\newtheorem{thm}{Theorem}[section]
\newtheorem{defn}[thm]{Definition}
\newtheorem{lemma}[thm]{Lemma}
\newtheorem{cor}[thm]{Corollary}
\newtheorem{remark}[thm]{Remark}
\newtheorem{example}[thm]{Example}
\newcommand{\bmb}{\left( \begin{array}{rr}}
\newcommand{\enm}{\end{array}\right)}
\newcommand{\g}{{\mathfrak{g}}}
\newcommand{\h}{{\mathfrak{h}}}
\newcommand{\n}{{\mathfrak{n}}}
\newcommand{\qY}{\mathbf Y}
\newcommand{\p}{p}
\newcommand{\bl}{{\boldsymbol\ell}}
\newcommand{\bq}{{\mathbf q}}
\renewcommand{\sl}{{\mathfrak{sl}}}
\newcommand{\KR}{{{\rm KR}}}
\newcommand{\C}{{\mathbb C}}
\newcommand{\Z}{{\mathbb Z}}
\newcommand{\N}{{\mathbb N}}
\newcommand{\Hom}{{\rm Hom}}
\newcommand{\ba}{{\mathbf a}}
\newcommand{\bp}{{\mathbf p}}
\newcommand{\bm}{{\mathbf m}}
\newcommand{\bn}{{\mathbf n}}
\newcommand{\bx}{{\mathbf x}}
\newcommand{\by}{{\mathbf y}}
\newcommand{\bj}{{\mathbf j}}
\newcommand{\bX}{{\mathbf X}}
\newcommand{\bz}{{\mathbf z}}
\newcommand{\al}{{\alpha}}
\newcommand{\qQ}{\widehat Q}
\newcommand{\bqQ}{\widehat{\mathbf Q}}
\newcommand{\CT}{{\rm CT}}
\newcommand{\qx}{\mathbf X}
\newcommand{\bell}{\boldsymbol\ell}
\newcommand{\qX}{{X}}
\newcommand{\qbin}[2]
{{
\left[
\begin{matrix}{\displaystyle #1}\\
{\displaystyle #2}\end{matrix}
\right]
}}
\numberwithin{equation}{section}
\begin{document}
\title{Quantum cluster algebras and fusion products}
\author{Philippe Di Francesco} 
\address{
Institut de Physique Th\'eorique du Commissariat \`a l'Energie Atomique, 
Unit\'e de Recherche associ\'ee du CNRS,
CEA Saclay/IPhT/Bat 774, F-91191 Gif sur Yvette Cedex, 
FRANCE. e-mail: philippe.di-francesco@cea.fr}
\author{Rinat Kedem}
\address{Department of Mathematics, University of Illinois, Urbana, IL 61821 USA. e-mail: rinat@illinois.edu}

\begin{abstract}
$Q$-systems are recursion relations satisfied by the characters of the restrictions 
of special finite-dimensional modules of quantum affine algebras. They can also be viewed as
mutations in certain cluster algebras, which have a natural quantum deformation.
In this paper, we explain the relation in the simply-laced case between the resulting quantum
$Q$-systems and the graded tensor product of Feigin and Loktev. 
We prove the graded version of the $M=N$ identities, and
write expressions for these as  non-commuting evaluated multi-residues
of suitable products of solutions of the quantum $Q$-system. This leads to
a simple reformulation of Feigin and Loktev's fusion coefficients as matrix elements in a representation 
of the quantum $Q$-system algebra.
\end{abstract}

\maketitle
\date{\today}

\section{Introduction}%\input{intro2}
The graded tensor product, or fusion product, of Feigin and Loktev \cite{FL} is a refinement, 
or $\g$-equivariant grading, of the tensor product of current algebra modules.
In \cite{AK,DFK} it was proved that the Hilbert polynomial $\mathcal M(q)$ of the graded multiplicity 
space is equal to the fermionic formula $M(q)$, introduced by \cite{KR,HKOTY}. 
This was done by proving a conjecture of \cite{HKOTY}, that the ungraded formula $M(1)$ is equal to an unrestricted sum,  $N(1)$. 
The latter is proven to be equal \cite{HKOTY}  to the multiplicity of the irreducible 
component in the tensor product of representations. 
Together with
certain inequalities \cite{AK} and the manifest positivity of the sum $M(q)$,
our proof \cite{DFK} of the equality of ungraded sums, $M(1)=N(1)$, implies that
$M(q)=\mathcal M(q)$, the Feigin-Loktev conjecture. This proof uses crucially
a polynomiality property the solutions of a system of recursion relations known as the $Q$-system \cite{KR}, which is satisfied by the characters of special  so-called Kirillov-Reshetikhin modules \cite{Nakajima}.

In the present paper, we prove the identity of the fermionic sums when $q\neq 1$, i.e. 
a set of identities of the form $M(q)=N(q)$. 
This is a natural extension of our proof for the ungraded case: As shown in \cite{Ke08}, 
the $Q$-systems may be 
embedded into certain cluster algebras \cite{FZ}, 
which have natural quantum deformations \cite{BZ}.
Such a deformation gives rise to a natural definition of quantum $Q$-system, 
a set of recursion relations satisfied by non-commuting elements \cite{DFK10}. 
We show that this $q$-deformation gives precisely the grading of the Feigin-Loktev fusion product. 
Moreover, quantum cluster algebras have many properties analogous to those of
commutative cluster algebras: In particular, the Laurent property, and the consequent 
polynomiality of solutions of the $Q$-system with particular boundary conditions have a quantum counterpart. 
This allows us to $q$-deform the theorems and proofs of \cite{DFK} to show the equality of the 
polynomials $M(q)$ and $N(q)$, and to completely characterize the Feigin-Loktev fusion product
within the framework of the quantum $Q$-system algebra. 
We need a few definitions in order to state our precise result. 

In their solution of the generalized, inhomogeneous Heisenberg spin
chain corresponding to the Yangian $Y(\g)$, where $\g$ is a simple Lie
algebra, Kirillov and Reshetikhin \cite{KR} introduced a conjectural
formula for {graded} multiplicities of the irreducible $\g$-modules in
the tensor product of finite-dimensional Yangian modules \cite{HKOTY}. The modules
are of Kirillov-Reshetikhin (KR) type \cite{Chari}, denoted by
$\KR_{\al,i}(z)$, where $i \omega_\al$ is the highest weight of the
KR-module with respect to $\h\subset\g\subset Y(\g)$, $\omega_\al$ a fundamental
weight of $\g$, and $\al\in I_r$, the set of root indices. The
variable $z\in \C^*$ is called the spectral parameter.
Denote the graded multiplicites by
$M_{\lambda,\bn}(q)$\footnote{In this paper, the
  notation differs from that of \cite{HKOTY} by
  $q\to q^{-1}$. That is,
  $M_{\lambda,\bn}(q)$ here is a polynomial in $q$ rather than $q^{-1}$.}
and define the $q$-graded
character to be the following:
$$
{\rm char}_{q}\ {\rm Res}^{Y(\g)}_\g\left(\underset{{\al,i}}{\otimes}\
  \KR_{\al,i}^{\otimes n_{\al,i}}\right)= \sum_{\lambda\in P^+}
M_{\lambda,\bn}(q) {\rm char}(V_\lambda),
$$
where the (omitted) spectral parameters of the KR-modules are
assumed to be pairwise distinct, and $V_\lambda$ is the irreducible
$\g$-module with highest weight $\lambda$.  Various definitions of the
grading will be discussed below.

Kirillov and Reshetikhin conjectured that the multiplicity
$M_{\lambda,\bn}(1)$ is equal to the number of Bethe eigenvectors,
which they were able to compute. This is called the completeness
conjecture. In its graded form, their formula is
\begin{equation}\label{msumintro}
  M_{\lambda,\bn}(q^{-1}) = \underset{m_{\al,i}\in\Z_+}{{\sum}^{(1),(2)}}
q^{Q(\bn,\bm)} \prod_{\al,i} \qbin{p_{\al,i}+m_{\al,i}}{m_{\al,i}}_q,
\end{equation}
where the sum is taken over vectors with non-negative integers entries
$\bm=(m_{\al,i})_{\al\in I_r, i\in \N}$, $I_r=\{1,2,...,r\}$, $r={\rm rk}(\g)$, and the sum is taken under
the restrictions (1) and (2) involving the integers $p_{\al,i}$
as follows. Let $C$ be the Cartan matrix of $\g$ and $A$ be the matrix
with entries $[A]_{i,j} = \min(i,j)$, we define the vectors
$\bp=(p_{\al,i})$ and the quadratic form $Q$ by:
$$
\bp = (\mathbb I \otimes A)\bn - (C\otimes A)\bm,\qquad 
Q(\bn,\bm) = \frac{1}{2}\bm^t(C\otimes A)\bm -\bm^t (\mathbb I \otimes A)\bn.
$$
The restrictions on the sum in \eqref{msumintro} are as follows: 
\begin{enumerate}
\item
$p_{\al,i}\geq 0$ for all $\al,i$;
\item $\lim_{i\to\infty} \sum_\al p_{\al,i} \omega_\al= \lambda$.
\end{enumerate}
Restriction (1) is non-trivial because the $q$-binomial coefficient
$$
\qbin{p+m}{m}_q\overset{\rm def}{=}
\frac{(q^{p+1};q)_\infty(q^{m+1};q)_\infty}{(q;q)_\infty(q^{p+m+1};q)_\infty}
  \qquad (p;q)_\infty=\prod_{j\geq 0}(1-q^j p)
$$
does not necessarily vanish, and may be negative, if $p<0$.

In \cite{DFK} we proved that the
restriction (1), $p_{\al,i}\geq 0$, does not affect the value of the
integers $M_{\bn,\lambda}(1)$. (The proof of \cite{DFK} covers the non
simply-laced case as well.) .
%Since the sum in \eqref{msumintro} involves only
%non-negative terms, the restriction (1) does not affect the value of the
%polynomials $M_{\lambda,\bn}(q)$ when $q\neq 1$.  
The sum in \eqref{msumintro} without the
restriction (1) was called the $N$-sum in \cite{HKOTY}:
\begin{equation}\label{nsumintro}
  N_{\lambda,\bn}(q^{-1}) \overset{\rm def}{=} \underset{m_{\al,i}\in\Z_+}{{\sum}^{(2)}}
q^{Q(\bn,\bm)} \prod_{\al,i} \qbin{p_{\al,i}+m_{\al,i}}{m_{\al,i}}_q.
\end{equation}
In fact, the ungraded form of this sum, at $q=1$, first appeared in
the case of the tensor product of the fundamental representations of $\sl_2$,
in the original solution of Hans Bethe \cite{Bethe}.  Note that the
$N$-sum is not manifestly positive.  A positive sum of terms which are
products of binomial coefficients is generally referred to as a
fermionic formula.

In \cite{HKOTY}, the authors proved that if the characters of
KR-modules satisfy the $Q$-system, then the multiplicity of the irreducible
component in the tensor product of these modules is equal to the
$N$-sum \eqref{nsumintro}. In the simply-laced case, Nakajima \cite{Nakajima}
proved that the $q$-characters of these modules satisfy the $T$-system
\cite{KR,KNS}, which implies the $Q$-system. This shows that
$N_{\lambda,\bn}(1)$ is the multiplicity of the irreducible
$\g$-modules in the tensor product. Given the results in \cite{DFK},
it is also equal to $M_{\lambda,\bn}(1)$, and the completeness of the
Bethe ansatz follows. The proof in \cite{DFK} was the original
inspiration for our formulation of 
$Q$-systems \cite{Ke08} and $T$-systems \cite{DFK2} as cluster
algebras \cite{FZ}.

In this paper, we will give a new interpretation for the grading in the sums
in \eqref{msumintro} and \eqref{nsumintro}, involving quantum cluster
algebras \cite{BZ}, and use this connection to
prove that $M_{\lambda,\bn}(q^{-1})=N_{\lambda,\bn}(q^{-1})$ for all $q\in\C^*$.

There are several interpretations of the grading of the tensor
product. The grading of the tensor product of Yangian modules given by
the polynomial $M_{\lambda,\bn}(q)$ was defined by Kirillov and
Reshetikhin by inspiration from the form of the Bethe ansatz
solutions. Subsequently, another grading was introduced \cite{OSS} for
tensor products of $\KR$-modules of certain quantum affine algebras
$U_q(\widehat{\g})$ by using an energy function on tensor products of
crystal bases. Finally, the tensor product of the KR-modules defined
for the algebra $\g[t]$ \cite{ChariMoura} admits yet another grading
\cite{FL}, compatible with the homogeneous grading in $t$ of
$U(\g[t])$. It turns out that these definitions all give the same
graded decomposition coefficients for the three algebras $Y(\g)$,
$U_q(\g)$, $U(\g[t])$ \cite{Ke11}.

As noted above, the expression \eqref{nsumintro} for the multiplicities
$N_{\lambda,\bn}(1)$ follows from the fact that the
characters of KR-modules obey a functional relation called the
$Q$-system.  In the case of simply-laced Lie algebras
which we treat in this paper, the $Q$-system has the form \cite{KR,KNS}
\begin{equation}\label{qsyssl} 
  Q_{\al,n+1}Q_{\al,n-1}=Q_{\al,n}^2-\prod_{\beta\neq \al} Q_{\beta,n}^{-C_{\al,\beta}},\qquad \al\in I_r,n \in \Z.
\end{equation}
If one sets $Q_{\al,0}=1$ for all $\al\in I_r$, and $Q_{\al,1}={\rm
  char}\ \KR_{\al,1}$, then it follows from the work of Nakajima
\cite{Nakajima} that the solution $Q_{\al,i}$ with $i>1$ of
\eqref{qsyssl} is equal to the character of $\KR_{\al,i}$.

In \cite{DFK}, we expressed the multiplicity $M_{\lambda,\bn}(1)$ as
a residue in $\{Q_{\al,1}\}_{\al\in I_r}$ of the product (with
sufficiently large $k$) $\prod_\al
\left(Q_{\al,k}/Q_{\al,k+1}\right)^{\ell_\al+1}\prod_{\al,i}Q_{\al,i}^{n_{\al,i}}$
of solutions of the $Q$-system \eqref{qsyssl} evaluated at
$\{Q_{\al,0}=1\}_{\al\in I_r}$.  Here, $\ell_\al$ is defined by
$\lambda = \sum_\al\ell_\al \omega_\al$.
%In this generating function, the summation constraint 
%that $p_{\al,i}\geq 0$ for all $\al,i$ is actually relaxed, while the condition 
%$\lim_{i\to\infty} \sum_\al p_{\al,i} \omega_\al= \lambda$ is guaranteed by the multi-residue. The fermionic sum
%analogous to \eqref{MsumIntro} for $q=1$ without the condition $p_{\al,i}\geq 0$ is usually called $N$-sum
%(as opposed to $M$-sum for $M_{\lambda,\bn}(1)$), and the $M=N$ identity was derived in \cite{DFK} for
%both simply-laced and non-simply-laced cases. 

In this paper, we follow an analogous argument in the quantized case, to
define another interpretation for the grading of the multiplicities
$M_{\lambda,\bn}(q)$ in terms of the quantum $Q$-system, introduced in
\cite{DFK10} as a non-commutative deformation of the $Q$-system based
on its connection with cluster algebra.

As shown in \cite{Ke08}, each of the $Q$-system relations
\eqref{qsyssl} is a mutation in a certain cluster algebra. A
useful corollary \cite{DFK2} to the Laurent property of cluster algebra is the
polynomiality  for the solutions of the $Q$-system
under the evaluation at $\{Q_{\al,0}=1: \al\in I_r\}$ as a function of
$\{Q_{\al,1}: \al\in I_r\}$. 
%This result also follows from the
%representation-theoretical interpretation of solutions of the
%$Q$-system and the properties of the Groethendieck ring of KR-modules.
This polynomiality property was a crucial ingredient in the proof of the $M=N$ identity \cite{DFK}, that
is, it implies that the restriction (1) on the sum in \eqref{msumintro} is unnecessary.

Similarly, applying the definition of quantum cluster algebras due to
Berenstein and Zelevinsky \cite{BZ}, one may define the quantum
$Q$-system in the same manner as we did in \cite{DFK10} for the $A$ cases.  
The main result of this paper is a
non-commutative version of the results of \cite{DFK}, for the graded
multiplicities $M_{\lambda,\bn}(q^{-1})$.  By use of a suitable
generating function with non-commutative arguments, the graded multiplicities may
be expressed as an evaluated residue of a product of non-commutative elements
$\qQ_{\al,i}$'s, where $\{\qQ_{\al,i}\}$ are solutions of the quantum $Q$-system with generic
initial conditions.
The ``evaluation at $\qQ_{\beta,0}=1$" must be defined appropriately, 
as $\qQ_{\beta,0}$ does not commute with the other $\qQ$'s; also,
the notion of multi-residue must be adapted to the non-commutation of the variables 
involved. 

We show that the correct evaluated multi-residue amounts
to computing a matrix element in a certain infinite-dimensional
representation of the quantum $Q$-system algebra. 
As in the commutative case, we obtain an analog
of the polynomiality lemma of Ref. \cite{DFK2}: Any matrix element of a
product of quantum $Q$-system solutions can be written as a matrix element of a
{\it polynomial} in the variables $\{\qQ_{\al,1}\}_\al$. 
This allows us to prove the graded version of the $M=N$ theorem in the simply-laced case.

Our purpose here is to give a concrete application of the quantum $Q$-system,
which we introduced in \cite{DFK10} and solved in terms of paths on
graphs with $q$-commuting weights, in the case of type $A$, giving rise to 
compact $q$-series generating functions for basic cluster variables \cite{DF11}. 
This paper is restricted to the simply-laced case, but the extension
to the non-simply-laced case should follow the same steps as those given in the commutative case \cite{DFK} using the techniques of the present paper.
In particular non-simply-laced quantum $Q$-systems can be defined via the
quantum versions of the associated cluster algebras given in \cite{DFK2}.

We note that a more geometric representation-theoretical interpretation of a special case of the 
$M$-sum was conjectured by Lusztig \cite{Lusztig}, and proved in \cite{KN}. 
In fact, the result of \cite{KN} implies the graded $M=N$ conjecture in the special 
case involving only fundamental modules, for simply-laced $\g$.
%It was conjectured to be related to the quantum cluster algebra in \cite{Nakaj}.

The paper is organized as follows. In Section 2 we review the
definition of the Feigin-Loktev fusion product of KR-modules, and in
Section 3, of the quantum $Q$-system. In Section 4, we give the
detailed explanation of how to obtain the graded $M=N$ theorem for the case of $A_1$
(Theorem \ref{mequalsn}), using
the $A_1$ quantum $Q$-system. We give an explicit expression for $M_{\lambda,\bn}(q)$
in terms of suitably evaluated 
multi-residues involving solutions of the quantum $Q$-system (Theorem \ref{compum}), 
and show how to represent these graded multiplicities
as matrix elements of an infinite-dimensional representation of the corresponding quantum $Q$-system algebra
(Theorem \ref{oscirepone}). 
The other simply-laced cases are very
similar although notationally more complicated, and are treated in Section 5.
The generalized versions of Theorems \ref{mequalsn}, \ref{compum} and \ref{oscirepone}
are Theorems \ref{M=Nsl}, \ref{howtocomputeM}, and \ref{oscirepsl} respectively.

\vskip.1in
\noindent{\bf Acknowledgements.} This work is supported by the CNRS PICS grant 05859. The work of RK on is supported by NSF grants DMS-0802511 and DMS-1100929. PDF acknowledges support from ANR grant GranMa. The authors thank MSRI for hosting part of this research during the program ``Random Matrix Theory, Interacting Particle Systems and Integrable Systems". RK also thanks CEA IPhT-Saclay for its hospitality and support while this research was conducted. We thank H. Nakajima and N. Reshetikhin for useful discussions.

\section{Graded tensor products}
We give here one of the representation-theoretical definitions of the
graded tensor product multiplicities, introduced Feigin and Loktev
\cite{FL} for graded $\g[t]$-modules. The authors referred to this
product as the fusion product, due to its origin in the fusion product
of conformal field theory.

Feigin and Loktev conjectured that their construction was related to
the graded multiplicities in Equation \eqref{msumintro}. It was proved
in \cite{AK} that in the case of the tensor product of
KR-modules of any simple Lie algebra, the graded
multiplicities were bounded from above by the polynomials
\eqref{msumintro} (generalized to the case of non simply-laced
$\g$). Due to the construction used in \cite{AK}, if the value of this
multiplicity at $q=1$ is equal to the usual tensor product
multiplicity, then the equality holds between graded
multiplicities. It was known \cite{HKOTY} that the unrestricted sum
$N_{\lambda,\bn}(1)$ was equal to the tensor product multiplicity,
given the $Q$-system identity for characters of KR-modules, proved in
\cite{Nakajima,Hernandez}. Thus, the proof in \cite{DFK} that the
unrestricted $N$-sum is equal to the $M$-sum at $q=1$ implies the Feigin-Loktev
conjecture.

\subsection{Graded cyclic modules}
Let $\g$ be a simple Lie algebra with Cartan decomposition 
$$\g\simeq
\n_- \oplus \h \oplus \n_+.$$
 Let $t$ be a formal variable and define the current algebra, $\g[t]:=\g\otimes \C[t]$. This is the Lie algebra with basis
$$
\{x[n]:=x\otimes t^n,\ x\in \g,
n\in\Z_+\}
$$
and relations inherited from $\g$:
$$
[x\otimes t^n, y\otimes t^m]_{\g[t]} = [x,y]_\g \otimes t^{m+n}, \quad x,y\in\g.
$$ 
This algebra is graded by degree in $t$, as is its universal enveloping algebra $U(\g[t])$.

Let $V$ denote a $\g[t]$-module on which $\g[t]$ acts via some representation $\pi$.
We introduce a ``translated action" of $\g[t]$ on $V$ as follows.  Let
$z\in\C^*$ be a non-zero complex number. The representation $\pi_z$ on
the $\g[t]$-module $V$ is given by 
$$\pi_z(x\otimes t^n) w = \pi(x\otimes (t+z)^n) w=\sum_{j=0}^n
{n\choose j} z^{n-j} \pi( x\otimes t^j) w, \quad x\in\g, w\in V.$$
Note that the translated action of $\g[t]$ does not preserve the
grading with respect to degree in $t$, as $z$ is a complex
number. However, the shifted action of $U(\g[t])$ is still filtered by
degree in $t$ as the top graded component of the action of $x\otimes
t^n$ is $n$.

Assume $V$ is a cyclic $\g[t]$-module with respect to the translated
action $\pi_z$. Let $v$ be the cyclic vector, $V\simeq\pi_z(U(\g[t]))
v.$ We assign the degree 0 to $v$.  Let $U^{(\leq n)}$ denote elements
in $U(\g[t])$ of total degree less than or equal to $n$ in $t$. This
gives a filtration of $U{(\g[t])}$, with $U^{(\leq n)}\subset U^{(\leq
n+1)}$, and $\pi_z(U(g[t])v$ inherits this filtration. We define the graded module $\overline{V}$
to be the associated graded space of this filtration of $\pi_z(U(\g[t]))v$. 
The graded components are
$\g$-modules, as $\g\subset\g[t]$ has degree 0.

KR-modules for $\g[t]$ are of the type described
above \cite{ChariMoura,AK}. Denote the generator $x\otimes t^n$ of
$\g[t]$ by $x[n]$. The KR-module $\KR_{\al,i}(z)$ is generated by the
cyclic vector $v$, a highest weight vector with respect to $\g$, with
highest weight of the form $i \omega_\al$. The action of $\g[t]$ is
defined as follows. Given $z\in \C^*$, define the representation
$\pi_z$ of $\g[t]$ as the quotient of the action of $U(\g[t])$ on $v$
by the relations $\pi_z(x[n]) v=0\ (x\in \n_+, n\geq 0)$,
$\pi_z(f_\beta[n]) v=0\ (n\geq \delta_{\alpha,\beta})$ and
$\pi_z(h[n]) v=0$ $(h\in\h, n>0)$, and $\pi_z(f_\al[0]^{i+1})v=0$,
where $f_\al$ is the element of $\n_-$ corresponding to the root
$-\alpha$. The KR-module is the associated graded space of
$\pi_z(U(\g[t]))v$.

If $\g\neq A_r$, the restriction of the KR-module to $\g[t]$ is not
necessarily irreducible. In general, $\KR_{\al,i}(z)\simeq
V_{i\omega_\al}\oplus t \N[t]\times$``smaller modules" as a
$\g$-module, where the smaller modules have highest weights which are
strictly less than $i\omega_\al$ in the dominance ordering. KR-modules are
finite-dimensional, and can be thought of as the ``smallest"
evaluation modules of $\g[t]$ containing $V_{i\omega_\al}$ which have a
deformation to the quantum affine algebra. It is a theorem that the
dimension of the graded KR-module is independent of the parameter $z$.

\subsection{Tensor products}
One can repeat the same construction with tensor products to define a graded tensor product. Choose $N$ cyclic $\g[t]$-modules $\{V_1,...,V_N\}$ with cyclic vectors $\{v_1,...,v_N\}$ and
$N$ pairwise distinct complex numbers $\{z_1,...,z_N\}$. The translated co-product action $\Delta^{(N)}_\bz$ on the tensor product $V_1\otimes V_2\otimes \cdots \otimes V_N$
of representations localized at $z_i$ is naturally defined to be
$$
\Delta^{(N)}_\bz(x\otimes f(t)) = \sum_{i=1}^N \pi_{z_i}^{(i)}(x\otimes f(t))=\sum_{i=1}^N \pi^{(i)}(x\otimes f(t+z_i))
$$
where $\pi^{(i)}$ denotes the representation $\pi$ acting on the $i$-th factor in the tensor product.

We let $U(\g[t])$ act on the tensor product of cyclic vectors
$v_1\otimes \cdots \otimes v_N$ via this co-product. We choose the
tensor product of cyclic vectors to have degree 0. The resulting space
is isomorphic (as a vector space) to the tensor product of
$\g[t]$-modules. Moreover, it is filtered by degree in $t$ and we can
take the associated graded space. This graded $\g$-module is called
the Feigin-Loktev fusion product \cite{FL} of the modules
$\{V_1,...,V_N\}$, denoted by $\mathcal F^*_{\{V_1,...,V_N\}}$. Its
graded components are again $\g$-modules. In principal, the fusion
product depends on the parameters $z_i$.

By definition, the fusion product is commutative. If we choose all
modules $V_i$ to be of KR-type, it was conjectured by Feigin and
Loktev and proven by \cite{AK,DFK} that the fusion product is
independent of the choice of complex numbers $z_i$.  We can therefore
parameterize the fusion product by a vector $\bn=(n_{\al,i})$ where
$n_{\al,i}$ is the number of KR-modules of type $\KR_{\al,i}(z)$ in the
product, for some $z$. Let us denote this fusion product by
$\mathcal F^*_\bn$.

\subsection{Graded multiplicities}
As noted above, the graded components of the fusion product are
$\g$-modules. Let $\mathcal F^*_\bn[m]$ denote the graded component of
degree $m$. Define the generating function for multiplicities of the
irreducible components $V(\lambda)$ in $\mathcal F^*_\bn$ to be
\begin{equation}\label{gramult}
\mathcal M_{\lambda,\bn}(q) = \sum_{m\geq 0} \dim \Hom_\g(\mathcal
F^*_\bn[m],V(\lambda)) q^m.
\end{equation}

One can prove that the fusion product of KR-modules is
independent of the localization parameters $z_i$
by computing these graded multiplicities explicitly. This was done in
\cite{AK,DFK} for any simple Lie algebra, and for any set of
KR-modules, where it was shown that $\mathcal
M_{\lambda,\bn}(q)=M_{\lambda,\bn}(q)$, as a consequence of
the identity $\mathcal M_{\lambda,\bn}(1)=M_{\lambda,\bn}(1)$ and
of the positivity of the graded sums.

To prove that $\mathcal M_{\lambda,\bn}(1)=M_{\lambda,\bn}(1)$, it was
necessary to use the theorems of \cite{HKOTY,Nakajima,Hernandez} for
$N_{\lambda,\bn}(1)$ and then prove that
$M_{\lambda,\bn}(1)=N_{\lambda,\bn}(1)$ \cite{DFK}. The latter follows
from the polynomiality of the solutions of the $Q$-system as a
function of the characters of the fundamental KR-modules.

There are two ways to understand this polynomiality of the solutions
of the $Q$-system under the evaluation at $Q_{\al,0}=1$. One is via
representation theory, by
using the properties of the
Groethendieck ring generated by the
fundamental KR-modules.
The other is combinatorial, and borrows from the connection to cluster algebras,
as explained above.
%using the fact that all recursion relations
%in the $Q$-system are cluster algebra mutations in an appropriately
%defined cluster algebra. Therefore one can use the theorem from
%cluster algebras which states that all cluster variables (that is,
%the variables obtained as a result of repeated mutations) are Laurent
%polynomials in some initial data. Using this and the form
%of the $Q$-system equations, one can prove \cite{DFK2} that under
%evaluation at $Q_{\al,0}=1$, the solutions are
%polynomials in the variables $\{Q_{\al,1}\}$.
%
%This second fact is the reason we introduced the cluster algebra
%interpretation of $Q$-systems and $T$-systems.  In this paper, we
%develop this connection in a slightly different direction, by
%considering the quantum cluster algebras \cite{BZ} associated with the
%(quantum deformation of the)
%$Q$-system for simply-laced algebras, in order to give a new meaning to the grading of the
%multiplicities.

The quantum version of this connection will allow us in particular to prove 
the graded version of the $M=N$
conjecture in the simply-laced case, namely that $M_{\lambda,\bn}(q)=N_{\lambda,\bn}(q)$ 
(Theorem \ref{M=Nsl} below), 
as a consequence of a non-commutative version of the polynomiality
property for $Q$-system solutions when specialized to KR characters.
We will also obtain 
an explicit expression of the graded $M$ or $N$ coefficients as matrix elements 
of a representation of the ``non-negative" part of the quantum $Q$-system
algebra.

%\section{The quantum $Q$-system}
%
%
%\section{Graded tensor multiplicities and the quantum $Q$-system}
%
\section{The quantum $Q$-system}
\subsection{Simply-laced $Q$ systems and cluster algebras}

%The Q-system for the simply-laced algebra with Cartan matrix $C$
%of rank $r$ is the following set of recursion relations \cite{KR,KNS}:
%\begin{equation}\label{qsyssl} 
%Q_{\al,n+1}Q_{\al,n-1}=Q_{\al,n}^2-\prod_{\beta\neq \al} Q_{\beta,n}^{-C_{\al,\beta}},\qquad \al\in I_r,n \in \Z.
%\end{equation}
\newcommand{\vm}{{\overset{\rightarrow}{m}}}

Any solution of the $Q$-system \eqref{qsyssl} can be expressed as a function of initial data 
consisting of $2r$ elements. An example is what we call the {\em fundamental initial data}, 
which consists of the components of the vector
\begin{equation}\label{fid}
\by_0=(Q_{\al,0},Q_{\al,1})_{\al\in I_r}.
\end{equation}
More generally, a valid set of initial data is determined by a generalized Motzkin path:
\begin{lemma}
The components of any vector of the form
\begin{equation}\label{admissible}
\by_{\vm} = (Q_{\al,m_\al},Q_{\al,m_{\al}+1})_{\al\in I_r},\qquad \vm=(m_1,...,m_r), 
\ |m_\al - m_{\beta}|\leq 1\ {\rm whenever}\ C_{\al,\beta}=-1.
\end{equation}
constitute a valid set of initial data for the $Q$-system: Any solution $Q_{\al,n}$ can be expressed as a 
Laurent polynomial in these variables.
\end{lemma}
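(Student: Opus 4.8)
The plan is to exhibit, for each admissible $\vm$ (i.e.\ satisfying $|m_\al-m_\beta|\le 1$ whenever $C_{\al,\beta}=-1$), an explicit sequence of $Q$-system mutations in the cluster algebra of \cite{Ke08} that carries the fundamental seed, whose cluster variables are the components of $\by_0=(Q_{\al,0},Q_{\al,1})_{\al\in I_r}$, to a seed whose cluster variables are the components of $\by_{\vm}=(Q_{\al,m_\al},Q_{\al,m_\al+1})_{\al\in I_r}$. Once this is done, the Laurent phenomenon for cluster algebras \cite{FZ} immediately gives that every cluster variable, and in particular every $Q_{\al,n}$, is a Laurent polynomial in the components of $\by_{\vm}$; conversely the components of $\by_{\vm}$ are themselves cluster variables (hence Laurent polynomials) in the fundamental data, so the two sets of data are related by an invertible Laurent change of variables and each is a valid set of initial data.

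The key observation is that in the cluster algebra attached to the simply-laced $Q$-system, applying the mutation $\mu_\al$ at node $\al$ replaces the pair $(Q_{\al,n},Q_{\al,n+1})$ by $(Q_{\al,n+1},Q_{\al,n+2})$ using exactly the exchange relation \eqref{qsyssl}, provided the neighbours $\beta$ (those with $C_{\al,\beta}=-1$) are in a state $(Q_{\beta,n},Q_{\beta,n+1})$ with the same $n$; the relation then produces $Q_{\al,n+2}Q_{\al,n}=Q_{\al,n+1}^2-\prod_{\beta\neq\al}Q_{\beta,n+1}^{-C_{\al,\beta}}$ (a shift of \eqref{qsyssl}), and the exchange matrix flips sign at $\al$, so that a subsequent mutation at $\al$ runs the recursion the other way. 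Thus each mutation changes exactly one coordinate $m_\al$ by $\pm 1$, and it is legal precisely when the resulting vector of heights still lies in the admissible region — this is the combinatorial shadow of the condition $|m_\al-m_\beta|\le 1$ for adjacent $\al,\beta$ in the Dynkin diagram.

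Therefore it suffices to show that the admissible region $\{\vm\in\Z^r: |m_\al-m_\beta|\le1\text{ when }C_{\al,\beta}=-1\}$ is connected under moves $\vm\mapsto\vm\pm e_\al$ that stay inside it, starting from $\vm=(0,\dots,0)$; this is an elementary lattice-path argument (one raises or lowers the coordinates in an order dictated by a linear extension, never violating the adjacency constraint, much as in the generalized Motzkin-path picture already used for type $A$ in \cite{DFK10}). I expect the main obstacle to be purely bookkeeping: carefully checking that at each step the exchange matrix is in the configuration that makes the next mutation reproduce the correctly-shifted instance of \eqref{qsyssl} rather than some other relation, i.e.\ tracking the signs of the exchange matrix entries $b_{(\al,\epsilon),(\beta,\epsilon')}$ along the path. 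Once that sign bookkeeping is set up (and it is the same as in the commutative $A$-type analysis), the Laurent property does the rest, and the lemma follows.
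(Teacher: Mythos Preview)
Your proposal is correct and is precisely the argument the paper has in mind: the lemma is stated without proof, with the surrounding text pointing to the cluster-algebra realization of the $Q$-system in \cite{Ke08} and the Laurent phenomenon of \cite{FZ}, which is exactly the machinery you invoke. One small correction to your description of the elementary step: when you mutate to raise $m_\al$ from $n$ to $n+1$, the requirement is not that every neighbour $\beta$ be at height exactly $n$, but that $Q_{\beta,n+1}$ appear in the current cluster, which happens when $m_\beta\in\{n,n+1\}$ --- and this is automatically guaranteed by the condition that both the starting and ending $\vm$ be admissible Motzkin paths; with that adjustment the bookkeeping you flag goes through exactly as in \cite{Ke08,DFK2}.
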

The vector $\vm$ in \eqref{admissible} is a generalized Motzkin path associated with the Cartan matrix $C$.

The rank $2r$ cluster algebras \cite{FZ} corresponding to the Q-systems \eqref{qsyssl}
were introduced in \cite{Ke08}. Cluster algebras have an exchange relation without subtractions, 
whereas the $Q$-system as written is an exchange relation with a subtraction. Therefore, in order to 
avoid the use of coefficients (as in the Appendix of \cite{DFK2}) in the cluster algebra, the variables 
$Q_{\al,n}$ can be rescaled:
Let $x_{\al,n}=\exp{( i \pi \sum_\al [C^{-1}]_{\beta,\al})}Q_{\al,n}$. These satisfy \eqref{qsyssl} with the 
minus sign on the right hand side changed to a plus sign:
$$
x_{\al,n+1}x_{\al,n-1}=x_{\al,n}^2+\prod_{\beta\neq\al}x_{\beta,n}^{-C_{\beta,\al}}.
$$
 The cluster algebra is defined from 
the initial cluster $(\bx_0, B)$, where
$\bx_0=(x_{\al,0},x_{\al,1})_{\al\in I_r}$, and  the exchange matrix $B$ is
\begin{equation}\label{bmatrix}
B=\begin{pmatrix} 0 & -C \\ C & 0\end{pmatrix}.
\end{equation}
The $Q$-system equations are a subset of the mutations in this cluster algebra: Those which lead  
to clusters involving  only admissible data  of the form \eqref{admissible}.

%It was shown that all the admissible initial data for the Q-system are clusters 
%in these cluster algebras, obtained by the mutations $\mu_\al$ corresponding to one use of
%the equation \eqref{qsyssl}, say to update one variable $R_{\al,n-1}\to R_{\al,n+1}$
%or $R_{\al,n+1}\to R_{\al,n-1}$ while the other variables remain unchanged.
%

\subsection{Quantum $Q$-systems}
The quantum Q-system for $A_r$ was introduced in \cite{DFK10}, inspired by the definition \cite{BZ} 
of quantum cluster algebras. Here we extend this definition to the simply-laced case. 

In the simplest case, a quantum cluster algebra of rank $n$ corresponding to the exchange matrix 
$B$ is the non-commutative algebra generated by the variables $\bX=(X_1,...,X_n)$ with relations
\begin{equation}\label{commuq}
X_i X_j = t^{\Lambda_{i,j}} X_j X_i,
\end{equation}
where $\Lambda_{i,j}$ are the entries of a matrix $\Lambda$ which satisfies a ``compatibility relation" 
with the exchange matrix $B$:\footnote{Since $B$ is skew-symmetric, the difference between our 
compatibility relation and that of \cite{BZ}, which uses $B^T$, corresponds simply to the changing $q\to q^{-1}$.}
$$
B \Lambda = \delta \mathbb I, \qquad \delta\in \N.
$$
Defining $\bX^{\ba}:= t^{\frac{1}{2} \sum_{i>j} \Lambda_{i,j} a_i a_j} X_1^{a_1}\cdots X_n^{a_n}$, 
mutations $\mu_i$ act in the same way on the matrix $B$ as in the case of the classical cluster 
algebra. We write $\mu_i(X_k)=X_k$ if $i\neq k$ and
\begin{equation}\label{mutaX}
\mu_i (X_i) = \bX^{\mathbf b_+[i]} + \bX^{\mathbf b_-[i]},\qquad (\mathbf b_{\pm}[i])_j 
:= ([\pm B]_+ - \mathbb I)_{i,j}.
\end{equation}

The quantum cluster algebra associated to the exchange matrix $B$ in \eqref{bmatrix} is a 
non-commutative algebra generated by the variables $\qx_0=(\qX_{\al,0},\qX_{\al,1})_{\al\in I_r}$ 
subject to the following commutation relations:
$$
\qX_{\al,n}\qX_{\beta,m} = t^{\lambda_{\al,\beta}(m-n)} 
\qX_{\beta,m}\qX_{\al,n}, \quad m,n\in \{0,1\}.
$$
where 
 $\lambda_{\al,\beta}$ are the elements of the matrix 
 \begin{equation}\label{lambdadef}
\lambda=\delta C^{-1},\qquad {\rm with}\qquad  \delta=\det(C)
\end{equation}
Specifically,  $\delta=r+1$, $4$, $3$, $2$, $1$ for $A_r$, $D_r$, $E_6$, $E_7$, $E_8$, 
respectively. These commutation relations correspond to \eqref{commuq},
with the matrix $\Lambda=\begin{pmatrix} 0 & \lambda \\
-\lambda & 0 \end{pmatrix}$.

We use  renormalized  cluster variables: 
$$\qQ_{\al,n}=\exp\left(-{1\over 2}\left({2i \pi \over \delta}+1\right)
\sum_\beta \lambda_{\al,\beta}\right)
\ \qX_{\al,n}\ ,$$
obeying the same commutation relations:
\begin{equation}\label{comq}
\qQ_{\al,n}\qQ_{\beta,m} = t^{\lambda_{\al,\beta}(m-n)} \qQ_{\beta,m}\qQ_{\al,n}, 
\quad m,n\in \{0,1\}.
\end{equation}

Then the mutations of the quantum cluster algebra are equivalent to the following recursion 
relations for the variables $\qQ_{\al,n}$:
\begin{equation}\label{qqsys}
t^{\lambda_{\al,\al}}\, \qQ_{\al,n+1}\qQ_{\al,n-1}
=\qQ_{\al,n}^2-\prod_{\beta\neq \al} \qQ_{\beta,n}^{-C_{\al,\beta}}
\quad (\al\in I_r;n\in \Z).
\end{equation}
Note that all the variables on the right hand side of \eqref{qqsys} commute with each other.
We call this equation the $\g$ quantum $Q$-system.
Again, the subset of renormalized cluster seeds which correspond to valid initial data for the quantum 
$Q$-system are parameterized by generalized Motzkin paths as in  
Equation \eqref{admissible}, and take the form: $\qY_\vm=(\qQ_{\al,m_\al};\qQ_{\al,m_\al+1})$.
%supplemented by the same initial commutation relations \eqref{comq} 
%for $\qQ_{\al,n}$. 
Using the mutation, in the form of the quantum $Q$-system equation \eqref{qqsys}, one can prove
\begin{lemma}\label{comlem}
Within each valid initial data set $\qY_{\vm}$ with $\vm$ as in \eqref{admissible},
the solutions of the quantum $Q$-system \eqref{qqsys} have the following commutation relations:
\begin{equation}\label{commutq}
\qQ_{\al,n}\qQ_{\beta,m} =t^{\lambda_{\al,\beta}(m-n)} \qQ_{\beta,m}\qQ_{\al,n}
\quad (m,n\in \Z)
\end{equation}
\end{lemma}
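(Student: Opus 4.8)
The plan is to establish \eqref{commutq} by induction, using the quantum $Q$-system relation \eqref{qqsys} itself to propagate the commutation relations away from the seed, and the Cartan identity $C\lambda=\delta\,\mathbb I$ of \eqref{lambdadef} to control the accumulation of $t$-powers.

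First I would dispose of the base case: when the indices $n,m$ together label variables belonging to the seed $\qY_{\vm}$, that is $n\in\{m_\al,m_\al+1\}$ and $m\in\{m_\beta,m_\beta+1\}$, relation \eqref{commutq} is just the defining relation \eqref{comq} read in $\qY_{\vm}$. For this I need that every valid seed carries the same commutation matrix $\lambda$, which follows from the Berenstein--Zelevinsky transformation law for $\Lambda$ under mutation together with the mutation-periodicity of the $Q$-system quiver from \cite{Ke08}: the mutation realizing one step of a generalized Motzkin path, followed by the transposition of the two $\al$-labelled vertices, fixes both $B$ and $\Lambda=\begin{pmatrix}0&\lambda\\-\lambda&0\end{pmatrix}$. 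This already yields the remark after \eqref{qqsys}, since $\lambda_{\al,\beta}(n-n)=0$.

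For the inductive step, using the $\Z$-translation invariance and the $n\mapsto-n$ symmetry of \eqref{qqsys}, it suffices to adjoin one new level at a time. Assuming \eqref{commutq} among $\{\qQ_{\al,j}:a\le j\le b\}$, all rewritten in the fixed seed $\qY_{\vm}$, I would write $\qQ_{\al,b+1}=t^{-\lambda_{\al,\al}}(\qQ_{\al,b}^2-\prod_{\beta\neq\al}\qQ_{\beta,b}^{-C_{\al,\beta}})\,\qQ_{\al,b-1}^{-1}$ and commute a generic $\qQ_{\gamma,m}$ with $\gamma\neq\al$ and $a\le m\le b$ through to the left. Passing through $\qQ_{\al,b}^2$ picks up $t^{2\lambda_{\al,\gamma}(m-b)}$ and passing through $\prod_{\beta\neq\al}\qQ_{\beta,b}^{-C_{\al,\beta}}$ picks up $t^{-\sum_{\beta\neq\al}C_{\al,\beta}\lambda_{\beta,\gamma}(m-b)}$; since $\sum_\beta C_{\al,\beta}\lambda_{\beta,\gamma}=\delta\,\delta_{\al,\gamma}=0$ here, these agree, so the whole bracket is a $t$-eigenvector for conjugation by $\qQ_{\gamma,m}$. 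Commuting $\qQ_{\gamma,m}$ through the remaining $\qQ_{\al,b-1}^{-1}$ (and trivially through the scalar $t^{-\lambda_{\al,\al}}$), the exponent telescopes to $\lambda_{\al,\gamma}(m-(b+1))$, which is \eqref{commutq}.

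The step I expect to be the main obstacle is the diagonal direction $\gamma=\al$: there the two monomials $\qQ_{\al,b}^2$ and $\prod_{\beta\neq\al}\qQ_{\beta,b}^{-C_{\al,\beta}}$ conjugate with exponents differing by $t^{\delta(m-b)}$, so a bare use of \eqref{qqsys} does not close the induction. My plan there would be to obtain the relations $\qQ_{\al,n}\qQ_{\al,m}=t^{\lambda_{\al,\al}(m-n)}\qQ_{\al,m}\qQ_{\al,n}$ by bootstrapping from the off-diagonal ones: for a Dynkin neighbour $\beta$ of $\al$, solve \eqref{qqsys} at $(\beta,n)$ for the single factor $\qQ_{\al,n}$ (it occurs to the first power since $-C_{\beta,\al}=1$), expressing it through $\qQ_{\beta,n},\qQ_{\beta,n\pm1}$ and the remaining neighbours of $\beta$ --- variables at adjacent levels whose commutators with $\qQ_{\al,m}$ are already controlled --- and then reading off the desired exponent, once more via $C\lambda=\delta\,\mathbb I$. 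Arranging the off-diagonal and diagonal inductions so that neither is circular, and checking that the bootstrap really returns $\lambda_{\al,\al}(m-n)$, is where the bulk of the work would lie; in type $A$ one can instead invoke the explicit path/transfer-matrix solution of the quantum $Q$-system from \cite{DFK10,DF11}, which exhibits all commutation relations among the $\qQ_{\al,n}$ at once.
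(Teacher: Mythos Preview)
Your off-diagonal inductive step is exactly the paper's argument: expand one of the two variables via \eqref{qqsys}, commute through, and invoke $\lambda C=\delta\,\mathbb I$ so that the two monomials on the right pick up the same power of $t$ when $\al\neq\beta$. Your base-case discussion (that the seed commutation matrix is preserved under the Motzkin mutations) is actually more explicit than the paper's bare citation of \eqref{comq}.

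Where you diverge from the paper is in the scope you ascribe to the lemma. The phrase ``within each valid initial data set $\qY_{\vm}$'' is meant literally: the claim is only that \eqref{commutq} holds for pairs $\qQ_{\al,n},\qQ_{\beta,m}$ that \emph{both lie in the same seed} $\qY_{\vm}$. The paper's proof makes this restriction explicit (``Assume that $\qQ_{\al,n}$ and $\qQ_{\beta,n+k}$ belong to the cluster seed $\qY_{\vm}$'') and then immediately uses the consequence that dissolves your ``main obstacle'': since a seed contains only $\qQ_{\al,m_\al}$ and $\qQ_{\al,m_\al+1}$ for each $\al$, any two seed variables with $|m-n|>1$ must have $\al\neq\beta$. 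Thus the diagonal case with level gap $>1$ simply does not arise, and there is nothing to bootstrap. This is also all the paper needs later (Lemma~\ref{comalbet} only uses $\al\neq\gamma$).

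Your neighbour-bootstrap idea would, if carried out, prove the genuinely stronger statement \eqref{commutq} for \emph{all} $m,n\in\Z$ and all $\al,\beta$; the computation you sketch does close (one checks, via $\sum_\gamma C_{\beta,\gamma}\lambda_{\gamma,\al}=0$ for $\beta\sim\al$, that conjugating the neighbour relation by $\qQ_{\al,m}$ returns the exponent $\lambda_{\al,\al}(m-n)$). That is a more general result than the paper states or needs, but it is not required for Lemma~\ref{comlem} as written.
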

\begin{proof}
By induction on $m-n$. The Lemma is true when $|m-n|\leq 1$ by \eqref{comq}. 
Assume that $\qQ_{\al,n}$ and $\qQ_{\beta,n+k}$ belong to the cluster seed $\qY_\vm$, 
where $k>1$, and that the Lemma holds for all $k'<k$. Note that if $k>1$, $\al\neq \beta$ 
if the two variables are in the same cluster. We use \eqref{qqsys}:
\begin{eqnarray*}
t^{\lambda_{\beta,\beta}}\qQ_{\al,n}\qQ_{\beta,n+k} &&=
\qQ_{\al,n}\left(\qQ_{\beta,n+k-1}^2 - \prod_{\gamma\neq\beta} 
\qQ_{\gamma,n+k-1}^{-C_{\gamma,\beta}}\right) \qQ_{\beta,n+k-2}^{-1} \\
&&\hskip-.5in= t^{-\lambda_{\al,\beta}(k-2)}\left(t^{2\lambda_{\al,\beta}(k-1)}\qQ_{\beta,n+k-1}^2-
t^{-(k-1)\sum_{\gamma\neq\beta}\lambda_{\al,\gamma}C_{\gamma,\beta}}\prod_{\gamma\neq\beta} 
\qQ_{\gamma,n+k-1}^{-C_{\gamma,\beta}}
\right)\qQ_{\beta,n+k-2}^{-1}\qQ_{\al,n}
\end{eqnarray*}
We use
$$
 2 \lambda_{\al,\beta}- \sum_{\gamma\neq \beta} \lambda_{\al,\gamma}C_{\gamma,\beta} = 
 \sum_{\gamma}\lambda_{\al,\gamma}C_{\gamma\beta} = \delta \delta_{\al,\beta}=0,
$$
where the last equality holds because $\al\neq \beta$. We conclude that
equation \eqref{commutq} holds for $k$.
\end{proof}
%The corresponding quantum cluster algebra mutation $\mu_\al$ now amounts to one application
%of the following quantum Q-system relation in the R-variables\footnote{Here, we adopt the normalizations of 
%\cite{DFK10}, which slightly differ
%from those of \cite{BZ}, in which cluster variables are related to the $Q$-variables
%via a relation of the form $X_{\al,n}=\epsilon(\al)t^{f(\al)} Q_{\al,n}$.}:
%\begin{equation}\label{qqsys}
%t^{\lambda_{\al,\al}}\, Q_{\al,n+1}Q_{\al,n-1}
%=Q_{\al,n}^2-\prod_{\beta\neq \al} Q_{\beta,n}^{-C_{\al,\beta}}
%\end{equation}
%Note that the $R$-variables satisfy the same relation but with the minus sign on the r.h.s. changed into a plus.

The cluster variables in any quantum cluster algebra have a Laurent property \cite{BZ} in an analogous 
way as for the usual cluster algebras. The solutions of the quantum $Q$-system inherit this property:
\begin{lemma}\label{lopol}
For any $\al,n$, $\qQ_{\al,n}$ can be expressed
as a (non-commutative) Laurent polynomial in any of the valid initial data sets $\qY_\vm$ with
 coefficients in $\Z[q,q^{-1}]$.
\end{lemma}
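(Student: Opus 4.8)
The plan is to reduce Lemma \ref{lopol} to the Laurent phenomenon for the ambient quantum cluster algebra. First I would recall from \cite{Ke08} that every $Q$-system relation \eqref{qsyssl} — equivalently, after renormalization, every quantum $Q$-system relation \eqref{qqsys} — is realized as a single mutation $\mu_i$ in the quantum cluster algebra attached to the exchange matrix $B$ of \eqref{bmatrix}, with compatible matrix $\Lambda=\begin{pmatrix} 0 & \lambda \\ -\lambda & 0\end{pmatrix}$. Concretely, one checks that applying $\mu_{(\al,0)}$ (in the variable $\qX_{\al,0}$) to the seed $\qx_{\vm}$ produces the seed in which $\qQ_{\al,m_\al-1}$ replaces $\qQ_{\al,m_\al+1}$, with exchange relation exactly \eqref{qqsys}; and similarly $\mu_{(\al,1)}$ shifts the index up by one. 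This is the content already summarized just before the statement: the subset of renormalized cluster seeds corresponding to valid initial data $\qY_{\vm}$ is indexed by generalized Motzkin paths, and adjacent such seeds differ by one of these mutations.

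Next I would invoke the quantum Laurent phenomenon of Berenstein–Zelevinsky \cite{BZ}: every cluster variable in a quantum cluster algebra is a Laurent polynomial, with coefficients in $\Z[q^{\pm 1/2}]$ (here $\Z[q,q^{-1}]$ after accounting for the integrality of $\lambda=\delta C^{-1}$ and the renormalization constants, which only rescale variables by central monomials), in the variables of any fixed seed. Given an arbitrary $\qQ_{\al,n}$, I would produce a finite sequence of mutations of the above type connecting the seed $\qY_{\vm}$ to a seed containing $\qQ_{\al,n}$ as one of its cluster variables: starting from $\vm$, repeatedly apply $\mu_{(\beta,1)}$ or $\mu_{(\beta,0)}$ along a path of admissible vectors that moves the $\beta$-th coordinate toward $n$, maintaining the Motzkin condition $|m_\beta-m_\gamma|\le 1$ when $C_{\beta,\gamma}=-1$ at every step. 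Such a path exists because one can always increment or decrement coordinates one at a time while staying within distance one of neighbors — e.g. first raise all coordinates to at least $\max(n, \text{current})$ in a "staircase" fashion, which is exactly the statement that the generalized Motzkin paths form a connected graph under elementary moves. Since $\qQ_{\al,n}$ appears in this target seed and the target seed's variables are Laurent polynomials in $\qY_{\vm}$ by \cite{BZ}, so is $\qQ_{\al,n}$.

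The main obstacle is the bookkeeping that the mutations in the cluster algebra really do specialize to the quantum $Q$-system relations \eqref{qqsys} and not to something with extra coefficients or the wrong power of $t$: one must verify that the exchange matrix $B$ of \eqref{bmatrix} is preserved (up to relabeling) under the relevant mutations, so that iterating stays inside the same combinatorial family, and that the $\Lambda$-twist in the definition $\bX^{\ba}=t^{\frac12\sum_{i>j}\Lambda_{i,j}a_ia_j}X_1^{a_1}\cdots X_n^{a_n}$ reproduces the factor $t^{\lambda_{\al,\al}}$ on the left of \eqref{qqsys}. This is essentially the computation underlying the passage from \eqref{commuq}--\eqref{mutaX} to \eqref{comq}--\eqref{qqsys}, already asserted in the text; in the write-up I would either cite \cite{DFK10} for the $A_r$ case and remark that the simply-laced computation is identical coordinate-wise, or spell out the one-line check that $B\Lambda=\delta\,\mathbb I$ with $\delta=\det C$ forces the mutated $B$-matrix to coincide with $B$ after the index shift. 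A secondary point to address is the coefficient ring: the renormalization factors $\exp(-\tfrac12(\tfrac{2i\pi}{\delta}+1)\sum_\beta\lambda_{\al,\beta})$ are scalars, not in $\Z[q,q^{-1}]$, but they are the same on both sides of every relation and in every seed, so they cancel in the Laurent expansion and the coefficients of the $\qQ$-Laurent polynomial indeed lie in $\Z[q,q^{-1}]$ — which is what \eqref{commutq} and the $B\Lambda=\delta\mathbb I$ relation guarantee, since $\delta C^{-1}$ has integer entries.
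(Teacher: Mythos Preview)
Your proposal is correct and is exactly the argument the paper has in mind: the paper gives no proof at all beyond the sentence immediately preceding the lemma, which simply asserts that the quantum Laurent phenomenon of \cite{BZ} is inherited by the $\qQ_{\al,n}$. Your write-up fills in precisely the details the paper omits---that each quantum $Q$-system relation is a mutation in the compatible pair $(B,\Lambda)$, that the Motzkin-path seeds are mutation-connected, and that the renormalization scalars cancel so that the coefficient ring is as stated---so there is nothing to correct.
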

As in the commutative $Q$-system, we will use this property below to prove an analogue of the 
polynomiality property of solutions of the $Q$-system under special boundary conditions. 

In the following two sections, we will proceed in analogy to the commutative case as in \cite{DFK}, 
starting with the case of $A_1$ and then treating the case of simply-laced $\g$ for pedagogical reasons. 

\section{Graded tensor multiplicities and the quantum $Q$-system: the $A_1$ case}
Here, we derive a constant term formula for the graded $M$ and $N$
sums in Equations \eqref{msumintro} and \eqref{nsumintro} in terms of
solutions of the quantum $Q$-system. As in \cite{DFK}, we prove a slightly stronger statement,
where we change the $N$ and $M$ sums to be sums over a finite number of variables, $k<\infty$.
The equality of the sums in the introduction follows when $k$ is sufficiently large.

\subsection{The quantum $M=N$ formula}

Let $A$ be the $k\times k$ matrix with entries 
$A_{i,j}=\min(i,j)$.
For given $k$-tuples of non-negative integers $\bm=(m_1,m_2,...,m_k)^t$
and $\bn=(n_1,n_2,...,n_k)^t$ and some $\ell\in\Z$, we define
the following integers:
\begin{eqnarray*}
q_0&=& \ell +\sum_{i=1}^k i(2m_i-n_i),\quad
p_j=\sum_{i=1}^k \min(i,j)(n_i-2m_i)\quad (j=1,2,...,k)
\end{eqnarray*}
or equivalently $\bp=(p_1,p_2,...,p_k)^t=A(\bn-2\bm )$, and $q_0=\ell-p_k$.
We also define the quadratic form
\begin{equation}
Q(\bm,\bn)=-{1\over 2}\bm^t (\bp+A\bn) =\bm^t A(\bm-\bn)={1\over 4} (2\bm-\bn)^t A(2\bm-\bn)-
{1\over 4}\bn^t A\bn.
\end{equation}

For $a\in\Z_+$ and $b\in \Z$
the quantum binomial coefficient 
%$\left[\begin{matrix}a+b \\ a \end{matrix}\right]_q$ 
may be defined by using
 a generating function. Let $(x;q)_\infty=\prod_{i=1}^\infty (1-q^i x)$, then
\begin{equation} \label{defbin}
\sum_{a\in \Z_+} \left[\begin{matrix}a+b \\ a \end{matrix}\right]_q x^a ={(q^{b+1}x;q)_\infty\over (x;q)_\infty}
=\left\{
\begin{matrix}
\prod_{i=0}^{b} (1-q^i x)^{-1} & {\rm if} \, b\geq  0\\
\prod_{i=0}^{-b-1} (1-q^{i+b+1} x) & {\rm if} \, b < 0\\
\end{matrix}\right.
\end{equation}

Define
\begin{eqnarray}
M_{\ell,\bn}^{(k)}(q^{-1})&=&\sum_{m_1,m_2,...,m_k\in\Z_+\atop q_0=0;\,  p_1,p_2,...,p_k\geq 0} q^{Q(\bm,\bn)} 
\prod_{i=1}^k \left[\begin{matrix}m_i+p_i \\ m_i \end{matrix}\right]_q\label{msum} \\
N_{\ell,\bn}^{(k)}(q^{-1})&=&\sum_{m_1,m_2,...,m_k\in\Z_+\atop q_0=0} q^{Q(\bm,\bn)} 
\prod_{i=1}^k \left[\begin{matrix}m_i+p_i \\ m_i \end{matrix}\right]_q \label{nsum}
\end{eqnarray}
As we shall see below,
the sum $M_{\ell,\bn}^{(k)}(q)$ is identical to $M_{\lambda=\ell\omega_1,\bn}(q)$ of eq.\eqref{gramult} 
for $\g=\sl_2$, provided $k$ is large enough 
(and upon completing the sequence of $n_i$'s by zeros).
%
%[xxx insert reference to calculation here? ]

Note that the difference between the two sums is in the restriction of the sum  
\eqref{msum} to non-negative values of $p_i$. The unrestricted sum \eqref{nsum}, 
in contrast, includes terms which can be negative if $p_i<0$. We introduced the 
quantum $Q$-system in order to prove the graded version of the $M=N$ theorem 
from \cite{DFK}. In the $A_1$ case, we will show:
\begin{thm} \label{mequalsn} 
For any $\ell\geq0$ and $k\in\N$, $\bn\in \Z_+^k$,
$$
N_{\ell,\bn}^{(k)}(q) = M_{\ell,\bn}^{(k)}(q).
$$
\end{thm}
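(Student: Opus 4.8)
The plan is to follow the strategy of \cite{DFK} but in the quantum (non-commutative) setting, using the Laurent property of the quantum $Q$-system (Lemma \ref{lopol}) as the key structural input. The difference between $M^{(k)}_{\ell,\bn}(q)$ and $N^{(k)}_{\ell,\bn}(q)$ lies entirely in the restriction $p_i\geq 0$, so it suffices to show that the ``extra'' terms in $N^{(k)}_{\ell,\bn}(q)$ — those with some $p_i<0$ — cancel among themselves. The first step is to rewrite the $N$-sum as an evaluated multi-residue (constant-term) formula in solutions of the $A_1$ quantum $Q$-system. Concretely, I would introduce a generating function obtained by relaxing the constraints $q_0=0$ and (the absence of) $p_i\geq 0$, writing $N^{(k)}_{\ell,\bn}(q)$ as a constant term in auxiliary variables $z_1,\dots,z_k$ of a product of the form $\prod_\al(\qQ_{k}\qQ_{k+1}^{-1})^{\ell+1}\prod_i \qQ_i^{n_i}$ (suitably evaluated at $\qQ_0=1$), where the $z_i$ track the exponents $m_i$ via the commutation relation \eqref{comq}; the $q$-binomial generating function \eqref{defbin} is exactly what is produced by the geometric-type expansions coming from the quantum $Q$-system recursion \eqref{qqsys}. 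The delicate point, flagged already in the introduction, is that ``evaluation at $\qQ_{\beta,0}=1$'' and the notion of multi-residue must be interpreted correctly since $\qQ_{\beta,0}$ does not commute with the other $\qQ$'s; I would fix an ordering convention (say, all residues/constant-terms taken in a prescribed order matching the order of factors) and verify that with this convention the combinatorial $N$-sum \eqref{nsum} is reproduced term by term, with each $q^{Q(\bm,\bn)}$ arising from the accumulated powers of $t$ from the commutation relations.

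The second step is the quantum polynomiality lemma, the non-commutative analogue of the polynomiality result of \cite{DFK2}: I claim that any ordered product of solutions $\qQ_{\al,i}$ of the quantum $Q$-system, evaluated at $\qQ_{\al,0}=1$, can be rewritten as a \emph{polynomial} (not merely a Laurent polynomial) in the $\qQ_{\al,1}$. This follows from Lemma \ref{lopol}: expressing each $\qQ_{\al,i}$ as a non-commutative Laurent polynomial in the fundamental initial data $\by_0=(\qQ_{\al,0},\qQ_{\al,1})$ and then specializing $\qQ_{\al,0}\to 1$, the Laurent-polynomial structure degenerates — the negative powers can only be negative powers of $\qQ_{\al,0}$ (since in the initial seed $\qQ_{\al,1}$ appears only with non-negative powers, as one checks by induction using \eqref{qqsys} applied forward in $n$), which become $1$ upon evaluation. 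One has to be careful here because specialization of a non-commutative Laurent polynomial is not automatically well-defined; I would justify it by working inside the appropriate localization / skew-field of fractions, or more concretely by using the explicit path-model solution of the $A_1$ quantum $Q$-system from \cite{DFK10,DF11}, where the Laurent monomials in the initial data are completely explicit and the degeneration at $\qQ_{\al,0}=1$ can be read off directly.

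The third step closes the argument. Having written $N^{(k)}_{\ell,\bn}(q)$ as a constant term of a product that, by the polynomiality lemma, equals a constant term of a \emph{polynomial} in the $\qQ_{\al,1}$, I observe that this constant term is a finite sum with \emph{no} negative powers appearing, hence no terms with $p_i<0$ survive — which is precisely the statement that $N^{(k)}_{\ell,\bn}(q)=M^{(k)}_{\ell,\bn}(q)$. In the commutative case this is the content of the proof in \cite{DFK}; the $q$-deformed version goes through once one checks that the constant-term extraction commutes appropriately with the substitution "Laurent polynomial $\to$ polynomial," which is immediate since constant-term extraction is linear and the two expressions are equal as elements of the quantum $Q$-system algebra before any constant term is taken.

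\textbf{Main obstacle.} The principal difficulty is \emph{not} the combinatorics of the $q$-binomials — those are handled by the generating-function identity \eqref{defbin} essentially as in the classical case — but rather making precise and rigorous the non-commutative constant-term / evaluated-multi-residue operation: choosing the ordering conventions so that (i) the resulting scalar genuinely reproduces $N^{(k)}_{\ell,\bn}(q)$, and (ii) the quantum Laurent/polynomiality property can be transported through the evaluation $\qQ_{\al,0}\to 1$ without ill-definedness. I expect this is exactly why the paper reinterprets the evaluated multi-residue as a matrix element in an infinite-dimensional representation of the quantum $Q$-system algebra (the oscillator-type representation of Theorem \ref{oscirepone}): in such a representation the evaluation and the ordering become automatic and unambiguous, and the polynomiality statement becomes the concrete assertion that a matrix element of a product of $\qQ$'s equals a matrix element of a polynomial in the $\qQ_{\al,1}$'s. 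So in practice I would develop that representation first and phrase both the $M=N$ identity and its proof entirely in terms of matrix elements, which sidesteps the well-definedness issues at the cost of some representation-theoretic setup.
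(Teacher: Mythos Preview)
Your overall strategy --- generating-function/constant-term realization of the $N$-sum, plus a quantum polynomiality lemma coming from the Laurent property --- is the same as the paper's. But there is a real gap in your third step, and it is precisely the point where the paper does the actual work.

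The evaluated constant-term expression for $N^{(k)}_{\ell,\bn}$ is not the constant term of a product of non-negative powers of the $\qQ_i$'s: it involves the tail factor $(\qQ_k\qQ_{k+1}^{-1})^{\ell+1}$, which is a genuine power series in $\qQ_1^{-1}$. So the polynomiality lemma cannot be applied to the whole expression at once, and the sentence ``this constant term is a finite sum with no negative powers appearing, hence no terms with $p_i<0$ survive'' does not follow. What the paper does instead is prove a \emph{factorization} of the generating function,
\[
Z^{(k)}_{\ell;\bn}(\qQ_0,\qQ_1)=Z^{(j)}_{0;n_1,\dots,n_j}(\qQ_0,\qQ_1)\,Z^{(k-j)}_{\ell;n_{j+1},\dots,n_k}(\qQ_j,\qQ_{j+1}),
\]
obtained by explicitly summing the $m_1,\dots,m_j$ variables (this uses the quantum $Q$-system recursion and translational invariance). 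In the second factor the integer $q_j$ appears as an exponent, via $\qQ_{j+1}^{-q_j}\qQ_j^{q_{j+1}}$. One then runs a \emph{descending induction} on $j=k,k-1,\dots,1$: assuming the sum can be restricted to $q_{j+1},\dots,q_k\ge 0$, any term with $q_j<0$ contributes a product of \emph{non-negative} powers of $\qQ_1,\dots,\qQ_{j+1}$ to the full expression, and it is only \emph{that} product to which the polynomiality/vanishing lemma is applied. Without the factorization you have no way to isolate $q_j$ as an exponent, and without the induction you cannot peel the constraints off one at a time.

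Two smaller points. First, your justification of polynomiality (``negative powers can only be in $\qQ_{\al,0}$, by induction on \eqref{qqsys} forward in $n$'') is not how the paper argues, and as stated it is not obviously correct since $\qQ_{n+1}$ is defined using $\qQ_{n-1}^{-1}$; the paper instead expands the polynomial $\Pi_\nu$ in the \emph{other} initial seed $(\qQ_{-1},\qQ_0)$ and uses that $\qQ_{-1}=t^{-1}\qQ_1^{-1}(\qQ_0^2-1)$ vanishes upon evaluation at $\qQ_0=1$. Second, the representation-theoretic reformulation is not needed to make the constant term well-defined: the paper defines $\mathrm{CT}_{\qQ_1}$ and the evaluation $|_{\qQ_0=1}$ directly on normal-ordered Laurent series, proves $M=N$ with those definitions, and only afterwards reinterprets the result as a matrix element.
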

The proof is the subject of the following subsections. It consists of two steps: 
First, define a generating function in variables on the quantum torus, whose ``evaluated 
constant term", appropriately defined, is the $N$-sum. 
Then, observe that the evaluated constant term has no contributions from terms in the 
summation with values of $p_i$ which are strictly negative, for any $p_i$.

%The next sections are devoted to the proof of:
%\begin{thm}
%$$M_{\ell,\bn}^{(k)}(q)=N_{\ell,\bn}^{(k)}(q)$$
%for all $q\in\C^*$, $\ell\in\Z$ and $\bn\in(\Z_+)^k$, for all $k\in\Z_{>0}$.
%\end{thm}

\subsection{Generating function for $N$ and $M$-sums}

Let $\bm,\bn\in \Z_+^k$
and choose $\ell\in\Z$. Recall the definition of  $q_0(\ell,\bn,\bm)$ in the previous section. We define the integers
\begin{equation*}
\bq(\bm,\bn)=\bp+q_0.
\end{equation*}
That is,
$$
 q_j(\bm,\bn)= \ell +\sum_{i=j+1}^k (i-j)(2m_i-n_i)\quad (j=1,2,...,k).
$$
Note that $q_k=\ell\geq 0$, and by definition, $q_j\big|_{q_0=0}=p_j.$ 
For fixed $k$, $q_{j+1}(\bm,\bn) = q_{j}(\bm',\bn')$, where $\bm_j'=\bm_{j+1}$ and $\bn_j'=\bn_{j+1}$. Similarly, 
\begin{equation}\label{qtrans}
q_{j+p}(\bm,\bn)=q_{j}(\bm^{(p)},\bn^{(p)})\hbox{ where }\bm^{(p)}_i=m_{i+p},\ \bn^{(p)}_i=n_{i+p}.\ 
\end{equation}
In these last expressions, we only have $k-p$ non-zero variables $m_j,n_j$. That is, $m_{k+j}=n_{k+j}=0$ if $j>0$.

One may rewrite the  function $Q(\bm,\bn)$ in terms of $\bq$:
\begin{lemma}\label{compuqQ}
In terms of the integers $\{q_i\}$ the function $Q(\bm,\bn)$ can be expressed as
\begin{eqnarray*}
Q(\bm,\bn)&=&{1\over 4}\sum_{j=0}^{k-1}\left( (q_j-q_{j+1})^2-(\sum_{i=j+1}^k n_i)^2\right)\\
%&=& {1\over 4}\left(\sum_{j=1}^{k} (q_j-q_{j-1})^2- \bn^t A \bn\right).
\end{eqnarray*}
\end{lemma}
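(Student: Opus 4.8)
The plan is to verify the identity by a direct algebraic computation, starting from one of the closed forms already recorded for $Q(\bm,\bn)$ and converting everything to the variables $q_j$. The cleanest route uses the form
$$
Q(\bm,\bn) = \frac14 (2\bm-\bn)^t A(2\bm-\bn) - \frac14 \bn^t A\bn,
$$
and the relation $\bp = A(\bn-2\bm)$, i.e.\ $2\bm-\bn = -A^{-1}\bp$. Since $A_{i,j}=\min(i,j)$, the inverse $A^{-1}$ is the tridiagonal matrix with $2$ on the diagonal (except $A^{-1}_{k,k}=1$) and $-1$ on the off-diagonals; concretely $(A^{-1}\bp)_i = 2p_i - p_{i-1} - p_{i+1}$ for $1\le i<k$ and $(A^{-1}\bp)_k = p_k - p_{k-1}$, with the convention $p_0=0$. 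The first step is therefore to write $(2\bm-\bn)^t A (2\bm-\bn) = \bp^t A^{-1}\bp$ and expand the latter as a sum of squared differences: a standard telescoping identity gives $\bp^t A^{-1}\bp = \sum_{j=0}^{k-1}(p_j - p_{j+1})^2 + \text{(boundary terms)}$, where one must be careful with the endpoint $j=k$ because $A^{-1}_{k,k}=1$ rather than $2$. Setting $p_0=0$ and recalling $q_k=\ell$, $q_j = p_j + q_0$, I would check that $p_{j}-p_{j+1} = q_j - q_{j+1}$ for $0\le j\le k-1$ (the shift $q_0$ cancels in the difference), so that the first block $\frac14\sum_{j=0}^{k-1}(q_j-q_{j+1})^2$ emerges directly.

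The second step is to identify the remaining terms with $-\frac14\sum_{j=0}^{k-1}\big(\sum_{i=j+1}^k n_i\big)^2$. For this I would note from $p_j = \sum_{i=1}^k \min(i,j)(n_i - 2m_i)$ that the partial sums $\sum_{i=j+1}^k n_i$ are exactly the "missing" linear combinations, and that the term $-\frac14\bn^t A\bn$ together with the boundary contributions from $\bp^t A^{-1}\bp$ (and possibly an endpoint square $\frac14 q_k^2 = \frac14\ell^2$ that needs to be absorbed, using $q_k = \ell = \sum n_i\cdot 0 + \ell$ — here one checks that $\sum_{i=k+1}^k n_i = 0$ makes the $j=k$ term of the claimed sum vanish, consistent with the upper limit $k-1$) reorganize into the claimed squares of tails of $\bn$. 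Equivalently, and perhaps more transparently, one can expand $\bn^t A \bn = \sum_{j=0}^{k-1}\big(\sum_{i=j+1}^k n_i\big)^2$ directly from $A_{i,j}=\min(i,j)$ — this is again the same telescoping identity applied to $\bn$ — and then the whole computation reduces to matching the two telescoped sums term by term.

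The main obstacle I expect is bookkeeping at the boundary index $j=k$: the matrix $A^{-1}$ is not quite the "discrete Laplacian" because of the corner entry, so the naive telescoping $\bv^t A^{-1}\bv = \sum (v_j - v_{j+1})^2$ with $v_0 = 0$ needs an extra term $\frac14 v_k^2$ or an appropriate convention $v_{k+1}=0$, and one must confirm that these endpoint pieces from the $(2\bm-\bn)$-square and from the $\bn$-square cancel in exactly the right way to leave a sum running only up to $j=k-1$ with no leftover $\ell^2$ or cross terms. Once the endpoint conventions ($p_0 = 0$, empty sums equal to zero, $q_k=\ell$) are pinned down and the identity $p_j - p_{j+1} = q_j - q_{j+1}$ is used, the rest is a routine term-by-term comparison of two applications of the same telescoping formula, one to $\bp$ (equivalently $2\bm-\bn$) and one to $\bn$. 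I would present the proof as: (i) the telescoping identity $\bv^t A \bv = \sum_{j=0}^{k-1}(\sum_{i>j} v_i)^2$ and its inverse version; (ii) substitution and cancellation of boundary terms; (iii) the change of variables $\bp \leftrightarrow \bq$.
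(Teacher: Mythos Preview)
Your approach is correct and is essentially the same computation as the paper's, though organized a bit differently. The paper avoids the explicit inverse $A^{-1}$ by pairing $(2\bm-\bn)$ directly with $\bp$: using $p_i=q_i-q_0$ and the second-difference relation $2m_i-n_i=(q_{i-1}-q_i)-(q_i-q_{i+1})$, it writes $-(2\bm-\bn)^t\bp=\sum_i(2m_i-n_i)(q_0-q_i)$ and applies Abel summation to obtain $\sum_{i}(q_{i-1}-q_i)^2$ in one stroke, with the boundary handled by the convention $q_{k+1}=q_k$. Your route via $\bp^tA^{-1}\bp$ and the tridiagonal form of $A^{-1}$ is equivalent (indeed the identity $\bv^tA^{-1}\bv=\sum_{j=0}^{k-1}(v_j-v_{j+1})^2$ with $v_0=0$ holds exactly, so your worry about leftover boundary terms dissolves once you check the corner entry carefully); it just trades the one-line Abel summation for an explicit matrix computation. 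Both arguments then finish by the same telescoping identity $\bn^tA\bn=\sum_{j=0}^{k-1}\big(\sum_{i>j}n_i\big)^2$.
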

\begin{proof}
The following two properties of the integers $q_j$ are useful in what follows:
\begin{equation}\label{qrelation}
q_j-q_{j+1}=\sum_{i=j+1}^k (2m_i-n_i) \quad {\rm and}\quad q_{j-1}+q_{j+1}-2 q_j=2m_j-n_j, \ j\in [1,k]
\end{equation}
where $q_{k+1}=q_k$. Using the expression $Q(\bm,\bn) = \frac{1}{4} \left((2\bm - \bn)^t \bp - \bn^t A \bn\right)$, we have
\begin{eqnarray*}
-(2\bm-\bn)^t\bp&=&\sum_{i=1}^k (2m_i-n_i)(q_0-q_i)\\
&=&\sum_{i=1}^k((q_{i-1}-q_i)-(q_i-q_{i+1}))(q_0-q_i)\\
&=&
\sum_{i=1}^k (q_{i-1}-q_i)^2,
\end{eqnarray*}
by use of the Abel summation formula.
% The second summand
%$\frac{1}{4}\bn^t A\bn$ is obtained by setting $m_i=0$ for all $i$ in this formula.
\end{proof}

We will need to use the $A_1$ quantum $Q$-system.
For $\g=\sl_2$ there is only one root: Let 
 $\qQ_j:=\qQ_{1,j}$. The quantum $Q$-system simplifies in this case:
\begin{equation}\label{qaonesys}
t \, \qQ_{j+1}\qQ_{j-1}=\qQ_{j}^2-1\qquad (j \in \Z)
\end{equation}
with commutation relations
\begin{equation}\label{qcomaone}
\qQ_j \qQ_{j+1}=t \,\qQ_{j+1}\qQ_j \qquad (j\in\Z).
\end{equation}
%For $u,v$ two non-commuting variables, subject to 
%\begin{equation}\label{uvcomm} uv =t\, v u\end{equation}
We choose
\begin{equation}\label{choiceq}q=t^{-2}
\end{equation}
and define the following generating function in the non-commutative variables $\qQ_0,\qQ_1$:
\begin{equation}\label{Zdef}
Z_{\ell;\bn}^{(k)}(\qQ_0,\qQ_1)=\sum_{m_1,m_2,...,m_k\in\Z_+} \qQ_1^{-q_0} \, \qQ_0^{q_1} 
q^{{1\over 4}(q_1^2+\sum_{i=1}^{k-1}(q_i-q_{i+1})^2)}
\prod_{i=1}^k 
\left[\begin{matrix}m_i+q_i \\ m_i \end{matrix}\right]_q  .
\end{equation}
Note that the power of $q$ which appears in the series is 
${\overline{Q}}(\bm,\bn)=Q(\bm,\bn)+
{1\over 4}\bn^t A\bn$ when $q_0=0$ in terms of the quadratic form
of Lemma \ref{compuqQ}.

Let $u=q^{\frac{1}{4}}$ and let $\C_u=\C[u,u^{-1}]$. 
Define $\mathcal R=\C_{u}[\qQ_0^{\pm1}]((\qQ_1^{-1}))$ to be the
ring of formal Laurent series in $\qQ_1^{-1}$ with coefficients in
$\C_{u}[\qQ_0^{\pm1}]$. Then $Z_{\ell;\bn}^{(k)}(\qQ_0,\qQ_1)\in
\mathcal R$. (Note that the power $q^{1/4}$ is only an artifact here: 
All the important functions will be expressed in terms of integer powers of $q$.)

To relate the generating series $Z_{\ell;\bn}^{(k)}(\qQ_0,\qQ_1)$ to
the $M$ and $N$ sums of Theorem \ref{mequalsn}, we define two maps, which
are non-commutative analogs of ``the constant term in $\qQ_1$" and
``the evaluation at $\qQ_0=1$" of functions in $\mathcal R$.  Any element of $\mathcal R$ can be written in the
normal-ordered form $f(\qQ_0,\qQ_1)=\sum_{a,b\in \Z} f_{a,b} \qQ_0^a
\qQ_1^b$, where $f_{a,b}\in \C_u$. We make the following definitions:

\begin{defn}\label{ct}
The constant term in $\qQ_1$ of $f$ is defined to be
\begin{equation}\label{CTf}
{\rm CT}_{\qQ_1}(f(\qQ_0,\qQ_1))=\sum_{a} f_{a,0} \qQ_0^a\in \C_u[\qQ_0^{\pm
  1}]. 
\end{equation}
Here, the sum over $a$ has a finite number of non-zero terms.
\end{defn}
\begin{defn}\label{eval}
The evaluation of $f$ at $\qQ_0=1$ is
\begin{equation}\label{evalf}
f(\qQ_0,\qQ_1)\vert_{\qQ_0=1}=\sum_{b\in \Z}  \qQ_1^b\sum_{a}f_{a,b}.
\end{equation}
Again, the sum over $a$ is finite.
\end{defn}
%Moreover, for any Laurent series $f(\qQ_1)=\sum_{a\in\Z}f_a \qQ_1^a$ of $v$, we define the positive part to be the power series
%$f_+(\qQ_1)=\sum_{a\in\Z_+}f_a \qQ_1^a$.

The two operations commute:
$$
\begin{CD}
\mathcal R @>{\rm CT}_{\qQ_1}>>\C_{u}[\qQ_0^{\pm1}] \\
@V{|_{\qQ_0=1}}VV @VV{|_{\qQ_0=1}}V\\
\C_{u}((\qQ_1^{-1}))@>{\rm CT}_{\qQ_1}>> \mathcal R
\end{CD}
$$
Therefore, we can compose them below without reference to order.

\begin{remark}
Our definition of the evaluation at $\qQ_0=1$ is really a
``left evaluation", as the $\qQ_0$'s have to be taken to the left of
all $\qQ_1$'s before evaluating. However, if we both take the constant
term in $\qQ_1$ and evaluate at $\qQ_0=1$, a ``right evaluation" would
yield the same result, as we have:
$$CT_{\qQ_1}\left(\sum_{a,b} f_{a,b}\qQ_0^a \qQ_1^b\right)
=CT_{\qQ_1}\left(\sum_{a,b} f_{a,b}t^{a b} \qQ_1^b\qQ_0^a
  \right)= \sum_{a} f_{a,0}\qQ_0^a$$ 
as the factor
$t^{ab}$ may be replaced by 1 because $b=0$ in
the constant term. As before, the sum over $a$ is finite and the
result is in $\C_{q^{1\over 4}}[\qQ_0^{\pm 1}]$.
\end{remark}

\begin{lemma}
The $N$ sum of \eqref{nsum} can be expressed as:
\begin{equation}\label{nsumZ}
N_{\ell,\bn}^{(k)}(q^{-1})=q^{-{1\over 4}\bn\cdot A\bn}\, \left.CT_{\qQ_1}\left(Z_{\ell;\bn}^{(k)}(\qQ_0,\qQ_1)\right)\right|_{\qQ_0=1}.
\end{equation}
\end{lemma}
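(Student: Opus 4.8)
The plan is to compute the right-hand side of \eqref{nsumZ} directly from the definition \eqref{Zdef} of $Z_{\ell;\bn}^{(k)}$, showing that the two operations $CT_{\qQ_1}$ and $|_{\qQ_0=1}$ simply force $q_0=0$ in the sum and strip off the $q$-binomial-independent prefactor. First I would observe that in each monomial of \eqref{Zdef}, the non-commutative variables $\qQ_0,\qQ_1$ appear only through the factor $\qQ_1^{-q_0}\qQ_0^{q_1}$, with all remaining factors (the power of $q$ and the $q$-binomials) being scalars in $\C_u$; so the series is already written as a sum of terms of the shape (scalar)$\cdot\qQ_1^{-q_0}\qQ_0^{q_1}$. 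To bring this to the normal-ordered form $\sum f_{a,b}\qQ_0^a\qQ_1^b$ of Definition \ref{ct}, I would commute $\qQ_0^{q_1}$ past $\qQ_1^{-q_0}$ using \eqref{qcomaone}: $\qQ_1^{-q_0}\qQ_0^{q_1} = t^{-q_0 q_1}\qQ_0^{q_1}\qQ_1^{-q_0}$, picking up a scalar $t^{-q_0q_1}$.

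Next I would apply $CT_{\qQ_1}$: by Definition \ref{ct} this retains exactly the terms with $\qQ_1$-exponent zero, i.e. $-q_0=0$, hence $q_0=0$. On that locus $q_j=p_j$ for all $j$ by the identity $q_j|_{q_0=0}=p_j$ recorded after the definition of $\bq(\bm,\bn)$, the spurious commutation factor $t^{-q_0q_1}$ is trivial, and the remaining $\qQ_0$-power is $\qQ_0^{q_1}=\qQ_0^{p_1}$. Then evaluating at $\qQ_0=1$ via Definition \ref{eval} replaces $\qQ_0^{p_1}$ by $1$. What survives is the scalar sum $\sum_{m_1,\dots,m_k\in\Z_+,\,q_0=0} q^{\frac14(p_1^2+\sum_{i=1}^{k-1}(p_i-p_{i+1})^2)}\prod_i\qbin{m_i+p_i}{m_i}_q$. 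Comparing the exponent of $q$ with Lemma \ref{compuqQ}: when $q_0=0$ one has $\frac14(q_1^2+\sum_{i=1}^{k-1}(q_i-q_{i+1})^2)=\frac14\sum_{j=0}^{k-1}(q_j-q_{j+1})^2 = Q(\bm,\bn)+\frac14\bn^t A\bn$ (using $q_0-q_1 = -q_1$ when $q_0=0$, and the $(\sum_{i=j+1}^k n_i)^2$ terms reassembling into $\bn^tA\bn$ via $\sum_{j=0}^{k-1}(\sum_{i>j}n_i)^2$, which one checks equals $\bn^tA\bn$ since $[A]_{ab}=\min(a,b)=\#\{j\geq 1: j\leq a, j\leq b\}$ shifted appropriately). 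Hence multiplying by $q^{-\frac14\bn^tA\bn}$ produces exactly $\sum_{q_0=0} q^{Q(\bm,\bn)}\prod_i\qbin{m_i+p_i}{m_i}_q = N_{\ell,\bn}^{(k)}(q^{-1})$ as in \eqref{nsum}.

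The only genuinely delicate point — and the step I expect to be the main obstacle — is the bookkeeping of the commutation scalar and the verification that $Z_{\ell;\bn}^{(k)}$ really lies in $\mathcal R$ so that the normal-ordering and the two maps are well-defined: one must check that for fixed $\qQ_1$-degree only finitely many $\bm$ contribute a given $\qQ_0$-power (this is why $CT_{\qQ_1}$ and $|_{\qQ_0=1}$ yield finite sums over $a$), which follows because $q_0$ and $q_1$ together with the constraint that the $m_i$ are non-negative pin down, for fixed $(q_0,q_1)$ and fixed $k,\bn,\ell$, a finite set of $\bm$. Once convergence/finiteness is in hand, the identity is a matter of tracking the scalar $t^{-q_0q_1}$ through $CT_{\qQ_1}$ (where $q_0=0$ kills it, exactly as in the Remark preceding this Lemma) and matching the quadratic form via Lemma \ref{compuqQ}; none of this requires the $Q$-system relation \eqref{qaonesys} itself, only the commutation relation \eqref{qcomaone}.
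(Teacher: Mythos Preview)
Your proposal is correct and follows essentially the same approach as the paper's own proof, which is just a two-sentence observation: taking the constant term forces $q_0=0$, whereupon $q_i=p_i$ and the exponent of $q$ equals $Q(\bm,\bn)+\tfrac14\bn^tA\bn$ (the paper records this identity immediately after defining $Z$, so its proof simply cites it). Your write-up is more explicit --- you spell out the normal-ordering, the quadratic-form matching via Lemma~\ref{compuqQ}, and the finiteness needed for $Z\in\mathcal R$ --- but none of this is a different route. One inconsequential slip: the commutation relation $\qQ_0\qQ_1=t\,\qQ_1\qQ_0$ gives $\qQ_1^{-q_0}\qQ_0^{q_1}=t^{+q_0q_1}\qQ_0^{q_1}\qQ_1^{-q_0}$, not $t^{-q_0q_1}$; as you yourself note, this factor dies anyway once $q_0=0$.
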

\begin{proof}
The constant term ensures that $q_0=0$, therefore $q_i=p_i$ for the terms in the summation over $\bm$ which contribute to the constant term. Moreover, the quadratic forms ${\overline{ Q}}(\bm,\bn)-{1\over 4}\bn^tA\bn$ and
$Q(\bm,\bn)$ are identical when $q_0=0$, as remarked above.
\end{proof}

\subsection{The generating function in the case $k=1$}

%Let us first compute $Z_{\ell;n}^{(1)}(u,v)$ in the case $k=1$. 
We have the following identity involving $q$-binomial coefficients.
\begin{lemma}\label{powersum}
If we have two variables $x,y$ on the quantum torus, with  $y x=q \, x y$,  then
%then we may express
%the formal power series expansion:
$$ \sum_{a\in \Z_+} \left[\begin{matrix}a+b \\ a \end{matrix}\right]_q x^a y^b=
y^{-1}\big(y(1-x)^{-1}\big)^{b+1} \qquad (b\in\Z), $$
where the right hand side is considered as a formal power series in the variable $x$.
\end{lemma}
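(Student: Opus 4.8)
The plan is to prove Lemma \ref{powersum} by a direct computation, using the generating-function definition \eqref{defbin} of the $q$-binomial coefficients together with the $q$-commutation relation $yx = q\,xy$, and comparing both sides as formal power series in $x$. First I would move all occurrences of $y$ to the left: since $yx = q\,xy$, we have $x^a y^b = q^{-ab} y^b x^a$, so the left-hand side normal-orders to $\sum_{a\in\Z_+} \qbin{a+b}{a}_q q^{-ab} y^b x^a$. It therefore suffices to show that this equals $y^{-1}\bigl(y(1-x)^{-1}\bigr)^{b+1}$ after the same normal-ordering is applied to the right-hand side.

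Next I would normal-order the right-hand side. Write $(1-x)^{-1} = \sum_{j\geq 0} x^j$ as a formal power series. Then $y(1-x)^{-1}$ as an operator sends $x^m$-coefficients around, and iterating $b+1$ times I would compute $\bigl(y(1-x)^{-1}\bigr)^{b+1}$ by repeatedly using $y x^j = q^j x^j y$. Concretely, $y\,(1-x)^{-1} = \bigl(\sum_j q^j x^j\bigr) y = (1-qx)^{-1} y$, and more generally $y^s (1-x)^{-1} = (1 - q^s x)^{-1} y^s$. Telescoping the product $y(1-x)^{-1}\cdot y(1-x)^{-1}\cdots$ gives $\bigl(y(1-x)^{-1}\bigr)^{b+1} = \prod_{s=1}^{b+1}(1-q^s x)^{-1}\cdot y^{b+1}$, so that $y^{-1}\bigl(y(1-x)^{-1}\bigr)^{b+1} = \prod_{s=1}^{b+1}(1-q^s x)^{-1}\cdot y^{b}$. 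Hmm — I should be careful with the indexing: collecting the powers of $q$ correctly, the product of shifts will be $\prod_{s=0}^{b}(1-q^s x)^{-1}$ or $\prod_{s=1}^{b+1}(1-q^s x)^{-1}$, and I would pin down the exact range by checking the case $b=0$ (where both sides should be $(1-x)^{-1}$ after accounting for the $q^{-ab}=1$ factor) and the case $b=1$.

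Then I would reconcile this with the normal-ordered left-hand side. From \eqref{defbin}, $\sum_{a} \qbin{a+b}{a}_q x^a = \prod_{i=0}^{b}(1-q^i x)^{-1}$ for $b\geq 0$; but the left-hand side of the lemma, after normal-ordering, carries the extra factor $q^{-ab}$ inside the sum, which amounts to the substitution $x \mapsto q^{-b} x$ in that generating function. Under $x\mapsto q^{-b}x$, the product $\prod_{i=0}^{b}(1-q^i x)^{-1}$ becomes $\prod_{i=0}^{b}(1-q^{i-b}x)^{-1} = \prod_{s=-b}^{0}(1-q^{s}x)^{-1}$, and I would match this (after re-indexing) against the normal-ordered right-hand side, with the residual $y^b$ on the right agreeing with the $y^b$ extracted on the left. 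For $b<0$ the argument is the same using the second branch of \eqref{defbin} (a finite product), and one checks the formal-power-series identity $\prod_{i=0}^{-b-1}(1-q^{i+b+1}x)$ is the reciprocal of the length-$|b|$ product appearing on the right, again up to the $q^{-ab}$ shift.

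The only real subtlety — not so much an obstacle as a bookkeeping hazard — is getting the exponent arithmetic exactly right: keeping straight the $q^{-ab}$ from commuting $x^a$ past $y^b$, the cumulative $q$-shifts from pushing $y$'s through $(1-x)^{-1}$ in the iterated product, and matching the two index ranges so that the shift in $x$ on the left coincides with the shift produced on the right. I would guard against sign/range errors by verifying $b=0$ and $b=1$ explicitly before writing the general telescoping identity, and by noting that both sides are manifestly elements of $\C_u[[x]]\,y^{\Z}$ so that equality of normal-ordered coefficients of each $x^a y^b$ is exactly what must be checked.
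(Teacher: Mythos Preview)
Your approach is correct and is essentially the same as the paper's: both use the commutation relation $y(1-x)^{-1}=(1-qx)^{-1}y$ (and $y^{-1}(1-x)=(1-q^{-1}x)y^{-1}$ for $b<0$) to normal-order the right-hand side, then compare with the generating-function definition \eqref{defbin}. Your detour of first moving $y^b$ to the \emph{left} on the left-hand side is unnecessary, though: since the $q$-binomial coefficients are scalars, the left-hand side is already $\bigl(\prod_{i=0}^{b}(1-q^ix)^{-1}\bigr)\,y^b$ directly from \eqref{defbin}, with $y^b$ on the right, and the paper simply normal-orders the right-hand side to this same form in one telescoping step --- avoiding the $q^{-ab}$ shift and the $x\mapsto q^{-b}x$ substitution you introduce and then have to undo.
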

\begin{proof}
Use the definition \eqref{defbin} and the commutation relations:
$y(1-x)^{-1} = (1-qx)^{-1} y$ for $b\geq 0$ and $y^{-1}(1-x)=(1-q^{-1} x)y^{-1}$ for $b<0$.
\end{proof}

Recall that $q=t^{-2}$, and therefore $\qQ_0\qQ_1^{-2}= q \qQ_1^{-2} \qQ_0$.
Applying Lemma \ref{powersum} with $y=\qQ_0$ and $x=\qQ_1^{-2}$, we get:
\begin{eqnarray*}
Z_{\ell;n}^{(1)}(\qQ_0,\qQ_1)&=&
q^{{1\over 4}\ell^2}\qQ_1^{n-\ell} \left(\sum_{m\geq 0} (\qQ_1^{-2})^m 
\left[\begin{matrix}m+\ell \\ m \end{matrix}\right]_q\right) \qQ_0^\ell\\
&=& q^{{1\over 4}\ell^2}\qQ_1^{n-\ell}  \qQ_0^{-1}\big( \qQ_0 (1-\qQ_1^{-2})^{-1}\big)^{\ell+1}\\
&=& q^{-{1\over 4}\ell}(\qQ_1^{-1})^{n+1} \qQ_0^{-1} \big( \qQ_0 \qQ_1^{-1}(1-\qQ_1^{-2})^{-1}\big)^{\ell+1}
\end{eqnarray*}

The quantum $Q$-system \eqref{qaonesys} allows to identify: 
$$\qQ_1\qQ_2^{-1}=t\qQ_1\qQ_0(\qQ_1^2-1)^{-1}= \qQ_0 \qQ_1^{-1}(1-\qQ_1^{-2})^{-1}\ ,$$
and
we may therefore rewrite the above result as:
\begin{lemma}
\begin{equation}\label{kone}
Z_{\ell;n}^{(1)}(\qQ_0,\qQ_1)=
q^{-{1\over 4}\ell}\qQ_1^{n+1} \qQ_0^{-1} \big( \qQ_1\qQ_2^{-1}\big)^{\ell+1}
\end{equation}
where $\qQ_2$
is the solution of the quantum $Q$-system \eqref{qaonesys} with initial data $(\qQ_0,\qQ_1)$.
\end{lemma}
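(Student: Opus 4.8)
The plan is to start from the explicit normal-ordered form computed just above, namely
$$
Z_{\ell;n}^{(1)}(\qQ_0,\qQ_1)=q^{-{1\over 4}\ell}(\qQ_1^{-1})^{n+1} \qQ_0^{-1} \big( \qQ_0 \qQ_1^{-1}(1-\qQ_1^{-2})^{-1}\big)^{\ell+1},
$$
which is already a valid element of $\mathcal R$ since the inner factor $\qQ_0\qQ_1^{-1}(1-\qQ_1^{-2})^{-1}$ expands as a formal power series in $\qQ_1^{-1}$ with coefficients polynomial in $\qQ_0$. So the only thing left to prove is the substitution $\qQ_1\qQ_2^{-1}=\qQ_0\qQ_1^{-1}(1-\qQ_1^{-2})^{-1}$ inside the $(\ell+1)$-th power, together with the cosmetic rewriting $q^{-\ell/4}\qQ_1^{n+1}\qQ_0^{-1}=q^{-\ell/4}(\qQ_1^{-1})^{n+1}\qQ_0^{-1}$ versus $q^{-\ell/4}\qQ_1^{n+1}\qQ_0^{-1}$ — here I should double-check the sign of the exponent of $\qQ_1$ (the display writes $\qQ_1^{n+1}$ in \eqref{kone} but $(\qQ_1^{-1})^{n+1}$ in the line above; I will adopt whichever is consistent and flag it, most likely $(\qQ_1^{-1})^{n+1}$, i.e. $\qQ_1^{-n-1}$, as demanded by membership in $\mathcal R$).

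\textbf{Step 1: isolate the $\qQ_2$ identity.} From the quantum $Q$-system \eqref{qaonesys} at $j=1$ we have $t\,\qQ_2\qQ_0=\qQ_1^2-1$, hence $\qQ_2=t^{-1}(\qQ_1^2-1)\qQ_0^{-1}$, and therefore
$$
\qQ_2^{-1}=t\,\qQ_0(\qQ_1^2-1)^{-1}.
$$
Multiplying on the left by $\qQ_1$ gives $\qQ_1\qQ_2^{-1}=t\,\qQ_1\qQ_0(\qQ_1^2-1)^{-1}$. Now I commute: by \eqref{qcomaone}, $\qQ_1\qQ_0=t^{-1}\qQ_0\qQ_1$, so $\qQ_1\qQ_2^{-1}=\qQ_0\qQ_1(\qQ_1^2-1)^{-1}=\qQ_0\qQ_1^{-1}(1-\qQ_1^{-2})^{-1}$, where in the last step I factor $\qQ_1^2$ out of $(\qQ_1^2-1)$ on the right. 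This is exactly the inner bracket, so the substitution is legitimate as an identity in (a suitable localization of) the algebra, and in particular the $(\ell+1)$-th powers of the two expressions agree.

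\textbf{Step 2: check that the manipulation is valid in $\mathcal R$.} Since $\ell\geq 0$, $\big(\qQ_1\qQ_2^{-1}\big)^{\ell+1}$ is a finite product, and each factor, rewritten via Step 1 as $\qQ_0\qQ_1^{-1}(1-\qQ_1^{-2})^{-1}=\qQ_0\qQ_1^{-1}\sum_{j\geq 0}\qQ_1^{-2j}$, lies in $\mathcal R$; the leading term in $\qQ_1^{-1}$ is $\qQ_0\qQ_1^{-1}$, so the product has a well-defined normal-ordered expansion. Thus both sides of \eqref{kone} are genuine elements of $\mathcal R$ and the computation chaining the three displayed equalities (extract $\qQ_1^{n-\ell}$ and $\qQ_0^\ell$ to the outside, apply Lemma \ref{powersum}, then peel off $\ell+1$ factors of $\qQ_1^{-1}$ to shift $\qQ_1^{n-\ell}\qQ_0^{\ell}\leftrightarrow \qQ_1^{-n-1}\qQ_0^{-1}$ up to the recorded power of $q$) is justified term by term.

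\textbf{Main obstacle.} There is no deep obstacle; the proof is essentially the chain of equalities already displayed plus Step 1. The one genuinely error-prone point is bookkeeping of the scalar prefactor and of the exponent of $\qQ_1$: moving $\qQ_0^{\ell}$ past $\qQ_1^{n-\ell}$, and converting $\qQ_1^{n-\ell}\qQ_0^{-1}(\cdots)^{\ell+1}$ into the form $\qQ_1^{-n-1}\qQ_0^{-1}(\cdots)^{\ell+1}$ after pulling $\ell+1$ copies of $\qQ_1^{-1}$ to the left, each such commutation producing a power of $t$ (equivalently of $q^{1/4}$ via \eqref{choiceq}); I will collect these and verify they combine to the stated $q^{-\ell/4}$. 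I also want to confirm that $\qQ_1^2-1$ is invertible in the relevant completion, which it is because $1=\qQ_1^2\qQ_1^{-2}$ and $(\qQ_1^2-1)^{-1}=-\sum_{j\geq 0}\qQ_1^{-2-2j}\cdot(-1)$... more precisely $(\qQ_1^2-1)^{-1}=-(1-\qQ_1^2)^{-1}$ is not in $\mathcal R$, but $(\qQ_1^2-1)^{-1}=\qQ_1^{-2}(1-\qQ_1^{-2})^{-1}=\sum_{j\geq 1}\qQ_1^{-2j}$ is, which is the form actually used. With that caveat dispatched, \eqref{kone} follows.
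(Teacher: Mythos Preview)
Your approach is exactly the paper's: the sole content of the lemma beyond the preceding display is the identification $\qQ_1\qQ_2^{-1}=\qQ_0\qQ_1^{-1}(1-\qQ_1^{-2})^{-1}$, which you derive correctly from \eqref{qaonesys} and \eqref{qcomaone} just as the paper does.

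There is one genuine slip, however. You resolve the discrepancy between $\qQ_1^{n+1}$ (in \eqref{kone}) and $(\qQ_1^{-1})^{n+1}$ (in the line above it) in favour of the latter, on the grounds that membership in $\mathcal R$ requires it. This is wrong on both counts. First, $\mathcal R=\C_u[\qQ_0^{\pm1}]((\qQ_1^{-1}))$ is the ring of formal \emph{Laurent} series in $\qQ_1^{-1}$, so finitely many positive powers of $\qQ_1$ are allowed; indeed already the $m=0$ term of the defining sum contributes $\qQ_1^{n-\ell}$, which has positive $\qQ_1$-degree whenever $n>\ell$. Second, if you actually carry out the bookkeeping you flag under ``Main obstacle''---starting from the \emph{second} line $q^{\ell^2/4}\qQ_1^{n-\ell}\qQ_0^{-1}\big(\qQ_0(1-\qQ_1^{-2})^{-1}\big)^{\ell+1}$ and pushing $\ell+1$ copies of $\qQ_1^{-1}$ into the bracket using $\qQ_1^{-1}\qQ_0=t\,\qQ_0\qQ_1^{-1}$---you get exactly $q^{-\ell/4}\qQ_1^{n+1}\qQ_0^{-1}B^{\ell+1}$ with $B=\qQ_0\qQ_1^{-1}(1-\qQ_1^{-2})^{-1}$. (A quick sanity check: the $k=1$ case of Theorem~\ref{factothm} gives $q^{-n/2-\ell/4}\qQ_1\qQ_0^{-1}\qQ_1^{n}(\qQ_1\qQ_2^{-1})^{\ell+1}$, and commuting $\qQ_0^{-1}$ past $\qQ_1^{n}$ reproduces $q^{-\ell/4}\qQ_1^{n+1}\qQ_0^{-1}(\qQ_1\qQ_2^{-1})^{\ell+1}$.) So the lemma is correct as stated, and the $(\qQ_1^{-1})^{n+1}$ in the preceding display is a typo in the paper, not the other way around.
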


\subsection{Factorization of the generating function}
We have defined the generating function $Z_{\ell,\bn}^{(k)}$ so that it can be summed, and has a factorization formula: This is the property which allows us to prove the $M=N$ identity of Theorem \ref{mequalsn}.

To show this, we first show a recursion relation for the generating function.
\begin{lemma}\label{telescope}
\begin{equation}\label{telsone}
Z_{\ell,\bn}^{(k)}(\qQ_0,\qQ_1)=
q^{-{1\over 2}n_1} Q_1Q_0^{-1} Q_1^{n_1+1} Q_2^{-1} Z_{\ell;\bn'}^{(k-1)}(\qQ_1,\qQ_2)
\end{equation}
\end{lemma}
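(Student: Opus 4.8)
\textbf{Proof proposal for Lemma \ref{telescope}.}

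The plan is to separate the sum variable $m_1$ from the rest and to recognize the remaining sum as a shifted copy of $Z^{(k-1)}$. First I would use the translation relation \eqref{qtrans}, namely $q_{j+1}(\bm,\bn) = q_j(\bm^{(1)},\bn^{(1)})$ with $\bm^{(1)}_i = m_{i+1}$, $\bn^{(1)}_i = n_{i+1}$, to rewrite every integer $q_j$ with $j\ge 1$ appearing in $Z^{(k)}_{\ell,\bn}$ in terms of the shifted data $\bn' := \bn^{(1)}$ and the shifted summation variables $m_2,\dots,m_k$. The only piece of the summand of \eqref{Zdef} that genuinely involves $m_1$ is the $i=1$ binomial $\qbin{m_1+q_1}{m_1}_q$ together with the monomial $\qQ_1^{-q_0}\qQ_0^{q_1}$, since $q_0 = q_0$ and $q_1 = \ell + \sum_{i\ge 2}(i-1)(2m_i-n_i)$ both contain $m_1$ only through the relation $q_0 = q_1 + (2m_1 - n_1)$, i.e. $q_0 - q_1 = 2m_1 - n_1$. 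So I would substitute $q_0 = q_1 + 2m_1 - n_1$ and collect the $m_1$-dependence as $\qQ_1^{-q_1}\qQ_1^{n_1}\,(\qQ_1^{-2})^{m_1}\,\qQ_0^{q_1}\,\qbin{m_1+q_1}{m_1}_q$, while also extracting from the $q$-power $\tfrac14\big(q_1^2 + \sum_{i=1}^{k-1}(q_i-q_{i+1})^2\big)$ the terms that survive under the shift versus those that must be carried along (using $q_0-q_1 = 2m_1-n_1$ once more to convert the extra square; this is where a short computation with the quadratic form of Lemma \ref{compuqQ} is needed).

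Next I would apply Lemma \ref{powersum} with $x = \qQ_1^{-2}$ and $y = \qQ_0$ — which is legitimate since $\qQ_0\qQ_1^{-2} = q\,\qQ_1^{-2}\qQ_0$ by \eqref{qcomaone} and \eqref{choiceq} — to sum $\sum_{m_1\ge 0}(\qQ_1^{-2})^{m_1}\qbin{m_1+q_1}{m_1}_q\,\qQ_0^{q_1}$ into a closed factor of the form $\qQ_0^{-1}(\qQ_0(1-\qQ_1^{-2})^{-1})^{q_1+1}$, exactly as in the $k=1$ computation preceding Lemma with equation \eqref{kone}. Then I would invoke the $A_1$ quantum $Q$-system \eqref{qaonesys} in the identity $\qQ_0\qQ_1^{-1}(1-\qQ_1^{-2})^{-1} = \qQ_1\qQ_2^{-1}$ already established above, turning the closed factor into $\qQ_0^{-1}\qQ_1^{-1}(\qQ_1\qQ_2^{-1})^{q_1+1}$, up to an explicit power of $q$. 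The remaining sum over $m_2,\dots,m_k$, after the shift, is by construction $Z^{(k-1)}_{\ell;\bn'}(\qQ_1,\qQ_2)$ — here one uses that $\{\qQ_1,\qQ_2\}$ is a valid initial data set with the same $A_1$ commutation relation $\qQ_1\qQ_2 = t\,\qQ_2\qQ_1$ (Lemma \ref{comlem}), so the definition \eqref{Zdef} applies verbatim with $(\qQ_0,\qQ_1)$ replaced by $(\qQ_1,\qQ_2)$. Collecting the monomial prefactors and the leftover $q$-powers gives the stated formula $q^{-\frac12 n_1}\,\qQ_1\qQ_0^{-1}\qQ_1^{n_1+1}\qQ_2^{-1}\,Z^{(k-1)}_{\ell;\bn'}(\qQ_1,\qQ_2)$.

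The main obstacle I anticipate is the careful bookkeeping of non-commutative monomial orderings: every time an $m_1$-dependent power of $\qQ_1$ is moved past $\qQ_0$ (or past the closed-form factor in $\qQ_0,\qQ_1$) it produces a power of $t = q^{-1/2}$, and these must be matched against the $q^{1/4}$-powers pulled out of the quadratic form so that the final prefactor is exactly $q^{-\frac12 n_1}$ with the monomial $\qQ_1\qQ_0^{-1}\qQ_1^{n_1+1}\qQ_2^{-1}$ in precisely that order. I would organize this by first doing the whole manipulation at the level of the $k=1$ identity \eqref{kone} with $\ell$ replaced by $q_1$ (treating $q_1$ as a formal exponent and $n$ replaced by $n_1$), which isolates the $m_1$-sum cleanly, and only afterwards reinstating the dependence of $q_1$ on the shifted variables and recognizing the residual sum as $Z^{(k-1)}$. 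A secondary point to check is that the split of the Gaussian exponent $\tfrac14(q_1^2 + \sum_{i=1}^{k-1}(q_i-q_{i+1})^2)$ into "the $k=1$ part in $q_1$" plus "the shifted $Z^{(k-1)}$ part in $q_2,\dots$" is exact, with no cross term left over; this follows because $q_i - q_{i+1}$ for $i\ge 1$ depends only on $m_2,\dots,m_k$ and $n_2,\dots,n_k$ after using \eqref{qtrans}, and the single term $(q_1 - q_2)^2$ together with $q_1^2$ reassembles correctly under the shift.
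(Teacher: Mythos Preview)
Your approach is the paper's: isolate the $m_1$-sum, apply Lemma~\ref{powersum} with $x=\qQ_1^{-2}$, $y=\qQ_0$, convert the closed factor via the quantum $Q$-system identity $\qQ_0\qQ_1^{-1}(1-\qQ_1^{-2})^{-1}=\qQ_1\qQ_2^{-1}$, and recognize the leftover sum as $Z^{(k-1)}_{\ell;\bn'}(\qQ_1,\qQ_2)$. There is, however, one concrete error that would derail the final identification. You write $q_0-q_1=2m_1-n_1$, but this is false: by the first relation in \eqref{qrelation}, $q_0-q_1=\sum_{i\ge 1}(2m_i-n_i)$. The correct identity, which the paper uses as \eqref{qrec}, is the second-difference relation $q_0=2q_1-q_2+2m_1-n_1$. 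Consequently the $m_1$-independent part of $\qQ_1^{-q_0}$ is $\qQ_1^{n_1+q_2-2q_1}$, not $\qQ_1^{n_1-q_1}$.

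This matters at the very end: after summing $m_1$ and using the $Q$-system, the paper obtains
\[
q^{\frac{q_1(q_1-q_2)}{2}}\,\qQ_1^{n_1+q_2-2q_1}\,\qQ_0^{-1}\bigl(\qQ_0(1-\qQ_1^{-2})^{-1}\bigr)^{q_1+1}
= q^{-\frac{n_1}{2}}\,\qQ_1\qQ_0^{-1}\,\qQ_1^{n_1+1}\,\qQ_2^{-q_1-1}\qQ_1^{q_2},
\]
and the trailing $\qQ_2^{-q_1}\qQ_1^{q_2}$ is exactly the leading monomial $\qQ_2^{-q_0'}\qQ_1^{q_1'}$ of $Z^{(k-1)}_{\ell;\bn'}(\qQ_1,\qQ_2)$ under the shift $q_0'=q_1$, $q_1'=q_2$. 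With your relation the factor $\qQ_1^{q_2}$ is missing and the match with $Z^{(k-1)}$ fails. Once you replace $q_0=q_1+2m_1-n_1$ by $q_0=2q_1-q_2+2m_1-n_1$ throughout, your sketch is the paper's proof verbatim, and the prefactor bookkeeping you flag as the main obstacle resolves to $q^{-n_1/2}$ as stated.
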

\begin{proof}
From \eqref{qrelation},
\begin{equation}
\label{qrec} q_0 =2q_1-q_2 +2m_1-n_1 
\end{equation}
The summation over $m_1$ in the generating function \eqref{Zdef} can be performed explicitly, using the calcuation for the case $k=1$ leading to Lemma \ref{kone}, because none of the integers $q_i(\bm,\bn)$ in the binomial coefficients depend on it.
\begin{eqnarray*}
Z_{\ell;\bn}^{(k)}(\qQ_0,\qQ_1)&=&\sum_{\bm'\in\Z_+^{k-1}} 
q^{{1\over 4}(q_2^2+\sum_{i=2}^{k-1}(q_i-q_{i+1})^2)}
\prod_{i=2}^k \left[\begin{matrix}m_i+q_i \\ m_i \end{matrix}\right]_q  \times\\
&&\times\, 
q^{q_1(q_1-q_2)\over 2}
\qQ_1^{n_1+q_2-2q_1} \qQ_0^{-1}\big( \qQ_0(1-\qQ_1^{-2})^{-1}\big)^{q_1+1}.
\end{eqnarray*}
Using the quantum $Q$-system, the last term can be rewritten as
$$
q^{q_1(q_1-q_2)\over 2}
\qQ_1^{n_1+q_2-2q_1} \qQ_0^{-1}\big( \qQ_0(1-\qQ_1^{-2})^{-1}\big)^{q_1+1}=
q^{-{n_1\over 2}}\qQ_1\qQ_0^{-1}\qQ_1^{n_1+1}\, \qQ_2^{-q_1-1} \qQ_1^{q_2}.$$
One can now identify the summation over the remaining variables as the generating function with $k$ replaced by $k-1$, $\bm,\bn$ replaced by $\bm',\bn'$ (recall that that $q_{i+1}(\bm,\bn)=q_i(\bm',\bn')$) and the arguments $\qQ_0,\qQ_1$ are replaced by $\qQ_1,\qQ_2$. The Lemma follows.
\end{proof}

The quantum $Q$-system solutions have a translational invariance property as in the commutative case.
We make explicit reference to the initial conditions by denoting $\qQ_n(a,b)$ the solution
of the $Q$-system with initial data $\qQ_0=a$ and $\qQ_1=b$.

\begin{lemma} \label{translat}
The solution $\qQ_n(\qQ_0,\qQ_1)$ of the quantum $A_1$ $Q$-system \eqref{qaonesys}
satisfies the following translational invariance property:
$$ \qQ_n\big(\qQ_j,\qQ_{j+1}\big)=\qQ_{n+j}(\qQ_0,\qQ_1) 
 \qquad (n,j\in \Z_+).$$
\end{lemma}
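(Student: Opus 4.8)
The plan is to use the defining recursion \eqref{qaonesys}, the commutation relations \eqref{commutq} from Lemma \ref{comlem}, and the Laurent property of Lemma \ref{lopol}. The key observation is that a solution of the quantum $A_1$ $Q$-system is uniquely determined by any two consecutive entries $(\qQ_m,\qQ_{m+1})$: rewriting \eqref{qaonesys} as $\qQ_{m+1}=t^{-1}(\qQ_m^2-1)\qQ_{m-1}^{-1}$ and $\qQ_{m-1}=t^{-1}(\qQ_m^2-1)\qQ_{m+1}^{-1}$, one propagates the sequence forward and backward, which is legitimate because by Lemma \ref{lopol} every $\qQ_m$ is a non-commutative Laurent polynomial in $\qQ_0,\qQ_1$, hence a unit in the relevant skew-field of fractions. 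So ``the solution with initial data $(a,b)$'' is a well-defined object as soon as the pair $(a,b)$ $q$-commutes in the right way.

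First I would check that $(\qQ_j,\qQ_{j+1})$ is legitimate initial data for \eqref{qaonesys}: by Lemma \ref{comlem} specialized to $\g=\sl_2$ (where $\lambda_{1,1}=1$) one has $\qQ_j\qQ_{j+1}=t\,\qQ_{j+1}\qQ_j$, which is exactly \eqref{qcomaone}. Hence $\qQ_n(\qQ_j,\qQ_{j+1})$ makes sense. Next, set $R_n:=\qQ_{n+j}(\qQ_0,\qQ_1)$ for $n\in\Z$. Then $R_0=\qQ_j$, $R_1=\qQ_{j+1}$, and for every $n$ one has $t\,R_{n+1}R_{n-1}=t\,\qQ_{n+j+1}\qQ_{n+j-1}=\qQ_{n+j}^2-1=R_n^2-1$ by \eqref{qaonesys} at index $n+j$, and likewise $R_nR_{n+1}=t\,R_{n+1}R_n$ by Lemma \ref{comlem}. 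Thus $(R_n)$ is a solution of the quantum $A_1$ $Q$-system with the same initial data $(\qQ_j,\qQ_{j+1})$, and by the uniqueness just discussed $R_n=\qQ_n(\qQ_j,\qQ_{j+1})$; restricting to $n,j\in\Z_+$ gives the claim.

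The argument is essentially formal once these two inputs are available, so the only real obstacle is the bookkeeping that justifies them: invoking Lemma \ref{lopol} to guarantee invertibility of the $\qQ_m$ (so the recursion can be run in both directions and ``the solution with given initial data'' is unambiguous), and invoking Lemma \ref{comlem} to see that the shifted pair carries the correct $t$-commutation. Equivalently, the whole proof can be packaged by saying that $\qQ_0\mapsto\qQ_j$, $\qQ_1\mapsto\qQ_{j+1}$ extends to an algebra homomorphism of the localized rank-$2$ quantum torus — well-defined precisely by Lemma \ref{comlem} — under which the universal solution $\qQ_n(\qQ_0,\qQ_1)$ is sent to $\qQ_{n+j}(\qQ_0,\qQ_1)$, since the image sequence satisfies the defining recursion with the required initial values.
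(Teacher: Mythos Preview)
Your proof is correct and follows essentially the same route as the paper's: both arguments rest on the observation that the shifted sequence $R_n=\qQ_{n+j}(\qQ_0,\qQ_1)$ satisfies the quantum $Q$-system with initial values $(\qQ_j,\qQ_{j+1})$, hence coincides with $\qQ_n(\qQ_j,\qQ_{j+1})$ by uniqueness of the recursion. The paper packages this as a short induction on $n$ using $\qQ_n(a,b)=\qQ_2\big(\qQ_{n-2}(a,b),\qQ_{n-1}(a,b)\big)$, whereas you verify the recursion for $R_n$ directly and invoke uniqueness; you are also more explicit than the paper about the bookkeeping (checking via Lemma~\ref{comlem} that $(\qQ_j,\qQ_{j+1})$ carries the correct $t$-commutation, and via Lemma~\ref{lopol} that the $\qQ_m$ are invertible so the recursion determines the solution uniquely).
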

\begin{proof}
The Lemma is true by definition for $n=2$, which is just the definition of the quantum $Q$-system. Suppose it is true for $m<n$. Then 
\begin{eqnarray*}
\qQ_n(\qQ_j,\qQ_{j+1})&=&\qQ_2(\qQ_{n-2}(\qQ_j,\qQ_{j+1}),\qQ_{n-1}(\qQ_j,\qQ_{j+1}))\\
& =& \qQ_2(\qQ_{n+j-2},\qQ_{n+j-1}) \\
&=& \qQ_{n+j}(\qQ_0,\qQ_1).
\end{eqnarray*}The Lemma follows by induction.
\end{proof}
%For any fixed $j\in\Z_+$, we introduce the automorphism $\sigma$ such that
%$\sigma(\qQ_0)=\qQ_j$, $\sigma(\qQ_1)=\qQ_{j+1}$ and $\sigma(t)=t$. Let $\qR_n=\sigma(\qQ_n)$. Then
%$\qR_n$ satisfies the quantum $A_1$ $Q$-system relation for all $n\in \Z$. 
%Moreover,
%$$\qR_n=\sigma\left( \qQ_n(\qQ_0,\qQ_1) \right) =\qQ_n(\sigma(\qQ_0),\sigma(\qQ_1))=\qQ_n(\qQ_j,\qQ_{j+1})$$
%as, by Lemma \ref{lopol},  
%$\qQ_n$ is a Laurent polynomial of $(\qQ_0,\qQ_1)$ with coefficients in $\Z[t,t^{-1}]$.
%Finally, using the $Q$-system relation as a recursion relation on $n$,
%we immediately obtain by induction on $n$ that $\qR_n=\qQ_{n+j}$ for all $n\in\Z_+$
%and the Lemma follows.
%\end{proof}

Translational invariance together with the recursion \eqref{telsone} imply a complete factorization property:
\begin{thm}\label{factothm}
\begin{equation}\label{factotum}
Z_{\ell;\bn}^{(k)}(\qQ_0,\qQ_1)
=q^{-{1\over 2}\sum_{i=1}^k n_i-{1\over 4}\ell}\, 
\qQ_1\qQ_0^{-1}\left(\prod_{i=1}^k \qQ_i^{n_i}\right) (\qQ_k\qQ_{k+1}^{-1})^{\ell+1}
\end{equation}
where $\qQ_j$, $j\geq 0$ are the solutions of the quantum $A_1$ $Q$-system \eqref{qaonesys}
with initial data $(\qQ_0,\qQ_1)$.
\end{thm}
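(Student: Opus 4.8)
The plan is to induct on $k$, using the recursion of Lemma \ref{telescope} as the inductive step and the closed form of Lemma \ref{kone} (the case $k=1$) as the base case. The only nontrivial point is that the recursion \eqref{telsone} expresses $Z_{\ell,\bn}^{(k)}$ in terms of $Z_{\ell;\bn'}^{(k-1)}(\qQ_1,\qQ_2)$, i.e. the generating function of rank $k-1$ evaluated not at the canonical initial data $(\qQ_0,\qQ_1)$ but at the shifted pair $(\qQ_1,\qQ_2)$; so the induction must be run on the version of the statement with arbitrary admissible initial data, and translational invariance (Lemma \ref{translat}) is what lets us reconcile the two.

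First I would state the inductive hypothesis in the shifted form: for all $j\geq 0$,
$$
Z_{\ell;\bn}^{(k-1)}(\qQ_j,\qQ_{j+1})
= q^{-{1\over 2}\sum_{i=1}^{k-1} n_i-{1\over 4}\ell}\,
\qQ_{j+1}\qQ_j^{-1}\left(\prod_{i=1}^{k-1}\qQ_{j+i}^{n_i}\right)(\qQ_{j+k-1}\qQ_{j+k}^{-1})^{\ell+1},
$$
where on the right $\qQ_m$ denotes the solution of \eqref{qaonesys} with initial data $(\qQ_0,\qQ_1)$; this follows from Theorem \ref{factothm} for rank $k-1$ together with Lemma \ref{translat}, which identifies $\qQ_n(\qQ_j,\qQ_{j+1})=\qQ_{n+j}(\qQ_0,\qQ_1)$. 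Next I substitute this into \eqref{telsone} with $j=1$, producing
$$
Z_{\ell,\bn}^{(k)}(\qQ_0,\qQ_1)=q^{-{1\over 2}n_1}\,\qQ_1\qQ_0^{-1}\qQ_1^{n_1+1}\qQ_2^{-1}\cdot
q^{-{1\over 2}\sum_{i=2}^{k} n_i-{1\over 4}\ell}\,\qQ_2\qQ_1^{-1}\left(\prod_{i=2}^{k}\qQ_{i}^{n_i}\right)(\qQ_{k}\qQ_{k+1}^{-1})^{\ell+1},
$$
after relabeling the rank-$(k-1)$ data $\bn'=(n_2,\dots,n_k)$ back to indices $2,\dots,k$. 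The prefactors combine to $q^{-{1\over2}\sum_{i=1}^k n_i-{1\over4}\ell}$ immediately.

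The only remaining task is to collapse the middle word $\qQ_1^{n_1+1}\qQ_2^{-1}\qQ_2\qQ_1^{-1}=\qQ_1^{n_1}$, which is a trivial cancellation of the adjacent $\qQ_2^{-1}\qQ_2$, leaving $\qQ_1\qQ_0^{-1}\,\qQ_1^{n_1}\left(\prod_{i=2}^k\qQ_i^{n_i}\right)(\qQ_k\qQ_{k+1}^{-1})^{\ell+1}$, which is exactly \eqref{factotum}. I should double-check that no hidden $t$-powers are generated: the variables $\qQ_1,\qQ_2$ that meet at the cancellation are literally adjacent in the normal-ordered expression coming out of \eqref{telsone}, so there is no commutation to perform there, and all the $q$-monomial bookkeeping has already been absorbed into the scalar prefactors of Lemma \ref{telescope} and of the inductive hypothesis. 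The main (and really only) obstacle is organizational: making sure the induction is phrased with shifted initial data so that Lemma \ref{translat} applies cleanly, and keeping the index relabeling $q_{i+1}(\bm,\bn)=q_i(\bm',\bn')$ consistent between \eqref{telsone} and the rank-$(k-1)$ instance of the theorem. Once that is set up, the computation is a one-line telescoping.
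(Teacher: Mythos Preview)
Your proof is correct and follows the same approach as the paper: induction on $k$ with base case Lemma \ref{kone}, inductive step via Lemma \ref{telescope}, and translational invariance (Lemma \ref{translat}) to interpret $Z_{\ell;\bn'}^{(k-1)}(\qQ_1,\qQ_2)$ in terms of the original $\qQ_j$'s. Your observation that the cancellation $\qQ_1^{n_1+1}\qQ_2^{-1}\cdot\qQ_2\qQ_1^{-1}=\qQ_1^{n_1}$ is adjacent and requires no commutation is accurate; the paper's phrase ``rearranging the factors by use of the commutation relations'' is in fact not needed here, so your version is slightly cleaner.
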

\begin{proof}
By induction on $k$. The Theorem holds for $k=1$ by explicit calcuation, Equation \eqref{kone}.
Suppose the theorem holds for $k-1$. Using Equation \eqref{telsone}, we have
\begin{eqnarray*}
Z_{\ell;\bn'}^{(k-1)}(\qQ_1,\qQ_2)&=&q^{-{1\over 2}\sum_{i=2}^k n_i-{1\over 4}\ell}
\left\{ \qQ_1\qQ_0^{-1}\left(\prod_{i=1}^{k-1} \qQ_i^{n_{i+1}}\right) 
(\qQ_{k-1}\qQ_{k}^{-1})^{\ell+1}\right\}_{\qQ_0\mapsto \qQ_1\atop \qQ_1\mapsto \qQ_2}\\
&=& q^{-{1\over 2}\sum_{i=2}^k n_i-{1\over 4}\ell}
\qQ_2\qQ_1^{-1}\left(\prod_{i=2}^k \qQ_{i}^{n_i}\right) (\qQ_k\qQ_{k+1}^{-1})^{\ell+1}
\end{eqnarray*}
where we have used Lemma \ref{translat} with $j=1$ in the second line
% (the substitution of initial data 
%$(\qQ_0,\qQ_1)\to (\qQ_1,\qQ_2)$ induces the change $\qQ_n\to \qQ_{n+1}$ for all $n\geq 0$). 
Substituting this into Equation \eqref{telsone},
and rearranging the factors by use of the commutation relations \eqref{qcomaone},  yields the theorem. 
\end{proof}

Theorem \ref{factothm} is useful as it allows us to write a factorization of the generating function in the following form:
\begin{cor}\label{factocor}
For any $j\in[1,k]$, we have:
\begin{equation}\label{factorbytwo}
Z_{\ell;n_1,...,n_k}^{(k)}(\qQ_0,\qQ_1)
=Z_{0;n_1,...,n_j}^{(j)}(\qQ_0,\qQ_1) \, Z_{\ell;n_{j+1},...,n_k}^{(k-j)}(\qQ_j,\qQ_{j+1}).
\end{equation}
\end{cor}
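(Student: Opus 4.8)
The plan is to derive Corollary~\ref{factocor} directly from the complete factorization formula \eqref{factotum} of Theorem~\ref{factothm}, applied three times: once to the left-hand side with parameter $k$, and once each to the two factors on the right-hand side with parameters $j$ and $k-j$. The key observation is that all three applications produce products of the \emph{same} solutions $\qQ_0,\qQ_1,\qQ_2,\dots$ of the quantum $A_1$ $Q$-system \eqref{qaonesys} with the same initial data $(\qQ_0,\qQ_1)$, thanks to the translational invariance of Lemma~\ref{translat}: the solution $\qQ_n(\qQ_j,\qQ_{j+1})$ appearing when we expand $Z^{(k-j)}_{\ell;n_{j+1},\dots,n_k}(\qQ_j,\qQ_{j+1})$ is just $\qQ_{n+j}(\qQ_0,\qQ_1)$, so after re-indexing the product $\prod_{i=1}^{k-j}\qQ_{i}^{n_{i+j}}$ becomes $\prod_{i=j+1}^{k}\qQ_i^{n_i}$, and the tail factor $(\qQ_{k-j}\qQ_{k-j+1}^{-1})^{\ell+1}$ becomes $(\qQ_k\qQ_{k+1}^{-1})^{\ell+1}$.

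Concretely, first I would write out, using \eqref{factotum},
$$
Z_{0;n_1,\dots,n_j}^{(j)}(\qQ_0,\qQ_1)=q^{-{1\over2}\sum_{i=1}^j n_i}\,\qQ_1\qQ_0^{-1}\left(\prod_{i=1}^j\qQ_i^{n_i}\right)\qQ_j\qQ_{j+1}^{-1},
$$
where the exponent of $q$ uses $\ell=0$ so the $-{1\over4}\ell$ term drops and the tail exponent $\ell+1$ becomes $1$. Then I would write
$$
Z_{\ell;n_{j+1},\dots,n_k}^{(k-j)}(\qQ_j,\qQ_{j+1})=q^{-{1\over2}\sum_{i=j+1}^k n_i-{1\over4}\ell}\,\qQ_{j+1}\qQ_j^{-1}\left(\prod_{i=j+1}^k\qQ_i^{n_i}\right)(\qQ_k\qQ_{k+1}^{-1})^{\ell+1},
$$
where I have applied \eqref{factotum} with $k\mapsto k-j$ and initial data $(\qQ_j,\qQ_{j+1})$, then used Lemma~\ref{translat} to re-express every $\qQ_n(\qQ_j,\qQ_{j+1})$ as $\qQ_{n+j}(\qQ_0,\qQ_1)$. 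Multiplying the two expressions, the prefactors combine to $q^{-{1\over2}\sum_{i=1}^k n_i-{1\over4}\ell}$, and the middle factor $\qQ_j\qQ_{j+1}^{-1}\cdot\qQ_{j+1}\qQ_j^{-1}$ telescopes to $1$, leaving exactly the right-hand side of \eqref{factotum} with parameter $k$, i.e.\ $Z_{\ell;n_1,\dots,n_k}^{(k)}(\qQ_0,\qQ_1)$.

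The one point requiring a little care — and the main (mild) obstacle — is the bookkeeping of the $q$-power prefactors and, more importantly, checking that no stray powers of $t$ are generated when the two factored expressions are juxtaposed. In \eqref{factotum} the monomials are already written in a fixed order (namely $\qQ_1\qQ_0^{-1}$, then the increasing product $\prod\qQ_i^{n_i}$, then the tail), and the concatenation of the two right-hand sides is \emph{already} in that same canonical order because the first factor ends with $\qQ_j\qQ_{j+1}^{-1}$ and the second begins with $\qQ_{j+1}\qQ_j^{-1}$, which cancel without any commutation being invoked. Hence no commutation relations \eqref{qcomaone} are needed and no extra $t$-powers appear; the identity is a pure rewriting. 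I would close by remarking that this is the $A_1$ precursor of the general telescoping/factorization structure that makes the evaluated constant term insensitive to the sign of the $p_i$'s, which is the mechanism behind Theorem~\ref{mequalsn}.
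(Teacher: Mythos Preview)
Your proof is correct and is essentially the same as the paper's: both apply Theorem~\ref{factothm} together with Lemma~\ref{translat} and observe that the junction $\qQ_j\qQ_{j+1}^{-1}\cdot\qQ_{j+1}\qQ_j^{-1}$ collapses to $1$ without invoking any commutation. The only cosmetic difference is direction---the paper splits the factored form of $Z^{(k)}$ into two pieces and identifies each with a $Z$, whereas you expand each factor on the right and reassemble $Z^{(k)}$.
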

\begin{proof}
We rewrite the factorization in the form 
\begin{eqnarray*} 
q^{-{1\over 2}\sum_{i=1}^k n_i -{1\over 4}\ell}&&\!\!\!\!\!\!\!\!\!\!\!\!
\qQ_1\qQ_0^{-1}\Big(\prod_{i=1}^k \qQ_i^{n_i}\Big)(\qQ_k\qQ_{k+1}^{-1})^{\ell+1}
=\left(q^{-{1\over 2}\sum_{i=1}^j n_i}\qQ_1\qQ_0^{-1}\Big(\prod_{i=1}^j \qQ_i^{n_i} \Big)
\qQ_{j}\qQ_{j+1}^{-1}\right)\\
&\times&
\left(q^{-{1\over 2}\sum_{i=j+1}^k n_i -{1\over 4}\ell}
\qQ_{j+1}\qQ_j^{-1}\, \Big(\prod_{i=j+1}^k\qQ_i^{n_i}\Big)(\qQ_k\qQ_{k+1}^{-1})^{\ell+1}\right) 
\end{eqnarray*}
and use Lemma \ref{translat} to rewrite the second factor 
as $Z_{\ell;\bn^{(j)}}^{(k-j)}(\qQ_j,\qQ_{j+1})$.
\end{proof}

\subsection{Proof of the graded M=N identity}
To prove Theorem \ref{mequalsn}, we must show that there are no contributions to the constant term in the sum over $\bm$ in the generating function  \eqref{Zdef} from any $q_i<0$, when evaluated at $\qQ_0=1$. Contributions to the constant term have $q_i=p_i$, so this implies the identity of Theorem \ref{mequalsn}.

As in the commutative case, we use a ``descending induction" on $j=k,k-1,..,1$ to show that
the $N$-sum is unchanged if we restrict
the summations in the definition \eqref{nsum} to ${ q}_{j},...,{ q}_k\geq 0$.

The initial step is trivial, as $q_k=\ell\geq 0$.

%\subsubsection{The inductive step}

We start from the expression \eqref{nsumZ} for the $N$-sum,
and use Corollary \ref{factocor} and Theorem \ref{factothm} to rewite it as:
\begin{eqnarray}
N_{\ell;\bn}^{(k)}(q^{-1})&=& q^{-{1\over 4}\bn\cdot A\bn-{1\over 2}\sum_{i=1}^k n_i-{1\over 4}\ell} CT_{\qQ_1}\left(
\qQ_1\qQ_0^{-1}\Big(\prod_{i=1}^j \qQ_i^{n_i} \Big)\qQ_{j}\qQ_{j+1}^{-1} \right. \nonumber \\
&\times &\left.
\sum_{m_{j+1},...,m_k\in\Z_+} \qQ_{j+1}^{-{q}_{j}} \qQ_j^{{q}_{j+1}} q^{{{q}_j^2\over 4}
+{1\over 4}\sum_{i=j+1}^k({q}_i-{q}_{i+1})^2}
\prod_{i=j+1}^k \left[\begin{matrix}m_i+{q}_i \\ m_i \end{matrix}\right]_q \right)\Big\vert_{\qQ_0=1} \label{factbarq}
\end{eqnarray}

The inductive step is as follows.
We assume the following recursion hypothesis holds:

$(H_j)$: {\it The $N$-sum is unchanged if
we restrict the summation over $\bm^{(j)}$ in the second factor to terms with ${q}_{j+1},...,{q}_{k}\geq 0$.}

We now wish to prove $(H_{j-1})$.
Consider the contribution to the sum \eqref{factbarq} from the terms with ${q}_j<0$. Recall that we may restrict the sum to terms with $q_{j+1}\geq 0$, by assumption. Each such term has strictly positive powers of $\qQ_j$, and the contribution to the 
factor in the pharentheses is proportional, up to a polynomial in $\C_q$,
to a monomial of the form:
$$ \qQ_1\qQ_0^{-1}\Pi_\nu(\qQ_0,\qQ_1), \qquad 
\Pi_\nu(\qQ_0,\qQ_1)=\prod_{i=1}^{j+1} \qQ_i^{\nu_i} $$
where all $\nu_i\in \Z_+$. 

We prove an analog of the polynomiality property for the commutative $Q$-system:

\begin{lemma}\label{vanilem}
For all $k\in \Z_{>0}$, and all $\nu_1,...,\nu_k\in \Z_+$ we have
$$CT_{\qQ_1}\left( \qQ_1\qQ_0^{-1} \Pi_\nu(\qQ_0,\qQ_1)\right)\Big\vert_{\qQ_0=1} =0 $$
\end{lemma}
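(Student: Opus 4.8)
The plan is to prove that any monomial of the form $\qQ_1\qQ_0^{-1}\prod_{i=1}^{k}\qQ_i^{\nu_i}$ with all $\nu_i\geq 0$ has vanishing constant term in $\qQ_1$ after evaluation at $\qQ_0=1$. First I would use the quantum $Q$-system \eqref{qaonesys} and Lemma \ref{lopol} (the Laurent property) to rewrite each $\qQ_i$, $i\geq 2$, as a non-commutative Laurent polynomial in the initial data $(\qQ_0,\qQ_1)$. The key structural fact is that, because of the relation $t\,\qQ_{j+1}\qQ_{j-1}=\qQ_j^2-1$, each $\qQ_j$ for $j\geq 1$ is, up to a power of $q$, a polynomial in $\qQ_1$ with a single factor of $\qQ_0^{-1}$ raised to the appropriate power — more precisely, from \eqref{kone} one reads off that $\qQ_j\qQ_{j+1}^{-1}=\qQ_0\qQ_1^{-1}(1-\qQ_1^{-2})^{-1}$, and iterating one finds that $\qQ_j$ has the shape $\qQ_0^{-(j-1)}\times(\text{polynomial in }\qQ_1\text{ of degree }\leq j)$ after normal ordering, with the top-degree term in $\qQ_1$ being $\qQ_0^{-(j-1)}\qQ_1^{j}$ times a power of $q$.

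Next I would normal-order the whole product $\qQ_1\qQ_0^{-1}\prod_i\qQ_i^{\nu_i}$ by moving all $\qQ_0$'s to the left, using the commutation relation \eqref{qcomaone} (which only produces powers of $t$, hence scalar coefficients), arriving at an expression of the form $\sum_{a,b}c_{a,b}\,\qQ_0^a\qQ_1^b$ with $c_{a,b}\in\C_u$. Evaluating at $\qQ_0=1$ and then taking the constant term in $\qQ_1$ (the two operations commute, as noted in the excerpt) amounts to extracting $\sum_a c_{a,0}$. The heart of the argument is a degree/grading count: the $Q$-system relation is homogeneous if we assign $\qQ_j$ the ``weight'' $1$ in a suitable $\Z$-grading (say, total degree in $\qQ_1$ minus degree in $\qQ_0$, or equivalently the grading in which $\qQ_0$ has weight $-1$ matching the $\qQ_0^{-1}$ prefactor structure). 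The prefactor $\qQ_1\qQ_0^{-1}$ together with all $\qQ_i^{\nu_i}$ contributes a strictly positive total weight; but the constant term in $\qQ_1$ after setting $\qQ_0=1$ picks out weight zero, so the extraction is empty and the result is $0$.

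The cleanest way to make this rigorous, mirroring the commutative proof of \cite{DFK2,DFK}, is to track the ``$\qQ_0$-$\qQ_1$ bidegree'' directly: I claim that in the normal-ordered Laurent expansion of $\qQ_1\qQ_0^{-1}\prod_i\qQ_i^{\nu_i}$, every monomial $\qQ_0^a\qQ_1^b$ that appears satisfies $a+b\geq 1$ — indeed $a+b=1+\sum_i\nu_i\geq 1$, since the combination $\qQ_0^a\qQ_1^b$ is invariant under the substitution and the quantum $Q$-system preserves $a+b$ (from $t\qQ_{j+1}\qQ_{j-1}=\qQ_j^2-1$, the two sides have ``$a+b$''-value $2j$ on the left and $2j$ resp. $0$ on the right — here I must be careful: the $-1$ term breaks homogeneity, so I instead argue that $\qQ_j$ is a sum of monomials all with $a+b=j$ and one extra monomial structure from the $-1$; cf. the explicit form above where $\qQ_j = q^{\#}\qQ_0^{-(j-1)}\qQ_1^{j} + (\text{lower})$ and all ``lower'' terms still have $a+b\leq j$ but the leading one has $a+b=1$ after factoring). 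When $\qQ_0=1$, $a$ is erased but the $\qQ_1^b$ monomials it collapses to all have the property that the only way to get $b=0$ would require $a+b=a<1$, i.e. $a\leq0$, contradicting $a+b\geq 1$ unless $b\geq1$; hence no $\qQ_1^0$ term survives, proving the vanishing. I expect the main obstacle to be handling the inhomogeneity introduced by the constant $-1$ on the right-hand side of \eqref{qaonesys}: one must check that the lower-order terms it generates never produce a pure $\qQ_1^{0}\qQ_0^{\leq 0}$ contribution, which requires a careful induction on $k$ (or on $\max_i$ of the indices appearing) exactly as in the commutative polynomiality lemma of \cite{DFK2}, with the extra bookkeeping that all coefficient shuffling lives in $\C_u$ and is therefore harmless.
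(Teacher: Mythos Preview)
Your degree-counting approach has a genuine gap: the vanishing of the evaluated constant term is \emph{not} because the normal-ordered expansion of $\qQ_1\qQ_0^{-1}\Pi_\nu$ avoids monomials $\qQ_0^a\qQ_1^0$, but because such monomials cancel upon setting $\qQ_0=1$. Already for $\Pi_\nu=\qQ_3$ one computes, in normal order,
\[
\qQ_1\qQ_0^{-1}\qQ_3 \;=\; t^6\qQ_0^{-3}\qQ_1^4 \;-\; (t^2+t^4)\qQ_0^{-3}\qQ_1^2 \;+\; \qQ_0^{-3} \;-\; \qQ_0^{-1},
\]
so the $\qQ_1^0$-part is $\qQ_0^{-3}-\qQ_0^{-1}$, which is nonzero as a Laurent polynomial in $\qQ_0$ yet evaluates to zero at $\qQ_0=1$. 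Your claimed constraint ``$a+b\geq 1$'' neither holds nor would it rule out $b=0$ terms (if $b=0$ and $a\geq 1$ there is no contradiction), and the inhomogeneity from the $-1$ in \eqref{qaonesys} that you flag as the ``main obstacle'' is indeed fatal to the argument as written: it produces exactly the $\qQ_1^0$ contributions you hoped to exclude.

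The paper's argument circumvents this by a change of initial data. Instead of expanding $\Pi_\nu$ as a Laurent polynomial in $(\qQ_0,\qQ_1)$, one uses Lemma~\ref{lopol} to write it as a Laurent polynomial in the seed $(\qQ_{-1},\qQ_0)$, say $\Pi_\nu=\sum_{p}\qQ_{-1}^{\,p}c_p(\qQ_0)$. Since $\qQ_{-1}=t^{-1}\qQ_1^{-1}(\qQ_0^2-1)$ carries a single $\qQ_1^{-1}$, the constant term in $\qQ_1$ of $\qQ_1\qQ_0^{-1}\Pi_\nu$ isolates the $p=1$ summand and equals
\[
\qQ_1\qQ_0^{-1}\qQ_{-1}\,c_1(\qQ_0)\;=\;\qQ_0^{-1}(\qQ_0^2-1)\,c_1(\qQ_0),
\]
which visibly vanishes at $\qQ_0=1$. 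The whole point is that the factor $(\qQ_0^2-1)$, invisible in the $(\qQ_0,\qQ_1)$ expansion, is forced to appear once you pass through $\qQ_{-1}$; this is the mechanism responsible for the cancellation you would otherwise have to detect by hand.
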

\begin{proof}
By Lemma \ref{lopol}, each term $\qQ_i$ is expressible as a Laurent polynomial of 
the admissible initial data $(\qQ_{-1},\qQ_0)$. Therefore, 
there is an expression $\Pi_\nu(\qQ_0,\qQ_1)=\sum_{p\in \Z}  \qQ_{-1}^p c_p(\qQ_0)$, where
the Laurent polynomials $c_p(\qQ_0)$ are non-zero for finitely many $p$.
The constant term in $\qQ_1$ extracts from $\Pi_\nu$ the term with $p=1$, since
$\qQ_{-1}=t^{-1}\qQ_1^{-1}(\qQ_0^2-1)$. 
$$CT_{\qQ_1}\left( \qQ_1\qQ_0^{-1} \Pi_\nu(\qQ_0,\qQ_1)\right)
=\qQ_1\qQ_0^{-1}\qQ_{-1}c_1(\qQ_0)
=\qQ_0^{-1}(\qQ_0^2-1)c_1(\qQ_0)$$
which vanishes when evaluated at $\qQ_0=1$. The lemma follows.
\end{proof}

The Lemma implies that the $N$-sum \eqref{nsumZ}, expressed as \eqref{factbarq}, 
only receives non-vanishing contributions
from the sum over $m_i$'s such that $q_j={p}_j\geq 0$.  Hence the induction step ($H_{j-1}$) is proved.
This concludes the proof of Theorem \ref{mequalsn}.

\subsection{Computing the graded multiplicities}

In view of the $N=M$ identity, the constant term identity \eqref{nsumZ} may be rewritten as:
\begin{equation}\label{ctnsum}
M_{\ell;\bn}^{(k)}(q^{-1}) =q^{-{1\over 2}\sum_i n_i -{1\over 4}(\ell+\bn\cdot  A\bn)}
CT_{\qQ_1}\left( \qQ_1\qQ_0^{-1} \prod_{i=1}^k \qQ_i^{n_i} 
\left( \qQ_k \qQ_{k+1}^{-1}\right)^{\ell+1}
\right)\Bigg\vert_{\qQ_0=1},
\end{equation}
where $\qQ_i$ with $i>1$ are determined from the recursion \eqref{qaonesys} and satisfy the commutation relations \eqref{qcomaone}. The product over $i$ is taken with lower indices to the left of higher indices.

To extract the constant term, we have to express the $\qQ_i$ as
Laurent polynomials of $\qQ_0,\qQ_1$, while $z_k=\qQ_k \qQ_{k+1}^{-1}$ 
must be expanded as a formal
Laurent series of $\qQ_1^{-1}$, with coefficients Laurent polynomials in $\qQ_0$. 

\begin{lemma} The ratio $z_k$ is a power series in $\qQ_1^{-1}$.
\end{lemma}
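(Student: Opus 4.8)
The plan is to argue by induction on $k$ using the quantum $Q$-system recursion, showing that at every stage the ratio $z_k = \qQ_k\qQ_{k+1}^{-1}$ lies in the ring $\C_u[\qQ_0^{\pm1}]((\qQ_1^{-1}))$, i.e.\ is a Laurent series in $\qQ_1^{-1}$ with coefficients that are Laurent polynomials in $\qQ_0$, with moreover no positive powers of $\qQ_1$ occurring (so that it is genuinely a power series in $\qQ_1^{-1}$). The base case $k=0$ is the identity $z_0 = \qQ_0\qQ_1^{-1}(1-\qQ_1^{-2})^{-1}$, which we already extracted in computing $Z^{(1)}$ leading to Lemma~\ref{kone}: this is manifestly a power series in $\qQ_1^{-1}$ (using $q=t^{-2}$ to move $\qQ_0$ past $\qQ_1^{-2}$ we may even write it in normal-ordered form), and the case $k=1$ is the analogous statement $z_1 = \qQ_1\qQ_2^{-1} = t\,\qQ_1\qQ_0(\qQ_1^2-1)^{-1}$, again a power series in $\qQ_1^{-1}$ with coefficient a Laurent polynomial in $\qQ_0$.

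For the inductive step, I would use the quantum $A_1$ $Q$-system \eqref{qaonesys} together with the translational invariance of Lemma~\ref{translat}. Writing $z_k(\qQ_0,\qQ_1) = \qQ_k\qQ_{k+1}^{-1}$, translational invariance gives $z_k(\qQ_0,\qQ_1) = z_1\big(\qQ_{k-1},\qQ_k\big) = z_{k-1}\big(\qQ_1,\qQ_2\big)$ — more precisely $z_k(\qQ_0,\qQ_1)$ is obtained from $z_{k-1}(\qQ_0,\qQ_1)$ by the substitution $\qQ_0\mapsto\qQ_1,\ \qQ_1\mapsto\qQ_2$. So it suffices to show that if $z_{k-1}$ is a power series in $\qQ_1^{-1}$ with Laurent-polynomial coefficients in $\qQ_0$, then substituting the pair $(\qQ_1,\qQ_2)$ for $(\qQ_0,\qQ_1)$ produces something that is again a power series in $\qQ_1^{-1}$. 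Here I would use that $\qQ_2 = t^{-1}\qQ_0^{-1}(\qQ_1^2-1)$ is (after normal ordering) a Laurent polynomial in $\qQ_0$ times a polynomial in $\qQ_1$, while $\qQ_2^{-1} = t\,\qQ_0(\qQ_1^2-1)^{-1}$ expands as $t\,\qQ_0\,\qQ_1^{-2}(1-\qQ_1^{-2})^{-1}$, a power series in $\qQ_1^{-1}$ beginning in degree $-2$. Thus each occurrence of $\qQ_1$ in $z_{k-1}$ becomes a polynomial contribution that is absorbed by the strictly negative-degree expansion of the $\qQ_2^{-1}$ factors; the bookkeeping is purely a matter of tracking that the total $\qQ_1$-degree of every monomial stays bounded above, so that the resulting series lies in $\C_u[\qQ_0^{\pm1}]((\qQ_1^{-1}))$ and in fact has no nonnegative powers of $\qQ_1$.

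The main obstacle — really the only delicate point — is the non-commutativity: one must be careful that "substituting $(\qQ_1,\qQ_2)$ for $(\qQ_0,\qQ_1)$" is well-defined as an operation on formal Laurent series in $\mathcal R$, since $\qQ_1$ and $\qQ_2$ $q$-commute (with $\qQ_1\qQ_2 = t\,\qQ_2\qQ_1$) just as $\qQ_0$ and $\qQ_1$ do, so the substitution respects the commutation relation \eqref{qcomaone} and extends to an algebra map; but one still needs the expansion of $\qQ_2^{-1}$ in $\qQ_1^{-1}$ to be compatible, term by term, with the infinite sum defining $z_{k-1}$, i.e.\ one needs a convergence/finiteness statement: for each fixed power of $\qQ_1^{-1}$ in the output, only finitely many terms of $z_{k-1}$ contribute. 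This follows because each application of $\qQ_2^{-1}$ only lowers $\qQ_1$-degree (by at least $2$), so degrees are bounded and the formal series is well-defined in $\mathcal R$. Once this is granted the induction closes and the lemma follows.
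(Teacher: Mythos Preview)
Your approach via translational invariance is plausible in spirit, but there is a real gap in the ``bookkeeping'' step, and it is precisely where you wave your hands.

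First a small error: $z_0 = \qQ_0\qQ_1^{-1}$ by definition, not $\qQ_0\qQ_1^{-1}(1-\qQ_1^{-2})^{-1}$. The latter expression is already $z_1$.

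The substantive issue is the substitution step. Suppose inductively that $z_{k-1}(\qQ_0,\qQ_1)=\sum_{b\geq 1} f_b(\qQ_0)\,\qQ_1^{-b}$ in normal-ordered form, with $f_b\in\C_u[\qQ_0^{\pm1}]$. After the substitution $(\qQ_0,\qQ_1)\mapsto(\qQ_1,\qQ_2)$ you get $\sum_b f_b(\qQ_1)\,\qQ_2^{-b}$, and you want to re-expand this in $\mathcal R$. Since $\qQ_2^{-b}$ expands as a series in $\qQ_1^{-1}$ of order $2b$, the term $f_b(\qQ_1)\,\qQ_2^{-b}$ contributes to $\qQ_1$-degrees up to $\deg_{\qQ_1}f_b-2b = \deg_{\qQ_0}f_b(\qQ_0)-2b$. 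For the re-expansion to be (i) well-defined in $\mathcal R$ and (ii) a genuine power series in $\qQ_1^{-1}$, you need $\deg_{\qQ_0} f_b \leq 2b$ for all $b$, and $\deg_{\qQ_0} f_b - 2b\to -\infty$. You never establish this; the statement ``each application of $\qQ_2^{-1}$ only lowers $\qQ_1$-degree by at least $2$'' is true but irrelevant without a bound on how fast the $\qQ_0$-degrees of the $f_b$ can grow. The induction hypothesis ``$z_{k-1}$ is a power series in $\qQ_1^{-1}$'' carries no such information.

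The paper avoids this entirely. It first proves the auxiliary fact that $\qQ_k$, as a Laurent polynomial in $(\qQ_0,\qQ_1)$, has $\qQ_1$-degree exactly $k$ with leading term $t^{k(k-1)/2}\qQ_0^{1-k}\qQ_1^k$. From the $Q$-system one reads off $z_{k+1}^{-1}z_k = 1-\qQ_{k+1}^{-2}$, and since $\qQ_{k+1}^{-2}$ is then a power series in $\qQ_1^{-1}$ of order $2k+2$, the factor $(1-\qQ_{k+1}^{-2})^{-1}$ is a well-defined unit of the form $1+O(\qQ_1^{-2k-2})$. Iterating gives the product formula
\[
z_{k+1}=\qQ_0\qQ_1^{-1}\prod_{i=1}^{k+1}(1-\qQ_i^{-2})^{-1},
\]
which is manifestly a power series in $\qQ_1^{-1}$ with no constant term. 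The key point is that establishing $\deg_{\qQ_1}\qQ_k=k$ is exactly the degree control your argument is missing; once you have it, the product formula is a much more direct route than pushing the substitution through.
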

\begin{proof}
From the recursion relation for $\qQ_i$ \eqref{qaonesys}, and the first two values $\qQ_0=\qQ_0$ and $\qQ_1=z_0^{-1}\qQ_0$,
it is easy to show that $\qQ_k$ is a Laurent polynomial in $\qQ_1$ of degree $k$,
with highest degree term of the form $z_0^{-k}\qQ_0=t^{k(k-1)/2} \qQ_0^{1-k}\qQ_1^k$. 
The quantum $Q$-system implies the recursion relation $z_{k+1}^{-1}z_k =1-\qQ_{k+1}^{-2}$. 
Considered in the ring of Laurent series in $\qQ_1^{-1}$, $\qQ_{k+1}^{-2}$ is a power series in $\qQ_1^{-1}$ with  
leading term in $\qQ_1^{-1}$ of the form $(z_0^{-k-1}\qQ_0)^{-2}$, hence of order $2k+2$.  Therefore,
$$
z_{k+1} = \qQ_0\qQ_1^{-1} \prod_{i=1}^{k+1} (1-\qQ_{i}^{-2})^{-1}
$$
is a power series in $\qQ_1^{-1}$ with no constant term.
\end{proof}
Moreover, writing $z_{k+1}-z_k=\qQ_{k+1}^{-2}z_{k+1}$, and recalling that $z_{k+1}$ has no constant term,
we see that the difference between $z_{k+1}$ and $z_k$ expanded as power series in $\qQ_1^{-1}$
occurs at degree $2k+3$. 

For a fixed list of parameters $\bn$, we  henceforth assume that $k$ is ``sufficiently large":
$$2k\geq \sum_i i\, n_i\ ,$$
the total degree in $\qQ_1$ of $\prod_i \qQ_i^{n_i}$.
Under this assumption,
the constant term of \eqref{ctnsum} is independent of $k$, and we can replace
$z_k$ by its stabilized limit:
\begin{equation}
\label{zdefz}
z =\lim_{k\to \infty} z_k = \qQ_0\qQ_1^{-1}\prod_{k=1}^\infty (1-\qQ_{k}^{-2})^{-1} 
\end{equation}
and we finally get:

\begin{thm}\label{compum}
With $z$ as in \eqref{zdefz}, and for $2k\geq \sum_i i\, n_i $, we have:
\begin{equation}\label{ctnsumfin}
M_{\ell;\bn}^{(k)}(q^{-1}) =M_{\ell;\bn}(q^{-1})=q^{-{1\over 2}\sum_i n_i -{1\over 4}(\ell+\bn\cdot  A\bn)}
CT_{\qQ_1}\left( \qQ_1\qQ_0^{-1} \prod_{i=1}^k \qQ_i^{n_i} z^{\ell+1}
\right)\Bigg\vert_{\qQ_0=1}
\end{equation}
where we may drop the superscript $(k)$ as the result doesn't depend on it. Taking $k\to \infty$ and
completing the vector $\bn$ with zero entries, we get $M_{\ell;\bn}(q^{-1})=M_{\ell\omega_1;\bn}(q^{-1})$,
the desired graded multiplicities.
\end{thm}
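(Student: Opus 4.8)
The plan is to assemble Theorem~\ref{compum} from the pieces already established, so that the remaining work is essentially bookkeeping. First I would start from the constant-term formula \eqref{ctnsum} for $M_{\ell;\bn}^{(k)}(q^{-1})$, which is itself a consequence of the $N=M$ identity (Theorem~\ref{mequalsn}) combined with the factorization Theorem~\ref{factothm} and the rewriting Lemma for $N$ in terms of $Z$ \eqref{nsumZ}. Everything then reduces to controlling the $\qQ_1^{-1}$-expansion of the ratio $z_k=\qQ_k\qQ_{k+1}^{-1}$ and showing that, once $k$ is large enough relative to $\bn$, the constant term stabilizes and $z_k$ may be replaced by its limit $z$ of \eqref{zdefz}.

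The key steps, in order: (1) Recall from the two preparatory lemmas that $\qQ_k$ is a Laurent polynomial in $\qQ_1$ of degree exactly $k$ with leading coefficient a unit times a power of $\qQ_0$, so that $\qQ_{k+1}^{-2}$, expanded in the ring $\mathcal R=\C_u[\qQ_0^{\pm1}]((\qQ_1^{-1}))$, is a power series in $\qQ_1^{-1}$ starting at order $2k+2$; consequently $z_k=\qQ_0\qQ_1^{-1}\prod_{i=1}^k(1-\qQ_i^{-2})^{-1}$ is a power series in $\qQ_1^{-1}$ with no constant term. (2) Use the telescoping identity $z_{k+1}-z_k=\qQ_{k+1}^{-2}z_{k+1}$ to conclude that $z_{k+1}$ and $z_k$ agree as power series in $\qQ_1^{-1}$ up to and including order $2k+2$ (the first discrepancy is at order $2k+3$), hence the infinite product $z=\lim_{k\to\infty}z_k$ is well-defined in $\mathcal R$ as a formal limit. (3) Observe that in the product $\qQ_1\qQ_0^{-1}\prod_{i=1}^k\qQ_i^{n_i}\,z_k^{\ell+1}$ the factor $\prod_i\qQ_i^{n_i}$ has total $\qQ_1$-degree $\sum_i i\,n_i$; under the hypothesis $2k\ge\sum_i i\,n_i$ the error incurred by replacing $z_k$ with $z$ lives in $\qQ_1$-degrees strictly below zero and therefore does not affect $\CT_{\qQ_1}$. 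This yields \eqref{ctnsumfin} with the $(k)$-dependence dropped. (4) Finally, pad $\bn$ with zeros and let $k\to\infty$: since $Z_{\ell;\bn}^{(k)}$ was designed (via Theorem~\ref{factothm}) so that adding a zero $n_i$ changes nothing, and since $M_{\ell;\bn}^{(k)}(q)$ equals the Feigin--Loktev graded multiplicity $M_{\ell\omega_1;\bn}(q)$ for $k$ large (as asserted after \eqref{nsum}), we obtain $M_{\ell;\bn}(q^{-1})=M_{\ell\omega_1;\bn}(q^{-1})$.

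The main obstacle is Step~(3): one must be careful that ``replacing $z_k$ by $z$ only perturbs high-order terms'' survives being raised to the power $\ell+1$ and multiplied on the left by the noncommuting monomial $\qQ_1\qQ_0^{-1}\prod_i\qQ_i^{n_i}$. Concretely, $z_{k}^{\ell+1}-z^{\ell+1}$ is a sum of products in which at least one factor is $z_k-z$ or $z_{k'}-z$ for some $k'\ge k$; each such factor has $\qQ_1$-order $\le -(2k+3)$ while the other $z$-factors have order $\le -1$ and the prefactor has $\qQ_1$-degree $\le 1+\sum_i i\,n_i-0\le 2k+1$ (using $\qQ_0^{-1}$ contributes no $\qQ_1$, and the $n_i$ are nonnegative). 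Commuting the $\qQ_0$'s past the $\qQ_1$'s only multiplies by powers of $t$ and does not change $\qQ_1$-degrees, so the total $\qQ_1$-degree of every term in the difference is $\le (2k+1)-(2k+3)<0$, hence contributes nothing to the constant term. Making this degree count rigorous in the normal-ordered form of $\mathcal R$ — in particular checking that all intermediate expressions genuinely lie in $\mathcal R$ and that $\CT_{\qQ_1}$ is continuous for the $\qQ_1^{-1}$-adic topology — is the one place where care is needed; the rest is a direct substitution of the already-proven identities.
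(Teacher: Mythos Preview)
Your proposal is correct and follows essentially the same approach as the paper: start from the evaluated constant-term identity \eqref{ctnsum}, use the telescoping $z_{k+1}-z_k=\qQ_{k+1}^{-2}z_{k+1}$ together with the degree count $\deg_{\qQ_1}\qQ_k=k$ to show that $z_k$ stabilizes to $z$ in the $\qQ_1^{-1}$-adic topology fast enough that, under $2k\ge\sum_i i\,n_i$, the constant term is unchanged by the replacement $z_k\mapsto z$. Your Step~(3) is in fact more explicit than the paper's treatment, which simply asserts the stabilization without spelling out the $(\ell+1)$-th power or the noncommutativity; your degree bookkeeping there is sound.
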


Let us describe how to concretely compute the  $M$'s by use of Theorem \ref{compum}.

\begin{defn}\label{unav}
Let $\Z_t= \Z[t,t^{-1}]$ and let $ p\in \Z_t[ \qQ_0^{-1},\qQ_0,\qQ_1,\qQ_2,...,\qQ_k]$ 
be a polynomial in the variables $\qQ_i (1\leq i \leq k) $ and a Laurent polynomial in $\qQ_0^{-1}$. To this polynomial
we associate the following element of the ground ring $\Z_t$:
$$ \mu_\ell({\p}) =
CT_{\qQ_1}\left( \qQ_1\qQ_0^{-1}\,p\,  z^{\ell+1}
\right)\Bigg\vert_{\qQ_0=1}. $$
We also define the moments $\mu_{\ell,j}$ by 
\begin{equation}\label{mulj}
\mu_{\ell,j}=\mu_\ell(Q_1^j).
\end{equation}
\end{defn}
We have the following properties:

\begin{lemma}\label{faclem}
For any polynomial of the form ${ p}=f(\qQ_0) g$,  where $f\in\Z_t[\qQ_0,\qQ_0^{-1}]$
and $g\in \Z_t[ \qQ_0^{-1},\qQ_0,\qQ_1,\qQ_2,...,\qQ_k]$,
$$ \mu_\ell({\p})= f(t^{-1}) \mu_\ell( g). $$
\end{lemma}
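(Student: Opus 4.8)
The statement asserts that the functional $\mu_\ell$ is ``left-linear'' over the subring $\Z_t[\qQ_0,\qQ_0^{-1}]$, in the sense that a factor $f(\qQ_0)$ sitting on the far left of $p$ can be pulled out of $\mu_\ell$ and replaced by the scalar $f(t^{-1})$. The plan is to reduce to the case $f(\qQ_0)=\qQ_0^a$ for a single integer $a$ by linearity (both $\CT_{\qQ_1}$ and the evaluation at $\qQ_0=1$ are $\Z_t$-linear), and then to track what the monomial $\qQ_0^a$ does as it is commuted past everything to reach the normal-ordered form used in Definitions \ref{ct} and \ref{eval}.

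First I would write $f(\qQ_0)g$ with $f=\qQ_0^a$, so that
$$
\mu_\ell(\qQ_0^a g)=\CT_{\qQ_1}\!\left(\qQ_1\qQ_0^{-1}\,\qQ_0^a\, g\, z^{\ell+1}\right)\Big|_{\qQ_0=1}.
$$
The key point is that $\qQ_0^a$ is NOT at the far left here: the prefactor $\qQ_1\qQ_0^{-1}$ stands in front of it. Using the commutation relation \eqref{qcomaone} in the form $\qQ_1\qQ_0^{-1}\qQ_0^a=t^{-a}\,\qQ_0^a\,\qQ_1\qQ_0^{-1}$ — more precisely $\qQ_1\qQ_0^a = t^{a}\qQ_0^a\qQ_1$ hence $\qQ_1\qQ_0^{-1}\qQ_0^{a}=\qQ_1\qQ_0^{a-1}=t^{a-1}\qQ_0^{a-1}\qQ_1=t^{a}\qQ_0^{a}\cdot\qQ_0^{-1}\qQ_1\cdot$, so a cleaner route is to move $\qQ_0^a$ to the extreme left past the single factor $\qQ_1$ — one gets
$$
\mu_\ell(\qQ_0^a g)=\CT_{\qQ_1}\!\left(t^{-a}\,\qQ_0^a\,\qQ_1\qQ_0^{-1}\, g\, z^{\ell+1}\right)\Big|_{\qQ_0=1}.
$$
Now $\qQ_0^a$ really is on the far left. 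Since the remaining operator $\qQ_1\qQ_0^{-1}g z^{\ell+1}$ lies in $\mathcal R$, write it in normal-ordered form $\sum_{b,c}h_{b,c}\,\qQ_0^{b}\qQ_1^{c}$; then $\qQ_0^a\cdot\sum h_{b,c}\qQ_0^b\qQ_1^c=\sum h_{b,c}\qQ_0^{a+b}\qQ_1^c$, which is again normal-ordered. Taking $\CT_{\qQ_1}$ keeps the $c=0$ part, $\sum_b h_{b,0}\qQ_0^{a+b}$, and evaluating at $\qQ_0=1$ gives $\sum_b h_{b,0}$, which is exactly $\CT_{\qQ_1}(\qQ_1\qQ_0^{-1}g z^{\ell+1})|_{\qQ_0=1}=\mu_\ell(g)$ — the extra $\qQ_0^a$ disappears under the evaluation. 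Combining with the prefactor $t^{-a}$ yields $\mu_\ell(\qQ_0^a g)=t^{-a}\mu_\ell(g)$, and summing over $a$ against the coefficients of $f$ gives $\mu_\ell(f(\qQ_0)g)=f(t^{-1})\mu_\ell(g)$.

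I would also double-check the one subtlety: that multiplying by $\qQ_0^a$ on the left is a well-defined operation on $\mathcal R=\C_u[\qQ_0^{\pm1}]((\qQ_1^{-1}))$ that commutes with passage to normal-ordered form — this is immediate since $\qQ_0$ commutes past the scalar ring $\C_u$ and only shifts the $\qQ_0$-exponent, leaving the $\qQ_1^{-1}$-adic structure untouched, so no convergence issue arises. The only genuine computational care needed is getting the single commutation $\qQ_1\qQ_0^{-1}\qQ_0^{a}$ and its power of $t$ right, which is where the value $f(t^{-1})$ (rather than $f(t)$ or $f(1)$) comes from; this is the ``main obstacle'' only in the sense of bookkeeping, as the conceptual content is just that the leftmost $\qQ_0$-power is killed by the evaluation $\qQ_0\mapsto 1$ after being transported across the lone $\qQ_1$ in the prefactor.
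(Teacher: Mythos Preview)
Your proposal is correct and follows essentially the same route as the paper's proof: commute the left factor $f(\qQ_0)$ past the prefactor $\qQ_1\qQ_0^{-1}$ (picking up the substitution $\qQ_0\mapsto t^{-1}\qQ_0$), then observe that the leftmost $\qQ_0$-power is killed by the evaluation at $\qQ_0=1$. The paper does this in one line for general $f$, while you reduce to monomials $\qQ_0^a$ first; this is only a cosmetic difference.

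One bookkeeping slip to clean up: in your parenthetical aside you write $\qQ_1\qQ_0^a=t^{a}\qQ_0^a\qQ_1$, but from $\qQ_0\qQ_1=t\,\qQ_1\qQ_0$ one has $\qQ_1\qQ_0^a=t^{-a}\qQ_0^a\qQ_1$. Fortunately you abandon that aside and state the correct relation $\qQ_1\qQ_0^{-1}\qQ_0^{a}=t^{-a}\qQ_0^{a}\qQ_1\qQ_0^{-1}$ in the displayed formula, so the error does not propagate; just delete or fix the aside.
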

\begin{proof}
We use the definition:
\begin{eqnarray*} \mu_\ell({p})&=&CT_{\qQ_1}\left( \qQ_1\qQ_0^{-1} f(\qQ_0)g\,
z^{\ell+1}\right)\Bigg\vert_{\qQ_0=1}\\
&=&CT_{\qQ_1}\left( f(t^{-1}\qQ_0)  \qQ_1\qQ_0^{-1}g\,
z^{\ell+1}\right)\Bigg\vert_{\qQ_0=1}=f(t^{-1}) \mu_\ell(g),
\end{eqnarray*}
where the last expression is computed by (i) expressing $g$ as a Laurent polynomial of $(\qQ_0,\qQ_1)$,
(ii) moving all powers of $\qQ_0$'s to the {\it left} (iii) taking the constant term in $\qQ_1$
(iv) evaluating the expression at $\qQ_0=1$. 
\end{proof}

\begin{remark}\label{othereval}
We may restate the result of Lemma \ref{faclem} as follows: The evaluation 
at $\qQ_0=1$ of the function $p$ may be performed alternatively
by normal ordering the expression for $p$ first (putting all the $\qQ_0$'s in 
$p$ to the left) and setting $\qQ_0=t^{-1}$ in the normal ordered expression for $p$. 
This will be instrumental
in our algebraic reformulation below.
\end{remark}

\begin{lemma}{(Polynomiality Lemma)}\label{polpP}
For each polynomial ${p}\in\Z_t[ \qQ_0^{-1},\qQ_0,\qQ_1,\qQ_2,...,\qQ_k]$, there is a unique polynomial 
$P(\qQ_1)\in \Z_t[\qQ_1]$ such that:
$$ \mu_\ell(p)=\mu_\ell(P(\qQ_1)) .$$
\end{lemma}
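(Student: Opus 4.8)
The plan is to reduce an arbitrary monomial $\qQ_1^{a_1}\qQ_2^{a_2}\cdots\qQ_k^{a_k}$ (with $a_i\geq 0$) to a polynomial in $\qQ_1$ alone, modulo the kernel of $\mu_\ell$, and then conclude by linearity. By Lemma \ref{faclem} we may freely strip off any pure-$\qQ_0$ factors, so it suffices to treat monomials in $\qQ_1,\dots,\qQ_k$ with non-negative exponents. The key observation is that Lemma \ref{vanilem} gives us a large supply of elements in the kernel of $\mu_0$ (and more generally of the functional $p\mapsto CT_{\qQ_1}(\qQ_1\qQ_0^{-1}p)|_{\qQ_0=1}$ that appears when we absorb $z^{\ell+1}$): any monomial $\Pi_\nu(\qQ_0,\qQ_1)=\prod_{i\geq 1}\qQ_i^{\nu_i}$ with all $\nu_i\in\Z_+$ lies in this kernel. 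But this is not quite enough by itself, because we need to rewrite a given monomial \emph{as a polynomial in $\qQ_1$} up to such kernel elements, not merely show it is killed.

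The mechanism to do the rewriting is the quantum $Q$-system \eqref{qaonesys} itself, read as $\qQ_{j+1}=t^{-1}(\qQ_j^2-1)\qQ_{j-1}^{-1}$, which expresses the highest-index variable in a monomial in terms of lower-index ones at the cost of introducing a $\qQ_{j-1}^{-1}$. First I would set up a descending induction on the largest index $j$ appearing in the monomial: given a monomial involving $\qQ_j$ with $j\geq 2$, use the $Q$-system relation (together with Lemma \ref{comlem} to move the newly created factors into normal order, picking up only powers of $t$) to replace one factor of $\qQ_j$; one then has to handle the resulting $\qQ_{j-1}^{-1}$ — but since $\qQ_{j-1}=t^{-1}(\qQ_{j-2}^2-1)\qQ_{j-3}^{-1}$ etc., inverses of the intermediate $\qQ$'s can themselves be expanded, ultimately, via Lemma \ref{lopol}, as Laurent polynomials in the initial data $(\qQ_0,\qQ_1)$. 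The cleaner route, which I would actually follow, is: expand everything directly via the Laurent property (Lemma \ref{lopol}) in the initial data $(\qQ_{-1},\qQ_0)$, exactly as in the proof of Lemma \ref{vanilem}. Then $\mu_\ell$ applied to $\qQ_1\qQ_0^{-1}p\,z^{\ell+1}$ only sees the coefficient of $\qQ_{-1}^{1}$ after normal-ordering, because $CT_{\qQ_1}$ picks out $\qQ_{-1}^1$ (using $\qQ_{-1}=t^{-1}\qQ_1^{-1}(\qQ_0^2-1)$) and kills everything else.

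Concretely, the argument runs as follows. Write $\qQ_1\qQ_0^{-1}\,p\,z^{\ell+1}$, expand $p$ and $z^{\ell+1}$ as (non-commutative) Laurent polynomials in $(\qQ_{-1},\qQ_0)$ using Lemmas \ref{lopol} and the formula \eqref{zdefz}, normal-order so that all $\qQ_0$'s stand to the left of all $\qQ_{-1}$'s, and collect the coefficient $c_1(\qQ_0)\qQ_{-1}$ of $\qQ_{-1}^{1}$. Then $\mu_\ell(p)=CT_{\qQ_1}(\qQ_1\qQ_0^{-1}\qQ_{-1}c_1(\qQ_0))|_{\qQ_0=1}=\qQ_0^{-1}(\qQ_0^2-1)c_1(\qQ_0)|_{\qQ_0=1}$, which is a well-defined element of $\Z_t$. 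To produce the polynomial $P(\qQ_1)$: observe that $\mu_\ell$ is $\Z_t$-linear and that the \emph{moments} $\mu_{\ell,j}=\mu_\ell(\qQ_1^j)$ are a specific sequence in $\Z_t$; what must be shown is that the value $\mu_\ell(p)$ depends only on finitely many of these moments and, moreover, that one can exhibit an explicit $P$ matching. Here Lemma \ref{vanilem} is the crucial input: expanding $p$ in $(\qQ_0,\qQ_1)$ rather than $(\qQ_{-1},\qQ_0)$, write $p=\sum_{a\in\Z,\,b\geq 0} f_{a,b}(t)\,\qQ_0^a\qQ_1^b$ in normal order, and note by Lemma \ref{faclem} (Remark \ref{othereval}) that $\mu_\ell(\qQ_0^a\qQ_1^b)=t^{-a}\mu_{\ell,b}\cdot(\text{the effect of }z^{\ell+1})$... rather, cleanly: $\mu_\ell(p)=\mu_\ell\bigl(\sum_b (\textstyle\sum_a f_{a,b}(t)t^{-a})\,\qQ_1^b\bigr)$ because normal-ordering and then setting $\qQ_0=t^{-1}$ is exactly what the evaluation does after $CT_{\qQ_1}$ (this is the content of Remark \ref{othereval}, applied with the $z^{\ell+1}$ factor carried along unchanged since it is a fixed power series in $\qQ_1^{-1}$ with $\qQ_0$-coefficients, and one checks the $\qQ_0$'s in $z$ also get sent to $t^{-1}$ consistently). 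Thus $P(\qQ_1):=\sum_b\bigl(\sum_a f_{a,b}(t)t^{-a}\bigr)\qQ_1^b$ works, and uniqueness follows because the $\mu_{\ell,j}$ are linearly independent over $\Z_t$ — or, if one prefers not to prove that, uniqueness can simply be dropped to ``existence'' since that is all the sequel needs; but in fact linear independence of the moments follows from the explicit leading-term analysis of $z$ and the $\qQ_k$'s already carried out before Theorem \ref{compum}, since $\mu_{\ell,j}$ has a nonzero top-degree behaviour in $j$.

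I expect the main obstacle to be the bookkeeping in the normal-ordering step: one must be careful that the $\qQ_0$-powers generated when commuting $z^{\ell+1}$ (which contains infinitely many $\qQ_k^{-2}$ factors) past the $\qQ_1^b$ are handled by the stabilization argument preceding Theorem \ref{compum} so that only finitely many matter, and that Remark \ref{othereval}'s ``set $\qQ_0=t^{-1}$ after normal-ordering'' genuinely commutes with taking $CT_{\qQ_1}$ in the presence of the $z^{\ell+1}$ tail. Once this is pinned down, the reduction to a polynomial in $\qQ_1$ is essentially formal, powered by Lemma \ref{vanilem} and Lemma \ref{faclem}.
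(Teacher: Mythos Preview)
Your final approach---expand $p$ as a Laurent polynomial in $(\qQ_0,\qQ_1)$ via Lemma~\ref{lopol}, then use Lemma~\ref{faclem} to evaluate the $\qQ_0$-factors---is exactly the paper's proof. But there is a genuine gap in how you arrive at a \emph{polynomial} in $\qQ_1$ rather than a Laurent polynomial. You write $p=\sum_{a\in\Z,\,b\geq 0} f_{a,b}(t)\,\qQ_0^a\qQ_1^b$ and simply assert $b\geq 0$, crediting Lemma~\ref{vanilem}. This is where the argument breaks down: the Laurent expansion of $p$ in $(\qQ_0,\qQ_1)$ will in general have negative powers of $\qQ_1$ (think of $p=\qQ_2=t^{-1}(\qQ_1^2-1)\qQ_0^{-1}$, which is fine, but products of higher $\qQ_i$'s can produce genuine $\qQ_1^{-1}$ terms). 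Lemma~\ref{vanilem} does not kill these: it concerns the functional $CT_{\qQ_1}(\qQ_1\qQ_0^{-1}\,\cdot\,)|_{\qQ_0=1}$ \emph{without} the $z^{\ell+1}$ factor, applied to monomials with \emph{non-negative} exponents in the $\qQ_i$'s---the opposite situation from what you need.

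The missing step is much simpler than the machinery you invoke. The paper's argument is: since $z$ is a power series in $\qQ_1^{-1}$ with no constant term (proven just before Theorem~\ref{compum}), the product $\qQ_1\qQ_0^{-1}\,\qQ_1^{j}f_j(\qQ_0)\,z^{\ell+1}$ has, after normal ordering, only strictly negative powers of $\qQ_1$ whenever $j<0$ (and $\ell\geq 0$). Hence $CT_{\qQ_1}$ annihilates all $j<0$ terms, leaving $P(x)=\sum_{j\geq 0} f_j(t^{-j-1})x^j$. No recourse to Lemma~\ref{vanilem}, no expansion in $(\qQ_{-1},\qQ_0)$---which in any case cannot be applied to $z^{\ell+1}$, since $z$ is an infinite series, not a Laurent polynomial, so your ``concretely, the argument runs as follows'' paragraph does not make sense as stated. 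Once you insert this one-line vanishing argument for $j<0$, your computation with Lemma~\ref{faclem} is correct and matches the paper exactly.
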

\begin{proof}
By the Laurent polynomiality property of quantum cluster algebras of Lemma \ref{lopol}, 
we deduce that ${p}$ is a Laurent polynomial
of $(\qQ_0,\qQ_1)$ with coefficients in $\Z_t$. 

Suppose
$ p=\sum_{j\in \Z} \qQ_1^j f_j(\qQ_0) $. 
Since $z$ is a power series in $\qQ_1^{-1}$ with no constant term in $\qQ_1^{-1}$,
the constant term of the product $\qQ_1 \qQ_0^{-1} p\, z^{\ell+1}$  
vanishes for the terms in $p$ with negative values of $j$. So we are left with 
terms with non-negative values of $j$ in $ {p}$:
\begin{eqnarray*}
\mu_\ell({p})&=&\sum_{j\in\Z_+} \mu_\ell( \qQ_1^j f_j(\qQ_0) )\\
&=&\sum_{j\in\Z_+} \mu_\ell(f_j(t^{-j}\qQ_0) \qQ_1^j)=\sum_{j\in\Z_+}f_j(t^{-j-1}) \mu_\ell(\qQ_1^j)
\end{eqnarray*}
by use of Lemma \ref{faclem}. We conclude that $P(x)=\sum_{j\in\Z_+}f_j(t^{-j-1}) x^j$
and the lemma follows.
\end{proof}

Let us denote by $\varphi: \Z_t[\qQ_0^{\pm1},\qQ_1,...,\qQ_k]\to \Z_t[\qQ_1]$ 
the map taking the Laurent polynomial $ {p}$ to the polynomial $ P$ described in Lemma \ref{polpP}. 
We may summarize the action of $\varphi$ as ``left evaluation  at $Q_0=t^{-1}$ of the
part of $p$ with non-negative powers of $\qQ_1$", once $p$ is expressed as
a Laurent polynomial of $(\qQ_0,\qQ_1)$.
Moreover we have:  $\mu_\ell({p})=\mu_\ell(\varphi({p}))$.

To compute any graded multiplicity $M$, we only need information about the 
moments $\mu_{\ell,j}$ of Equation \eqref{mulj}, for each $j\in \Z_+$.
These in turn may be obtained either by computing $\mu_\ell(\qQ_m)$ as in Lemma \ref{polpP},
or from the definition of the corresponding $M$-sum for
$n_i=\delta_{i,m}$. This last is easily computed as $M_{\ell;\bn}(q^{-1})=t^{m+1}\mu_\ell(\qQ_m)=\delta_{m,\ell}$. 

Indeed, when $t=1$,
$M_{\ell;\bn}\equiv M_{\lambda=\ell\omega_1,\bn}$ is the
multiplicity of the irreducible representation $V_{\ell\omega_1}$ with highest weight $\ell\omega_1$
in the KR module $\KR_m=V_{m\omega_1}$ of $A_1$, itself equal to the irreducible
representation with highest weight $m\omega_1$, 
hence $\ell$ must be equal to $m$ and the multiplicity is $1$. Let us write
$$t^{-m-1}\delta_{\ell,m}= \mu_\ell(\qQ_m)=\mu_\ell(\varphi(\qQ_m))=\sum_{j=0}^m c_{m,j}(t)\mu_{\ell,j}$$
via Lemma \ref{polpP}. 
The coefficients $c_{m,j}(t)\in \Z[t,t^{-1}]$ are determined by the quantum $Q$-system, 
using the fact that 
\begin{equation}\label{basone}
\varphi(\qQ_m)=\sum_{0\leq j\leq m}  c_{m,j}(t) \qQ_1^j,\qquad c_{j,j}\neq 0,
\end{equation}
which is a triangular system and so may be inverted (see Remark \ref{chbas} below):
\begin{equation}\label{bastwo}
\qQ_1^j=\varphi(\sum_{0\leq i\leq j} d_{j,i}(t) \qQ_i )
\end{equation}
leading to 
the value of $\mu_{\ell,j}$:
$$\mu_{\ell,j}=\sum_{0\leq i\leq j}d_{j,i}(t) \mu_\ell(\qQ_i )=d_{j,\ell}(t)t^{-\ell-1}$$
Substituting these values in the expression of $P$ in Lemma
\ref{polpP} for ${p}=\prod_{i=1}^k \qQ_i^{n_i}$ finally yields:
$\mu_\ell({p})=\sum_{j\in\Z_+}f_j(t^{-j-1}) d_{j,\ell}(t)t^{-\ell-1}$.

\subsection{An algebraic reformulation of the fusion product}\label{vacvecsec}

%We are now ready to
%construct a vacuum vector representation of the algebra generated by the $\qQ_m$'s, $m\in\Z_+$,
%with a cyclic vector $\langle 0\vert$ and a basis $\langle m\vert$, $m\in \Z_+$,
%and use it to represent the constant term of any polynomial expression ${\mathcal P}$ of the $\qQ_i$'s, $i\geq 1$ as:
%\begin{equation}\label{vacvec}
%\mu_\ell({\mathcal P} )
%=\langle 0\vert {\mathcal P} \vert \ell \rangle
%\end{equation}
%where $\vert \ell\rangle$ denotes the dual basis, with $\langle m\vert \ell\rangle =\delta_{\ell,m}$
%for all $\ell,m\in \Z_+$. The definitions below are a direct consequence of Remark \ref{othereval}.

%Recall first that $\mu_\ell(\qQ_m)=t^{-m-1} \delta_{\ell,m} $.
%Comparing this for $m=0$ to the desired representation \eqref{vacvec} for ${\mathcal P}=\qQ_0$, 
%leads to the following.

In this section we use Remark \ref{othereval} to reformulate the quantity $\mu_\ell({\p} )$
as a special matrix element in a representation of the algebra generated by the non-commutative elements $\{\qQ_i\}$. 
% denoted by $\langle 0\vert {\p} \vert \ell \rangle$.

Let ${\mathcal A}_+$ be the universal enveloping algebra over $\Z[t,t^{-1}]$
with the generators 
$$\qQ_0,\ \qQ_0^{-1},\ \qQ_m\quad (m\in \N).$$ These generators have relations determined by the quantum $Q$-system \eqref{qaonesys} 
and the commutation relations \eqref{qcomaone}. 

\begin{defn}\label{vacrep}
For $m\in \Z_+$, let $\langle m \vert$ denote the $\Z[t,t^{-1}]$--basis of a cyclic representation 
of the algebra ${\mathcal A}_+$,
%subject to the ``vacuum" condition that:
such that:
\begin{eqnarray}
(i) \ \  \langle 0 \vert \qQ_0&=&t^{-1} \langle 0 \vert \label{zerocond} \\
(ii) \ \ \ \  \langle m \vert &=&t^{m+1}\langle 0 \vert \qQ_m \qquad (m>0) \label{basisvec}
\end{eqnarray}
%with basis vectors 
%$$\langle m \vert =t^{m+1}\langle 0 \vert \qQ_m \qquad (m>0)$$
%and the dual vector basis $\vert \ell \rangle$ such that 
%$$\langle m \vert \ell \rangle=\delta_{m,\ell}\qquad {\rm for} \quad  \ell,m\in\Z_+$$
\end{defn}

Equation \eqref{zerocond} gives an algebraic implementation of the left evaluation at $\qQ_0=t^{-1}$ 
described in Lemma \ref{faclem} and Remark \ref{othereval}.
We need the following preliminary lemma:

\begin{lemma}\label{vanione}
$$\langle 0 \vert \qQ_{-1} =0$$
\end{lemma}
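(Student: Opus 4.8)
The plan is to unwind the definition of $\langle 0\vert$ and use the quantum $Q$-system to express $\qQ_{-1}$ in terms of the initial data $(\qQ_0,\qQ_1)$, then apply the defining conditions \eqref{zerocond} and \eqref{basisvec}. First I would solve the $A_1$ quantum $Q$-system \eqref{qaonesys} "backwards" for $\qQ_{-1}$: from $t\,\qQ_1\qQ_{-1}=\qQ_0^2-1$, and using the commutation relation \eqref{qcomaone} in the form $\qQ_{-1}\qQ_0=t\,\qQ_0\qQ_{-1}$ (equivalently $\qQ_0\qQ_{-1}=t^{-1}\qQ_{-1}\qQ_0$), one gets $\qQ_{-1}=t^{-1}\qQ_1^{-1}(\qQ_0^2-1)$, exactly as already used in the proof of Lemma \ref{vanilem}. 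The key point is that $\qQ_{-1}$ can be written with all the $\qQ_0$-dependence sitting in a polynomial $\qQ_0^2-1$ and an overall $\qQ_1^{-1}$ in front.

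The main step is then a normal-ordering computation on $\langle 0\vert\qQ_{-1}$. Using $\qQ_{-1}=t^{-1}\qQ_1^{-1}(\qQ_0^2-1)$ I would compute
\begin{equation*}
\langle 0\vert\qQ_{-1}=t^{-1}\langle 0\vert\qQ_1^{-1}(\qQ_0^2-1)=t^{-1}\langle 0\vert\qQ_1^{-1}\qQ_0^2-t^{-1}\langle 0\vert\qQ_1^{-1}.
\end{equation*}
To apply condition \eqref{zerocond}, the $\qQ_0$'s must be moved to the left past $\qQ_1^{-1}$. From \eqref{qcomaone}, $\qQ_0\qQ_1^{-1}=t^{-1}\qQ_1^{-1}\qQ_0$ is wrong in sign; rather $\qQ_0\qQ_1=t\,\qQ_1\qQ_0$ gives $\qQ_1^{-1}\qQ_0=t\,\qQ_0\qQ_1^{-1}$, hence $\qQ_1^{-1}\qQ_0^2=t^2\qQ_0^2\qQ_1^{-1}$. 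Therefore
\begin{equation*}
\langle 0\vert\qQ_{-1}=t^{-1}\bigl(t^2\langle 0\vert\qQ_0^2\qQ_1^{-1}-\langle 0\vert\qQ_1^{-1}\bigr)=t^{-1}\bigl(t^2\,t^{-2}\langle 0\vert\qQ_1^{-1}-\langle 0\vert\qQ_1^{-1}\bigr)=0,
\end{equation*}
using $\langle 0\vert\qQ_0=t^{-1}\langle 0\vert$ twice to replace $\langle 0\vert\qQ_0^2$ by $t^{-2}\langle 0\vert$. So the two terms cancel and the lemma follows.

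The only subtlety — and the "hard part", though it is really a bookkeeping point — is to make sure the expression $\langle 0\vert\qQ_1^{-1}$ is legitimate in the representation, i.e. that $\qQ_1$ acts invertibly on the cyclic module so that $\langle 0\vert\qQ_1^{-1}$ makes sense, or alternatively to phrase the whole computation multiplied through by $\qQ_1$ on the right so that only $\qQ_{-1}\qQ_1=t^{-1}(\qQ_0^2-1)$ (no inverse) appears; then $\langle 0\vert\qQ_{-1}\qQ_1=t^{-1}\langle 0\vert(\qQ_0^2-1)=t^{-1}(t^{-2}-1)\langle 0\vert\neq 0$ in general, which shows one does need the ring $\mathcal A_+$ to contain $\qQ_1^{-1}$ or at least that $\langle 0\vert\qQ_{-1}$ be interpreted in the completed setting $\mathcal R$ where $\qQ_1^{-1}$ is available. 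I would therefore state at the outset that the computation takes place after expressing everything as Laurent polynomials in $(\qQ_0,\qQ_1)$, exactly as in Lemma \ref{vanilem}, so that $\qQ_{-1}=t^{-1}\qQ_1^{-1}(\qQ_0^2-1)$ is the operative identity, and the cancellation above then gives the result cleanly.
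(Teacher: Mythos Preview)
Your main argument is correct and is essentially identical to the paper's proof: both rewrite $\qQ_{-1}=t^{-1}\qQ_1^{-1}(\qQ_0^2-1)=(t\qQ_0^2-t^{-1})\qQ_1^{-1}$ and then apply $\langle 0\vert\qQ_0=t^{-1}\langle 0\vert$ to obtain the cancellation.

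One small correction to your aside: you wrote $\qQ_{-1}\qQ_1=t^{-1}(\qQ_0^2-1)$, but that is the expression for $\qQ_1\qQ_{-1}$; the correct right-multiplied form is $\qQ_{-1}\qQ_1=t\qQ_0^2-t^{-1}$, which also gives $\langle 0\vert\qQ_{-1}\qQ_1=(t\cdot t^{-2}-t^{-1})\langle 0\vert=0$, so your conclusion that this alternative ``fails'' is mistaken (though of course $\langle 0\vert\qQ_{-1}\qQ_1=0$ alone does not imply $\langle 0\vert\qQ_{-1}=0$ without knowing $\qQ_1$ acts injectively).
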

\begin{proof}
Use the quantum $Q$-system to write 
$\qQ_{-1}=t^{-1}\qQ_1^{-1}(\qQ_0^2-1)=(t\qQ_0^2-t^{-1})\qQ_1^{-1}$,
and then use $\langle 0 \vert \qQ_0=t^{-1}\langle 0 \vert $ to conclude.
\end{proof}

%It is clear that the right action on $\langle 0\vert$ of any expression of the $\qQ$'s
%effectively evaluates $\qQ_1\qQ_0^{-1}$ times that expression at $\qQ_0=1$ in the 
%above sense, upon commuting
%all $\qQ_0$'s to the {\it left} and using the definition \eqref{zerocond}.
We are now in a position to compute all the quantities 
$\langle 0 \vert \prod_i \qQ_i^{n_i}$, which are uniquely determined
by the above definitions, and the fact that the $\qQ$'s obey the quantum $Q$-system relations. 
To this end, let us formulate the following quantum counterpart of the polynomiality property
of cluster algebra that was used in the proof of the KR conjecture in the classical case \cite{DFK}.

\begin{lemma}\label{polyprop}
With the $\qQ_i$, $i\in \Z$, satisfying the quantum $Q$-system \eqref{qaonesys} 
and the commutation relations \eqref{qcomaone},
we have for an arbitrary polynomial ${\p}\in \mathcal A_+$:
$$\langle 0 \vert {\p}=\langle 0\vert \varphi({\p}), $$
where $P(\qQ_1)=\varphi(\p)$ is the map of Lemma  \ref{polpP}.
\end{lemma}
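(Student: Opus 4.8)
The plan is to reduce the statement to the already-established Polynomiality Lemma (Lemma \ref{polpP}) by showing that the pairing $\p \mapsto \langle 0 \vert \p$ is exactly computed by the functional $\varphi$ followed by the representation map. First I would make precise the claim that the cyclic representation of Definition \ref{vacrep} is well-defined: the vectors $\langle m \vert$ span the representation space, and every relation in $\mathcal A_+$ (the quantum $Q$-system \eqref{qaonesys} together with the commutation relations \eqref{qcomaone}) is respected; the normalization \eqref{basisvec} together with \eqref{zerocond} determines $\langle 0 \vert \p$ uniquely for any $\p \in \mathcal A_+$. In particular, by Lemma \ref{lopol}, any $\p \in \mathcal A_+$ can be written as a Laurent polynomial in $(\qQ_0,\qQ_1)$ with coefficients in $\Z_t$, say $\p = \sum_{j\in\Z} \qQ_1^j f_j(\qQ_0)$ after normal-ordering with all $\qQ_0$'s moved to the left.

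Next I would compute $\langle 0 \vert \p$ term by term in this normal-ordered expansion. For $j \geq 0$: using the commutation relation \eqref{qcomaone} to move the $\qQ_0$'s past the $\qQ_1^j$ (picking up the appropriate power of $t$, i.e. $\qQ_1^j f_j(\qQ_0) = f_j(t^{-j}\qQ_0)\qQ_1^j$), and then applying \eqref{zerocond}, we get $\langle 0 \vert \qQ_1^j f_j(\qQ_0) = f_j(t^{-j-1}) \langle 0 \vert \qQ_1^j$. For $j < 0$: I must show $\langle 0 \vert \qQ_1^j = 0$. This is where Lemma \ref{vanione} enters — write $\qQ_1^{-1}$ in terms of $\qQ_{-1}$ via the quantum $Q$-system (as $\qQ_{-1} = t^{-1}\qQ_1^{-1}(\qQ_0^2-1)$, so $\qQ_1^{-1}$ is obtained up to invertible factors by solving), and more generally $\qQ_1^{-j}$ for $j>0$ is a Laurent polynomial in the negatively-indexed variables $\qQ_{-1},\qQ_{-2},\dots$ which by translational invariance and an iterated version of Lemma \ref{vanione} all annihilate $\langle 0 \vert$. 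Actually a cleaner route: expand $\p$ as a Laurent polynomial in the admissible initial data $(\qQ_{-1},\qQ_0)$ using Lemma \ref{lopol} — then $\langle 0 \vert$ kills every term containing a positive power of $\qQ_{-1}$ by Lemma \ref{vanione}, and the surviving terms are Laurent polynomials in $\qQ_0$ alone (powers $\qQ_{-1}^{-p}$, $p\geq 0$) which by \eqref{zerocond} evaluate to scalars; one then checks this scalar matches $\varphi(\p)$ evaluated the same way.

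Combining the surviving $j\geq 0$ terms, $\langle 0 \vert \p = \sum_{j\in\Z_+} f_j(t^{-j-1}) \langle 0 \vert \qQ_1^j = \langle 0 \vert \bigl(\sum_{j\in\Z_+} f_j(t^{-j-1})\qQ_1^j\bigr)$, and by the definition of $\varphi$ extracted from the proof of Lemma \ref{polpP}, the polynomial $P(x) = \sum_{j\in\Z_+} f_j(t^{-j-1})x^j$ is precisely $\varphi(\p)$. Hence $\langle 0 \vert \p = \langle 0 \vert \varphi(\p)$, as desired. The main obstacle I anticipate is the bookkeeping for the $j<0$ (equivalently, positive-$\qQ_{-1}$-power) terms: one must argue rigorously that $\langle 0 \vert$ annihilates all of them, which requires either an induction using Lemma \ref{vanione} and translational invariance (Lemma \ref{translat}) to handle $\qQ_{-2}, \qQ_{-3}, \dots$, or a direct appeal to the fact that $\varphi$ in Lemma \ref{polpP} was built from exactly the same "no constant term in $\qQ_1^{-1}$ hence negative powers drop out" mechanism — so the cleanest proof simply mirrors the proof of Lemma \ref{polpP} step for step, replacing the operations "$CT_{\qQ_1}(\,\cdot\,z^{\ell+1})$ then evaluate at $\qQ_0=1$" by "apply $\langle 0 \vert$", which is legitimate precisely because of Lemma \ref{faclem} and Remark \ref{othereval} identifying these two evaluation procedures.
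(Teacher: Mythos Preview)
Your overall strategy is right and lands on the paper's argument, but your handling of the negative-$\qQ_1$-power terms is where the proposal wobbles. The paper's proof is cleaner at exactly that point: it uses Lemma~\ref{lopol} for \emph{both} initial data sets $(\qQ_0,\qQ_1)$ and $(\qQ_{-1},\qQ_0)$ simultaneously, and from the two Laurent expansions extracts a single hybrid decomposition
\[
\p=\sum_{j\geq 0} \qQ_1^j f_j(\qQ_0) \;+\;\sum_{j>0} \qQ_{-1}^j g_{j}(\qQ_0),
\]
with $f_j,g_j\in\Z_t[\qQ_0^{\pm1}]$. Once you have this, a single application of Lemma~\ref{vanione} kills the entire second sum (no need for $\qQ_{-2},\qQ_{-3},\dots$ or translational invariance), and the first sum is dispatched exactly as you wrote: commute $\qQ_0$'s left, apply \eqref{zerocond}, and recognize $\varphi(\p)$.

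Your ``cleaner route'' paragraph contains an actual error: if you expand $\p$ purely in $(\qQ_{-1},\qQ_0)$ and kill the positive-$\qQ_{-1}$ terms, the survivors are \emph{not} Laurent polynomials in $\qQ_0$ alone---they carry negative powers $\qQ_{-1}^{-p}$, and $\qQ_{-1}^{-1}=t\,\qQ_1(\qQ_0^2-1)^{-1}$ is not a Laurent polynomial of $\qQ_0$. So that route does not by itself produce $\varphi(\p)$. The hybrid decomposition is what makes this work: the non-negative-$\qQ_1$ part is read off from the $(\qQ_0,\qQ_1)$ expansion, and only the residual (which, by matching degrees, consists of \emph{positive} powers of $\qQ_{-1}$) is read off from the $(\qQ_{-1},\qQ_0)$ expansion. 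Your final paragraph is then exactly right, and your anticipated ``main obstacle'' dissolves once the decomposition above is stated.
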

\begin{proof}
By Lemma \ref{lopol}, we may write 
${\p}$ as a Laurent polynomial of 
either initial data $(\qQ_0,\qQ_1)$
or $(\qQ_{-1},\qQ_0)$, and therefore we can write it as
$${\p}=\sum_{j\geq 0} \qQ_1^j f_j(\qQ_0)
 +\sum_{j>0} \qQ_{-1}^j g_{j}(\qQ_0)$$
where both $f_{j}$'s and $g_{j}$'s Laurent polynomials of $\qQ_0$ with coefficients in $\Z[t,t^{-1}]$. 
Using Lemma \ref{vanione}, 
\begin{eqnarray*}
\langle 0 \vert {\p}&=& \sum_{j\geq 0} \langle 0 \vert \qQ_1^j f_{j}(\qQ_0)\\
&=& \langle 0 \vert \sum_{j\geq 0}  f_{j}(t^{-j-1})\qQ_1^j =\langle 0 \vert \varphi({\p})(\qQ_1)
\end{eqnarray*}
\end{proof}

Let $\vert \ell \rangle$, $\ell\in \Z_+$, denote the dual vector basis to that of Def. \ref{vacrep}, 
namely such that 
$$\langle m \vert \ell \rangle=\delta_{m,\ell}\qquad {\rm for} \quad  \ell,m\in\Z_+$$
We deduce the main theorem:

\begin{thm}
With the $\qQ_i$, $i\in \Z$, satisfying the quantum $Q$-system \eqref{qaonesys} 
and the commutation relations \eqref{qcomaone},
we have for an arbitrary polynomial ${\p}\in\mathcal A_+$,
$$ \mu_\ell({\p})= \langle 0 \vert {\p} \vert \ell \rangle $$
\end{thm}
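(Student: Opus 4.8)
The plan is to identify both sides of the claimed identity $\mu_\ell(\p) = \langle 0 \vert \p \vert \ell\rangle$ via the intermediate polynomial $\varphi(\p) \in \Z_t[\qQ_1]$, since by Lemma \ref{polpP} we have $\mu_\ell(\p) = \mu_\ell(\varphi(\p))$ and by Lemma \ref{polyprop} we have $\langle 0 \vert \p = \langle 0 \vert \varphi(\p)$. Thus it suffices to prove the theorem for $\p = \qQ_1^j$, $j\in\Z_+$, i.e. to show $\mu_{\ell,j} = \langle 0 \vert \qQ_1^j \vert \ell\rangle$. First I would reduce further: using the triangular change of basis \eqref{basone}--\eqref{bastwo}, it is enough to check the identity on the basis elements $\p = \qQ_m$, $m\in\Z_+$ — that is, $\mu_\ell(\qQ_m) = \langle 0 \vert \qQ_m \vert \ell\rangle$.

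For the right-hand side, by definition \eqref{basisvec} of the dual basis, $\langle 0 \vert \qQ_m = t^{-m-1}\langle m \vert$ for $m > 0$, so $\langle 0 \vert \qQ_m \vert \ell\rangle = t^{-m-1}\langle m \vert \ell\rangle = t^{-m-1}\delta_{m,\ell}$ (and the $m=0$ case is handled directly by \eqref{zerocond} together with $\langle 0 \vert \ell\rangle = \delta_{0,\ell}$). For the left-hand side, the computation following Lemma \ref{polpP} already records that $\mu_\ell(\qQ_m) = t^{-m-1}\delta_{\ell,m}$: this is the representation-theoretic input that $M_{\ell;\bn}(q^{-1}) = t^{m+1}\mu_\ell(\qQ_m) = \delta_{m,\ell}$ when $\bn = (\delta_{i,m})_i$, since that multiplicity counts $V_{\ell\omega_1}$ inside the KR-module $\KR_m = V_{m\omega_1}$ of $A_1$. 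Matching the two expressions gives the theorem on generators, and hence on all of $\mathcal A_+$ by linearity and the two reduction lemmas.

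The one point requiring care is verifying that Lemma \ref{polyprop} genuinely licenses the passage $\langle 0 \vert \p = \langle 0 \vert \varphi(\p)$ for \emph{arbitrary} $\p \in \mathcal A_+$, not merely Laurent polynomials in $(\qQ_0,\qQ_1)$: this is exactly what Lemma \ref{polyprop} asserts, its proof resting on Lemma \ref{lopol} (the quantum Laurent property) to re-expand $\p$ in the initial data $(\qQ_{-1},\qQ_0)$ and on Lemma \ref{vanione} to kill the $\qQ_{-1}$-part against $\langle 0 \vert$. So I would simply cite these. I expect the main (mild) obstacle to be bookkeeping the normal-ordering conventions so that the ``left evaluation at $\qQ_0 = t^{-1}$'' used in defining $\varphi$ (Remark \ref{othereval}) is applied consistently on both sides — in particular ensuring that the $t$-powers produced by commuting $\qQ_0$'s past $\qQ_1$'s in $\varphi(\p)$ agree with those built into Definition \ref{vacrep} via \eqref{zerocond}. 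Once that is checked, the proof is essentially the chain
\[
\mu_\ell(\p) = \mu_\ell(\varphi(\p)) = \langle 0 \vert \varphi(\p) \vert \ell\rangle = \langle 0 \vert \p \vert \ell\rangle,
\]
where the middle equality is the generator-wise computation above.
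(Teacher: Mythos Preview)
Your proposal is correct and follows essentially the same route as the paper's proof: reduce via Lemmas \ref{polpP} and \ref{polyprop} to checking $\mu_{\ell,j} = \langle 0 \vert \qQ_1^j \vert \ell\rangle$, then use the triangular change of basis \eqref{basone}--\eqref{bastwo} to further reduce to $\mu_\ell(\qQ_m) = \langle 0 \vert \qQ_m \vert \ell\rangle$, which holds directly from Definition \ref{vacrep} and the computation $\mu_\ell(\qQ_m) = t^{-m-1}\delta_{\ell,m}$ recorded after Lemma \ref{polpP}. Your additional commentary on the normal-ordering bookkeeping and the $m=0$ case is more explicit than the paper's terse version, but the underlying argument is identical.
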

\begin{proof}
By Lemmas \ref{polpP} and  \ref{polyprop}, we only have to check that 
$\langle 0 \vert\qQ_1^j \vert \ell \rangle=\mu_{\ell,j}$ for all $\ell,j\in \Z_+$.
Using the ``change of basis" \eqref{bastwo}, this reduces to 
$\langle 0 \vert\qQ_m  \vert \ell \rangle= \mu_\ell(\qQ_m)$, which follows directly from Definition
\ref{vacrep}.
\end{proof}

\begin{remark}\label{chbas}
The change of basis from
$\langle m \vert$, $m\in \Z_+$ to $\langle 0 \vert \qQ_1^{j}$, $j \in \Z_+$
can be made completely explicit, by using the following linear recursion relation, 
satisfied by 
the solution of the quantum $Q$-system \eqref{qaonesys}:
\begin{equation}\label{linrecone}
\qQ_{n+1}+t \qQ_{n-1}=\big( \qQ_1(\qQ_0)^{-1}+t \qQ_{-1}(\qQ_0)^{-1}\big) \qQ_n
\qquad (n\in \Z)
\end{equation}
Using this relation and induction, the change of basis reads:
For all $m,n\in\Z_+$:
\begin{eqnarray}
\langle m\vert &=& \sum_{j=0}^{[m/2]} (-1)^j\,  t^{{m(m+3)\over 2}-j(j+1)}
\left[\begin{matrix}m-j\\ m-2j\end{matrix}\right]_q\,  \langle 0\vert (\qQ_1)^{m-2j}\\
\langle 0\vert (\qQ_1)^n &=&   \sum_{j=0}^{[n/2]} t^{-{(n - 2 j) (n - 2 j + 3)\over 2}}
\left\{\left[\begin{matrix}n\\ j\end{matrix}\right]_q
-\left[\begin{matrix}n\\ j-1\end{matrix}\right]_q\right\} \langle n-2j\vert 
\end{eqnarray}
\end{remark}

%Writing the expression of Lemma \ref{chbas} 
%as $\langle 0 \vert \qQ_1^{j} =\sum_{0\leq m\leq j} d_{m,j}(t) \langle m \vert$, 
%with coefficients $d_{m,j}(t)\in \Z[t,t^{-1}]$, and using Lemma \ref{polyprop},
%we finally get:
%$$\langle 0 \vert \prod_i \qQ_i^{n_i}\vert \ell\rangle 
%=\sum_{0\leq m\leq j\leq a_\bn} f_{\bn,j}(t,t^{-j-1}) d_{\ell,j}(t)\in\Z[t,t^{-1}]$$

%\begin{example}\label{chbasml}
%Using the first few values of the $Q_m$'s \eqref{Qexaone}, we easily get:
%\begin{eqnarray*}
%Q_2&=&t^{-1}((Q_1)^2-1)Q_0^{-1} \\
%Q_3&=&\big(t^{-3}(Q_1)^3-(t^{-1}+t^{-3})Q_1\big)(Q_0)^{-2} -Q_{-1}(Q_0)^{-2} \\
%Q_4&=&\big(t^{-6}(Q_1)^4 -(t^{-2}+t^{-4}+t^{-6})(Q_1)^2+1+t^{-2}+t^{-4}\big)(Q_0)^{-3} 
%-(1+t^{-2})(Q_0)^{-1}-t^2 (Q_{-1})^{2}(Q_0)^{-3} 
%\end{eqnarray*}
%Acting on the vacuum and commuting the $Q_0$'s to the left,
%we get
%\begin{eqnarray*}
%\langle 1 \vert &=& \langle 0 \vert t^2 Q_1 \\
%\langle 2 \vert &=& \langle 0 \vert t^3(t^2 (Q_1)^2-1) \\
%\langle 3 \vert &=& \langle 0 \vert t^4(t^5 (Q_1)^3- (t+t^{3})Q_1)  \\
%\langle 4 \vert &=& \langle 0 \vert t^5(t^9 (Q_1)^4- (t^3+t^{5}+t^7)(Q_1)^2+t^3) 
%\end{eqnarray*}
%which we easily invert into:
%\begin{eqnarray*}
%\langle 0 \vert Q_1 \ \ \ &=& t^{-2} \langle 1 \vert \\
%\langle 0 \vert (Q_1)^2 &=& t^{-5} \langle 2 \vert +t^{-2} \langle 0 \vert \\
%\langle 0 \vert (Q_1)^3 &=& t^{-9} \langle 3 \vert +(t^{-4}+t^{-6}) \langle 1 \vert  \\
%\langle 0 \vert (Q_1)^4 &=& t^{-14} \langle 4 \vert +(t^{-7}+t^{-9}+t^{-11}) \langle 2\vert 
%+(t^{-4}+t^{-8}) \langle 0\vert \end{eqnarray*}
%\end{example}

The above construction is summarized in the following reformulation of $M_{\ell;\bn}$:

\begin{thm} \label{oscirepone}
We have:
$$M_{\ell;\bn}(q^{-1}) = q^{-{1\over 2}\sum_i n_i -{1\over 4}(\ell+\bn\cdot  A\bn)} \langle 0 \vert
\prod_{i=1}^k \qQ_i^{n_i} \vert \ell \rangle $$
Up to prefactors, the fusion product of the KR modules $V_{i\omega_1}$ $n_i$ times, $i=1,2,...,k$
is realized on the representation of Definition \ref{vacrep} by the action of the product of the corresponding
quantum $Q$-system solutions, $\qQ_i^{n_i}$, $i=1,2,...,k$.
\end{thm}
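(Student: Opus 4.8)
The plan is to combine the constant-term formula for $M_{\ell;\bn}(q^{-1})$ from Theorem \ref{compum} with the algebraic reformulation of the $\mu_\ell$ operation developed in Lemmas \ref{faclem}--\ref{polyprop} and the preceding main theorem $\mu_\ell(\p)=\langle 0\vert \p\vert\ell\rangle$. First I would take $\p=\prod_{i=1}^k \qQ_i^{n_i}\in\mathcal A_+$, which is a legitimate element of $\mathcal A_+$ once $k$ is chosen ``sufficiently large'' ($2k\geq\sum_i i\,n_i$, so that by Theorem \ref{compum} the constant term stabilizes and equals the true graded multiplicity). Applying the main theorem directly gives
\begin{equation*}
\mu_\ell\Bigl(\prod_{i=1}^k \qQ_i^{n_i}\Bigr)=\langle 0\vert \prod_{i=1}^k \qQ_i^{n_i}\vert \ell\rangle,
\end{equation*}
and then Theorem \ref{compum} rewrites $\mu_\ell(\prod_i\qQ_i^{n_i})$ as $q^{{1\over 2}\sum_i n_i+{1\over 4}(\ell+\bn\cdot A\bn)}M_{\ell;\bn}(q^{-1})$ (reading off the prefactor from \eqref{ctnsumfin}, remembering $z^{\ell+1}$ is already built into the definition of $\mu_\ell$ in Definition \ref{unav}). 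Rearranging these two identities yields exactly the claimed formula.

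The second sentence of the theorem --- that ``up to prefactors the fusion product is realized on the representation of Definition \ref{vacrep} by the action $\prod_i \qQ_i^{n_i}$'' --- is really a restatement/interpretation rather than a separate computation: the graded multiplicity space $\mathcal F^*_\bn$ decomposes with multiplicities $\mathcal M_{\ell\omega_1,\bn}(q)=M_{\ell;\bn}(q)$ (the $A_1$ case of the Feigin-Loktev identification recalled in Section 2, now available unconditionally via $M=N$ of Theorem \ref{mequalsn} together with the known $\mathcal M(1)=M(1)$ and positivity), so the matrix elements $\langle 0\vert \prod_i\qQ_i^{n_i}\vert\ell\rangle$ of the operator $\prod_i\qQ_i^{n_i}$ in the representation $\mathcal A_+$ literally compute (up to the explicit monomial prefactor in $q$) the graded decomposition numbers of the fusion product. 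I would phrase this paragraph of the proof as simply invoking those earlier identifications and Definition \ref{vacrep}.

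The only genuinely non-routine point is checking that the prefactors match: one must verify that the power of $q$ appearing in \eqref{ctnsumfin}, namely $q^{-{1\over 2}\sum_i n_i-{1\over 4}(\ell+\bn\cdot A\bn)}$, is exactly the one that carries over, with no stray factor of $u=q^{1/4}$ left over from the $Z$-function normalization or from the passage through $\qQ_1\qQ_0^{-1}z^{\ell+1}$ built into $\mu_\ell$. This is bookkeeping one has already done in proving Theorem \ref{compum}, so here it amounts to citing \eqref{ctnsumfin} and Definition \ref{unav} and observing that $\mu_\ell(\prod_i\qQ_i^{n_i})$ on the left of \eqref{ctnsumfin} is precisely $\langle 0\vert\prod_i\qQ_i^{n_i}\vert\ell\rangle$ by the main theorem. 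Thus the main obstacle, such as it is, is purely notational consistency of the $q$-prefactor; the substantive content has all been established in Theorems \ref{mequalsn}, \ref{compum}, the polynomiality Lemmas \ref{polpP} and \ref{polyprop}, and the matrix-element theorem, and the present theorem is their corollary.
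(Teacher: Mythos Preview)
Your proposal is correct and matches the paper's approach exactly: the paper presents Theorem \ref{oscirepone} as a direct summary (``The above construction is summarized in the following reformulation'') of the preceding results, obtained precisely by combining Theorem \ref{compum} with the matrix-element identity $\mu_\ell(\p)=\langle 0\vert \p\vert\ell\rangle$ applied to $\p=\prod_i\qQ_i^{n_i}$. Your reading of the second sentence as interpretation rather than a separate claim is also how the paper treats it.
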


\begin{example}
Let us compute the $n$-fold fusion product of $V_{\omega_1}$, $(V_{\omega_1})^{\star n}$.
For integers $n\geq \ell\geq 0$ with $\ell=n$ mod 2, we have by Remark \ref{chbas}:
$$\langle 0\vert (Q_1)^n \vert \ell\rangle = t^{-{\ell (\ell+ 3)\over 2}}\left\{\left[\begin{matrix}n\\ {n-\ell\over 2}\end{matrix}\right]_q
-\left[\begin{matrix}n\\{n-\ell\over 2}-1\end{matrix}\right]_q\right\}$$
We deduce:
$$M_{\ell;(n,0,0,...)}(q^{-1})=t^{n+{\ell\over 2}+{n^2\over 2}}\langle 0\vert (Q_1)^n \vert \ell\rangle 
= q^{-{(n-\ell) (n+\ell+ 2)\over 4}}\left\{\left[\begin{matrix}n\\ {n-\ell\over 2}\end{matrix}\right]_q
-\left[\begin{matrix}n\\{n-\ell\over 2}-1\end{matrix}\right]_q\right\}$$
This number was computed in \cite{Ke04} in terms of the co-charge $q$-Kostka polynomials:
$$M_{\ell;(n,0,0,...)}(q^{-1})={\tilde K}_{(n-j,j),(1^n)}(q^{-1})=q^{-{n(n-1)\over 2}} K_{(n-j,j),(1^n)}(q) $$
with $j=(n-\ell)/2$, and $K_{\lambda,\mu}(q)$ the $q$-Kostka polynomials.
The above formula agrees with the Hook formula for the $q$-Kostka polynomials \cite{Ke04}:
$$q^{-{n(n-1)\over 2}}K_{(n-j,j),(1^n)}(q) =q^{n(2^j1^{n-2j})-{n(n-1)\over 2}} 
{\prod_{i=1}^n (1-q^{i})\over \prod_{x\in (n-j,j)} (1-q^{h(x)})}$$
where $n(\lambda)=\sum(i-1)\lambda_i$ and $h(x)$ is the hook length of the box $x$ namely
the total number of boxes to the right of $x$ plus those below $x$, plus one. This
gives $n(2^j1^{n-2j})=j(j-1)+n$, so that
$$q^{-{n(n-1)\over 2}}K_{(n-j,j),(1^n)}(q)=q^{-j(n-j)} 
{\prod_{i=1}^n (1-q^{i})\over \prod_{x\in (n-j,j)} (1-q^{h(x)})}$$
This is identical to:
$$ q^{-j(n-j+1)}\left\{\left[\begin{matrix}n\\ j\end{matrix}\right]_q
-\left[\begin{matrix}n\\j-1\end{matrix}\right]_q \right\}=q^{-j(n-j)}\, {\prod_{i=1}^n (1-q^i) \over
\prod_{i=1}^j (1-q^i) \prod_{m=1}^{n-2j} (1-q^m) \prod_{k=n-2j+1}^{n-j}(1-q^{k+1})} $$
equal to $q^{-j(n-j)}$ times the $q$-hook product for the Young diagram with two rows, one of length
$n-j$ and one of length $j$. We conclude that:
$$(V_{\omega_1})^{\star\, n}={\begin{matrix}\oplus \\ {}_{ 0\leq  2j\leq n} \end{matrix}}
{q^{-j(n-j)}\prod_{i=1}^n (1-q^i) \over
\prod_{i=1}^j (1-q^i) \prod_{m=1}^{n-2j} (1-q^m) 
\prod_{k=n-2j+1}^{n-j}(1-q^{k+1})}\, V_{(n-2j)\omega_1}$$
\end{example}

\begin{example}
Let us compute the fusion product $V_{2\omega_1}\star V_{2\omega_1}$. We first express:
$$\langle 0\vert \qQ_2^2=t^{-2}\langle 0\vert (\qQ_1^2-1)\qQ_0^{-1}(\qQ_1^2-1)\qQ_0^{-1}
=\langle 0\vert(t^6\qQ_1^4-(t^2+t^4)\qQ_1^2+1)$$
by commuting all the $\qQ_0$'s to the left.
We then write the result in the $\langle m\vert$ basis, using Lemma \ref{chbas}:
$$\langle 0\vert \qQ_2^2=t^{-8} \langle 4 \vert +t^{-5} \langle 2\vert +t^{-2} \langle 0\vert $$
Collecting the prefactors, we finally get
$$ M_{4;0,2}=1 \qquad M_{2;0,2}=t^2 \qquad M_{0;0,2}=t^4$$
and therefore 
$$V_{2\omega_1}\star V_{2\omega_1}=V_{4\omega_1}\oplus q^{-1} V_{2\omega_1}\oplus q^{-2}V_0$$
\end{example}

\section{Graded tensor multiplicities and the quantum $Q$-system: The case of simply-laced algebras}
%\input{gradedsl.tex}

%\section{The Simply-Laced cases}
We shall repeat the analysis of the $M$ and $N$-sums for arbitrary simply-laced Lie algebras,
with rank $r$ and Cartan matrix $C=(C_{\al,\beta})_{\al,\beta\in  I_r}$. Formally, the arguments are almost 
identical to the case of $A_1$, but we now have to keep track of more indices. For this reason, we will 
sometimes use the shorthand $\bqQ_k$ to mean the set of (commuting) variables $\{\qQ_{1,k},...,\qQ_{r,k}\}$, 
and so on, where $r$ is the rank of the algebra.

\subsection{$M$ and $N$ sums}

Given $(r\times k)$-tuples of integrers $\bm=(m_{\al,i})_{\al\in I_r;i\in[1,k]}$,
$\bn=(n_{\al,i})_{\al\in I_r;i\in[1,k]}$, and integer $r$-tuples $\bell=(\ell_{\al})_{\al\in  I_r}$, 
we define the elements $\bp=(p_{\al,j})_{\al\in I_r,j\in 1,...,k}$ and $\bq_0=(q_{\al,0})_{\al\in I_r}$ as
\begin{eqnarray*}
\bp&=&(I\otimes A) \bn -(C\otimes A) \bm,\\
\bq_{0}&=&\bell-\bp_k,
%\sum_{i,\beta} i(C_{\al,\beta} m_{\beta,i} -\delta_{\al,\beta} n_{\beta,i})\\
%p_{\al,j}&=&\sum_{i,\beta} \min(i,j)(\delta_{\al,\beta}n_{\beta,i}-C_{\al,\beta} m_{\beta,i})\quad (\al\in I_r;j\in[1,k])
\end{eqnarray*}
%Viewing the vectors $\bm,\bn$ as elements of $\Z^r\times \Z^k$,
%and in terms of the matrix 
where the $k\times k$ matrix $A$ has entries $A_{i,j}=\min(i,j)$.
%, we may write $$,
%while $q_{\al,0}=\ell_\al-p_{\al,k}$. 
We also
introduce the quadratic form:
\begin{eqnarray*} Q(\bm,\bn)&=&-{1\over 2} \bm\cdot (\bp+(I\otimes A)\bn)\\
&=&-{1\over 2}(\bm-(C^{-1}\otimes I) \bn)\cdot \bp - {1\over 2}(C^{-1}\otimes I) 
\bn\cdot (I\otimes A)\bn\\
&=&
{1\over 2}(\bm-(C^{-1}\otimes I) \bn)\cdot (C\otimes A)(\bm-(C^{-1}\otimes I) \bn)- {1\over 2}(C^{-1}\otimes I) 
\bn\cdot (I\otimes A)\bn 
\end{eqnarray*}

and the sums:

\begin{eqnarray}
M_{\bl;\bn}^{(k)}(q^{-1})&=&\sum_{m_{i,\al}\in \Z_+\atop
q_{\al,0}=0; \, p_{\al,i}\geq 0}q^{Q(\bm,\bn)}\prod_{\al=1}^r 
\prod_{i=1}^k  \left[\begin{matrix}m_{\al,i}+p_{\al,i} \\ m_{\al,i} \end{matrix}\right]_q\label{Msumsl}\\
N_{\bl;\bn}^{(k)}(q^{-1})&=&\sum_{m_{i,\al}\in \Z_+\atop
q_{\al,0}=0} q^{Q(\bm,\bn)}\prod_{\al=1}^r
\prod_{i=1}^k  \left[\begin{matrix}m_{\al,i}+p_{\al,i} \\ m_{\al,i} \end{matrix}\right]_q\label{Nsumsl}
\end{eqnarray}
Note that the integers $\bq_0$ were introduced for the purpose of imposing the restriction on the 
summation variables (also known as the zero weight condition, see \cite{AK}).

The next sections are devoted to the proof of the following
\begin{thm}\label{M=Nsl}
$$M_{\bl;\bn}^{(k)}(q)=N_{\bl;\bn}^{(k)}(q)$$
for all $\bl \in \Z_+^r$ and all $\bn\in \Z_+^r\times \Z_+^k$.
\end{thm}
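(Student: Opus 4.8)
The plan is to mimic the $A_1$ argument of Section 4 verbatim, keeping track of the extra root index $\al\in I_r$. First I would introduce, for the simply-laced $Q$-system \eqref{qqsys}, the non-commutative generating function
\[
Z_{\bl;\bn}^{(k)}(\bqQ_0,\bqQ_1)=\sum_{\bm\in\Z_+^{r\times k}}
\Big(\prod_\al \qQ_{\al,1}^{-q_{\al,0}}\qQ_{\al,0}^{q_{\al,1}}\Big)\,
q^{\,\overline Q(\bm,\bn)}\prod_{\al,i}\qbin{m_{\al,i}+q_{\al,i}}{m_{\al,i}}_q,
\]
where $q_{\al,j}=p_{\al,j}+q_{\al,0}$ and $\overline Q$ is $Q$ shifted by the appropriate diagonal piece, so that the ``evaluated constant term'' (now taken in all the $\qQ_{\al,1}$, using the commuting definitions of $\CT$ and $|_{\qQ_{\al,0}=1}$ extended variable-by-variable) reproduces $N_{\bl;\bn}^{(k)}(q^{-1})$ up to an explicit power of $q$. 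The relation $\bq_0=\bell-\bp_k$, $\bp=(I\otimes A)\bn-(C\otimes A)\bm$ should be rewritten in the telescoping form analogous to \eqref{qrelation}, expressing $Q$ as a sum over $j$ of squares of the differences $q_{\al,j}-q_{\al,j+1}$ minus the $\bn$-dependent terms; this is exactly Lemma \ref{compuqQ} with the extra index carried along (note that the cross terms among different $\al$ cancel because the quadratic form involves $C\otimes A$ and the commutation factors $t^{\lambda_{\al,\beta}(\cdot)}$ exactly compensate — this is where Lemma \ref{comlem} is used).

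Next I would establish the key factorization. The computation for ``$k=1$'' now involves summing over one vector $\bm_{\cdot,1}=(m_{\al,1})_\al$; applying Lemma \ref{powersum} once per root $\al$ (the variables $\qQ_{\al,0}$ and $\qQ_{\al,1}^{-C_{\al,\al}}=\qQ_{\al,1}^{-2}$ $q$-commute appropriately after the renormalization in \eqref{comq}) and then using the quantum $Q$-system \eqref{qqsys} to recognize the resulting product as $\prod_\al(\qQ_{\al,1}\qQ_{\al,2}^{-1})^{\ell_\al+1}$ times an explicit monomial — the multi-root analogue of \eqref{kone}. Then, by a descending telescoping induction on $k$ identical to Lemma \ref{telescope} together with the translational invariance Lemma \ref{translat} (which holds for \eqref{qqsys} by the same induction as in the $A_1$ case, with $\bqQ_\vm$ an admissible Motzkin-path seed), I would obtain
\[
Z_{\bl;\bn}^{(k)}(\bqQ_0,\bqQ_1)=q^{\ast}\,
\Big(\prod_\al\qQ_{\al,1}\qQ_{\al,0}^{-1}\Big)\Big(\prod_{i=1}^k\prod_\al\qQ_{\al,i}^{n_{\al,i}}\Big)
\prod_\al(\qQ_{\al,k}\qQ_{\al,k+1}^{-1})^{\ell_\al+1},
\]
the analogue of Theorem \ref{factothm}, and hence a partial factorization $Z^{(k)}_{\bl}=Z^{(j)}_{0}\,Z^{(k-j)}_{\bl}$ as in Corollary \ref{factocor}.

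Finally, the $M=N$ identity follows from the polynomiality Lemma \ref{lopol} exactly as in Section 4.5: one runs a descending induction on $j=k,k-1,\dots,1$, restricting the summation to $q_{\al,j'}\geq 0$ for $j'\geq j$ (the base $j=k$ is trivial since $q_{\al,k}=\ell_\al\geq0$), and at each step the contributions with some $q_{\al,j}<0$ produce, after using $q_{\beta,j+1}\geq 0$, a monomial of the form $\big(\prod_\al\qQ_{\al,1}\qQ_{\al,0}^{-1}\big)\prod_{\al,i\le j+1}\qQ_{\al,i}^{\nu_{\al,i}}$ with all $\nu_{\al,i}\in\Z_+$; this vanishes under $\CT_{\bqQ_1}(\cdot)|_{\bqQ_0=1}$ by the multi-root analogue of Lemma \ref{vanilem}. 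For that last vanishing one re-expresses each $\qQ_{\al,i}$ as a Laurent polynomial in the admissible seed $(\bqQ_{-1},\bqQ_0)$ via Lemma \ref{lopol}, notes that $\CT$ in $\qQ_{\al,1}$ picks out the coefficient of $\qQ_{\al,-1}^{1}$, and uses $\qQ_{\al,-1}=t^{-\lambda_{\al,\al}}\qQ_{\al,1}^{-1}(\qQ_{\al,0}^2-\prod_{\beta\neq\al}\qQ_{\beta,0}^{-C_{\al,\beta}})$ so that the factor $(\qQ_{\al,0}^2-\prod_{\beta\neq\al}\qQ_{\beta,0}^{-C_{\al,\beta}})$ appears and vanishes at $\bqQ_0=\bone$ because $\sum_\beta(-C_{\al,\beta})=0$ is false — rather, at $\qQ_{\beta,0}=1$ for all $\beta$ the bracket is $1-1=0$. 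The main obstacle I anticipate is purely bookkeeping: verifying that all the $q$-prefactors and $t^{\lambda_{\al,\beta}(\cdot)}$ commutation phases collected while reordering monomials combine into the single clean power $q^{-\frac12\sum n_{\al,i}-\frac14(\cdots)}$ predicted by the $A_1$ case, which requires careful use of the identity $\sum_\gamma\lambda_{\al,\gamma}C_{\gamma,\beta}=\delta\,\delta_{\al,\beta}$ from the proof of Lemma \ref{comlem}; the structural steps themselves are routine transcriptions of Section 4.
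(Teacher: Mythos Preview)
Your overall architecture matches the paper's: define the non-commutative generating function $Z$, prove factorization via the $k=1$ computation plus translation invariance, then run a descending induction on $j$ using a vanishing lemma. However, the final step contains a genuine gap that is \emph{not} present in rank one.

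You assert that when $q_{\al,j}<0$ for some fixed $\al$, the relevant monomial has the form $\prod_{\gamma,i}\qQ_{\gamma,i}^{\nu_{\gamma,i}}$ with \emph{all} $\nu_{\gamma,i}\in\Z_+$. This is false for $r\ge 2$. The generic term in the factored sum is (up to prefactors and ordering)
\[
\prod_{\gamma,\,i\le j}\qQ_{\gamma,i}^{n_{\gamma,i}}\;\prod_{\beta}\qQ_{\beta,j}^{\,1+q_{\beta,j+1}}\;\prod_{\beta}\qQ_{\beta,j+1}^{\,-1-q_{\beta,j}},
\]
and the induction hypothesis $(H_j)$ only controls $q_{\beta,j+1}\ge 0$; nothing is known about $q_{\beta,j}$ for $\beta\neq\al$. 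Hence the exponent $-1-q_{\beta,j}$ of $\qQ_{\beta,j+1}$ is typically \emph{negative} for those $\beta$. A straight transcription of Lemma~\ref{vanilem} therefore does not apply: that argument needs all exponents non-negative so that the Laurent expansion in the seed $(\bqQ_{-1},\bqQ_0)$ stays a (Laurent) polynomial, and here the inverse powers $\qQ_{\beta,j+1}^{-1-q_{\beta,j}}$ destroy this.

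The paper repairs this with three results that have no $A_1$ analogue. First, a sharpening of the Laurent property (Lemma~\ref{positalpha}): each $\qQ_{\al,i}$ is a \emph{polynomial} in $\{\qQ_{\beta,1},\qQ_{\beta,-1}\}_\beta$ (not merely a Laurent polynomial), proved by matching the two seed expansions and a coprimality argument among the factors $\qQ_{\beta,0}^2-\prod_{\eta\sim\beta}\qQ_{\eta,0}$. Second, a structure lemma for inverses: $\qQ_{\al,i}^{-1}$ is a Laurent series in $\qQ_{\al,1}^{-1}$ whose coefficients are polynomials in $\{\qQ_{\beta,1},\qQ_{\beta,-1}\}_{\beta\neq\al}$. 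These feed into the actual vanishing statement (Lemma~\ref{techlem}): for $P=\prod_j \qQ_{\al,j}^{a_j}\prod_{i,\beta\neq\al}\qQ_{\beta,i}^{b_{\beta,i}}$ with $a_j\ge 0$ but $b_{\beta,i}\in\Z$ \emph{arbitrary}, one has $\CT_{\qQ_{\al,1}}(\rho(P))=0$. Only this mixed-sign version suffices for the induction step. A secondary point you gloss over: even the claim that the bracket $\qQ_{\al,0}^2-\prod_{\beta\sim\al}\qQ_{\beta,0}$ still vanishes after evaluation requires checking that the $t$-phases acquired when commuting it leftwards through $\prod_{\beta\neq\al}\qQ_{\beta,1}\qQ_{\beta,0}^{-1}$ cancel; this is the content of Lemma~\ref{vaniminusone} and uses $C\lambda=\delta I$ in a nontrivial way.
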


\subsection{Generating functions}

We define the integers
$$ q_{\al,j}=q_{\al,0}+p_{\al,j}=\ell_\al+\sum_{i=j+1}^k\sum_{\beta=1}^r 
(i-j)(C_{\al,\beta} m_{\beta,i} -\delta_{\al,\beta} n_{\beta,i})\quad (\al\in I_r;j\in[1,k])$$
which have the property that $\bq|_{\bq_0=0}=\bp$.
As above, we denote by $\bq_i=(q_{\al,i})_{\al\in  I_r}\in\Z^r$, and $\bn_i=(n_{\al,i})_{\al\in  I_r}\in\Z_+^r$.

%In terms of these, we have
\begin{lemma}\label{valQ}
$$Q(\bm,\bn)={1\over 2 \delta}\sum_{j=1}^k \left((\bq_{j-1}-\bq_{j})\cdot \lambda(\bq_{j-1}-\bq_{j})-
(\sum_{i=j+1}^k \bn_{i})\cdot \lambda(\sum_{i=j+1}^k \bn_{i})\right) $$
where the matrix $\lambda$ and the number $\delta$ are defined in \eqref{lambdadef}.
\end{lemma}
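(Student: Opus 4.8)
The plan is to reduce the general simply-laced computation to a reindexed version of the $A_1$ computation carried out in Lemma \ref{compuqQ}, keeping track of the extra tensor factor carrying the Cartan matrix $C$. First I would record the two key properties of the integers $q_{\al,j}$ that are the analogs of \eqref{qrelation}: from the explicit formula,
\begin{equation*}
\bq_{j-1}-\bq_j=\sum_{i=j}^k\bigl((C\otimes \mathbb I)\bm_i-\bn_i\bigr),\qquad
\bq_{j-1}+\bq_{j+1}-2\bq_j=(C\otimes \mathbb I)\bm_j-\bn_j\quad(j\in[1,k]),
\end{equation*}
where $\bq_{k+1}=\bq_k$ (since $q_{\al,0}=\ell_\al-p_{\al,k}$, the "telescoping" structure is identical to the $A_1$ case with $2m_i$ replaced by $C\bm_i$ and the ambient bilinear form $\cdot$ replaced by $\cdot\,\lambda$). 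I would state these as a preliminary observation, obtained directly by inspecting the defining sum for $q_{\al,j}$.

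Next I would start from the middle expression $Q(\bm,\bn)=-\tfrac12\,\bm\cdot(\bp+(\mathbb I\otimes A)\bn)$ given just before the Lemma, and split it as $Q=-\tfrac12\bm\cdot\bp-\tfrac12\bm\cdot(\mathbb I\otimes A)\bn$. The second piece I would handle by the identity $(\mathbb I\otimes A)\bn=\tfrac1\delta(\lambda\otimes\mathbb I)(C\otimes A)\bn$ is not quite what is needed; instead the cleaner route is to use $\bm\cdot(\mathbb I\otimes A)\bn$ together with the relation $\bp=(\mathbb I\otimes A)\bn-(C\otimes A)\bm$ to rewrite everything in terms of $\bp$ and the combination $C\bm$, exactly mirroring the Abel-summation step in the proof of Lemma \ref{compuqQ}. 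Concretely, since $\delta C^{-1}=\lambda$, I would replace each occurrence of $\bm$ paired against something of the form $(C\otimes A)(\cdots)$ by $(C^{-1}\otimes\mathbb I)$-shifted variables, so that the quadratic form becomes $\tfrac1{2\delta}$ times a $\lambda$-weighted sum of squares of differences of partial sums of $C\bm_i-\bn_i$, plus the correction term $-\tfrac1{2\delta}(\sum_i\bn_i)\cdot\lambda(\sum_i\bn_i)$ coming from the $\bn^t A\bn$ piece. The Abel summation over $j$ then converts $\sum_i(C\bm_i-\bn_i)$ into $\bq_{j-1}-\bq_j$ by the preliminary observation, giving the stated formula.

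I expect the main obstacle to be bookkeeping rather than conceptual: correctly tracking which index ($\al\in I_r$ versus $i\in[1,k]$) each matrix ($C$ versus $A$ versus $\lambda$) acts on, and verifying that the cross terms in the Abel summation cancel in exactly the same pattern as in the $A_1$ case (where they collapsed via $((q_{i-1}-q_i)-(q_i-q_{i+1}))(q_0-q_i)$ summing to $\sum(q_{i-1}-q_i)^2$). The substitution $2\lambda_{\al,\beta}-\sum_{\gamma}\lambda_{\al,\gamma}C_{\gamma,\beta}=0$ type identities (already used in the proof of Lemma \ref{comlem}) are the algebraic facts that make the $\lambda$ and $C$ factors compatible, and I would invoke $\lambda C=\delta\,\mathbb I$ repeatedly. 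The cleanest write-up is probably to observe that the map $\bm\mapsto\bm$, $\bn\mapsto\bn$ intertwines the $A_1$ scalar identity of Lemma \ref{compuqQ} with the present one once one replaces the scalar "2" by the matrix $C$ and the trivial pairing by the $\lambda$-pairing, so that no new summation identity is actually needed — only the verification that $Q(\bm,\bn)$ and the partial-sum integers $q_{\al,j}$ transform correctly under this replacement.
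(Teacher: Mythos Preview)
Your plan is correct and follows essentially the same route as the paper. The paper records precisely your two difference identities (labeled \eqref{firstdif} and \eqref{secdif} there), then starts from the already-split form $Q=-\tfrac12(\bm-(C^{-1}\otimes I)\bn)\cdot\bp-\tfrac12(C^{-1}\otimes I)\bn\cdot(I\otimes A)\bn$ (which is the same as your shift of $\bm$ by $(C^{-1}\otimes I)\bn$), applies Abel summation exactly as in Lemma~\ref{compuqQ} with the $\lambda$-pairing in place of the scalar pairing, and obtains the $\bn$-term by setting $\bm=0$ in the same computation; your observation that $\lambda C=\delta\,\mathbb I$ is the only algebraic input needed is also how the paper handles it.
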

\begin{proof}
We note first that 
\begin{eqnarray}
q_{\al,j}-q_{\al,j+1}&=&\sum_{i=j+1}^k \sum_{\beta=1}^r (C_{\al,\beta}m_{\beta,i}-\delta_{\al,\beta}n_{\al,i}) 
\label{firstdif} \\
q_{\al,j-1}+q_{\al,j+1}-2 q_{\al,j}&=&\sum_{\beta=1}^r C_{\al,\beta}m_{\beta,j}-n_{\al,j}\label{secdif}
\end{eqnarray}
valid for $j=1,2,...,k$ provided we define $q_{\al,k+1}=q_{\al,k}$ for all $\al\in I_r$.
Recalling that $C^{-1}={1\over \delta}\lambda$, we compute
\begin{eqnarray*}
-{1\over 2}(\bm-(C^{-1}\otimes I) \bn)\cdot \bp&=&{1\over 2\delta}
\sum_{i=1}^k\sum_{\al,\beta=1}^r 
(\sum_{\gamma=1}^rC_{\al,\gamma}m_{\gamma,i}-n_{\al,i})\lambda_{\al,\beta}(q_{\beta,0}-q_{\beta,i})\\
&=&{1\over 2\delta}\sum_{i=1}^k\sum_{\al,\beta=1}^r ((q_{\al,i-1}-q_{\al,i})-(q_{\al,i}-q_{\al,i+1}))
\lambda_{\al,\beta}(q_{\beta,0}-q_{\beta,i})\\
&=&
{1\over 2 \delta}\sum_{i=1}^k\sum_{\al,\beta=1}^r  (q_{\al,i-1}-q_{\al,i})\lambda_{\al,\beta}(q_{\beta,i-1}-q_{\beta,i})
\end{eqnarray*}
by use of the Abel summation formula. 
The remaining term ${1\over 2}(C^{-1}\otimes I)\bn\cdot (I\otimes A)\bn ={1\over 2\delta}\bn^t (\lambda\otimes A)\bn$
is obtained by formally setting $m_{\al,i}=0$ for all $\al,i$ in the above, and the Lemma follows.
\end{proof}

Let us fix the quantum parameter to be
\begin{equation}\label{cutie} q=t^{-\delta}\end{equation}
with $\delta$ as in \eqref{lambdadef}.
For any ring $R$ and a set of variables $x=\{x_1,...,x_n\}$,
let $R((x))$ denote the ring of formal Laurent series of the variables $x_1,...,x_n$.
As above, we define the generating series for multiplicities
$$Z_{\bl;\bn}^{(k)}(\bqQ_0,\bqQ_1
%\{\qQ_{\al,0}\},\{\qQ_{\al,1}\}
)\in 
\Z_{q^{1\over 2\delta}}[\{\qQ_{\al,0}^{\pm 1}\}_{\al\in I_r}]((\{\qQ_{\al,1}^{-1}\}_{\al\in I_r}))$$
with the non-commutative variables $\{\qQ_{\al,0},\qQ_{\al,1}, \al \in I_r\}$ subject
to the commutation relations \eqref{comq}.
The generating functions are defined as follows:
\begin{eqnarray}
\label{Zsl}\\
Z_{\bl;\bn}^{(k)}(\bqQ_0,\bqQ_1
%\{\qQ_{\al,0}\},\{\qQ_{\al,1}\}
)=\sum_{\underset{\al\in I_r;\, i\in [1,k]}{m_{\al,i}\in\Z_+} } 
q^{\overline{Q}(\bm,\bn)}
%^{{1\over 2\delta}\big(\bq_1\cdot \lambda \bq_1+\sum_{i=1}^{k-1} 
%(\bq_i-\bq_{i+1})\cdot\lambda(\bq_i-\bq_{i+1})\big)}
%\nonumber \\
%&&\qquad\qquad\qquad\times 
\prod_{\al=1}^r (\qQ_{\al,1})^{-q_{\al,0}} 
\prod_{\al=1}^r (\qQ_{\al,0})^{q_{\al,1}} 
\prod_{\al\in I_r\atop i=1,...,k}  \left[\begin{matrix}m_{\al,i}+q_{\al,i} \\ m_{\al,i} \end{matrix}\right]_q
\nonumber
\end{eqnarray}
Here, the modified quadratic function
$${{\overline Q}(\bm,\bn)}={1\over 2\delta}\big(\bq_1\cdot \lambda \bq_1+\sum_{i=1}^{k-1}(\bq_i-\bq_{i+1})
\cdot\lambda(\bq_i-\bq_{i+1})\big)$$ has the property that it is equal to
$Q(\bm,\bn)+{1\over 2\delta}\bn^t  (\lambda\otimes A)\bn$ when $\bq_0=0$,
with $Q(\bm,\bn)$ as in Lemma \ref{valQ}. 

%This generating function is a Laurent series of the variables $\{\qQ_{\al,1}^{-1}\}_{\al\in I_r}$
%with coefficients that are Laurent polynomials of the variables $\{\qQ_{\al,0}\}_{\al\in I_r}$,
%themselves with coefficients in $\C_{t^{1\over 2}}=\C_{q^{1\over 2\delta}}$.

The generating function $Z_{\bl;\bn}^{(k)}(\bqQ_0,\bqQ_1
%\{\qQ_{\al,0}\},\{\qQ_{\al,1}\}
)$ 
is  related to the $N$-sum 
\eqref{Nsumsl} via a constant term and an evaluation. 
For a Laurent series $f(\bqQ_0,\bqQ_1
%\{\qQ_{\al,0}\},\{\qQ_{\al,1}\}
)$
in the variables $\{\qQ_{\al,1}^{-1}\}_{\al\in I_r}$,
with coefficients which are Laurent polynomials of the variables $\{\qQ_{\al,0}\}_{\al\in I_r}$, 
we define the multiple-constant term 
$\CT_{\bqQ_{1}}(f)$ to be the term of total degree $0$ in each of the variables
$(\qQ_{1,1}$, $\qQ_{2,1}$,..., $\qQ_{r,1})$ 
in any formal expansion of $f$.
In particular, if we have a normal-ordered  expansion 
$$f=\sum_{a_1,...,a_r,b_1,...,b_r\in \Z} f_{a_1,...,a_r;b_1,...,b_r}
\prod_{\al=1}^r\qQ_{\al,0}^{a_\al}\, \prod_{\beta=1}^r\qQ_{\beta,1}^{b_\beta},
$$
we have
$$ \CT_{\bqQ_1}(f)=\sum_{a_1,...,a_r}f_{a_1,...,a_r;0,...,0}\prod_{\al=1}^r\qQ_{\al,0}^{a_\al}. $$
Note that the sum is finite.
Likewise, 
we define the multiple evaluation of $f$ at $\qQ_{1,0}=1,...,\qQ_{r,0}=1$ to be the Laurent series:
$$f\big|_{\bqQ_0=1}=\sum_{a_1,...,a_r,b_1,...,b_r} 
f_{a_1,...,a_r;b_1,...,b_r}\prod_{\beta=1}^r\qQ_{\beta,1}^{b_\beta}$$
As in the $A_1$ case, this is a ``left evaluation". The constant term and evaluation maps commute, and their
composition gives:
$$\CT_{\bqQ_1}(f)\big|_{\bqQ_0=1}=\sum_{a_1,...,a_r} f_{a_1,...,a_r;0,...,0} .$$
The same result would be obtained with a ``right evaluation" because all the variables $t$-commute.

We may now express the $N$-sum in terms of $Z_{\bl;\bn}^{(k)}(\bqQ_0,\bqQ_1)$ as:
\begin{equation}
\label{NsumSL} N_{\bl;\bn}^{(k)}(q^{-1})=q^{- {1\over 2\delta}
\bn^t(\lambda\otimes A)\bn } \, 
\CT_{\bqQ_1}\Big( Z_{\bl;\bn}^{(k)}(\bqQ_0,\bqQ_1) 
\Big)\Big\vert_{\bqQ_0=1} 
\end{equation}
where the constant term ensures the condition $\bq_0=0$, and the result agrees with the definition 
\eqref{Nsumsl}, as ${\overline Q}(\bm,\bn)-{1\over 2\delta}
\bn^t (\lambda\otimes A)\bn$ and $Q(\bm,\bn)$ are identical when $\bq_0=0$.

\subsection{Factorization: the case $k=1$}

We first compute $Z_{\bl;\bn}^{(1)}(\bqQ_0,\bqQ_1)$, by explicitly summing
over the variables $\{m_{\al,1}, \al\in  I_r\}$. In this case,
$q_{\al,0}=\ell_\al+\sum_\beta C_{\al,\beta}m_{\beta,1} -n_{\al,1}$ and $q_{\al,1}=\ell_\al$,
so that:
\begin{equation*}
Z_{\bl;\bn}^{(1)}(\bqQ_0,\bqQ_1)=q^{{1\over 2\delta}\bl\cdot \lambda \bl}
\prod_{\al=1}^r \qQ_{\al,1}^{n_{\al,1}-\ell_\al} 
\left(\prod_{\beta=1}^r 
\sum_{m_{\beta,1}\in \Z_+} \big(\prod_{\al=1}^r \qQ_{\al,1}^{-C_{\al,\beta}}\big)^{m_{\beta,1}}
\left[\begin{matrix}m_{\beta,1}+\ell_\beta \\ m_{\beta,1} \end{matrix}\right]_q \right)
\prod_{\gamma=1}^r \qQ_{\gamma,0}^{\ell_\gamma}
\end{equation*}
Using the commutation relations \eqref{comq}, we get:
$$ \qQ_{\gamma,0}\left(\prod_{\al=1}^r \qQ_{\al,1}^{-C_{\al,\beta}}\right)
=t^{-\sum_{\al=1}^r\lambda_{\gamma,\al}C_{\al,\beta}}\,
\left(\prod_{\al=1}^r \qQ_{\al,1}^{-C_{\al,\beta}}\right) \, \qQ_{\gamma,0}=
q^{\delta_{\gamma,\beta}}
\,\left(\prod_{\al=1}^r \qQ_{\al,1}^{-C_{\al,\beta}}\right) \, \qQ_{\gamma,0}$$
by use of $\lambda C=\delta I$ and \eqref{cutie}.
We may consequently apply Lemma \ref{powersum} to each summation, resulting in:
\begin{eqnarray*}
&&\!\!\!\!\!\!\!\! Z_{\bl;\bn}^{(1)}(\bqQ_0,\bqQ_1)=
q^{{1\over 2\delta}\bl\cdot \lambda \bl}
\prod_{\al=1}^r \qQ_{\al,1}^{n_{\al,1}-\ell_\al}  \prod_{\beta=1}^r \qQ_{\beta,0}^{-1} 
\prod_{\gamma=1}^r
\Big(\qQ_{\gamma,0}(1-\prod_{\al=1}^r \qQ_{\al,1}^{-C_{\al,\gamma}})^{-1}\Big)^{\ell_\gamma+1}\\
&=&q^{-{1\over 2 \delta}\sum_{\al=1}^r \lambda_{\al,\al}\ell_\al}
q^{{1\over \delta}\sum_{\al<\beta}\lambda_{\al,\beta}}
\prod_{\al=1}^r \qQ_{\al,1}^{n_{\al,1}+1}
 \prod_{\beta=1}^r \qQ_{\beta,0}^{-1}\prod_{\gamma=1}^r
\Big(\qQ_{\gamma,0} \qQ_{\gamma,1}^{-1} (1-\prod_{\al=1}^r \qQ_{\al,1}^{-C_{\al,\gamma}})^{-1}\Big)^{\ell_\gamma+1}
\end{eqnarray*}
This is easily rewritten as follows, in terms of $\qQ_{\beta,2}=t^{-\lambda_{\beta,\beta}} 
(\qQ_{\beta,1}^{2}-\prod_{\al\neq \beta}\qQ_{\al,1}^{-C_{\al,\beta}})\qQ_{\beta,0}^{-1}$,
the solution of the quantum $Q$-system \eqref{qqsys} with initial
data $(\{\qQ_{\al,0}\},\{\qQ_{\al,1}\})$:
\begin{lemma}\label{zonesl}
\begin{equation}
Z_{\bl;\bn}^{(1)}(\bqQ_0,\bqQ_1)=
q^{-{1\over 2 \delta}\sum_{\al=1}^r \lambda_{\al,\al}\ell_\al}
\, \prod_{\al=1}^r \qQ_{\al,1}^{n_{\al,1}}
\prod_{\beta=1}^r \qQ_{\beta,1}\qQ_{\beta,0}^{-1} \prod_{\gamma=1}^r
\big(\qQ_{\gamma,1}\qQ_{\gamma,2}^{-1}\big)^{\ell_\gamma+1}
\end{equation}
\end{lemma}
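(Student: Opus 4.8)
The plan is to follow the $A_1$ computation verbatim, keeping track of the extra root indices. First I would record the $k=1$ specializations $q_{\al,0}=\ell_\al+\sum_\beta C_{\al,\beta}m_{\beta,1}-n_{\al,1}$ and $q_{\al,1}=\ell_\al$, substitute them into the definition \eqref{Zsl}, and pull the $q^{\overline Q}$ prefactor out: at $k=1$, $\overline Q(\bm,\bn)=\frac{1}{2\delta}\bq_1\cdot\lambda\bq_1=\frac{1}{2\delta}\bl\cdot\lambda\bl$ is independent of $\bm$, so it factors out as $q^{\frac{1}{2\delta}\bl\cdot\lambda\bl}$. This leaves a product over $\beta$ of geometric-type sums $\sum_{m_{\beta,1}}\big(\prod_\al \qQ_{\al,1}^{-C_{\al,\beta}}\big)^{m_{\beta,1}}\qbin{m_{\beta,1}+\ell_\beta}{m_{\beta,1}}_q$, exactly the shape treated by Lemma \ref{powersum}.

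Next I would check the $q$-commutation hypothesis needed to apply Lemma \ref{powersum}: writing $x_\beta:=\prod_\al\qQ_{\al,1}^{-C_{\al,\beta}}$ and $y_\gamma:=\qQ_{\gamma,0}$, the relation \eqref{comq} together with $\lambda C=\delta\mathbb I$ and the choice \eqref{cutie}, $q=t^{-\delta}$, gives $y_\gamma x_\beta = q^{\delta_{\gamma,\beta}}x_\beta y_\gamma$. Thus each sum over $m_{\beta,1}$ is a bona fide quantum-torus power sum in the pair $(x_\beta,\qQ_{\beta,0})$, and Lemma \ref{powersum} converts it to $\qQ_{\beta,0}^{-1}\big(\qQ_{\beta,0}(1-x_\beta)^{-1}\big)^{\ell_\beta+1}$. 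Applying this for every $\beta$ and collecting the monomials $\prod_\al\qQ_{\al,1}^{n_{\al,1}-\ell_\al}$ on the left and $\prod_\gamma\qQ_{\gamma,0}^{\ell_\gamma}$ on the right yields the first displayed intermediate expression.

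The remaining work is bookkeeping with \eqref{comq}: I would move a factor $\prod_\al\qQ_{\al,1}^{-1}$ past the appropriate $\qQ_{\gamma,0}$'s to turn $\big(\qQ_{\gamma,0}(1-x_\gamma)^{-1}\big)^{\ell_\gamma+1}$ into $\big(\qQ_{\gamma,0}\qQ_{\gamma,1}^{-1}(1-x_\gamma)^{-1}\big)^{\ell_\gamma+1}$, each commutation producing an explicit power of $t$; these powers assemble into the prefactor $q^{-\frac{1}{2\delta}\sum_\al\lambda_{\al,\al}\ell_\al}$ after absorbing the symmetric off-diagonal contributions $q^{\frac1\delta\sum_{\al<\beta}\lambda_{\al,\beta}}$ against the starting factor $q^{\frac{1}{2\delta}\bl\cdot\lambda\bl}$ (using $\bl\cdot\lambda\bl=\sum_\al\lambda_{\al,\al}\ell_\al^2+2\sum_{\al<\beta}\lambda_{\al,\beta}\ell_\al\ell_\beta$ and the exponents that \eqref{comq} actually generates). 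Finally, the definition $\qQ_{\gamma,2}=t^{-\lambda_{\gamma,\gamma}}(\qQ_{\gamma,1}^2-\prod_{\al\neq\gamma}\qQ_{\al,1}^{-C_{\al,\gamma}})\qQ_{\gamma,0}^{-1}$ — i.e. the quantum $Q$-system \eqref{qqsys} at $n=1$ — gives the identity $\qQ_{\gamma,1}\qQ_{\gamma,2}^{-1}=\qQ_{\gamma,0}\qQ_{\gamma,1}^{-1}(1-\prod_\al\qQ_{\al,1}^{-C_{\al,\gamma}})^{-1}$ (note the $\prod_{\al\neq\gamma}$ versus $\prod_\al$ discrepancy is absorbed because $C_{\gamma,\gamma}=2$ so $\qQ_{\gamma,1}^{-C_{\gamma,\gamma}}=\qQ_{\gamma,1}^{-2}$, making $1-\prod_\al\qQ_{\al,1}^{-C_{\al,\gamma}}=\qQ_{\gamma,1}^{-2}(\qQ_{\gamma,1}^2-\prod_{\al\neq\gamma}\qQ_{\al,1}^{-C_{\al,\gamma}})$), yielding the stated formula. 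The main obstacle, and the only genuinely delicate point, is verifying that the $t$-powers generated by the successive commutations in this last step combine precisely into $q^{-\frac{1}{2\delta}\sum_\al\lambda_{\al,\al}\ell_\al}$ with no residual factor; this is where the compatibility relation $\lambda C=\delta\mathbb I$ and the symmetry of $\lambda$ must be used carefully, but it is a finite, mechanical check rather than a conceptual difficulty.
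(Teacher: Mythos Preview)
Your proposal is correct and follows essentially the same route as the paper: specialize $k=1$, factor out $q^{\frac{1}{2\delta}\bl\cdot\lambda\bl}$, verify via $\lambda C=\delta\mathbb I$ that $\qQ_{\gamma,0}$ and $x_\beta=\prod_\al\qQ_{\al,1}^{-C_{\al,\beta}}$ $q$-commute so Lemma~\ref{powersum} applies, then reorder using \eqref{comq} and invoke the quantum $Q$-system \eqref{qqsys} to recognize $\qQ_{\gamma,1}\qQ_{\gamma,2}^{-1}$. One small bookkeeping note: the $\ell$-independent factor $q^{\frac{1}{\delta}\sum_{\al<\beta}\lambda_{\al,\beta}}$ is not absorbed against $q^{\frac{1}{2\delta}\bl\cdot\lambda\bl}$ but rather cancels exactly when you pass from $\prod_\al\qQ_{\al,1}^{n_{\al,1}+1}\prod_\beta\qQ_{\beta,0}^{-1}$ to $\prod_\al\qQ_{\al,1}^{n_{\al,1}}\prod_\beta\qQ_{\beta,1}\qQ_{\beta,0}^{-1}$; the $\ell$-dependent powers from the earlier commutations are what turn $q^{\frac{1}{2\delta}\bl\cdot\lambda\bl}$ into $q^{-\frac{1}{2\delta}\sum_\al\lambda_{\al,\al}\ell_\al}$.
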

Note that the products over non-commuting factors in this formula are taken to be ordered with lower 
indices to the left and higher indices to the right.

\subsection{Factorization: the general $k$ case}
We first prove a recursion relation:
\begin{lemma}
\begin{eqnarray}
&&Z_{\bl;\bn}^{(k)}(\bqQ_0,\bqQ_1) \nonumber \\
&&\qquad =q^{-{1\over \delta}\sum_{\al,\beta}n_{\al,1}\lambda_{\al,\beta}}
\prod_{\al=1}^r \qQ_{\al,1}
\prod_{\beta=1}^r \qQ_{\beta,0}^{-1}\prod_{\al=1}^r \qQ_{\al,1}^{n_{\al,1}+1} 
\prod_{\beta=1}^r \qQ_{\beta,2}^{-1}\,
Z_{\bl;\bn'}^{(k-1)}(\bqQ_1,\bqQ_2)\label{recuZsl}
\end{eqnarray}
where $\qQ_{\al,2}$ is the solution of the quantum $Q$-system \eqref{qqsys}
with initial data $(\{\qQ_{\al,0}\},\{\qQ_{\al,1}\})$. 
\end{lemma}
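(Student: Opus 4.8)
The plan is to mimic exactly the proof of Lemma \ref{telescope} in the $A_1$ case, isolating the summation over the first slice of variables $\{m_{\al,1}\}_{\al\in I_r}$. First I would use the relation \eqref{secdif}, namely $q_{\al,0}=2q_{\al,1}-q_{\al,2}+\sum_\beta C_{\al,\beta}m_{\beta,1}-n_{\al,1}$, to express the exponents $-q_{\al,0}$ of $\qQ_{\al,1}$ appearing in \eqref{Zsl} in terms of $q_{\al,1}$, $q_{\al,2}$ and the first-slice summation variables. Crucially, none of the integers $q_{\al,i}$ sitting inside the $q$-binomial coefficients with $i\geq 1$ depends on the $m_{\beta,1}$'s (they only involve $m_{\beta,i}$ with $i>j\geq 1$), so the sum over $\{m_{\al,1}\}$ factors out as a product of $r$ independent geometric-type sums, exactly as in the $k=1$ computation leading to Lemma \ref{zonesl}.

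Next I would carry out that first-slice summation using Lemma \ref{powersum}, together with the key commutation identity $\qQ_{\gamma,0}\big(\prod_\al \qQ_{\al,1}^{-C_{\al,\gamma}}\big)=q^{\delta_{\gamma,\beta}}\big(\prod_\al\qQ_{\al,1}^{-C_{\al,\gamma}}\big)\qQ_{\gamma,0}$ derived from $\lambda C=\delta I$ and the choice \eqref{cutie} $q=t^{-\delta}$. This produces, as in the $k=1$ case, a prefactor times $\prod_\al\qQ_{\al,1}\prod_\beta\qQ_{\beta,0}^{-1}\prod_\al\qQ_{\al,1}^{n_{\al,1}+1}\prod_\beta\qQ_{\beta,2}^{-1}$ (after invoking the quantum $Q$-system \eqref{qqsys} to convert the factors $\qQ_{\gamma,0}(1-\prod_\al\qQ_{\al,1}^{-C_{\al,\gamma}})^{-1}$ into $\qQ_{\gamma,1}\qQ_{\gamma,2}^{-1}$), multiplied by the remaining sum over $\{m_{\al,i}\}_{i\geq 2}$. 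Finally, using the index-shift property $q_{\al,j+1}(\bm,\bn)=q_{\al,j}(\bm',\bn')$ with $\bm'_{\al,i}=m_{\al,i+1}$, $\bn'_{\al,i}=n_{\al,i+1}$ (the simply-laced analog of \eqref{qtrans}), I would recognize the leftover sum as $Z_{\bl;\bn'}^{(k-1)}(\bqQ_1,\bqQ_2)$, giving \eqref{recuZsl}. One has to check the prefactors of $q$ match up: the $k=1$ prefactor $q^{-\frac{1}{2\delta}\sum_\al\lambda_{\al,\al}\ell_\al}$ is split so that part combines with the modified quadratic form $\overline{Q}$ to regenerate $\overline{Q}$ for the shifted data, and the residual piece is precisely $q^{-\frac{1}{\delta}\sum_{\al,\beta}n_{\al,1}\lambda_{\al,\beta}}$; this bookkeeping is the routine part.

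The main obstacle I anticipate is purely notational rather than conceptual: keeping track of the ordering of the non-commuting factors $\qQ_{\al,1}$, $\qQ_{\beta,0}$, $\qQ_{\beta,2}$ through the summation and through the application of Lemma \ref{powersum} (which is stated for a single pair $yx=qxy$), and verifying that the accumulated powers of $t$ from each commutation, summed over $\al,\beta$, collapse via $\lambda C=\delta I$ to the clean exponent displayed in \eqref{recuZsl}. Once \eqref{recuZsl} is in hand, the full factorization theorem for general $k$ will follow by induction on $k$ combined with the translational invariance of the quantum $Q$-system solutions (the simply-laced analog of Lemma \ref{translat}), exactly paralleling the proof of Theorem \ref{factothm}.
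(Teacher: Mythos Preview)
Your proposal is correct and follows essentially the same approach as the paper's own proof: isolate the $\{m_{\al,1}\}$ summation using \eqref{secdif} with $j=1$, perform it via Lemma \ref{powersum} and the commutation identity coming from $\lambda C=\delta I$, rewrite the resulting factor through the quantum $Q$-system as $\prod_\beta\qQ_{\beta,2}^{-q_{\beta,1}-1}\prod_\al\qQ_{\al,1}^{q_{\al,2}}$, and recognize the leftover sum via the index shift $q_{\al,i+1}(\bm,\bn)=q_{\al,i}(\bm',\bn')$. The only point to be careful about is exactly the one you flag, namely tracking the powers of $t$ through the reordering of the non-commuting $\qQ$'s; the paper handles this in one displayed rewriting step and obtains the prefactor $q^{-\frac{1}{\delta}\sum_{\al,\beta}n_{\al,1}\lambda_{\al,\beta}}$ directly, so your anticipated bookkeeping is indeed the whole remaining content.
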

We use the same notation as in the $A_1$ case, with $\bn' = (n_{\al,2}, ...,n_{\al,k})_{\al\in I_r}$ and so forth.

\begin{proof}
From Equation \eqref{secdif} with $j=1$, 
$$q_{\al,0}=2 q_{\al,1}-q_{\al,2} +\sum_{\beta=1}^rC_{\al,\beta}m_{\beta,1}-n_{\al,1}.
$$
One can explicitly perform the summations over $m_{\al,1}\in \Z_+$ for all $\al$,
using Lemma \ref{powersum}. Then \eqref{Zsl} can be re-written as:
\begin{eqnarray*}
&Z_{\bl;\bn}^{(k)}(\bqQ_0,\bqQ_1)&=
\sum_{m_{\al,2},...,m_{\al,k}\in \Z_+\atop \al\in  I_r} 
q^{{1\over 2\delta}\big(\bq_2\cdot \lambda \bq_2+\sum_{i=2}^{k-1} 
(\bq_i-\bq_{i+1})\cdot\lambda(\bq_i-\bq_{i+1})\big)} \prod_{i=2}^k
\left[\begin{matrix}m_{\al,i}+q_{\al,i} \\ m_{\al,i} \end{matrix}\right]_q 
\nonumber \\
&\times \ q^{{1\over \delta}\bq_1\cdot \lambda(\bq_1-\bq_2)}&
\prod_{\al=1}^r \qQ_{\al,1}^{q_{\al,2}+n_{\al,1}-2 q_{\al,1}}
\prod_{\beta=1}^r \qQ_{\beta,0}^{-1}\Big(\qQ_{\beta,0} (1-\prod_{\al=1}^r 
\qQ_{\al,1}^{-C_{\al,\beta}})^{-1}\Big)^{q_{\beta,1}+1}
\end{eqnarray*}
We use the commutation relations \eqref{comq} and the quantum $Q$-system to rewrite the last factor as
\begin{eqnarray*}
&&\!\!\!\!\!\!\!\!\! \!\!\!\!\!\!\!\!\! q^{{1\over \delta}\bq_1\cdot \lambda(\bq_1-\bq_2)}
\prod_{\al=1}^r \qQ_{\al,1}^{q_{\al,2}+n_{\al,1}-2 q_{\al,1}}
\prod_{\beta=1}^r \qQ_{\beta,0}^{-1}\Big(\qQ_{\beta,0} (1-\prod_{\al=1}^r 
\qQ_{\al,1}^{-C_{\al,\beta}})^{-1}\Big)^{q_{\beta,1}+1}\\
&&\qquad\qquad\qquad=q^{-{1\over \delta}\sum_{\al,\beta}n_{\al,1}\lambda_{\al,\beta}}
\prod_{\al=1}^r \qQ_{\al,1}
\prod_{\beta=1}^r \qQ_{\beta,0}^{-1}\prod_{\al=1}^r \qQ_{\al,1}^{n_{\al,1}+1} \prod_{\beta=1}^r 
 \qQ_{\beta,2}^{-q_{\beta,1}-1} \prod_{\al=1}^r \qQ_{\al,1}^{q_{\al,2}}
\end{eqnarray*}
and the Lemma follows, since, as before, as $q_{\al,i+1}(\bm,\bn)=q_{\al,i}(\bm',\bn')$, where $\bm'$ 
are the new summation variables, and
the arguments are changed to $(\{\qQ_{\al,1}\},\{\qQ_{\al,2}\})$.
\end{proof}

Writing the solution
of the quantum $Q$-system as $\qQ_{\al,n}(\{\qQ_{\al,0}\},\{\qQ_{\al,1}\})$ 
to display its dependence on initial conditions,
we shall now use the following translational invariance property of the $Q$-system:
\begin{lemma}
For any solution of the quantum $Q$-system \eqref{qqsys}, we have:
$$\qQ_{\al,n}(\{\qQ_{\al,j}\},\{\qQ_{\al,j+1}\})=\qQ_{\al,n+j}(\{\qQ_{\al,0}\},\{\qQ_{\al,1}\})
\qquad (n\in\Z;j\in\Z_+;\al\in I_r).$$
\end{lemma}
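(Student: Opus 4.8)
The plan is to mirror the proof of Lemma \ref{translat} from the $A_1$ case, carrying the extra root index through the induction. Write $\qQ^{(j)}_{\al,n}$ for the solution of the quantum $Q$-system \eqref{qqsys} whose initial data is the constant Motzkin path at level $j$, i.e. $\qQ^{(j)}_{\al,0}=\qQ_{\al,j}$ and $\qQ^{(j)}_{\al,1}=\qQ_{\al,j+1}$ for all $\al\in I_r$ (the constant vector $(j,\dots,j)$ is admissible in the sense of \eqref{admissible}, since all of its entries coincide, so $\qY_{(j,\dots,j)}$ is a valid initial data set). The assertion is $\qQ^{(j)}_{\al,n}=\qQ_{\al,n+j}$ for all $\al\in I_r$ and $n\in\Z$. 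The base cases $n=0,1$ hold by definition, and I would prove the general case by two inductions, one ascending ($n\geq 2$) and one descending ($n\leq -1$).

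For the ascending step, apply \eqref{qqsys} at level $n-1$ to the solution $\qQ^{(j)}$, which gives
\[
\qQ^{(j)}_{\al,n}=t^{-\lambda_{\al,\al}}\Big((\qQ^{(j)}_{\al,n-1})^2-\prod_{\beta\neq\al}(\qQ^{(j)}_{\beta,n-1})^{-C_{\al,\beta}}\Big)(\qQ^{(j)}_{\al,n-2})^{-1},
\]
where the factor $\qQ^{(j)}_{\al,n-2}$ is invertible because the variables $(\qQ^{(j)}_{\al,n-2},\qQ^{(j)}_{\al,n-1})_{\al\in I_r}$ form a valid initial data set (again a constant Motzkin path, at a shifted level), and the bracketed expression is a polynomial in the mutually commuting variables $\{\qQ^{(j)}_{\beta,n-1}\}_{\beta\in I_r}$ (equal-level variables commute since $\lambda_{\al,\beta}\cdot 0=0$, and $-C_{\al,\beta}\geq 0$ for $\beta\neq\al$). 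Feeding in the induction hypothesis for the indices $n-1$ and $n-2$, for every root index, turns the right-hand side into $t^{-\lambda_{\al,\al}}\big(\qQ_{\al,n-1+j}^2-\prod_{\beta\neq\al}\qQ_{\beta,n-1+j}^{-C_{\al,\beta}}\big)\qQ_{\al,n-2+j}^{-1}$, which is exactly $\qQ_{\al,n+j}$ by \eqref{qqsys} once more. The descending step is identical after solving \eqref{qqsys} at level $n+1$ for $\qQ^{(j)}_{\al,n}=t^{-\lambda_{\al,\al}}(\qQ^{(j)}_{\al,n+2})^{-1}\big((\qQ^{(j)}_{\al,n+1})^2-\prod_{\beta\neq\al}(\qQ^{(j)}_{\beta,n+1})^{-C_{\al,\beta}}\big)$.

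There is no genuine structural obstacle here; the only point that needs care — the ``main obstacle'', such as it is — is the non-commutative bookkeeping when the inverse of $\qQ^{(j)}_{\al,n\mp 2}$ is moved onto the bracketed term, so that the resulting identity reproduces the side conventions and the $t$-prefactor of \eqref{qqsys} exactly. This is handled in the same spirit as Lemma \ref{comlem}: the relevant relations are $\qQ_{\al,m}\qQ_{\al,m'}=t^{\lambda_{\al,\al}(m'-m)}\qQ_{\al,m'}\qQ_{\al,m}$ together with the commutativity of equal-level variables. With this in hand the two inductions close, using only \eqref{qqsys}, Lemma \ref{comlem}, and the Laurent property Lemma \ref{lopol}, and no further input is required.
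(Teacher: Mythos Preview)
Your proposal is correct and follows essentially the same route as the paper, which simply says ``The proof is similar to the case of $A_1$'' (Lemma \ref{translat}) and gives no further details. Your explicit treatment of the descending induction for $n\leq -1$ is in fact slightly more complete than what the paper provides, since the $A_1$ lemma was stated only for $n\in\Z_+$ while the simply-laced version claims $n\in\Z$; but this is a straightforward elaboration of the same argument, not a different method.
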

The proof is similar to the case of $A_1$. This property allows to iterate the recursion relation \eqref{recuZsl}, 
which yields the factorization:
\begin{eqnarray*}
Z_{\bl;\bn}^{(k)}(\bqQ_0,\bqQ_1)&=&
q^{-{1\over \delta}\sum_{\al,\beta,i}n_{\al,i}\lambda_{\al,\beta}-{1\over 2\delta}\sum_\al \ell_\al\lambda_{\al,\al}}
q^{{1\over \delta}\sum_{\al<\beta}\lambda_{\al,\beta}} \\
&\times&\prod_{\al=1}^r \qQ_{\al,1}
\prod_{\beta=1}^r \qQ_{\beta,0}^{-1} \prod_{i=1}^k  \prod_{\al=1}^r\qQ_{\al,i}^{n_{\al,i}}
\prod_{\beta=1}^r
\big(\qQ_{\beta,k}\qQ_{\beta,k+1}^{-1}\big)^{\ell_\beta+1} 
\end{eqnarray*}
Using the commutation relations between the $\qQ$'s we finally arrive at:

\begin{thm}\label{zkfin}
\begin{eqnarray*}
Z_{\bl;\bn}^{(k)}(\bqQ_0,\bqQ_1)&=&
q^{-{1\over \delta}\sum_{\al,\beta,i}n_{\al,i}\lambda_{\al,\beta}-{1\over 2\delta}\sum_\al \ell_\al\lambda_{\al,\al}} \\
&\times&\left(\prod_{\al=1}^r \qQ_{\al,1}\qQ_{\al,0}^{-1}\right)
\left( \prod_{i=1}^k  \prod_{\al=1}^r\qQ_{\al,i}^{n_{\al,i}}
\right)\left(
\prod_{\beta=1}^r
\big(\qQ_{\beta,k}\qQ_{\beta,k+1}^{-1}\big)^{\ell_\beta+1} \right)
\end{eqnarray*}
\end{thm}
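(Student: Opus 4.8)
The plan is to prove Theorem \ref{zkfin} by induction on $k$, mirroring exactly the strategy used for the $A_1$ case in Theorem \ref{factothm}. The base case $k=1$ is precisely Lemma \ref{zonesl}, up to the elementary identity $\tfrac1{2\delta}\bl\cdot\lambda\bl - \tfrac1\delta\sum_\al\lambda_{\al,\al}\ell_\al = -\tfrac1{2\delta}\sum_\al\ell_\al\lambda_{\al,\al} + \tfrac1\delta\sum_{\al<\beta}\lambda_{\al,\beta}\cdot(\text{something})$; more honestly, one simply checks that the prefactor $q^{-\frac1{2\delta}\sum_\al\lambda_{\al,\al}\ell_\al}$ in Lemma \ref{zonesl} together with the ordered monomial $\prod_\al\qQ_{\al,1}^{n_{\al,1}}\prod_\beta\qQ_{\beta,1}\qQ_{\beta,0}^{-1}\prod_\gamma(\qQ_{\gamma,1}\qQ_{\gamma,2}^{-1})^{\ell_\gamma+1}$ agrees with the $k=1$ specialization of the claimed formula after commuting factors past each other; the $q^{\frac1\delta\sum_{\al<\beta}\lambda_{\al,\beta}}$ terms are exactly the Weyl-type phases produced by this reordering. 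So the base case reduces to bookkeeping with \eqref{comq}.

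For the inductive step I would assume the formula holds with $k$ replaced by $k-1$, and feed it into the recursion relation \eqref{recuZsl}. Applying the induction hypothesis to $Z_{\bl;\bn'}^{(k-1)}(\bqQ_1,\bqQ_2)$ gives an expression in the variables $\qQ_{\al,n}(\{\qQ_{\al,1}\},\{\qQ_{\al,2}\})$ for $n$ ranging appropriately; then the translational invariance lemma (the simply-laced analogue of Lemma \ref{translat}, stated just before Theorem \ref{zkfin}) lets me replace each such $\qQ_{\al,n}(\{\qQ_{\al,1}\},\{\qQ_{\al,2}\})$ by $\qQ_{\al,n+1}(\{\qQ_{\al,0}\},\{\qQ_{\al,1}\})$. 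At this point the product telescopes: the factor $\prod_\beta\qQ_{\beta,2}^{-1}$ coming from \eqref{recuZsl} cancels against a $\prod_\beta\qQ_{\beta,2}$ hidden in the shifted $\prod_\al\qQ_{\al,1}\qQ_{\al,0}^{-1}$-type head of the $(k-1)$-term (after the index shift it becomes $\prod_\al\qQ_{\al,2}\qQ_{\al,1}^{-1}$), leaving the single head factor $\prod_\al\qQ_{\al,1}\qQ_{\al,0}^{-1}$, the full ordered product $\prod_{i=1}^k\prod_\al\qQ_{\al,i}^{n_{\al,i}}$, and the tail $\prod_\beta(\qQ_{\beta,k}\qQ_{\beta,k+1}^{-1})^{\ell_\beta+1}$ — exactly as in the $A_1$ proof of Theorem \ref{factothm}.

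The remaining work is purely the power-of-$q$ accounting. One must check that the prefactor $q^{-\frac1\delta\sum_{\al,\beta}n_{\al,1}\lambda_{\al,\beta}}$ from \eqref{recuZsl}, times the inductive prefactor $q^{-\frac1\delta\sum_{\al,\beta,i\geq2}n_{\al,i}\lambda_{\al,\beta}-\frac1{2\delta}\sum_\al\ell_\al\lambda_{\al,\al}}$ (with indices shifted), times all the $t$-phases generated by reordering the non-commuting monomials via \eqref{commutq} into the canonical order (lower index left, higher index right, and $\al<\beta$ within a fixed index), collapses to the single clean exponent $-\frac1\delta\sum_{\al,\beta,i}n_{\al,i}\lambda_{\al,\beta}-\frac1{2\delta}\sum_\al\ell_\al\lambda_{\al,\al}$ displayed in Theorem \ref{zkfin}. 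The key algebraic facts that make the cross-terms cancel are $\lambda C=\delta\mathbb I$, the symmetry $\lambda_{\al,\beta}=\lambda_{\beta,\al}$, and the commutation law \eqref{commutq} which produces phase $t^{\lambda_{\al,\beta}(m-n)}$; these are precisely the identities already invoked in the proof of Lemma \ref{comlem} and in the $k=1$ computation, so no new input is needed. I expect this phase-tracking to be the only real obstacle: it is routine but genuinely intricate because one is simultaneously commuting $2r$-type blocks of variables and matching them against two separately-derived $q$-exponents, and a sign or factor-of-$\tfrac12$ slip there is the natural failure mode. Everything else — the telescoping, the index shifts, the use of translational invariance — is a verbatim transcription of the $A_1$ argument with $\al$-indices carried along.
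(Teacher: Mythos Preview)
Your proposal is correct and follows essentially the same route as the paper: the paper also proves the theorem by iterating the recursion \eqref{recuZsl} (equivalently, induction on $k$ with base case Lemma \ref{zonesl}), invoking the translational-invariance lemma to shift indices, and then using the commutation relations \eqref{comq} to reorder the head factor $\prod_\al \qQ_{\al,1}\prod_\beta \qQ_{\beta,0}^{-1}$ into $\prod_\al \qQ_{\al,1}\qQ_{\al,0}^{-1}$, absorbing the stray phase $q^{\frac1\delta\sum_{\al<\beta}\lambda_{\al,\beta}}$ in the process. Your assessment that the only genuine work is the $t$-phase bookkeeping, governed by $\lambda C=\delta\mathbb I$ and the symmetry of $\lambda$, is accurate and matches what the paper leaves implicit.
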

We have the subsequent obvious factorization:
\begin{cor}\label{corsl}
$$Z_{\bl;\bn_1,...,\bn_k}^{(k)}(\bqQ_0,\bqQ_1)=
Z_{0;\bn_1,...,\bn_j}^{(j)}(\bqQ_0,\bqQ_1) \, 
Z_{\ell;\bn_{j+1},...,\bn_k}^{(k-j)}(\bqQ_j,\bqQ_{j+1})$$
\end{cor}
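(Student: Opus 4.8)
The plan is to follow the proof of Corollary \ref{factocor} in the $A_1$ case almost verbatim, extracting the factorization directly from the closed product formula of Theorem \ref{zkfin}. First I would write $Z_{\bl;\bn}^{(k)}(\bqQ_0,\bqQ_1)$ as the right-hand side of Theorem \ref{zkfin}, split the central product over the level index $i$ at $j$,
$$
\prod_{i=1}^k \prod_{\al=1}^r \qQ_{\al,i}^{n_{\al,i}}
= \Big( \prod_{i=1}^j \prod_{\al=1}^r \qQ_{\al,i}^{n_{\al,i}} \Big)
  \Big( \prod_{i=j+1}^k \prod_{\al=1}^r \qQ_{\al,i}^{n_{\al,i}} \Big),
$$
(both factors ordered, as in Theorem \ref{zkfin}, with lower indices to the left), and insert between the two halves the product $\big(\prod_{\beta=1}^r \qQ_{\beta,j}\qQ_{\beta,j+1}^{-1}\big)\big(\prod_{\al=1}^r \qQ_{\al,j+1}\qQ_{\al,j}^{-1}\big)$. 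Likewise I would split the scalar prefactor of Theorem \ref{zkfin}, using that $\sum_{\al,\beta,i} n_{\al,i}\lambda_{\al,\beta} = \sum_{\al,\beta}\sum_{i=1}^j n_{\al,i}\lambda_{\al,\beta} + \sum_{\al,\beta}\sum_{i=j+1}^k n_{\al,i}\lambda_{\al,\beta}$, and assigning the whole $-{1\over 2\delta}\sum_\al \ell_\al\lambda_{\al,\al}$ term to the second block.

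By Theorem \ref{zkfin} applied with $k$ replaced by $j$ and $\bl$ set to zero, the resulting first block
$$
q^{-{1\over \delta}\sum_{\al,\beta}\sum_{i=1}^j n_{\al,i}\lambda_{\al,\beta}}
\Big( \prod_{\al=1}^r \qQ_{\al,1}\qQ_{\al,0}^{-1} \Big)
\Big( \prod_{i=1}^j \prod_{\al=1}^r \qQ_{\al,i}^{n_{\al,i}} \Big)
\Big( \prod_{\beta=1}^r \qQ_{\beta,j}\qQ_{\beta,j+1}^{-1} \Big)
$$
is precisely $Z_{0;\bn_1,\dots,\bn_j}^{(j)}(\bqQ_0,\bqQ_1)$. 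For the second block I would use the translational invariance lemma (the simply-laced analogue of Lemma \ref{translat}), $\qQ_{\al,n}(\{\qQ_{\al,j}\},\{\qQ_{\al,j+1}\}) = \qQ_{\al,n+j}(\{\qQ_{\al,0}\},\{\qQ_{\al,1}\})$: since $(\bqQ_j,\bqQ_{j+1})$ satisfies the commutation relations \eqref{comq} by Lemma \ref{comlem}, Theorem \ref{zkfin} applies in the initial data $(\bqQ_j,\bqQ_{j+1})$ with $k$ replaced by $k-j$ and $\bn$ replaced by $(\bn_{j+1},\dots,\bn_k)$, and after substituting $\qQ_{\al,m}(\bqQ_j,\bqQ_{j+1}) = \qQ_{\al,m+j}$ one recognizes its right-hand side exactly as the second block above. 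Assembling the two identifications gives $Z_{\bl;\bn_1,\dots,\bn_k}^{(k)}(\bqQ_0,\bqQ_1) = Z_{0;\bn_1,\dots,\bn_j}^{(j)}(\bqQ_0,\bqQ_1)\, Z_{\bl;\bn_{j+1},\dots,\bn_k}^{(k-j)}(\bqQ_j,\bqQ_{j+1})$.

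The one step that needs genuine care is checking that the inserted product $\big(\prod_{\beta}\qQ_{\beta,j}\qQ_{\beta,j+1}^{-1}\big)\big(\prod_{\al}\qQ_{\al,j+1}\qQ_{\al,j}^{-1}\big)$ really equals $1$ --- unlike in the $A_1$ case, where there is a single root and this is the trivial cancellation $\qQ_j\qQ_{j+1}^{-1}\qQ_{j+1}\qQ_j^{-1}$, it now uses the $t$-commutation relations \eqref{comq}: sliding the level-$j$ factors past the level-$(j+1)$ factors shows that each of the two ordered products equals, up to a common power of $t$, $\big(\prod_{\al}\qQ_{\al,j}\big)\big(\prod_{\al}\qQ_{\al,j+1}\big)^{-1}$ and its inverse, so the $t$-powers and the monomials cancel. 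One must also check that the reordering needed to bring the split expression back into the ordered form of Theorem \ref{zkfin} introduces no extra factors, again by \eqref{comq}; both verifications are routine since every exponent is linear in the integers occurring, exactly as the factor $t^{ab}$ was harmless in the $A_1$ computations.
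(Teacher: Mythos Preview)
Your approach is correct and is exactly the one the paper intends: it calls Corollary \ref{corsl} the ``obvious factorization'' following Theorem \ref{zkfin}, and the intended argument is precisely the split-and-identify procedure of Corollary \ref{factocor} that you have spelled out. Your extra care in verifying that the inserted product $\big(\prod_{\beta}\qQ_{\beta,j}\qQ_{\beta,j+1}^{-1}\big)\big(\prod_{\al}\qQ_{\al,j+1}\qQ_{\al,j}^{-1}\big)$ equals $1$ is justified and correct --- the factors $\qQ_{\al,j}\qQ_{\al,j+1}^{-1}$ pairwise commute by \eqref{comq}, so the two ordered products are indeed mutual inverses --- and this is the only point where the higher-rank case needs a word more than the $A_1$ case.
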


\subsection{Proof of the $M=N$ identity}

Our task is to prove Theorem \ref{M=Nsl}. We need some preliminary Lemmas.

\begin{lemma}\label{comalbet}
We have the commutation relation:
$$ \qQ_{\al,-1}\qQ_{\gamma,1}=t^{2\lambda_{\al,\gamma}}\qQ_{\gamma,1}\qQ_{\al,-1} 
\quad {\rm for} \ {\rm all} \ \ \al\neq \gamma. $$
\end{lemma}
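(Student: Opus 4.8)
The plan is to derive this directly from the quantum $Q$-system \eqref{qqsys} at the boundary index $n=0$ together with the level-$\{0,1\}$ commutation relations \eqref{comq}, rather than from Lemma \ref{comlem}. Indeed, Lemma \ref{comlem} does not apply here: the indices $-1$ and $1$ differ by $2$, so $\qQ_{\al,-1}$ and $\qQ_{\gamma,1}$ need not belong to a common valid initial-data set $\qY_\vm$ (the Motzkin condition in \eqref{admissible} is violated when $C_{\al,\gamma}=-1$), so the argument must be run by hand.

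First I would use \eqref{qqsys} with $n=0$ to write
$$
\qQ_{\al,-1}=t^{-\lambda_{\al,\al}}\,\qQ_{\al,1}^{-1}\Big(\qQ_{\al,0}^{2}-\prod_{\beta\neq\al}\qQ_{\beta,0}^{-C_{\al,\beta}}\Big),
$$
and then move $\qQ_{\gamma,1}$ from the right of this expression to the left. Since $\qQ_{\al,1}$ and $\qQ_{\gamma,1}$ commute (both are at level $1$), so do $\qQ_{\al,1}^{-1}$ and $\qQ_{\gamma,1}$. For each level-$0$ factor we have $\qQ_{\beta,0}\qQ_{\gamma,1}=t^{\lambda_{\beta,\gamma}}\qQ_{\gamma,1}\qQ_{\beta,0}$ by \eqref{comq}, hence
$$
\qQ_{\al,0}^{2}\,\qQ_{\gamma,1}=t^{2\lambda_{\al,\gamma}}\,\qQ_{\gamma,1}\,\qQ_{\al,0}^{2},\qquad
\Big(\prod_{\beta\neq\al}\qQ_{\beta,0}^{-C_{\al,\beta}}\Big)\qQ_{\gamma,1}
=t^{-\sum_{\beta\neq\al}C_{\al,\beta}\lambda_{\beta,\gamma}}\,\qQ_{\gamma,1}\Big(\prod_{\beta\neq\al}\qQ_{\beta,0}^{-C_{\al,\beta}}\Big).
$$
The key point is that the two $t$-exponents coincide. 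This is exactly the cancellation identity already used in the proof of Lemma \ref{comlem}: from $\lambda=\delta C^{-1}$ one has $\sum_{\beta}C_{\al,\beta}\lambda_{\beta,\gamma}=\delta\,\delta_{\al,\gamma}=0$ for $\al\neq\gamma$, and since $C_{\al,\al}=2$ this gives $-\sum_{\beta\neq\al}C_{\al,\beta}\lambda_{\beta,\gamma}=2\lambda_{\al,\gamma}$. Therefore the whole parenthesis $\qQ_{\al,0}^{2}-\prod_{\beta\neq\al}\qQ_{\beta,0}^{-C_{\al,\beta}}$ commutes with $\qQ_{\gamma,1}$ up to the single scalar $t^{2\lambda_{\al,\gamma}}$, and combining this with the commutation of $\qQ_{\al,1}^{-1}$ and $\qQ_{\gamma,1}$ yields $\qQ_{\al,-1}\qQ_{\gamma,1}=t^{2\lambda_{\al,\gamma}}\qQ_{\gamma,1}\qQ_{\al,-1}$.

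There is no real obstacle here — it is a short computation whose only delicate ingredient is the matching of the two exponents, which is precisely $2\lambda_{\al,\gamma}-\sum_{\gamma'\neq\al}\lambda_{\gamma',\gamma}C_{\al,\gamma'}=\delta\,\delta_{\al,\gamma}=0$. The one thing I would be careful to state explicitly is why Lemma \ref{comlem} cannot simply be invoked, as noted above, so that the reader sees the boundary case genuinely needs this separate (if routine) verification.
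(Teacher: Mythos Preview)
Your direct computation is correct and proves the lemma. However, your framing is mistaken: Lemma \ref{comlem} \emph{does} apply here, and that is precisely how the paper proves it. The Motzkin path $\vm=(m_\beta)_{\beta\in I_r}$ with $m_\beta=-\delta_{\beta,\al}$ satisfies the condition in \eqref{admissible}, since $|m_\al-m_\gamma|=|-1-0|=1\leq 1$ for every $\gamma\neq\al$. The corresponding initial-data set $\qY_\vm=(\qQ_{\beta,m_\beta},\qQ_{\beta,m_\beta+1})_{\beta\in I_r}$ then contains $\qQ_{\al,-1}$ (as $\qQ_{\al,m_\al}$) and $\qQ_{\gamma,1}$ (as $\qQ_{\gamma,m_\gamma+1}$) simultaneously, so Lemma \ref{comlem} gives the relation immediately. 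Your error is to read the Motzkin condition as a constraint on the second indices $-1$ and $1$ themselves; in fact it constrains only the base levels $m_\al$ and $m_\gamma$, and the cluster at $\vm$ contains \emph{two} consecutive levels for each root, which is exactly enough to accommodate a gap of $2$ between second indices when $\al\neq\gamma$.

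That said, your hands-on argument is essentially the $n=0$ instance of the inductive step inside the proof of Lemma \ref{comlem}, so the two approaches are not really different in substance --- you have just unrolled one step of that induction. The paper's route is shorter because the work has already been done; your route is self-contained but redoes it. I would drop the paragraph asserting that Lemma \ref{comlem} cannot be invoked.
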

\begin{proof}
We note that  $\qQ_{\al,-1},\qQ_{\gamma,1}\in \by_{\vm}$ for the Motzkin path 
$\vm=(m_\beta)_{\beta\in I_r}$ with $m_\beta=-\delta_{\beta,\alpha}$, and apply Lemma \ref{comlem}.
\end{proof}

\begin{lemma}\label{positalpha}
The quantum $Q$-system solutions satisfy:
\begin{eqnarray}
\qQ_{\al,i}&\in& \Z_t[\{\qQ_{\beta,1},\qQ_{\beta,-1},\qQ_{\beta,0}^{\pm 1}\}]\label{propminusone}.
%{\widehat y}_\beta=\prod_{\gamma}\qQ_{\gamma,1}^{-C_{\beta,\gamma}}
\end{eqnarray}
\end{lemma}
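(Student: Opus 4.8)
The statement to prove is Lemma~\ref{positalpha}: the quantum $Q$-system solutions $\qQ_{\al,i}$ lie in $\Z_t[\{\qQ_{\beta,1},\qQ_{\beta,-1},\qQ_{\beta,0}^{\pm 1}\}]$, i.e. they are genuine polynomials (not Laurent) in the $\qQ_{\beta,\pm1}$, and only the degree-zero variables $\qQ_{\beta,0}$ appear with negative powers.

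\textbf{Plan.} The natural approach is a two-sided induction on $i$, using the quantum $Q$-system recursion \eqref{qqsys} in the forward direction ($i\geq 1$) and in the backward direction ($i\leq 0$), with the base cases $i=-1,0,1$ being immediate since those variables are among the generators. The subtlety is exactly the one already present in the commutative $Q$-system: the recursion \eqref{qqsys}, solved for $\qQ_{\al,n+1}$, reads
\[
\qQ_{\al,n+1}=t^{-\lambda_{\al,\al}}\Big(\qQ_{\al,n}^2-\prod_{\beta\neq\al}\qQ_{\beta,n}^{-C_{\al,\beta}}\Big)\qQ_{\al,n-1}^{-1},
\]
so na\"ively one only gets a Laurent polynomial, and one must show the division by $\qQ_{\al,n-1}$ is exact (apart from the $\qQ_{\beta,0}$'s). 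The way to do this cleanly, as in \cite{DFK}, is to combine the recursion with the \emph{linear} recursion that the $\qQ$'s satisfy: the quantum analogue of \eqref{linrecone}, namely there exists an element $h_\al$ (a Laurent polynomial in the $\qQ_{\beta,0}$ only) such that $\qQ_{\al,n+1}+t^{?}\qQ_{\al,n-1}= (\text{stuff involving }\qQ_{\beta,0}^{\pm1}) \,\qQ_{\al,n}$ -- but in the higher-rank case the linear recursion is more involved, so the safer route is the direct induction below.

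\textbf{Key steps.} First I would set up the induction hypothesis: for all $|j|\leq N$ and all $\al$, $\qQ_{\al,j}\in R:=\Z_t[\{\qQ_{\beta,1},\qQ_{\beta,-1},\qQ_{\beta,0}^{\pm1}\}]$. The base case $N=1$ is the definition of the generators together with Lemma~\ref{comlem} (commutations among them are $t$-powers, so $R$ is a well-defined skew-polynomial ring). For the inductive step at $n=N\geq 1$ (the case $n\leq -1$ being symmetric under $j\mapsto -j$, $\qQ_{\beta,1}\leftrightarrow\qQ_{\beta,-1}$), I use \eqref{qqsys} to express $\qQ_{\al,N+1}$. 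The right-hand side $\qQ_{\al,N}^2-\prod_{\beta\neq\al}\qQ_{\beta,N}^{-C_{\al,\beta}}$ lies in $R$ by the hypothesis; the issue is the right-division by $\qQ_{\al,N-1}$. Here I would invoke Lemma~\ref{lopol}: $\qQ_{\al,N+1}$ is \emph{a priori} a (non-commutative) Laurent polynomial in any admissible initial data, in particular in $(\qQ_{\al,N-1},\qQ_{\al,N})$ together with the neighbouring variables forced by the Motzkin condition; combined with the fact that it is expressible with coefficients in $\Z[q^{\pm1}]$ and the explicit form of \eqref{qqsys}, the only variable that can survive in the denominator after the division is $\qQ_{\al,N-1}$, but because $\qQ_{\al,N+1}\qQ_{\al,N-1}$ has been shown (RHS of \eqref{qqsys}) to be a polynomial in $\qQ_{\al,N}$ and the $\qQ_{\beta,N}$, and these in turn are polynomials in $\qQ_{\beta,\pm1},\qQ_{\beta,0}^{\pm1}$, one reduces to a purely algebraic exactness statement inside $R$. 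I would then push the induction from the top (the leading $\qQ_{\al,1}$-degree term): by the hypothesis each $\qQ_{\beta,N}$ has a well-defined top-degree monomial in the $\qQ_{\gamma,1}$'s, and tracking these through \eqref{qqsys} shows the product $\qQ_{\al,N}^2-\prod\qQ_{\beta,N}^{-C_{\al,\beta}}$ is divisible on the right by the top term of $\qQ_{\al,N-1}$; iterating the long division term by term (each step lowering the $\qQ_{\beta,1}$-degree) terminates and stays in $R$, since the remainder is forced to vanish by the Laurent property of Lemma~\ref{lopol}.

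\textbf{Main obstacle.} The crux is the exactness of the right-division by $\qQ_{\al,N-1}$ in the non-commutative ring $R$: unlike the commutative case one cannot simply say ``a Laurent polynomial that a priori has no pole must be polynomial,'' because ``no pole'' has to be interpreted via an expansion and the variables $t$-commute rather than commute. The clean way around it is to use the \emph{linear} recursion satisfied by the quantum $Q$-system solutions -- the higher-rank analogue of \eqref{linrecone} -- which expresses $\qQ_{\al,N+1}$ as a \emph{left} multiple of $\qQ_{\al,N}$ (and $\qQ_{\al,N-1}$) by an element depending only on the $\qQ_{\beta,0}^{\pm1}$'s and on $\qQ_{\beta,1}\qQ_{\gamma,0}^{-1}$, $\qQ_{\beta,-1}\qQ_{\gamma,0}^{-1}$; substituting the induction hypothesis into that linear form makes membership in $R$ manifest with no division at all. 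So the proof I would actually write proceeds: (1) establish the quantum linear recursion for the $\qQ_{\al,n}$ (a short computation from \eqref{qqsys} plus the commutation relations, using $\lambda C=\delta I$ as in the proof of Lemma~\ref{comlem}); (2) read off that its coefficients lie in $\Z_t[\{\qQ_{\beta,0}^{\pm1}\}]\cdot\{1\}$ after clearing $\qQ_{\beta,0}$; (3) run the two-sided induction, where the inductive step is now just ``an $R$-combination of elements of $R$ is in $R$.'' I expect step (1)-(2), pinning down the precise shape of the linear recursion and checking the coefficients involve only $\qQ_{\beta,0}^{\pm1}$, to be the only place requiring real care.
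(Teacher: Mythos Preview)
Your overall strategy---two-sided induction on $i$ via a linear recursion with coefficients in $R$---has a genuine gap in the higher-rank setting. The linear recursion \eqref{linrecone} is special to $A_1$: there the $Q$-system is a single decoupled relation, and the conserved quantity $\qQ_1\qQ_0^{-1}+t\,\qQ_{-1}\qQ_0^{-1}$ gives a two-term linear recursion with coefficients in $\Z_t[\qQ_0^{\pm1},\qQ_{\pm1}]$. For simply-laced $\g$ of rank $r>1$ the $Q$-system \eqref{qqsys} is nonlinearly \emph{coupled} across the root labels, and there is no two-term linear recursion $\qQ_{\al,n+1}=c_\al\,\qQ_{\al,n}+d_\al\,\qQ_{\al,n-1}$ with $c_\al,d_\al$ independent of $n$ and lying in $R=\Z_t[\{\qQ_{\beta,\pm1},\qQ_{\beta,0}^{\pm1}\}]$. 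What one does have are higher-order linear recursions coming from the conserved quantities of the associated discrete integrable system, but their coefficients involve the $\qQ_{\beta,j}$ for several values of $j$, which defeats the induction you want to run. So step~(1)--(2) of your plan, which you correctly flag as ``the only place requiring real care,'' does not go through as stated.

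The paper takes a completely different, non-inductive route. It invokes the quantum Laurent property (Lemma~\ref{lopol}) \emph{twice}: once for the seed $(\bqQ_0,\bqQ_1)$ and once for the seed $(\bqQ_{-1},\bqQ_0)$, so that $\qQ_{\al,i}$ is simultaneously a Laurent polynomial in $\{\qQ_{\beta,0}^{\pm1},\qQ_{\beta,1}^{\pm1}\}$ and in $\{\qQ_{\beta,-1}^{\pm1},\qQ_{\beta,0}^{\pm1}\}$. Since $\qQ_{\beta,-1}$ is (up to a $\qQ_0$-polynomial factor) exactly $\qQ_{\beta,1}^{-1}$, one can match these two expansions monomial-by-monomial in the $\qQ_{\beta,1}$-degrees: a monomial with negative $\qQ_{\beta,1}$-exponents on a set $A\subset I_r$ and positive exponents on the complementary set $B$ must equal the corresponding monomial with positive $\qQ_{\beta,-1}$-exponents on $A$ and negative on $B$. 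Multiplying through by $\qQ_{\beta,1}\qQ_{\beta,-1}=t^{-\lambda_{\beta,\beta}}(\qQ_{\beta,0}^2-\prod_{\eta\sim\beta}\qQ_{\eta,0})$ reduces this to an identity $P_B\,c=t^xP_A\,d$ in the \emph{commutative} subring $\Z_t[\{\qQ_{\beta,0}^{\pm1}\}]$, where $P_A,P_B$ are coprime (since $A\cap B=\emptyset$). Unique factorization then forces the offending negative powers to cancel, yielding \eqref{finqal}. The whole argument is a single coprimality trick with no induction and no division in a non-commutative ring; the Laurent property does all the heavy lifting.
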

\begin{proof}
Using the Laurent property of quantum cluster algebras, we know that
$\qQ_{\al,i}\in \Z_t[\{\qQ_{\beta,0}^{\pm1},\qQ_{\beta,1}^{\pm1}\}_{\beta\in I_r}]$ and also 
$\qQ_{\al,i}\in\Z_t[\{\qQ_{\beta,-1}^{\pm1},\qQ_{\beta,0}^{\pm1}\}_{\beta\in I_r}]$. Equating the two 
Laurent polynomial expressions for $\qQ_{\al,i}$, and using the fact that
 $\qQ_{\beta,-1}$ is linear in $\qQ_{\beta,1}^{-1}$, there is an identification, monomial by monomial, 
 of terms of the form:
\begin{equation}\label{identimonom} \left(\prod_{\beta\in A}\qQ_{\beta,1}^{-m_\beta}
\prod_{\gamma\in B}\qQ_{\gamma,1}^{m_\gamma}\right)c = \left(\prod_{\beta\in A}\qQ_{\beta,-1}^{m_\beta}
\prod_{\gamma\in B}\qQ_{\gamma,-1}^{-m_\gamma}\right) d
\end{equation}
where $A,B\subset I_r$, $A\cap B=\emptyset$ and $c,d$ are Laurent polynomials in $\{\qQ_{\beta,0}\}_{\beta\in I_r}$. 
Using the commutation relations of Lemma \ref{comalbet}, which we can do because  $A$ and $B$ are disjoint sets, this is 
equivalent to:
$$  \left(\prod_{\gamma\in B}\qQ_{\gamma,1}^{m_\gamma}\right)\left(\prod_{\gamma\in B}\qQ_{\gamma,-1}^{m_\gamma}
\right)c = t^x\left(\prod_{\beta\in A}\qQ_{\beta,1}^{m_\beta}\right)\left(\prod_{\beta\in A}\qQ_{\beta,-1}^{m_\beta}\right) d$$
where $x$ is some integer.
Using the quantum $Q$-system relation 
\begin{equation}\label{qqone}
\qQ_{\beta,1}\qQ_{\beta,-1}=t^{-\lambda_{\beta,\beta}}\Big(\qQ_{\beta,0}^2-\prod_{\eta\sim \beta}\qQ_{\eta,0}\Big)
\end{equation}
this reduces to 
$$ P_B(\{\qQ_{\beta,0}\}_{\beta\in I_r})\, c = t^x P_A(\{\qQ_{\beta,0}\}_{\beta\in I_r}) \, d $$
where $P_A,P_B$ are polynomials of the $\qQ_{\beta,0}$'s of the form:
$$P_A=t^{x_A}\prod_{\beta\in A} (\qQ_{\beta,0}^2 -t^{x^A_\beta} \prod_{\eta\sim \beta} \qQ_{\eta,0}),
\qquad P_B=t^{x_B}\prod_{\gamma\in B} (\qQ_{\gamma,0}^2 -t^{x^B_\beta} \prod_{\eta\sim \gamma} 
\qQ_{\eta,0})$$
where $x_A$, $x^A_\beta$, $x_B$, $x^B_\gamma$ are integers.
These two polynomials $P_A$ and $P_B$ are clearly coprime, as $A\cap B=\emptyset$.
Thus, there exists a Laurent polynomial $e$ of $\{\qQ_{\beta,0}\}$, such that 
$c =t^x P_A \, e \quad {\rm and}\quad  d= P_B \, e$. Substituting this
into \eqref{identimonom} leads to
$$\left(\prod_{\beta\in A}\qQ_{\beta,1}^{-m_\beta}
\prod_{\gamma\in B}\qQ_{\gamma,1}^{m_\gamma}\right)c =t^z \left(
\prod_{\beta\in A}\qQ_{\beta,-1}^{m_\beta}\prod_{\gamma\in B}\qQ_{\gamma,1}^{m_\gamma}\right) e $$
for some integer $z$.
We conclude that $\qQ_{\al,i}$ may be written as a {\it polynomial} of 
$\{\qQ_{\beta,-1},\qQ_{\beta,1}\}_{\al\in I_r}$, with coefficients in $\Z_t[\{Q_{\al,0}^{\pm 1}\}]$, in the following form:
\begin{equation}\label{finqal}
\qQ_{\al,i}=\sum_{A\cup B=I_r, A\cap B =\emptyset
\atop m_\beta \in \Z_+} \left( \prod_{\beta\in A}\qQ_{\beta,-1}^{m_\beta}
\prod_{\gamma\in B}\qQ_{\gamma,1}^{m_\gamma}\right) \ \ c^{A,B}_{m_1,...,m_r}(\{\qQ_{\eta,0}^{\pm 1}\})
\end{equation}
where the sum is finite. This implies the Lemma.
\end{proof}

It follows immediately that
\begin{cor}
\begin{equation}
\prod_{\al,i}\qQ_{\al,i}^{n_{\al,i}}\in \Z_t[\{\qQ_{\beta,1},\qQ_{\beta,-1},\qQ_{\beta,0}^{\pm 1}\}]
\ {\rm for}\ {\rm finitely}\ {\rm many}\ n_{\al,i}\in \Z_+. \label{propzero}
\end{equation}
\end{cor}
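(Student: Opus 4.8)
The plan is to obtain this as a purely formal consequence of Lemma \ref{positalpha}. First I would fix the notation $\mathcal B:=\Z_t[\{\qQ_{\beta,1},\qQ_{\beta,-1},\qQ_{\beta,0}^{\pm 1}\}_{\beta\in I_r}]$ for the $\Z_t$-subalgebra appearing on the right-hand side of \eqref{propminusone}; being a subalgebra, $\mathcal B$ is automatically closed under (non-commutative) multiplication. The key observation is then that, by Lemma \ref{positalpha}, each individual solution $\qQ_{\al,i}$ occurring with a nonzero exponent in the product $\prod_{\al,i}\qQ_{\al,i}^{n_{\al,i}}$ already lies in $\mathcal B$.

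Next, since the multi-index $\bn=(n_{\al,i})$ is assumed to have finite support, $\prod_{\al,i}\qQ_{\al,i}^{n_{\al,i}}$ is a \emph{finite} product of elements of $\mathcal B$, hence itself an element of $\mathcal B$; this is exactly \eqref{propzero}. The one bookkeeping point is that these factors do not commute, so the product must be read in some fixed order (say, increasing in $i$, and in a fixed order of the $\al$); but this is immaterial, since any two admissible orderings differ only by a factor $t^m$ with $m\in\Z$, produced by the commutation relations of Lemma \ref{comlem}, and $t^m\in\Z_t\subset\mathcal B$.

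I do not expect any genuine obstacle here: the whole content is carried by Lemma \ref{positalpha}, and the Corollary merely records that the property ``belongs to $\mathcal B$'' is stable under finite products. This packaging is exactly what is needed downstream, in the proof of Theorem \ref{M=Nsl}: as in the $A_1$ case, one will re-expand such a product in the alternative admissible initial data $(\bqQ_{-1},\bqQ_0)$, and the fact that it is polynomial — not merely Laurent — in the variables $\qQ_{\beta,-1},\qQ_{\beta,1}$ is precisely what forces the vanishing of the contributions with some $q_{\al,j}<0$ to the evaluated multiple constant term.
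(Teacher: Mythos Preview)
Your proposal is correct and matches the paper's approach: the paper simply writes ``It follows immediately that'' before stating the Corollary, since the ring $\mathcal B$ is closed under multiplication and Lemma~\ref{positalpha} places each factor in $\mathcal B$. Your remark about the ordering being immaterial up to a power of $t$ is a harmless elaboration, not needed but not wrong.
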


The following ``evaluation map" will be useful in the next section:
\begin{defn}\label{defrho}
Let $\rho:\Z_{t}[\{\qQ_{\al,0}^{\pm 1}\}]((\{\qQ_{\al,1}^{-1}\}))\to \Z_{t}((\{\qQ_{\al,1}^{-1}\}))$ 
denote the  evaluation map:
$$\rho(f)=\left.\left(\prod_{\beta=1}^r \qQ_{\beta,1}\qQ_{\beta,0}^{-1}\times f\right)\right\vert_{\{Q_{\al,0}=1\}_{\al\in I_r}}$$
\end{defn}

We are now ready to evaluate the expressions of Lemma \ref{positalpha} using $\rho$ of Definition \ref{defrho}.
We first note the following:
\begin{lemma}\label{vaniminusone}
For any Laurent series $f\in \Z_{t}[\{\qQ_{\al,0}^{\pm 1}\}]((\{\qQ_{\al,1}^{-1}\}))$, 
and any $\gamma\in I_r$, we have
\begin{equation}
\rho(\qQ_{\gamma,-1}f ) =0 
\end{equation}
\end{lemma}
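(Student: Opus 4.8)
\textbf{Proof plan for Lemma \ref{vaniminusone}.}

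The plan is to mimic the $A_1$ argument of Lemma \ref{vanione}, where the key point was that $\langle 0\vert\qQ_{-1}=0$ because $\qQ_{-1}=t^{-1}\qQ_1^{-1}(\qQ_0^2-1)$ kills the vacuum on which $\qQ_0$ acts by $t^{-1}$. Here the analog should read: use the quantum $Q$-system relation \eqref{qqone} to write
$$
\qQ_{\gamma,-1}=t^{-\lambda_{\gamma,\gamma}}\Big(\qQ_{\gamma,0}^2-\prod_{\eta\sim\gamma}\qQ_{\eta,0}\Big)\qQ_{\gamma,1}^{-1},
$$
so that $\qQ_{\gamma,-1}$ is a Laurent polynomial in the $\qQ_{\beta,0}$'s times $\qQ_{\gamma,1}^{-1}$ on the right. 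First I would commute the factor $\prod_\beta\qQ_{\beta,1}\qQ_{\beta,0}^{-1}$ from the definition of $\rho$ past this expression, picking up an overall power of $t$ from \eqref{comq}; the product of $\qQ_{\beta,1}$'s absorbs the lone $\qQ_{\gamma,1}^{-1}$ (up to another $t$-power and a reordering), leaving, inside $\rho$, an expression of the form $(\text{Laurent polynomial in the }\qQ_{\beta,0})\cdot(\prod_{\beta\ne\gamma}\qQ_{\beta,1})\cdot f$, times $\qQ_{\gamma,0}$'s collected appropriately. The factor $\qQ_{\gamma,0}^2-\prod_{\eta\sim\gamma}\qQ_{\eta,0}$ becomes, after evaluation at $\{\qQ_{\al,0}=1\}$, equal to $1-1=0$ (each $\eta\sim\gamma$ contributes a single $\qQ_{\eta,0}$ since $C$ is simply-laced, so the monomial $\prod_{\eta\sim\gamma}\qQ_{\eta,0}$ evaluates to $1$). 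Hence $\rho(\qQ_{\gamma,-1}f)=0$.

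The one subtlety I would be careful about is the order of operations hidden in "$\big|_{\{\qQ_{\al,0}=1\}}$": as emphasized in the $A_1$ discussion and Remark \ref{othereval}, this is a \emph{left} evaluation, so I must first move all $\qQ_{\al,0}$'s to the left using \eqref{comq} before setting them to $1$. The cleanest route is to normal-order everything so that all $\qQ_{\beta,0}$'s sit to the left of all $\qQ_{\beta,1}$'s (and of $f$, which by hypothesis lies in $\Z_t[\{\qQ_{\al,0}^{\pm1}\}]((\{\qQ_{\al,1}^{-1}\}))$, i.e.\ already has its $\qQ_{\al,0}$-coefficients out front); the $t$-powers generated are irrelevant scalars. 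After normal ordering, the combined $\qQ_{\gamma,0}$-dependence is exactly a scalar multiple of the polynomial $\qQ_{\gamma,0}^2-t^x\prod_{\eta\sim\gamma}\qQ_{\eta,0}$ for some integer $x$; but in fact all the $t$'s produced while normal-ordering the purely-$\qQ_{\cdot,0}$ block are trivial since those variables commute among themselves, so $x=0$ and the factor vanishes at $\qQ_{\al,0}=1$.

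I expect the main (mild) obstacle to be bookkeeping the $t$-powers so as to be sure that the vanishing factor really is $\qQ_{\gamma,0}^2-\prod_{\eta\sim\gamma}\qQ_{\eta,0}$ and not something like $\qQ_{\gamma,0}^2-t\prod_{\eta\sim\gamma}\qQ_{\eta,0}$ which would fail to vanish at $1$. This is handled by the observation in the previous paragraph: the only $t$-factors that could matter come from commuting a $\qQ_{\cdot,1}$ past a $\qQ_{\cdot,0}$, and these all get collected \emph{outside} the $\qQ_{\gamma,0}^2-\prod\qQ_{\eta,0}$ combination, which itself is manufactured intact by a single application of \eqref{qqone}. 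Once this is checked, the lemma follows exactly as in the $A_1$ case, and it will play the same role in the simply-laced $M=N$ proof that Lemma \ref{vanione} played for $A_1$.
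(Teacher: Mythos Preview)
Your overall strategy coincides with the paper's, and it is the right one: use \eqref{qqone} to replace $\qQ_{\gamma,1}\qQ_{\gamma,-1}$ by a polynomial in the $\qQ_{\cdot,0}$'s, move that polynomial to the far left, and observe that it vanishes upon setting all $\qQ_{\al,0}=1$. However, there is a genuine gap precisely at the step you flag as the ``mild obstacle''.

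First, a small slip: \eqref{qqone} gives $\qQ_{\gamma,-1}=t^{-\lambda_{\gamma,\gamma}}\qQ_{\gamma,1}^{-1}\big(\qQ_{\gamma,0}^2-\prod_{\eta\sim\gamma}\qQ_{\eta,0}\big)$, with $\qQ_{\gamma,1}^{-1}$ on the \emph{left}. If you move it to the right you acquire a relative factor $t^{-\delta}$ between the two terms, since $2\lambda_{\gamma,\gamma}-\sum_{\eta\sim\gamma}\lambda_{\gamma,\eta}=(C\lambda)_{\gamma,\gamma}=\delta$.

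The more serious issue is your justification that the $t$-powers from commuting $\qQ_{\cdot,1}$'s past $\qQ_{\cdot,0}$'s ``all get collected outside'' the combination $\qQ_{\gamma,0}^2-\prod_{\eta\sim\gamma}\qQ_{\eta,0}$. This is not automatic: commuting $\qQ_{\beta,1}$ past $\qQ_{\gamma,0}^2$ produces $t^{-2\lambda_{\beta,\gamma}}$, whereas commuting it past $\prod_{\eta\sim\gamma}\qQ_{\eta,0}$ produces $t^{-\sum_{\eta\sim\gamma}\lambda_{\beta,\eta}}$. These agree if and only if $\sum_\eta C_{\gamma,\eta}\lambda_{\eta,\beta}=0$, i.e.\ $(C\lambda)_{\gamma,\beta}=\delta\,\delta_{\gamma,\beta}=0$, which holds precisely when $\beta\neq\gamma$. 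Your stated reason (``those variables commute among themselves'') only covers commutations among $\qQ_{\cdot,0}$'s and misses this point entirely. The paper uses exactly this identity $C\lambda=\delta I$: it first reorders the prefactor so as to isolate $\qQ_{\gamma,1}$ and cancel it against the $\qQ_{\gamma,1}^{-1}$ coming from \eqref{qqone}, leaving only $\qQ_{\beta,1}$ with $\beta\neq\gamma$ to commute through; then the relation $C\lambda=\delta I$ guarantees no relative $t$-power is introduced. Once you insert this step, your argument is complete and matches the paper's.
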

\begin{proof}
We write:
\begin{eqnarray*}
\left(\prod_{\beta=1}^r \qQ_{\beta,1}\qQ_{\beta,0}^{-1}\right) \qQ_{\gamma,-1}\, f&=&
t^x\left(\prod_{\beta\neq \gamma}  \qQ_{\beta,1}\qQ_{\beta,0}^{-1} \right)\qQ_{\gamma,0}^{-1}\qQ_{\gamma,1}\qQ_{\gamma,-1}\, f\\
&=&t^y\left(\prod_{\beta\neq \gamma}  \qQ_{\beta,1}\qQ_{\beta,0}^{-1} \right)\qQ_{\gamma,0}
(1-\prod_{\eta=1}^r\qQ_{\eta,0}^{-C_{\gamma,\eta}})\, f\\
&=&t^y\Big(1-\prod_{\eta=1}^r\qQ_{\eta,0}^{-C_{\gamma,\eta}}t^{\sum_{\beta\neq \eta}
C_{\gamma,\eta}\lambda_{\eta,\beta}}\Big)\left(\prod_{\beta\neq \gamma} 
 \qQ_{\beta,1}\qQ_{\beta,0}^{-1} \right)\qQ_{\gamma,0} \, f
\end{eqnarray*}
where $x$ and $y$ are integers.
The lemma follows by noting that $t^{\sum_{\beta\neq \eta}
C_{\gamma,\eta}\lambda_{\eta,\beta}}=1$ from $C\lambda =\delta I$.
\end{proof}

Thus, we have
\begin{cor}
\begin{eqnarray}
\rho(\qQ_{\al,i}) &\in& \Z_t[\{Q_{\beta,1}\}_{\beta\in I_r}] \label{propone} \\
\rho\left(\prod_{\al,i}\qQ_{\al,i}^{n_{\al,i}}\right)&\in & \Z_t[\{Q_{\beta,1}\}_{\beta\in I_r}] , 
\ {\rm for}\ {\rm finitely}\ {\rm many}\ n_{\al,i}\in \Z_+ \label{proptwo}
\end{eqnarray}
\end{cor}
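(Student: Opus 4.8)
## Proof proposal for the Corollary (equations \eqref{propone} and \eqref{proptwo})

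The plan is to combine the explicit structural form of the quantum $Q$-system solutions established in Lemma \ref{positalpha}, equation \eqref{finqal}, with the vanishing property of $\rho$ on any term containing a factor $\qQ_{\gamma,-1}$, established in Lemma \ref{vaniminusone}. First I would recall from \eqref{finqal} that for each $\al,i$ we can write
$$\qQ_{\al,i}=\sum_{A\cup B=I_r,\ A\cap B=\emptyset\atop m_\beta\in\Z_+}
\left(\prod_{\beta\in A}\qQ_{\beta,-1}^{m_\beta}\prod_{\gamma\in B}\qQ_{\gamma,1}^{m_\gamma}\right)
c^{A,B}_{m_1,\dots,m_r}(\{\qQ_{\eta,0}^{\pm1}\}),$$
a finite sum. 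Applying $\rho$ term by term (using linearity of $\rho$ over $\Z_t$), any summand with a nontrivial factor $\qQ_{\beta,-1}^{m_\beta}$, i.e.\ with $A\neq\emptyset$, is annihilated by Lemma \ref{vaniminusone}, since we may first move the $\qQ_{\beta,0}^{\pm1}$ factors in $c^{A,B}$ through the $\qQ$'s at the cost of powers of $t$ (using the commutation relations \eqref{comq} and Lemma \ref{comlem}) and then peel off one $\qQ_{\beta,-1}$ to the left. What survives is only the term with $A=\emptyset$, $B=I_r$, which is a polynomial in $\{\qQ_{\gamma,1}\}_{\gamma\in I_r}$ with coefficients of the form $\rho$ applied to Laurent monomials in $\{\qQ_{\eta,0}\}$; but $\rho$ evaluates all $\qQ_{\eta,0}$ at $1$ (after left-normal-ordering), producing an element of $\Z_t$. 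Hence $\rho(\qQ_{\al,i})\in\Z_t[\{\qQ_{\beta,1}\}_{\beta\in I_r}]$, which is \eqref{propone}.

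For \eqref{proptwo}, I would invoke the Corollary following Lemma \ref{positalpha} (equation \eqref{propzero}), which already gives
$$\prod_{\al,i}\qQ_{\al,i}^{n_{\al,i}}\in\Z_t[\{\qQ_{\beta,1},\qQ_{\beta,-1},\qQ_{\beta,0}^{\pm1}\}]$$
for finitely many nonzero $n_{\al,i}$. Expanding this polynomial in normal-ordered form with all $\qQ_{\beta,-1}$'s brought to the left (again using Lemma \ref{comalbet} and Lemma \ref{comlem} to commute factors, picking up powers of $t$), we get a finite $\Z_t$-linear combination of monomials of the shape $\left(\prod_{\beta\in A}\qQ_{\beta,-1}^{m_\beta}\right)\cdot(\text{monomial in }\qQ_{\gamma,1}\text{ and }\qQ_{\eta,0}^{\pm1})$. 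Applying $\rho$, Lemma \ref{vaniminusone} kills every monomial with $A\neq\emptyset$, and on the remaining monomials $\rho$ sets the $\qQ_{\eta,0}$'s to $1$, leaving a polynomial in $\{\qQ_{\beta,1}\}_{\beta\in I_r}$ with coefficients in $\Z_t$. This yields \eqref{proptwo}.

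The only subtlety — and the step I would be most careful about — is the bookkeeping in reordering: one must check that pushing the $\qQ_{\beta,-1}$ factors to the left past the $\qQ_{\gamma,1}$ and $\qQ_{\eta,0}$ factors only ever produces integer powers of $t$ and never creates new $\qQ$-variables outside the allowed set, so that the structural form \eqref{finqal} (resp.\ \eqref{propzero}) is genuinely preserved under normal ordering. This is guaranteed because all the relevant pairs lie in a common cluster seed $\by_{\vm}$ (the Motzkin path with $m_\beta=-\delta_{\beta,\al}$ from Lemma \ref{comalbet}), so Lemma \ref{comlem} applies and every commutation is a clean $t$-power; no genuinely new computation is needed, only the observation that $\rho$ is well-defined and $\Z_t$-linear on the completed ring $\Z_{t}[\{\qQ_{\al,0}^{\pm1}\}]((\{\qQ_{\al,1}^{-1}\}))$. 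Both statements then follow immediately, as claimed by the word ``immediately'' in the excerpt.
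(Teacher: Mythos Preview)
Your proposal is correct and follows essentially the same approach as the paper: use the decomposition \eqref{finqal} (resp.\ \eqref{propzero}), apply Lemma \ref{vaniminusone} to kill every summand with $A\neq\emptyset$, and observe that the surviving coefficients $c^{\emptyset,I_r}_{m_1,\dots,m_r}$ commute through the $\qQ_{\beta,1}$'s and evaluate to elements of $\Z_t$. The extra care you take with the reordering bookkeeping is more explicit than the paper's version but not logically different.
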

\begin{proof}
From Lemma \ref{vaniminusone}, the only nontrivial contributions to $\rho(\qQ_{\al,i})$ 
come from terms with $A=\emptyset$ in \eqref{finqal}, and it follows
that $\rho(\qQ_{\al,i}) \in  \Z_t[\{Q_{\beta,1}\}_{\beta\in I_r}] $,
as the coefficients $c^{\emptyset,I_r}_{m_1,...,m_r}$ evaluate to elements of $\Z_t$, after commutation with 
$\prod_{\beta\in I_r} \qQ_{\beta,1}^{m_\beta}$. The same reasoning applies to $\prod_{\al,i}\qQ_{\al,i}^{n_{\al,i}}$.
\end{proof}

\begin{lemma}
The function $\qQ_{\al,i}^{-1}$ can be considered as a Laurent series in $\qQ_{\al,1}^{-1}$ 
with coefficients which are Laurent polynomials in the variables corresponding to the root labels $\beta\neq \al$, that is:
\begin{eqnarray}
\qQ_{\al,i}^{-1}&\in& \Z_t[\{\qQ_{\beta,1},\qQ_{\beta,-1}\}_{\beta\neq \al},\{\qQ_{\beta,0}^{\pm 1}\}]((\qQ_{\al,1}^{-1})).\label{propthree}
\end{eqnarray}
\end{lemma}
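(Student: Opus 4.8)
The plan is to realize $\qQ_{\al,i}$ as a finite (skew-)Laurent polynomial in the single variable $\qQ_{\al,1}$ over the ring $R:=\Z_t[\{\qQ_{\beta,1},\qQ_{\beta,-1}\}_{\beta\neq\al},\{\qQ_{\beta,0}^{\pm1}\}]$, to check that its top-degree coefficient is a unit of $R$, and then to invert it by a geometric series. First I would use the explicit form \eqref{finqal} provided by Lemma \ref{positalpha}, together with the $n=-1$ instance of the quantum $Q$-system \eqref{qqsys}, namely $\qQ_{\al,-1}=t^{-\lambda_{\al,\al}}\big(\qQ_{\al,0}^2-\prod_{\eta\sim\al}\qQ_{\eta,0}\big)\qQ_{\al,1}^{-1}$, to substitute for every occurrence of $\qQ_{\al,-1}$ and to move all powers of $\qQ_{\al,1}$ to the left. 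This is legitimate because $\qQ_{\al,1}$ commutes with each generator of $R$ up to a power of $t$: trivially with the $\qQ_{\beta,1}$ by \eqref{comq}, with the $\qQ_{\beta,-1}$ by Lemma \ref{comalbet}, and with the $\qQ_{\beta,0}$ again by \eqref{comq}. The upshot is an expansion $\qQ_{\al,i}=\sum_{j\leq D}\qQ_{\al,1}^{j}\,u_j$ with all $u_j\in R$, only finitely many nonzero, and $D$ the top $\qQ_{\al,1}$-degree.

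The hard part will be to prove that the leading coefficient $u_D$ is a unit of $R$ --- concretely, a Laurent monomial in the $\qQ_{\beta,0}$'s, up to a power of $t$ and a monomial in the $\qQ_{\beta,-1}$'s. The reason this should hold is structural: Lemma \ref{positalpha} has already absorbed each ``binomial'' $g_\beta=\qQ_{\beta,0}^2-\prod_{\eta\sim\beta}\qQ_{\eta,0}$ accompanying a negative power of $\qQ_{\beta,1}$ into the single variable $\qQ_{\beta,-1}$, so the coefficients that dominate as $\qQ_{\al,1}\to\infty$ cannot be genuine binomials. I would prove it by induction on $i$, running the quantum $Q$-system in the form $\qQ_{\al,i+1}=t^{-\lambda_{\al,\al}}\big(\qQ_{\al,i}^2-\prod_{\beta\neq\al}\qQ_{\beta,i}^{-C_{\al,\beta}}\big)\qQ_{\al,i-1}^{-1}$ and carrying as inductive data, for every root $\beta$, the top $\qQ_{\al,1}$-degree $d_{\beta,j}$ of $\qQ_{\beta,j}$ together with its (monomial) leading coefficient; the base cases $j\in\{-1,0,1\}$ are immediate since $\qQ_{\beta,\pm1},\qQ_{\beta,0}^{\pm1}\in R$ for $\beta\neq\al$ (so $d_{\beta,j}=0$) while $\qQ_{\al,1}$ has leading coefficient $1$ and $\qQ_{\al,0}^{\pm1}$ has $\qQ_{\al,1}$-degree $0$. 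One then checks that the $d_{\beta,j}$ satisfy a max-plus recursion in which the term realizing the maximum is never cancelled, so no honest binomial survives at top degree, whence the leading coefficient of $\qQ_{\al,i+1}$ is the product of leading coefficients of $\qQ_{\al,i}^{2}$, of $\qQ_{\al,i-1}^{-1}$, and (in case of a tie) of the $\qQ_{\beta,i}^{-C_{\al,\beta}}$ --- again a monomial. This degree bookkeeping, and especially the verification that ties never produce cancellation of the top-order term, is the genuine obstacle; it is the exact analogue of the explicit fact $\qQ_k=t^{k(k-1)/2}\qQ_0^{1-k}\qQ_1^{k}+(\text{lower order in }\qQ_1)$ invoked in the $A_1$ case.

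Granting that $u_D$ is a unit, I conclude as follows. Collecting all powers of $\qQ_{\al,1}$ on the left via the $t$-commutation above, write $\qQ_{\al,i}=\qQ_{\al,1}^{D}(u_D+v)$ with $v\in\qQ_{\al,1}^{-1}R[[\qQ_{\al,1}^{-1}]]$. Then $u_D+v=u_D(1+u_D^{-1}v)$ with $u_D^{-1}v\in\qQ_{\al,1}^{-1}R[[\qQ_{\al,1}^{-1}]]$, so $(1+u_D^{-1}v)^{-1}=\sum_{n\geq0}(-u_D^{-1}v)^{n}$ converges $\qQ_{\al,1}^{-1}$-adically, giving $\qQ_{\al,i}^{-1}=(1+u_D^{-1}v)^{-1}u_D^{-1}\,\qQ_{\al,1}^{-D}\in R((\qQ_{\al,1}^{-1}))$, which is \eqref{propthree}. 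The case $i=1$ is of course trivial, and this is the only form in which the expansion is needed in the subsequent constant-term computation.
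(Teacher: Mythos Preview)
Your strategy is sound but takes a genuinely different route from the paper.  The paper inducts directly on $\qQ_{\al,j}^{-1}$: it rewrites the quantum $Q$-system as
\[
\qQ_{\al,i+1}^{-1}
=t^{-\lambda_{\al,\al}}\sum_{m\geq 0}t^{-m\delta}\,
\qQ_{\al,i}^{-2(m+1)}\Big(\prod_{\beta\sim\al}\qQ_{\beta,i}^{\,m}\Big)\qQ_{\al,i-1},
\]
invokes the induction hypothesis on $\qQ_{\al,i}^{-1}$ for the first factor, and uses Lemma~\ref{positalpha} (eq.~\eqref{finqal}) together with the $t$-commutations of Lemma~\ref{comalbet} for the positive-power factors; any $\qQ_{\al,-1}$ appearing is traded back for $\qQ_{\al,1}^{-1}$ via the $n=0$ $Q$-system relation.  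You instead expand $\qQ_{\al,i}$ itself as a finite Laurent polynomial in $\qQ_{\al,1}$ over $R$ and invert once by a geometric series, which forces you to prove that the leading coefficient $u_D$ is a unit of $R$.  Both arguments rest on the same tropical fact --- your ``max-plus recursion in which the maximum is never cancelled'', equivalently $2\deg_{\qQ_{\al,1}}\qQ_{\al,i}>\sum_{\beta\sim\al}\deg_{\qQ_{\al,1}}\qQ_{\beta,i}$ --- which in the paper's version is exactly what makes the displayed series converge in $R((\qQ_{\al,1}^{-1}))$.  The paper's packaging is lighter because it never has to name the leading coefficient; yours buys a more explicit description of the lowest-order term of $\qQ_{\al,i}^{-1}$, at the cost of the degree bookkeeping you flag as ``the genuine obstacle''.

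One point to tighten: you allow $u_D$ to contain ``a monomial in the $\qQ_{\beta,-1}$'s'', but such a factor is \emph{not} a unit of $R$ (only $t$ and the $\qQ_{\beta,0}$ are invertible there), so if that actually occurred your final geometric-series step would fail.  In fact the induction you sketch shows more: the top $\qQ_{\al,1}$-degree of $\qQ_{\al,i}$ is exactly $i$, attained by iterating $\qQ_{\al,i}^2\qQ_{\al,i-1}^{-1}$, and its leading coefficient is a monomial in $\qQ_{\al,0}^{-1}$ alone times a power of $t$, precisely as in the $A_1$ case $\qQ_k=t^{k(k-1)/2}\qQ_0^{1-k}\qQ_1^{k}+\cdots$.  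You should state and prove this sharper claim rather than hedging.
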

A similar lemma was proven in the commutative case in \cite{DFK}, and follows from the form of the 
$Q$-system equations. The proof in the quantum case follows the same reasoning.
\begin{proof}
We prove \eqref{propthree} by induction on $i$.
First, $\qQ_{\al,0}^{-1}$, as well as $\qQ_{\al,1}^{-1}$ satisfy the property \eqref{propthree}.
Assume the property holds for all $j\leq i$. Then
\begin{eqnarray*}
\qQ_{\al,i+1}^{-1}&=&t^{\lambda_{\al,\al}} \qQ_{\al,i-1}\left(1-\prod_{\beta} \qQ_{\beta,i}^{-C_{\al,\beta}} \right)^{-1}\qQ_{\al,i}^{-2}
=t^{-\lambda_{\al,\al}}\left(1-t^{-\delta}\prod_{\beta} \qQ_{\beta,i}^{-C_{\al,\beta}} \right)^{-1}\qQ_{\al,i}^{-2}\qQ_{\al,i-1}\\
&=&t^{-\lambda_{\al,\al}}\sum_{m\in \Z_+}t^{-m\delta} \qQ_{\al,i}^{-2(m+1)} \left(\prod_{\beta\neq\al}\qQ_{\beta,i}^{m|C_{\al,\beta}|}\right) \qQ_{\al,i-1}.
\end{eqnarray*}
By the recursion hypothesis, the term  $\qQ_{\al,i}^{-2(m+1)}$ is a Laurent series of $\qQ_{\al,1}^{-1}$, with
coefficients polynomial of $\qQ_{\beta,\pm 1}$ for $\beta\neq \al$, and Laurent polynomials
of the $\qQ_{\beta,0}$ for all $\beta$.
The remaining factor is a product of non-negative
powers of $\qQ$'s, and may be decomposed as in \eqref{finqal}, as a polynomial of the
variables $\qQ_{\beta,\pm 1}$ for all $\beta$, with coefficients that are Laurent polynomials 
of the $\qQ_{\beta,0}$ for all $\beta$.

We may now commute to the left each monomial of these polynomials that involve $\qQ_{\beta,\pm 1}$
for $\beta\neq \al$,
through the powers of $\qQ_{\al,1}^{-1}$ in the series. This is possible due to the $t$-commutation relations between 
all the terms, from Lemma \ref{comalbet}. Terms involving $\qQ_{\al,\pm 1}$ affect the Laurent series of $\qQ_{\al,1}^{-1}$
but respect the Laurent property. For instance the terms  $(\qQ_{\al,-1})^m$ for some $m\geq 0$ are
to be translated back in terms of $\qQ_{\al,1}$ as:
$$(\qQ_{\al,-1})^m=t^{-m\lambda_{\al,\al}} \left(\qQ_{\al,1}^{-1}\big(\qQ_{\al,0}^2-\prod_{\beta\sim \al} \qQ_{\beta,0}\big)
\right)^m$$
and simply contribute to the Laurent series of $\qQ_{\al,1}^{-1}$. The final result is therefore still a Laurent series
of $\qQ_{\al,1}^{-1}$ with coefficients in $\Z_t[\{\qQ_{\beta,1},\qQ_{\beta,-1}\}_{\beta\neq \al},\{\qQ_{\beta,0}^{\pm 1}\}]$
and the lemma follows.
\end{proof}

We  need the following generalization of Lemma \ref{vanilem}:

\begin{lemma}\label{techlem}
Let $P=\prod_j \qQ_{\al,j}^{a_j}\prod_{i,\beta\neq \al} \qQ_{\beta,i}^{b_{\beta,i}}$ 
for some $a_j\in \Z_+, \ b_{\beta,i}\in\Z$, then we have:
$$\CT_{\qQ_{\al,1}}\left(\rho(P)\right)=0$$
\end{lemma}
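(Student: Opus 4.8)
The claim is the simply-laced generalization of Lemma \ref{vanilem}. The proof should follow the same strategy: use the Laurent property to re-express the cluster variables involved in $P$ in terms of the initial data $(\qQ_{\beta,-1},\qQ_{\beta,0})$ rather than $(\qQ_{\beta,0},\qQ_{\beta,1})$, observe that extracting the constant term in $\qQ_{\al,1}$ picks out precisely the linear-in-$\qQ_{\al,-1}$ part, and then use that $\qQ_{\al,-1}$ produces a factor which vanishes under the evaluation map $\rho$. The novelty compared to the $A_1$ case is that $P$ mixes powers of $\qQ_{\al,j}$ (root label $\al$, which we must move to the $\pm 1$ level) with powers of $\qQ_{\beta,i}$ for $\beta\neq\al$, and we only want to shift the $\al$-variables, keeping the others as inert ``coefficients.'' This is exactly what Lemma on \eqref{propthree} and the polynomial form \eqref{finqal} of Lemma \ref{positalpha} are set up to do.

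First I would combine \eqref{finqal} (for the factors $\qQ_{\al,j}^{a_j}$ with $a_j>0$, written as polynomials in $\qQ_{\beta,\pm1}$ over all $\beta$ with coefficients in $\Z_t[\{\qQ_{\eta,0}^{\pm1}\}]$) with \eqref{propthree} for any negative powers among the $\qQ_{\beta,i}^{b_{\beta,i}}$, $\beta\neq\al$. Since all the relevant variables $t$-commute (Lemma \ref{comalbet}), I can normal-order $P$ so that every occurrence of $\qQ_{\al,\pm1}$ is collected, and translate $\qQ_{\al,-1}^{m}$ back to $\qQ_{\al,1}^{-m}$ times a polynomial in the $\qQ_{\eta,0}$'s via the quantum $Q$-system \eqref{qqone}, exactly as in the proof of \eqref{propthree}. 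The upshot is that $\rho(P)$ is a Laurent series in $\qQ_{\al,1}^{-1}$ whose coefficients lie in $\Z_t[\{\qQ_{\beta,1}\}_{\beta\neq\al}]$ (after applying $\rho$ and using \eqref{propone}/\eqref{proptwo} to kill the $\qQ_{\beta,-1}$, $\beta\neq\al$, and the $\qQ_{\eta,0}$). Because $a_j\in\Z_+$ for the $\al$-label factors, after re-expressing $P$ as a Laurent polynomial in $(\qQ_{\al,-1},\qQ_{\al,0},\ \{\qQ_{\beta,\pm1},\qQ_{\beta,0}\}_{\beta\neq\al})$ the only terms contributing to $\CT_{\qQ_{\al,1}}$ are those of degree exactly $1$ in $\qQ_{\al,-1}$, using $\qQ_{\al,-1}=t^{-\lambda_{\al,\al}}\qQ_{\al,1}^{-1}(\qQ_{\al,0}^2-\prod_{\beta\sim\al}\qQ_{\beta,0})$.

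Then the constant term is proportional to $\qQ_{\al,-1}$ times a coefficient free of $\qQ_{\al,1}$, and I invoke Lemma \ref{vaniminusone}: $\rho(\qQ_{\al,-1}\,g)=0$ for any admissible $g$. (One has to be slightly careful with the order of operations — $\rho$ multiplies by $\prod_\beta\qQ_{\beta,1}\qQ_{\beta,0}^{-1}$ on the left and then sets all $\qQ_{\eta,0}=1$ — but since $\CT_{\qQ_{\al,1}}$ and the $\qQ_{\beta,0}$-evaluation for $\beta\neq\al$ commute through each other and through the $t$-commuting variables, as already noted in the discussion of the constant-term/evaluation diagram, one can apply Lemma \ref{vaniminusone} after taking $\CT_{\qQ_{\al,1}}$.) Hence $\CT_{\qQ_{\al,1}}(\rho(P))=0$.

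\textbf{The main obstacle.} The delicate point is bookkeeping: making precise that $\CT_{\qQ_{\al,1}}$ genuinely extracts only the $\qQ_{\al,-1}^1$-part and that no contribution sneaks in from the $\beta\neq\al$ variables or from the Laurent-series (rather than polynomial) nature of the $\qQ_{\al,i}^{-1}$ factors. Concretely, one must check that after the normal-ordering and the substitution $\qQ_{\al,-1}\leftrightarrow\qQ_{\al,1}^{-1}$, the expression $\rho(P)$ lies in $\Z_t[\{\qQ_{\beta,1}\}_{\beta\neq\al}][[\qQ_{\al,1}^{-1}]]\cdot\qQ_{\al,1}^{N}$ for some finite $N$ determined by the $a_j$'s, so that ``constant term in $\qQ_{\al,1}$'' is well-defined and equals the $\qQ_{\al,1}^0$ coefficient, which by the above is a single $\rho(\qQ_{\al,-1}\cdot(\cdots))$ term. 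This is the same kind of argument as in the proof of \eqref{propthree}, so it should go through, but it is the step that needs the most care to state cleanly; everything else is a routine transcription of the $A_1$ proof of Lemma \ref{vanilem} with the extra root indices carried along.
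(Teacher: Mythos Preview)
Your approach uses the same ingredients as the paper's proof --- Lemma~\ref{positalpha} and equation~\eqref{finqal} to express $P$ with only non-negative powers of $\qQ_{\al,\pm1}$, property~\eqref{propthree} to control the negative powers $\qQ_{\beta,i}^{b_{\beta,i}}$ for $\beta\neq\al$, and Lemma~\ref{vaniminusone} to kill the $\qQ_{\gamma,-1}$ contributions --- and would succeed, but your logical flow is tangled in one place.

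The sentence ``the only terms contributing to $\CT_{\qQ_{\al,1}}$ are those of degree exactly $1$ in $\qQ_{\al,-1}$,'' followed by ``invoke Lemma~\ref{vaniminusone}: $\rho(\qQ_{\al,-1}\,g)=0$,'' is circular as written: you identify the constant term \emph{before} applying $\rho$, and then annihilate it \emph{by} applying $\rho$ --- but in $\CT_{\qQ_{\al,1}}(\rho(P))$ the evaluation comes first. Your parenthetical about swapping the order does not actually repair this, because once $\rho$ has acted there is no $\qQ_{\al,-1}$ left on which to invoke Lemma~\ref{vaniminusone}. The ``degree exactly~$1$'' focus is a red herring imported too literally from the $A_1$ proof.

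The paper's argument avoids this by observing directly that \emph{every} term of positive degree in $\qQ_{\al,-1}$ (not just degree~$1$) is annihilated by $\rho$, while the degree-zero part --- having no $\qQ_{\al,\pm1}$ in it at all --- becomes, after the prefactor $\qQ_{\al,1}\qQ_{\al,0}^{-1}$ in $\rho$, a polynomial in $\qQ_{\al,1}$ of valuation at least~$1$. Hence $\rho(P)$ itself is a polynomial in $\qQ_{\al,1}$ with no constant term, and $\CT_{\qQ_{\al,1}}(\rho(P))=0$ follows immediately. This is the same mechanism you describe, but with $\rho$ applied first throughout, so no order-swapping is needed.
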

\begin{proof}
By the last property \eqref{propthree} of Lemma \ref{positalpha}, for each negative $b_{\beta,i}$, $\qQ_{\beta,i}^{b_{\beta,i}}$
is a Laurent series of $\qQ_{\beta ,1}^{-1}$
with coefficients in $\Z_t[\{\qQ_{\gamma,1},\qQ_{\gamma,-1}\}_{\gamma\neq \beta},\{\qQ_{\gamma,0}^{\pm 1}\}]$.
So $P$ has only non-negative powers of $\qQ_{\al,\pm 1}$, When we apply $\rho$, according to Lemma
\ref{positalpha}, all the $\qQ_{\beta,-1}$ are evaluated to $0$, so we are left with an expression
that is a Laurent series of some variables $\qQ_{\beta,1}^{-1}$ with $\beta\neq \al$, with coefficients
that are in particular polynomials of $\qQ_{\al,1}$, with valuation $1$ at least, due to the prefactor
in the definition of $\rho$. The constant term in $\qQ_{\al,1}$ therefore vanishes, and the lemma follows.

\end{proof}

%%%%%%%%%%%%%%%proof of M=N theorem%%%%%%%%%%%
Theorem \ref{M=Nsl} is equivalent to  the following statement.
We start from the expression \eqref{NsumSL} for the $N$-sum. 
\begin{lemma}
The sum over $\bm$ in \eqref{NsumSL} is unchanged if we restrict the sum to sets $\bm$ such that $q_{\al,j}\geq 0$, where $j=k,...,1$.
\end{lemma}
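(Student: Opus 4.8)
The plan is to adapt the descending induction used for $A_1$ in the proof of Theorem~\ref{mequalsn}, with Lemma~\ref{vanilem} replaced by its simply-laced refinement Lemma~\ref{techlem} and the naive evaluation at $\qQ_0=1$ replaced by the map $\rho$ of Definition~\ref{defrho}. Concretely, I run a descending induction on $j=k,k-1,\ldots,1$ with hypothesis $(H_j)$: the $N$-sum \eqref{NsumSL} is unchanged if the summation over $\bm$ is further restricted to the tuples with $q_{\al,i}\geq 0$ for all $\al\in I_r$ and all $i\in\{j,j+1,\ldots,k\}$. The base case $j=k$ is trivial because $q_{\al,k}=\ell_\al\geq 0$ by definition. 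Once $(H_1)$ is established the Lemma follows, and since the multi-constant term $\CT_{\bqQ_1}$ in \eqref{NsumSL} forces $\bq_0=0$, on the surviving terms $q_{\al,i}=p_{\al,i}$, so $(H_1)$ is precisely the passage from \eqref{Nsumsl} to \eqref{Msumsl} and thereby yields Theorem~\ref{M=Nsl}.

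For the inductive step I assume $(H_{j+1})$ and prove $(H_j)$, i.e.\ that the terms with $q_{\al_0,j}<0$ for some root $\al_0$ contribute nothing. Using Corollary~\ref{corsl} I split $Z_{\bl;\bn}^{(k)}=Z_{0;\bn_1,\ldots,\bn_j}^{(j)}(\bqQ_0,\bqQ_1)\,Z_{\bl;\bn^{(j)}}^{(k-j)}(\bqQ_j,\bqQ_{j+1})$, and expand the first factor by Theorem~\ref{zkfin} (with $\bl=0$), which produces the prefactor $\bigl(\prod_\al\qQ_{\al,1}\qQ_{\al,0}^{-1}\bigr)\bigl(\prod_{i=1}^{j}\prod_\al\qQ_{\al,i}^{n_{\al,i}}\bigr)\bigl(\prod_\beta\qQ_{\beta,j}\qQ_{\beta,j+1}^{-1}\bigr)$ in front of $Z^{(k-j)}_{\bl;\bn^{(j)}}(\bqQ_j,\bqQ_{j+1})$, which still carries the sum over $\bm^{(j)}$ in the form \eqref{Zsl}. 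Evaluating at $\bqQ_0=1$ (and using that $\CT_{\bqQ_1}$ and this evaluation commute) then turns \eqref{NsumSL}, in exact analogy with \eqref{factbarq}, into $q^{-(\cdots)}\,\CT_{\bqQ_1}\bigl(\rho(G_j\cdot Z^{(k-j)}_{\bl;\bn^{(j)}}(\bqQ_j,\bqQ_{j+1}))\bigr)$ with $G_j=\bigl(\prod_{i=1}^{j}\prod_\al\qQ_{\al,i}^{n_{\al,i}}\bigr)\bigl(\prod_\beta\qQ_{\beta,j}\qQ_{\beta,j+1}^{-1}\bigr)$, the map $\rho$ supplying exactly the factor $\prod_\al\qQ_{\al,1}\qQ_{\al,0}^{-1}$ and the evaluation. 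Now I fix a term of the $\bm^{(j)}$-sum with $q_{\al_0,j}<0$; by $(H_{j+1})$ I may assume $q_{\al,i}\geq 0$ for every $\al$ and every $i\geq j+1$. Using the shift identity $q_{\al,i+j}(\bm,\bn)=q_{\al,i}(\bm^{(j)},\bn^{(j)})$, the factor of $Z^{(k-j)}$ attached to this term is $\prod_\al\qQ_{\al,j+1}^{-q_{\al,j}}\prod_\al\qQ_{\al,j}^{q_{\al,j+1}}$ times $q$-binomial scalars, so in the monomial $G_j\cdot(\text{this factor})$ the variable $\qQ_{\al_0,j+1}$ occurs with power $-1-q_{\al_0,j}\geq 0$ (since $q_{\al_0,j}\leq-1$), $\qQ_{\al_0,j}$ with power $1+n_{\al_0,j}+q_{\al_0,j+1}\geq 0$, and each $\qQ_{\al_0,i}$ with $1\leq i<j$ with power $n_{\al_0,i}\geq 0$. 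Hence, after $t$-normal-ordering, the monomial has exactly the shape $P=\prod_i\qQ_{\al_0,i}^{a_i}\prod_{i,\beta\neq\al_0}\qQ_{\beta,i}^{b_{\beta,i}}$ with all $a_i\in\Z_+$ required by Lemma~\ref{techlem} (the powers of the other roots may be arbitrary integers, which is permitted), so $\CT_{\qQ_{\al_0,1}}(\rho(P))=0$ and a fortiori $\CT_{\bqQ_1}(\rho(P))=0$. Summing over all such terms gives $(H_j)$, completing the induction.

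The main obstacle is organizational rather than conceptual: one must track the two sources of negative powers of the $\qQ_{\cdot,j+1}$'s — the factor $\prod_\beta\qQ_{\beta,j}\qQ_{\beta,j+1}^{-1}$ coming from $Z^{(j)}_{0;\cdots}$ and the factor $\qQ_{\al,j+1}^{-q_{\al,j}}$ from $Z^{(k-j)}$ — and verify that for the chosen root $\al_0$ with $q_{\al_0,j}<0$ they combine to a non-negative power, whereas for the remaining roots the (possibly negative) powers are harmless because Lemma~\ref{techlem} requires non-negativity only in the single root $\al_0$. A secondary point requiring care is that the passage to the monomial $P$ is compatible with the multi-constant term: all $\qQ_{\al,1}$ mutually $t$-commute and moving the $\qQ_{\al_0,\cdot}$'s past the $\qQ_{\beta,\cdot}$'s with $\beta\neq\al_0$ is governed by Lemmas~\ref{comlem} and \ref{comalbet}, so that $\CT_{\bqQ_1}=\CT_{\qQ_{\al_0,1}}\circ(\text{the rest})$ and the vanishing of the inner constant term suffices.
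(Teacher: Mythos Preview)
Your proposal is correct and follows essentially the same approach as the paper: descending induction on $j$, the factorization from Corollary~\ref{corsl} and Theorem~\ref{zkfin} to isolate the relevant monomial, and then Lemma~\ref{techlem} applied to the root $\al_0$ with $q_{\al_0,j}<0$. Your indexing convention for $(H_j)$ is shifted by one relative to the paper's, and you are somewhat more explicit than the paper in verifying that the combined power of $\qQ_{\al_0,j+1}$ is $-1-q_{\al_0,j}\geq 0$ and that only the $\al_0$-powers need be non-negative for Lemma~\ref{techlem} to apply, but the argument is the same.
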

\begin{proof}
This is proved by induction as in the case of $A_1$. We assume the following induction hypothesis:

$(H_{j})$: {\it the result of the summation in \eqref{NsumSL}
remains unchanged if we restrict it so that $q_{\al,i}\geq 0$ 
for all $i=j+1,...,k$ and $\al\in I_r$.}

This clearly holds for $j=k-1$, as $q_{\al,k}=\ell_\al\in\Z_+$ 
for all $\al\in I_r$. Assume that it holds for some $j$,
and wish to prove it for $j-1$. Using the factorization property of Corollary \ref{corsl},
\begin{eqnarray}
\label{finsumsl}
\\
&&\!\!\!\!\!\!\!\! Z_{\bl;\bn}^{(k)}
%(\{\qQ_{\al,0}\},\{\qQ_{\al,1}\})
=
q^{-{1\over \delta}\sum_{i=1}^j\sum_{\al,\beta} n_{\al,i}\lambda_{\al,\beta}-{1\over 2\delta}\sum_\al \ell_\al\lambda_{\al,\al}}
\prod_{\al=1}^r \qQ_{\al,1}
\qQ_{\al,0}^{-1} \prod_{1\leq \al\leq r\atop 1\leq i \leq j} \qQ_{\al,i}^{n_{\al,i}} \prod_{\beta=1}^r
\qQ_{\beta,j}\qQ_{\beta,j+1}^{-1}\nonumber \\
&& \ \ \ \ \  \times \sum_{\bm_{j+1},...,\bm_{k}\in \Z_+^r}q^{{1\over 2\delta}{\bq}_j\cdot \lambda \bq_j}
\prod_{\al=1}^r \qQ_{\al,j+1}^{-{q}_{\al,j}} \prod_{\beta=1}^r \qQ_{\beta,j}^{{q}_{\beta,j+1}}
\prod_{i=j+1}^k q^{{1\over 2\delta}({\bq}_i-{\bq}_{i+1})\cdot \lambda ({\bq}_i-{\bq}_{i+1})} 
\left[\begin{matrix}m_{\beta,i}+{q}_{\beta,i} \\ m_{\beta,i}\end{matrix}\right]_q \nonumber
\end{eqnarray}
We wish to prove that for {\it each} $\al\in I_r$ the contribution to the summation with $q_{\al,j}<0$
vanishes after the constant term in all the $\qQ_{\beta,1}$'s and the evaluation at all $\qQ_{\beta,0}=1$ are taken,
thereby establishing $(H_{j-1})$.
By $(H_j)$, we may restrict the summation to $q_{\beta,j+1},...,q_{\beta,k}\geq 0$ for all $\beta\in I_r$.
A generic term of the sum \eqref{finsumsl} at $\qQ_{\beta}=1$, $\beta\in I_r$
(apart from coefficients involving $t$) has the form
$$\rho\left(\prod_{1\leq \al\leq r\atop 1\leq i \leq j} \qQ_{\al,i}^{n_{\al,i}} \prod_{\beta=1}^r
\qQ_{\beta,j}\qQ_{\beta,j+1}^{-1}\prod_{\beta=1}^r \qQ_{\beta,j+1}^{-{q}_{\beta,j}} 
\prod_{\beta=1}^r \qQ_{\beta,j}^{{q}_{\beta,j+1}}\right).
$$

Due to Lemma \ref{techlem}, terms with $q_{\al,j}<0$ in \eqref{finsumsl} do not contribute
to the constant term of $Z_{\bl;\bn}^{(k)}(\{\qQ_{\al,0}\},\{\qQ_{\al,1}\})$. This holds for all $\al \in I_r$. Thus,
property $(H_{j-1})$ follows.
This completes the proof of the Lemma and therefore Theorem \ref{M=Nsl}.
\end{proof}

\subsection{Computing the graded multiplicities}

Now that we have proven Theorem \ref{M=Nsl}, we can use the constant term expression for $M_{\bl;\bn}^{(k)}(q)$. 
\begin{eqnarray*}
\label{MsumsL} M_{\bl;\bn}^{(k)}(q^{-1})&=&t^{\sum_{\al,\beta,i}n_{\al,i}\lambda_{\al,\beta}+
{1\over 2}(\sum_\al \ell_\al \lambda_{\al,\al}+\bn\cdot (\lambda\otimes A)\bn) } \\
&\times&\CT_{\bqQ_1}\left.\left( 
\prod_{\al=1}^r \qQ_{\al,1}\qQ_{\al,0}^{-1}\prod_{i=1}^k  \prod_{\al=1}^r\qQ_{\al,i}^{n_{\al,i}}
\prod_{\beta=1}^r
\big(\qQ_{\beta,k}\qQ_{\beta,k+1}^{-1}\big)^{\ell_\beta+1}
\right)\right|_{\bqQ_0=1}.
\end{eqnarray*}
Completing the sequence $\bn$ with zeros, and taking $k$ sufficiently large with respect to 
$\bn$, the constant term is independent of $k$, we have
\begin{thm}\label{howtocomputeM}
\begin{equation}\label{Msumslcomp}
M_{\bl,\bn}(q^{-1}) =t^{\sum_{\al,\beta,i}n_{\al,i}\lambda_{\al,\beta}+
{1\over 2}(\sum_\al \ell_\al \lambda_{\al,\al}+\bn\cdot (\lambda\otimes A)\bn) } 
\times \CT_{\bqQ_1}\
%\left( \prod_{\al=1}^r \qQ_{\al,1}\qQ_{\al,0}^{-1}
\rho\
\left(\prod_{i=1}^k  \prod_{\al=1}^r\qQ_{\al,i}^{n_{\al,i}}
\prod_{\beta=1}^r
z_{\beta}^{\ell_\beta+1}
\right)
\end{equation}
where
$$
z_\beta=\lim_{k\to \infty} z_{\beta,k}=\qQ_{\beta,0}\qQ_{\beta,1}^{-1}\prod_{k=1}^\infty
\left(1-\prod_{\al=1}^r (\qQ_{\al,k})^{-C_{\al,\beta}}\right)^{-1}.
$$
\end{thm}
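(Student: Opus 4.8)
The plan is to follow the $A_1$ argument of Theorem~\ref{compum} line by line, now keeping track of the root label. The starting point combines three results already in hand: the equality $M^{(k)}_{\bl;\bn}=N^{(k)}_{\bl;\bn}$ of Theorem~\ref{M=Nsl}, the expression \eqref{NsumSL} of the $N$-sum as a multiple constant term and evaluation of $Z^{(k)}_{\bl;\bn}$, and the complete factorization of $Z^{(k)}_{\bl;\bn}$ in Theorem~\ref{zkfin}. The key observation is that the leftmost factor $\prod_\al \qQ_{\al,1}\qQ_{\al,0}^{-1}$ in Theorem~\ref{zkfin}, followed by the evaluation $\vert_{\bqQ_0=1}$, is exactly the map $\rho$ of Definition~\ref{defrho}; since $\CT_{\bqQ_1}$ and $\vert_{\bqQ_0=1}$ commute, collecting all scalar prefactors (and using $q=t^{-\delta}$) then yields
$$
M^{(k)}_{\bl;\bn}(q^{-1})=t^{\sum_{\al,\beta,i}n_{\al,i}\lambda_{\al,\beta}+\frac12(\sum_\al\ell_\al\lambda_{\al,\al}+\bn\cdot(\lambda\otimes A)\bn)}\;\CT_{\bqQ_1}\rho\Big(\prod_{i,\al}\qQ_{\al,i}^{n_{\al,i}}\prod_\beta z_{\beta,k}^{\ell_\beta+1}\Big),
$$
where $z_{\beta,k}:=\qQ_{\beta,k}\qQ_{\beta,k+1}^{-1}$; by \eqref{propthree} the factor $\qQ_{\beta,k+1}^{-1}$ is a formal power series in $\qQ_{\beta,1}^{-1}$, so the argument of $\rho$ lies in the ring on which $\rho$ acts.

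It then remains to identify $z_{\beta,k}$ and to pass to the limit $k\to\infty$. The $k=1$ computation leading to Lemma~\ref{zonesl} already gives $\qQ_{\gamma,1}\qQ_{\gamma,2}^{-1}=\qQ_{\gamma,0}\qQ_{\gamma,1}^{-1}\big(1-\prod_\al\qQ_{\al,1}^{-C_{\al,\gamma}}\big)^{-1}$; applying translational invariance of the quantum $Q$-system and inducting on $k$ produces the telescoping formula
$$
z_{\beta,k}=\qQ_{\beta,0}\qQ_{\beta,1}^{-1}\prod_{j=1}^{k}\Big(1-\prod_{\al=1}^{r}\qQ_{\al,j}^{-C_{\al,\beta}}\Big)^{-1}.
$$
To control the tail I would grade by degree in the variables $\{\qQ_{\al,1}\}$ and show, by induction on $j$ via \eqref{qqsys} (whose right-hand side is a product of mutually commuting $\qQ$'s), that $\qQ_{\al,j}$ has degree growing linearly in $j$ --- the analogue of ``$\qQ_k$ is a degree-$k$ Laurent polynomial in $\qQ_1$'' in the $A_1$ case. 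Hence $\prod_\al\qQ_{\al,j}^{-C_{\al,\beta}}$, expanded as above, has valuation growing with $j$; the infinite product converges in the appropriate completed ring, defining $z_\beta=\lim_k z_{\beta,k}$, and $z_\beta-z_{\beta,k}$ involves only terms of order growing with $k$.

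Putting this together, once $k$ exceeds a bound depending on $\bn$ (so that the degree in the $\qQ_{\al,1}$'s of $\prod_{i,\al}\qQ_{\al,i}^{n_{\al,i}}$ is smaller than the order at which $z_{\beta,k}$ and $z_\beta$ begin to differ), replacing each $z_{\beta,k}$ by $z_\beta$ leaves $\CT_{\bqQ_1}\rho(\cdots)$ unchanged; in particular the constant term no longer depends on $k$. Completing $\bn$ with zeros and using (as in the $A_1$ case, extended to simply-laced $\g$) that $M^{(k)}_{\bl;\bn}(q)=M_{\lambda,\bn}(q)$ with $\lambda=\sum_\al\ell_\al\omega_\al$ for $k$ large, this gives \eqref{Msumslcomp}.

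The step I expect to be the main obstacle is precisely this degree bookkeeping in the multi-variable setting: showing that the valuations of the $\qQ_{\al,j}^{-C_{\al,\beta}}$ grow with $j$ uniformly enough across root labels for the constant term to stabilize, while keeping track of the effect of the normal-ordering and $t$-commutation moves (pushing $\qQ_{\al,0}$'s to the left and commuting the several $\qQ_{\al,1}^{-1}$-series past one another using Lemma~\ref{comalbet}) on that bookkeeping. This is the analogue of the explicit leading-term computation in the $A_1$ case, and goes through because all the variables $t$-commute and the right-hand side of \eqref{qqsys} is a product of commuting $\qQ$'s.
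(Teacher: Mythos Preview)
Your proposal is correct and follows essentially the same approach as the paper. The only cosmetic difference is that the paper derives the telescoping formula for $z_{\beta,k}$ directly from the quantum $Q$-system relation $z_{\beta,k}^{-1}z_{\beta,k-1}=1-\prod_\al\qQ_{\al,k}^{-C_{\al,\beta}}$, whereas you obtain it from the $k=1$ identity in Lemma~\ref{zonesl} together with translational invariance; both routes yield the same product formula, and the remaining degree/valuation argument and the stabilization of the constant term for large $k$ are handled in the same way.
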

\begin{proof}
We define $z_{\beta,k}=\qQ_{\beta,k}\qQ_{\beta,k+1}^{-1}$, and use the quantum $Q$-system to write
$z_{\beta,k}^{-1}z_{\beta,k-1}=1-\prod_{\al=1}^r (\qQ_{\al,k})^{-C_{\al,\beta}}$,
so that
$$
z_{\beta,k}=\qQ_{\beta,0}\qQ_{\beta,1}^{-1}\prod_{j=1}^{k}
\left(1-\prod_{\al=1}^r (\qQ_{\al,j})^{-C_{\al,\beta}}\right)^{-1}.
$$
Each $\qQ_{\al,j}$, expressed as a function of the initial data 
$\{\qQ_{\gamma,0},\qQ_{\gamma,1}\}_{\gamma\in  I_r}$
is a Laurent polynomial of degree $j$ in the variable $\qQ_{\al,1}$. 
This allows to expand $z_{\al,k}$ as
a formal power series of $\qQ_{\al,1}^{-1}$, with coefficients Laurent polynomials of the remaining variables,
and to extract the relevant constant term in the $\qQ_{\al,1}$'s.

When $k$ is sufficiently large with respect to $\bn$, the result is independent of $k$, and one may replace
$z_{\beta,k}$ by its limit when $k\to\infty$,
$z_\beta$ as in the statement of the Lemma. This last is a formal power series of $\qQ_{\beta,1}^{-1}$ 
with coefficients Laurent polynomial
of the remaining initial data.
\end{proof}
%As before, we denote by $M_{\bl;\bn}(q)=\lim_{k\to \infty} M_{\bl;\bn}^{(k)}(q)$, and we have:
%\begin{eqnarray}
%M_{\bl;\bn}(q^{-1})&=&t^{\sum_{\al,\beta,i}n_{\al,i}\lambda_{\al,\beta}+
%{1\over 2}(\sum_\al \ell_\al \lambda_{\al,\al}+\bn\cdot (\lambda\otimes A)\bn) }\nonumber \\
%&\times&CT_{\{\qQ_{\al,1}\}_{\al\in I_r}}\left( 
%\prod_{\al=1}^r \qQ_{\al,1}\qQ_{\al,0}^{-1}\prod_{i=1}^k  \prod_{\al=1}^r\qQ_{\al,i}^{n_{\al,i}}
%\prod_{\beta=1}^r
%z_\beta^{\ell_\beta+1}
%\right)\Big\vert_{\{\qQ_{\al,0}=1\}_{\al\in I_r}}\label{Msl}
%\end{eqnarray}

We have the following definitions, generalizing Definition \ref{unav}.

\begin{defn}\label{unavsl}
To any function $p\in\Z_t[\bqQ_{0}^{\pm1},\bqQ_{i}]_{i>0}$,
we associate the following Laurent polynomial of $t$:
$$ \mu_\bl(p) =\CT_{\bqQ_1}\rho\left(
\p \times
\prod_{\beta=1}^r
z_{\beta}^{\ell_\beta+1}
\right)\in\Z_t, $$
where $\bl=(\ell_1,...,\ell_r)$.
We use the following notation for the moments:
\begin{equation}\label{mulmsl}
\mu_{\bl,\bm}:=\mu_\bl\left(\prod_{\al=1}^rQ_{\al,1}^{m_\al}\right),
\end{equation}
where $\bm=(m_1,...,m_r)$.
\end{defn}
We have the following properties:

\begin{lemma}\label{faclemsl}
For any product of polynomials of the form $f g$, where $f= f(\bqQ_{0})\in\Z_t[\bqQ_0^{\pm1}]$ 
and $g\in\Z_t[\bqQ^{\pm1},\bqQ_i]_{i>0},$
we have
$$ \mu_\bl(f \ g)= f(\{ t^{-\sum_\beta \lambda_{\al,\beta}}\}_{\al\in I_r}) \mu_\bl( g). $$
\end{lemma}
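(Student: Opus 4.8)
The plan is to run the proof of Lemma \ref{faclem} almost verbatim, with the single scalar $t^{-1}$ of the $A_1$ case replaced by the $r$-tuple $(t^{-\sum_\beta\lambda_{\al,\beta}})_{\al\in I_r}$; no new idea is required beyond one commutation identity. Everything reduces to pushing the factor $f(\bqQ_0)$ leftward past the prefactor $\prod_{\beta=1}^r\qQ_{\beta,1}\qQ_{\beta,0}^{-1}$ occurring in the map $\rho$ of Definition \ref{defrho}.

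First I would establish, for a monomial $f(\bqQ_0)=\prod_{\al=1}^r\qQ_{\al,0}^{c_\al}$, the relation
$$\Big(\prod_{\beta=1}^r\qQ_{\beta,1}\qQ_{\beta,0}^{-1}\Big)\,f(\bqQ_0)
= f\big(\{\,t^{-\sum_\gamma\lambda_{\al,\gamma}}\qQ_{\al,0}\,\}_{\al\in I_r}\big)\,\Big(\prod_{\beta=1}^r\qQ_{\beta,1}\qQ_{\beta,0}^{-1}\Big),$$
which is immediate from the commutation relations \eqref{comq}: the factors $\qQ_{\beta,0}^{-1}$ commute with every $\qQ_{\al,0}$, so one only has to drag each $\qQ_{\al,0}^{c_\al}$ to the left across the $r$ factors $\qQ_{\beta,1}$, picking up $t^{-c_\al\lambda_{\beta,\al}}$ at each crossing; summing over $\beta$ and using that $\lambda=\delta C^{-1}$ is symmetric produces the overall monomial $\prod_\al(t^{-\sum_\gamma\lambda_{\al,\gamma}})^{c_\al}$, which is exactly $f$ evaluated at the rescaled arguments. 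Extending by linearity covers all $f\in\Z_t[\bqQ_0^{\pm1}]$.

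Next I would substitute $p=fg$ into Definition \ref{unavsl}, use this identity to carry $f$ to the far left, and then observe that, because all the $\qQ_{\al,0}$ commute with one another (the case $m=n=0$ of \eqref{comq}), the left evaluation $|_{\bqQ_0=1}$ sends the leftmost factor to the scalar $f(\{t^{-\sum_\gamma\lambda_{\al,\gamma}}\}_{\al\in I_r})\in\Z_t$: its $\qQ_{\al,0}$-content never gets entangled with that of $g$ or of the $z_\beta$ during normal ordering. This scalar then pulls out of $\CT_{\bqQ_1}$ as well, since it carries no $\qQ_{\al,1}$, and the remaining expression is precisely $\mu_\bl(g)$, giving the claim.

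The computation is essentially mechanical; the one point deserving genuine care is the last step, namely verifying that the non-commutative ``left evaluation'' really does let $f$ be factored out as a scalar. As in the $A_1$ proof of Lemma \ref{faclem}, this rests solely on the mutual $t$-commutativity of the $\qQ_{\al,0}$, so I expect no real obstacle — only careful bookkeeping of the $t$-powers and indices.
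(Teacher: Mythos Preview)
Your proposal is correct and essentially identical to the paper's own proof: both unfold Definition \ref{unavsl}, commute $f(\bqQ_0)$ leftward through the prefactor $\prod_\beta\qQ_{\beta,1}\qQ_{\beta,0}^{-1}$ using \eqref{comq} to pick up the rescaling $\qQ_{\al,0}\mapsto t^{-\sum_\beta\lambda_{\al,\beta}}\qQ_{\al,0}$, and then invoke the left evaluation at $\bqQ_0=1$ to extract the scalar. The paper's version is more terse but follows exactly the same steps you outline.
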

\begin{proof}
This follows from
\begin{eqnarray*} 
\mu_\bl(\p)&=&\CT_{\bqQ_{1}}\rho\left(f(\bqQ_{0}) g
\times\prod_{\beta=1}^r z_{\beta}^{\ell_\beta+1}\right)\\
&=&\CT_{\bqQ_{1}}\left( f(\{t^{-\sum_\beta \lambda_{\al,\beta}}\qQ_{\al,0}\}_{\al\in I_r})  
\left(\prod_{\al=1}^r \qQ_{\al,1}\qQ_{\al,0}^{-1}\right) g
\prod_{\beta=1}^r z_{\beta}^{\ell_\beta+1}\right)\Bigg\vert_{\bqQ_{0}=1}\\
&=&f(\{ t^{-\sum_\beta \lambda_{\al,\beta}}\}_{\al\in I_r}) \mu_\bl(g)
\end{eqnarray*}
where the last expression is computed by (i) expressing $g$ as a Laurent polynomial of 
$(\{\qQ_{\al,0}\},\{\qQ_{\al,1}\})$,
(ii) moving all powers of $\qQ_{\al,0}$'s to the {\it left} (iii) taking the constant term in all the $\qQ_{\al,1}$
(iv) evaluating the expression at $\qQ_{\al,0}=1$. The left factor
$f(\{t^{-\sum_\beta \lambda_{\al,\beta}}\qQ_{\al,0}\}_{\al\in I_r})$ is eventually evaluated to  
$f(\{ t^{-\sum_\beta \lambda_{\al,\beta}}\}_{\al\in I_r})$, and the lemma follows.
\end{proof}

\begin{remark}\label{otherevalsl}
We may restate the result of Lemma \ref{faclemsl} as follows: the evaluation at all $\qQ_{\al,0}=1$ 
may be performed by commuting all the $\qQ_{\al,0}$'s of $\p$ to the left, 
and setting $\qQ_{\al,0}=t^{-\sum_\beta \lambda_{\al,\beta}}$. This will be instrumental
in our algebraic reformulation below.
\end{remark}

\begin{lemma}{(Polynomiality property)}\label{polpPsl}
For each (Laurent) polynomial  $\p\in \Z_t[\bqQ_0^{\pm 1},\bqQ_i]_{i>0}$
there exists a unique polynomial 
$P(\bqQ_{1})\in \Z_t[\bqQ_1]$ such that:
$$ \mu_\bl(\p)
=\mu_\bl(P(\bqQ_1)) $$
\end{lemma}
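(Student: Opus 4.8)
The plan is to mimic, step by step, the proof of Lemma~\ref{polpP} in the $A_1$ case, using the structural results just established for the simply-laced quantum $Q$-system. First I would invoke Lemma~\ref{lopol} (the Laurent property of the quantum cluster algebra) to write the given $\p$ as a non-commutative Laurent polynomial in the fundamental initial data $(\bqQ_0,\bqQ_1)$, with coefficients in $\Z_t$. Expanding $\p=\sum_{\bj\in\Z^r}\big(\prod_{\al}\qQ_{\al,1}^{j_\al}\big) f_{\bj}(\bqQ_0)$ in normal-ordered form (all $\qQ_{\al,0}$'s to the right of the $\qQ_{\al,1}$'s, say), one separates the ``positive part'' $\bj\in\Z_+^r$ from the rest. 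Since each $z_\beta$ is, by Theorem~\ref{howtocomputeM}, a formal power series in $\qQ_{\beta,1}^{-1}$ with \emph{no constant term} and coefficients in the remaining variables, the operator $\CT_{\bqQ_1}\rho(\,\cdot\,\prod_\beta z_\beta^{\ell_\beta+1})$ kills every monomial in which some $\qQ_{\al,1}$ appears to a strictly negative power: the prefactor $\prod_\al\qQ_{\al,1}\qQ_{\al,0}^{-1}$ in $\rho$ raises the valuation in each $\qQ_{\al,1}$ by one, and $z_\beta^{\ell_\beta+1}$ contributes only non-positive powers, so a term with $j_\al<0$ for some $\al$ cannot contribute a total degree zero in that variable. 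Hence only the $\bj\in\Z_+^r$ survive.

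**Reduction to a polynomial.** For the surviving terms, I would apply Lemma~\ref{faclemsl}: moving the coefficient $f_{\bj}(\bqQ_0)$ past the monomial $\prod_\al\qQ_{\al,1}^{j_\al}$ via the $t$-commutation relations \eqref{comq} rescales its arguments by fixed powers of $t$, and then $\mu_\bl$ evaluates $\qQ_{\al,0}\mapsto t^{-\sum_\beta\lambda_{\al,\beta}}$ (Remark~\ref{otherevalsl}). Thus $\mu_\bl\big(\prod_\al\qQ_{\al,1}^{j_\al}f_{\bj}(\bqQ_0)\big)=c_{\bj}(t)\,\mu_\bl\big(\prod_\al\qQ_{\al,1}^{j_\al}\big)=c_{\bj}(t)\,\mu_{\bl,\bj}$ for an explicit scalar $c_{\bj}(t)\in\Z_t$ obtained by combining the commutation $t$-powers with the evaluation. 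Summing over $\bj\in\Z_+^r$ gives $\mu_\bl(\p)=\mu_\bl\big(P(\bqQ_1)\big)$ with
$$
P(\bqQ_1)=\sum_{\bj\in\Z_+^r} c_{\bj}(t)\,\prod_{\al=1}^r\qQ_{\al,1}^{j_\al}\ \in\ \Z_t[\bqQ_1],
$$
a genuine polynomial since the Laurent expansion of $\p$ has only finitely many nonzero $f_{\bj}$.

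**Uniqueness and the main obstacle.** Uniqueness of $P$ follows because the monomials $\prod_\al\qQ_{\al,1}^{j_\al}$, $\bj\in\Z_+^r$, are linearly independent over $\Z_t$ in the quantum torus, and two polynomials in $\bqQ_1$ with the same $\mu_\bl$-image for all $\bl$ must agree coefficientwise—or, more directly, one checks that $P\mapsto\mu_\bl(P)$ restricted to $\Z_t[\bqQ_1]$ is injective, because $\mu_\bl$ already has the factored form above on monomials and the moments $\mu_{\bl,\bj}$ determine the coefficients by the triangular change-of-basis argument (the analogue of \eqref{basone}--\eqref{bastwo}), which here uses the linear recursion satisfied by the quantum $Q$-system solutions in each root direction. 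The step I expect to be the genuine obstacle is the careful bookkeeping that the constant-term/$\rho$ operator really annihilates \emph{all} negative-power contributions in the multivariable setting: one must be sure that a monomial which is negative in $\qQ_{\al,1}$ but positive in some $\qQ_{\beta,1}$ ($\beta\neq\al$) still drops out. This is exactly what Lemma~\ref{techlem} (with its input \eqref{propthree} and the vanishing $\rho(\qQ_{\gamma,-1}f)=0$ of Lemma~\ref{vaniminusone}) is designed to handle, so the proof is a matter of assembling those lemmas in the right order rather than proving anything new; the main care is in tracking the $t$-powers produced by all the commutations so that the coefficients $c_{\bj}(t)$ land in $\Z_t$ and not in a larger ring.
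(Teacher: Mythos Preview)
Your proof strategy has a genuine gap at exactly the point you flag as ``the main obstacle,'' and the fix you propose does not close it. The valuation argument---that a monomial with $j_\al<0$ cannot contribute to $\CT_{\bqQ_1}$ because the $z_\beta$'s contribute only non-positive powers of $\qQ_{\al,1}$---is simply false in the higher-rank case: for $\beta\sim\al$ the factor $(1-\prod_\gamma\qQ_{\gamma,k}^{-C_{\gamma,\beta}})^{-1}$ in $z_\beta$ produces \emph{positive} powers of $\qQ_{\al,1}$, which can compensate a negative $j_\al$ in $\p$. You recognize this, but the lemmas you invoke do not repair it. Lemma~\ref{techlem} applies only to products in which the $\qQ_{\al,j}$ appear to \emph{non-negative} powers in the $\al$-direction; a bare Laurent monomial $\prod_\gamma\qQ_{\gamma,1}^{j_\gamma}$ with $j_\al<0$ is not of that form, and replacing $\qQ_{\al,1}^{-1}$ by $\qQ_{\al,-1}$ via the $Q$-system introduces the inverse of $\qQ_{\al,0}^2-\prod_{\eta\sim\al}\qQ_{\eta,0}$, which is not a Laurent polynomial in $\bqQ_0$. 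So neither Lemma~\ref{techlem} nor Lemma~\ref{vaniminusone} applies directly to the terms in your normal-ordered expansion.

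The paper's proof circumvents this entirely by using a different decomposition. Rather than a naive Laurent expansion in $(\bqQ_0,\bqQ_1)$, it invokes the structural result from the proof of Lemma~\ref{positalpha}: any $\p\in\Z_t[\bqQ_0^{\pm1},\bqQ_i]_{i>0}$ admits a \emph{finite} expansion
\[
\p=\sum_{A\sqcup B=I_r,\ m_\gamma\ge0}\Big(\prod_{\al\in A}\qQ_{\al,-1}^{m_\al}\Big)\Big(\prod_{\beta\in B}\qQ_{\beta,1}^{m_\beta}\Big)\,c^{A,B}_{\bm}(\bqQ_0),
\]
with Laurent-polynomial coefficients in $\bqQ_0$. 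This is a nontrivial statement (it needs the coprimality argument comparing the two Laurent expansions in $(\bqQ_0,\bqQ_1)$ and $(\bqQ_{-1},\bqQ_0)$), and it is precisely what makes Lemma~\ref{vaniminusone} applicable: every term with $A\neq\emptyset$ has a left factor $\qQ_{\al,-1}$ and is killed by $\rho$. Only the $A=\emptyset$ terms survive, and those are already polynomial in $\bqQ_1$; then Lemma~\ref{faclemsl} evaluates the $\bqQ_0$-coefficients. The missing ingredient in your argument is this $(A,B)$-decomposition; once you have it, the rest of your outline (commuting $\bqQ_0$'s to the left and evaluating) goes through exactly as you describe.
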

\begin{proof}
As explained in the proof of Lemma \ref{positalpha}, 
$\p$ may be expanded in a form similar to  \eqref{finqal}:
\begin{equation}\label{abcup}
\p= \sum_{A\cup B= I_r,\  A\cap B=\emptyset \atop
m_\al\in \Z_+} \left(\prod_{\al\in A}\qQ_{\al,-1}^{m_\al} \right)
\left(\prod_{\beta\in B} \qQ_{\beta,1}^{m_\beta} \right)\, 
c_{m_1,...,m_r}^{A,B}(\bqQ_{0})
\end{equation}
for finitely many non-vanishing Laurent polynomials $c_{m_1,...,m_r}^{A,B}$ of  $\bqQ_{0}$.
In fact, due to Lemma \ref{vaniminusone}, terms with $A\neq \emptyset$ vanish after the evaluation $\rho$. 
Applying $\mu_\bl$ to the Laurent expansion \eqref{abcup} of $\p$,
$$\mu_\bl(\p)=\mu_\bl \left(\sum_{m_1,...,m_r\in \Z_+}
c_{m_1,...,m_r}^{\emptyset, I_r}(\bqQ_0)\, 
\prod_{\beta=1}^r \qQ_{\beta,1}^{m_\beta} \right).$$
Finally using Lemma \ref{faclemsl} and Remark \ref{otherevalsl}, the lemma follows, with 
$$P(\{x_\al\}_{\al\in I_r})= \sum_{m_1,...,m_r\in \Z_+}
c_{m_1,...,m_r}^{\emptyset, I_r}(\{t^{-\sum_\beta \lambda_{\al,\beta}}\}_{\al\in I_r})\, 
\prod_{\beta=1}^r x_\beta^{m_\beta}. $$
\end{proof}

As in the $A_1$ case, we denote by $\varphi:\Z_t[\bqQ_0^{\pm1},\bqQ_i (i>0)]\to \Z_t[\bqQ_1]$ 
the map $\p\mapsto P$ given by the previous Lemma.
In view of Lemma \ref{polpPsl}, the $M$-sums are entirely determined by the coefficients 
$c_{m_1,...,m_r}^{\emptyset, I_r}$ and the numbers $\mu_{\bl,\bm}$ of eq.\eqref{mulmsl}.
The latter may be obtained by applying Lemma \ref{polpPsl} to
polynomials of the form: $\p=\prod_{\al=1}^r Q_{\al,m_{\al}}$ for some integers $m_\al\in\Z_+$. 
We may write the result as:
\begin{equation}\label{decompir}
\mu_\bl(\prod_{\al=1}^r Q_{\al,m_{\al}})=\mu_\bl(\varphi(\prod_{\al=1}^r Q_{\al,m_{\al}}))=
\sum_{j_1,...,j_\al \geq 0} c_{\bm,\bj}(t) \mu_{\bl,\bj} ,
\end{equation}
which, upon inversion, yields the values of $\mu_{\bl,\bj}$. 
Alternatively, the $\mu_{\bl,\bj}$ are determined by the graded $M$-sum formula
for integers $\bn$ such that $n_{\al,i}=\delta_{i,1} j_\al$. Explicitly:
\begin{equation*}
\mu_{\bl,\bj}=t^{
-{1\over 2}(\sum_\al \ell_\al \lambda_{\al,\al}+
\sum_{\al,\beta}\big\{ (j_\al+1) \lambda_{\al,\beta}(j_\beta+1)-\lambda_{\al,\beta}\big\}}
\, M_{\bl;\bn}(q^{-1}).
\end{equation*}

\subsection{An algebraic reformulation of the fusion product}

In the same spirit as for the $A_1$ case, we now present a purely algebraic formulation
of the fusion product of KR modules in the simply-laced case, 
using the solutions to the associated quantum $Q$-system.

More precisely, we use Remark \ref{otherevalsl} to reformulate the quantity $\mu_\bl(\p )$
as the matrix element of an algebra representation, which we denote by
$\langle 0,...,0\vert \p \vert \ell_1,...,\ell_r \rangle$.

Let ${\mathcal A}_+$ be the non-commutative algebra over $\Z[t,t^{-1}]$
generated by the variables $\qQ_{\al,0}$, $\qQ_{\al,0}^{-1}$, $\qQ_{\al,m}$, $\al\in I_r,m\in \Z_{>0}$, 
satisfying the quantum $Q$-system \eqref{qqsys} 
and the commutation relations \eqref{comq}. 

\begin{defn}\label{vacrepsl}
For $\bm=(m_1,...,m_r)\in (\Z_+)^r$, let 
$\langle m_1,...,m_r \vert$ denote the $\Z[t,t^{-1}]$--basis of a cyclic representation 
of the algebra ${\mathcal A}_+$
with a cyclic vector $\langle 0,0,...,0|$ such that
\begin{equation}\label{normavac}
\langle 0,0,...,0\vert \qQ_{\al,0} =t^{-{\sum_\beta \lambda_{\al,\beta}}}\, \langle 0,0,...,0\vert 
\end{equation}
and with basis vectors $\langle p_1,...,p_r\vert$, with $\bp=(p_1,...,p_r)\in (\Z_+)^r$, defined by:
\begin{eqnarray}
&&\sum_{\bp\in (\Z_+)^r} M_{\bp,\bn}(q^{-1})\, \langle p_1,...,p_r\vert \nonumber \\
&&\qquad \qquad 
=t^{-\sum_{\al,\beta}m_\al\lambda_{\al,\beta}-{1\over 2}(\sum_\al \ell_\al \lambda_{\al,\al}
+\sum_{\al,\beta}\min(m_\al,m_\beta)\lambda_{\al,\beta}) }
\langle 0,0,...,0\vert \prod_{\al=1}^r \qQ_{\al,m_\al} \label{defp}
\end{eqnarray}
where $\bn=(n_{\al,i})_{\al\in I_r, i\in \Z_+}$ with $n_{\al,i}=\delta_{i,m_\al}$ for all $\al,i$.
%Finally we have the dual vector basis $\vert \ell_1,...,\ell_r \rangle$, with $\bl=(\ell_1,...,\ell_r)\in (\Z_+)^r$ 
%such that:
%$$\langle m_1,...,m_r \vert \ell_1,...,\ell_r \rangle=\prod_{\al=1}^r
%\delta_{m_\al,\ell_\al}$$
\end{defn}

Definition \ref{vacrepsl} gives an algebraic expression for the left evaluation at 
$\qQ_{\al,0}=t^{-\sum_\beta \lambda_{\al,\beta}}$ 
described in Lemma \ref{faclemsl} and Remark \ref{otherevalsl}.
Using the proofs of Lemmas \ref{vaniminusone} and \ref{polpPsl},
we have

\begin{lemma}
$$\langle 0,...,0 \vert \qQ_{\al,-1} =0\qquad (\al \in I_r)$$
\end{lemma}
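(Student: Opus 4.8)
The plan is to mimic exactly the proof of Lemma~\ref{vanione} from the $A_1$ case, now using the simply-laced quantum $Q$-system \eqref{qqsys} and the normalization \eqref{normavac}. First I would specialize \eqref{qqsys} to the index $n=0$, which gives the relation \eqref{qqone} rewritten so as to isolate $\qQ_{\al,-1}$:
$$
\qQ_{\al,-1}=t^{-\lambda_{\al,\al}}\Big(\qQ_{\al,0}^2-\prod_{\beta\neq\al}\qQ_{\beta,0}^{-C_{\al,\beta}}\Big)\qQ_{\al,1}^{-1},
$$
where I use that all the variables $\qQ_{\beta,0}$ ($\beta\neq\al$) and $\qQ_{\al,0}$ commute with each other (they lie in the initial cluster $\qY_0$), so the right-hand side is unambiguous once we agree to put the $\qQ_{\al,1}^{-1}$ on the right. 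Then I would act with $\langle 0,\dots,0\vert$ on the left, commute every $\qQ_{\beta,0}$ (and $\qQ_{\al,0}$) to act first via \eqref{normavac}, producing a scalar factor, and check that the resulting scalar coefficient of $\langle 0,\dots,0\vert\,\qQ_{\al,1}^{-1}$ vanishes.

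The key computation is the following: applying \eqref{normavac} turns $\qQ_{\al,0}^2$ into $t^{-2\sum_\beta\lambda_{\al,\beta}}$ and turns $\prod_{\beta\neq\al}\qQ_{\beta,0}^{-C_{\al,\beta}}$ into $t^{-\sum_{\beta\neq\al}C_{\al,\beta}\big(-\sum_\gamma\lambda_{\beta,\gamma}\big)}=t^{\sum_{\beta\neq\al}C_{\al,\beta}\sum_\gamma\lambda_{\beta,\gamma}}$ — but one has to be careful, because $\qQ_{\al,1}^{-1}$ does not commute with $\qQ_{\beta,0}$, so pulling the $\qQ_{\beta,0}$'s to the left past $\qQ_{\al,1}^{-1}$ first produces factors $t^{-\lambda_{\beta,\al}(1-0)}$ from \eqref{comq}; equivalently one keeps $\qQ_{\al,1}^{-1}$ frozen and evaluates the $\qQ_{\beta,0}$'s in place. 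Doing the bookkeeping cleanly (as in the proof of Lemma~\ref{vaniminusone}, which already isolates exactly this kind of identity), the two terms $\qQ_{\al,0}^2$ and $\prod_{\beta\neq\al}\qQ_{\beta,0}^{-C_{\al,\beta}}$ acquire the \emph{same} scalar prefactor precisely because $\sum_\gamma C_{\al,\gamma}\lambda_{\gamma,\beta}=\delta\,\delta_{\al,\beta}$, i.e. $C\lambda=\delta I$; this is the same identity used in Lemma~\ref{comlem} and Lemma~\ref{vaniminusone}. Hence $\langle 0,\dots,0\vert\big(\qQ_{\al,0}^2-\prod_{\beta\neq\al}\qQ_{\beta,0}^{-C_{\al,\beta}}\big)=0$ already before multiplying by $\qQ_{\al,1}^{-1}$, and therefore $\langle 0,\dots,0\vert\,\qQ_{\al,-1}=0$.

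Concretely I would write the proof as: ``Using \eqref{qqsys} at $n=0$, $\qQ_{\al,-1}=t^{-\lambda_{\al,\al}}\big(\qQ_{\al,0}^2-\prod_{\beta\neq\al}\qQ_{\beta,0}^{-C_{\al,\beta}}\big)\qQ_{\al,1}^{-1}$. Moving the (mutually commuting) variables $\qQ_{\beta,0}$, $\beta\in I_r$, to the left and applying \eqref{normavac}, we get $\langle 0,\dots,0\vert\qQ_{\al,0}^2=t^{-2\sum_\beta\lambda_{\al,\beta}}\langle 0,\dots,0\vert$ and $\langle 0,\dots,0\vert\prod_{\beta\neq\al}\qQ_{\beta,0}^{-C_{\al,\beta}}=t^{\sum_{\beta\neq\al}C_{\al,\beta}\sum_\gamma\lambda_{\beta,\gamma}}\langle 0,\dots,0\vert$, and these two prefactors coincide because $-2\sum_\beta\lambda_{\al,\beta}=-\sum_\gamma\lambda_{\al,\gamma}\big(2\big)$ while $\sum_{\beta\neq\al}C_{\al,\beta}\sum_\gamma\lambda_{\beta,\gamma}=\sum_\gamma\big(\sum_\beta C_{\al,\beta}\lambda_{\beta,\gamma}\big)-2\sum_\gamma\lambda_{\al,\gamma}=\delta\cdot 0-2\sum_\gamma\lambda_{\al,\gamma}$ by $C\lambda=\delta I$ (here $C_{\al,\al}=2$). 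Hence $\langle 0,\dots,0\vert\big(\qQ_{\al,0}^2-\prod_{\beta\neq\al}\qQ_{\beta,0}^{-C_{\al,\beta}}\big)=0$, so $\langle 0,\dots,0\vert\qQ_{\al,-1}=0$.''

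The only subtlety — and the step I would be most careful about — is the order of operations: whether one first commutes $\qQ_{\al,1}^{-1}$ to the right and then evaluates the $\qQ_{\beta,0}$'s in place, or commutes the $\qQ_{\beta,0}$'s all the way to the left past $\qQ_{\al,1}^{-1}$ first. These two procedures differ by an overall power of $t$ (from \eqref{comq}) that is \emph{common} to both monomials $\qQ_{\al,0}^2$ and $\prod_{\beta\neq\al}\qQ_{\beta,0}^{-C_{\al,\beta}}$, so it cancels in the difference and does not affect the vanishing; the whole point is that the relative prefactor between the two monomials is exactly $1$ thanks to $C\lambda=\delta I$. Since the ambient lemma is only invoked afterward to kill the $A\neq\emptyset$ terms in \eqref{abcup} (cf. Lemma~\ref{polpPsl} and the generalized Lemma~\ref{vaniminusone}), this elementary verification suffices, and no further machinery is needed.
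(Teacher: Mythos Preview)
Your approach is the same as the paper's (which simply points back to Lemmas~\ref{vanione} and~\ref{vaniminusone}), but your execution contains two compensating errors, and one of your intermediate claims is actually false.

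First, the displayed formula $\qQ_{\al,-1}=t^{-\lambda_{\al,\al}}\big(\qQ_{\al,0}^2-\prod_{\beta\neq\al}\qQ_{\beta,0}^{-C_{\al,\beta}}\big)\qQ_{\al,1}^{-1}$ is incorrect. From \eqref{qqsys} at $n=0$ one has $\qQ_{\al,-1}=t^{-\lambda_{\al,\al}}\qQ_{\al,1}^{-1}\big(\qQ_{\al,0}^2-\prod_{\beta\neq\al}\qQ_{\beta,0}^{-C_{\al,\beta}}\big)$, and commuting $\qQ_{\al,1}^{-1}$ to the right via \eqref{comq} produces \emph{different} $t$-factors for the two monomials: $\qQ_{\al,1}^{-1}\qQ_{\al,0}^2=t^{2\lambda_{\al,\al}}\qQ_{\al,0}^2\qQ_{\al,1}^{-1}$ while $\qQ_{\al,1}^{-1}\prod_{\beta\neq\al}\qQ_{\beta,0}^{-C_{\al,\beta}}=t^{2\lambda_{\al,\al}-\delta}\big(\prod_{\beta\neq\al}\qQ_{\beta,0}^{-C_{\al,\beta}}\big)\qQ_{\al,1}^{-1}$, since $\sum_\beta C_{\al,\beta}\lambda_{\al,\beta}=\delta$. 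Hence the correct expression is
$$\qQ_{\al,-1}=t^{\lambda_{\al,\al}}\Big(\qQ_{\al,0}^2-t^{-\delta}\prod_{\beta\neq\al}\qQ_{\beta,0}^{-C_{\al,\beta}}\Big)\qQ_{\al,1}^{-1}.$$
So your last-paragraph assertion that the commutation factor is ``common to both monomials'' is precisely what fails.

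Second, in your evaluation you write $\sum_\gamma\big(\sum_\beta C_{\al,\beta}\lambda_{\beta,\gamma}\big)=\delta\cdot 0$; but $\sum_\beta C_{\al,\beta}\lambda_{\beta,\gamma}=\delta\,\delta_{\al,\gamma}$, so the sum over $\gamma$ is $\delta$, not $0$. Thus the exponent for the second term is $\delta-2\sum_\gamma\lambda_{\al,\gamma}$, and consequently $\langle 0,\dots,0\vert\big(\qQ_{\al,0}^2-\prod_{\beta\neq\al}\qQ_{\beta,0}^{-C_{\al,\beta}}\big)=(1-t^{\delta})\,t^{-2\sum_\beta\lambda_{\al,\beta}}\langle 0,\dots,0\vert\neq 0$.

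Once both errors are fixed, the extra $t^{-\delta}$ from the correct commutation cancels the extra $t^{\delta}$ from the correct evaluation, and one indeed obtains $\langle 0,\dots,0\vert\big(\qQ_{\al,0}^2-t^{-\delta}\prod_{\beta\neq\al}\qQ_{\beta,0}^{-C_{\al,\beta}}\big)=0$, hence $\langle 0,\dots,0\vert\,\qQ_{\al,-1}=0$. The strategy is fine; the bookkeeping needs repair.
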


\begin{lemma}\label{polypropsl}
With the $\qQ_{\al,i}$, $\al\in I_r, i\in \Z$, satisfying the quantum $Q$-system \eqref{qqsys} 
and the commutation relations \eqref{comq}, and
for an arbitrary function
$\p\in\Z_t[\bqQ_{0}^{\pm1},\{\bqQ_{\al,i}\}_{i>0}]$,
$$\langle 0,...,0 \vert \p 
=\langle 0,...,0 \vert \varphi(\p)(\bqQ_{1}). $$
\end{lemma}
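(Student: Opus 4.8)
The plan is to mimic exactly the proof of Lemma \ref{polyprop} from the $A_1$ case, using the structural results that have already been established for the simply-laced $\qQ$-system. First I would invoke Lemma \ref{positalpha} (in the form \eqref{finqal}) to write any $\p\in\Z_t[\bqQ_0^{\pm1},\{\qQ_{\al,i}\}_{i>0}]$ as a finite sum
$$
\p=\sum_{A\cup B=I_r,\ A\cap B=\emptyset\atop m_\al\in\Z_+}
\left(\prod_{\al\in A}\qQ_{\al,-1}^{m_\al}\right)\left(\prod_{\beta\in B}\qQ_{\beta,1}^{m_\beta}\right)
c^{A,B}_{m_1,\dots,m_r}(\bqQ_0),
$$
with coefficients Laurent polynomials in the $\qQ_{\beta,0}$'s; this is exactly the expansion \eqref{abcup} used in the proof of Lemma \ref{polpPsl}.

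Next I would apply $\langle 0,\dots,0\vert$ on the left. By the preceding Lemma, $\langle 0,\dots,0\vert\qQ_{\al,-1}=0$ for every $\al\in I_r$, so every term with $A\neq\emptyset$ is annihilated (one commutes the single $\qQ_{\al,-1}$ factor — linear in $\qQ_{\al,1}^{-1}$ — to the leftmost position using the $t$-commutation relations \eqref{comq}, then it hits the covector and dies). Hence only the $A=\emptyset$ terms survive:
$$
\langle 0,\dots,0\vert\,\p=\langle 0,\dots,0\vert
\sum_{m_1,\dots,m_r\in\Z_+}c^{\emptyset,I_r}_{m_1,\dots,m_r}(\bqQ_0)\prod_{\beta=1}^r\qQ_{\beta,1}^{m_\beta}.
$$
Then, moving the $\qQ_{\al,0}$'s to the left through the $\qQ_{\beta,1}$'s via \eqref{comq} and applying the normalization \eqref{normavac} (i.e. Remark \ref{otherevalsl}, substituting $\qQ_{\al,0}\mapsto t^{-\sum_\beta\lambda_{\al,\beta}}$), each coefficient $c^{\emptyset,I_r}_{m_1,\dots,m_r}(\bqQ_0)$ collapses to the scalar $c^{\emptyset,I_r}_{m_1,\dots,m_r}(\{t^{-\sum_\beta\lambda_{\al,\beta}}\}_{\al})$. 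The resulting polynomial in the $\qQ_{\beta,1}$'s is precisely $P(\bqQ_1)=\varphi(\p)(\bqQ_1)$ as constructed in the proof of Lemma \ref{polpPsl}, which gives the claimed identity $\langle 0,\dots,0\vert\p=\langle 0,\dots,0\vert\varphi(\p)(\bqQ_1)$.

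The main obstacle, and the one place requiring care rather than bookkeeping, is justifying that passing the $\qQ_{\al,-1}$ and $\qQ_{\al,0}^{\pm1}$ factors to the left is legitimate and produces only $t$-monomial prefactors: this rests on Lemma \ref{comalbet} (for $\qQ_{\al,-1}$ versus $\qQ_{\gamma,1}$) and \eqref{comq}, and on the fact that $\qQ_{\al,-1}$ is \emph{linear} in $\qQ_{\al,1}^{-1}$ so that the ``annihilation'' step is unambiguous even though $\qQ_{\al,-1}$ and $\qQ_{\al,1}$ do not commute. One should also note that the expansion \eqref{finqal} is genuinely finite and that the evaluation map $\rho$/left-evaluation is well defined on it — but all of this has already been set up in Lemmas \ref{positalpha}, \ref{vaniminusone} and \ref{polpPsl}, so the proof is essentially a one-line reduction: ``By the expansion \eqref{abcup}, the vanishing $\langle 0,\dots,0\vert\qQ_{\al,-1}=0$, and the normalization \eqref{normavac}, the argument of the proof of Lemma \ref{polpPsl} applies verbatim.''
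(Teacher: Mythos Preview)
Your proposal is correct and matches the paper's own treatment: the paper gives no independent proof of Lemma \ref{polypropsl}, stating only that it follows ``Using the proofs of Lemmas \ref{vaniminusone} and \ref{polpPsl}'', and your write-up is precisely the natural unpacking of that reference via the expansion \eqref{abcup}, the annihilation $\langle 0,\dots,0\vert\qQ_{\al,-1}=0$, and the left-evaluation \eqref{normavac}. One small simplification: in the decomposition \eqref{abcup} the $\qQ_{\al,-1}$ factors are already leftmost, so no commutation past the $\qQ_{\beta,1}$'s is needed before applying the covector---the annihilation is immediate.
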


Let $\vert \ell_1,...,\ell_r \rangle$, with $\bl=(\ell_1,...,\ell_r)\in (\Z_+)^r$ 
denote the dual vector basis to that of Def. \ref{vacrepsl}, namely
such that:
$$\langle m_1,...,m_r \vert \ell_1,...,\ell_r \rangle=\prod_{\al=1}^r
\delta_{m_\al,\ell_\al}.$$
We deduce the following main theorem, establishing the connection between our former evaluated constant term
and certain matrix elements of the present representation:

\begin{thm}
With the $\qQ_{\al,i}$, $\al\in I_r, i\in \Z$, satisfying the quantum $Q$-system \eqref{qqsys} 
and the commutation relations \eqref{comq}, and
for 
$\p\in \Z_t[\bqQ_{0}^{\pm1},\{\bqQ_{\al,i}\}_{i>0}]$
$$ \mu_\bl(\p)= \langle 0,...,0 \vert \p \vert \ell_1,...,\ell_r \rangle $$
in terms of the vector basis $\langle m_1,...,m_r \vert$ and its dual.
\end{thm}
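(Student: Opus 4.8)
The plan is to mirror exactly the structure of the proof of the corresponding $A_1$ theorem (Theorem stating $\mu_\ell(\p) = \langle 0\vert \p\vert\ell\rangle$), so that the final identity reduces to a computation of matrix elements $\langle 0,\ldots,0\vert \prod_\al \qQ_{\al,1}^{m_\al}\vert\ell_1,\ldots,\ell_r\rangle$. First I would invoke Lemma~\ref{polpPsl} on the right-hand side of Definition~\ref{unavsl}: since $\mu_\bl(\p) = \mu_\bl(\varphi(\p))$ and $\varphi(\p) = P(\bqQ_1) \in \Z_t[\bqQ_1]$, it suffices to prove the claimed matrix-element formula when $\p = \prod_{\al=1}^r \qQ_{\al,1}^{m_\al}$ is a pure monomial in the $\qQ_{\al,1}$. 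On the operator side, Lemma~\ref{polypropsl} gives $\langle 0,\ldots,0\vert \p = \langle 0,\ldots,0\vert \varphi(\p)(\bqQ_1)$, so by linearity in the coefficients (which lie in $\Z_t$ and hence pass through freely) both sides are determined by the monomial case. Thus the theorem is equivalent to the statement
$$
\langle 0,\ldots,0\vert \prod_{\al=1}^r \qQ_{\al,1}^{m_\al}\vert \ell_1,\ldots,\ell_r\rangle = \mu_{\bl,\bm},
$$
with $\mu_{\bl,\bm}$ as in \eqref{mulmsl}.

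Next I would reduce this monomial identity to the defining relation \eqref{defp} of the basis $\langle p_1,\ldots,p_r\vert$. The point is that \eqref{decompir} expresses $\mu_\bl(\prod_\al \qQ_{\al,m_\al})$ as a finite triangular combination $\sum_{\bj} c_{\bm,\bj}(t)\,\mu_{\bl,\bj}$ of the moments, with the leading coefficient $c_{\bm,\bm}$ nonzero (triangularity coming from the fact that each $\varphi(\qQ_{\al,m_\al})$ is a polynomial in $\qQ_{\al,1}$ of degree $m_\al$ with nonzero top coefficient, exactly as in the $A_1$ discussion around \eqref{basone}). Hence the system \eqref{decompir} can be inverted to write each $\prod_\al \qQ_{\al,1}^{j_\al}$, after applying $\varphi$, as a finite combination of the $\varphi(\prod_\al \qQ_{\al,m_\al})$. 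Using Lemma~\ref{polypropsl} once more, this same triangular change of basis relates $\langle 0,\ldots,0\vert \prod_\al \qQ_{\al,1}^{j_\al}$ to the $\langle 0,\ldots,0\vert \prod_\al \qQ_{\al,m_\al}$, which by \eqref{defp} is precisely $\sum_{\bp} M_{\bp,\bn}(q^{-1})\langle \bp\vert$ up to the explicit $t$-prefactor. Pairing with $\vert\ell_1,\ldots,\ell_r\rangle$ using the duality $\langle \bm\vert\bl\rangle = \prod_\al \delta_{m_\al,\ell_\al}$ then picks out $M_{\bl,\bn}(q^{-1})$ times the prefactor, which by the displayed formula for $\mu_{\bl,\bj}$ at the end of the ``Computing the graded multiplicities'' subsection is exactly $\mu_{\bl,\bj}$. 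Tracking the prefactors through both \eqref{defp} and \eqref{Msumslcomp} and checking they agree is the bookkeeping that makes everything close up.

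The main obstacle I expect is precisely this prefactor bookkeeping: one must verify that the $t$-powers in \eqref{defp} — namely $t^{-\sum_{\al,\beta}m_\al\lambda_{\al,\beta} - \frac12(\sum_\al\ell_\al\lambda_{\al,\al} + \sum_{\al,\beta}\min(m_\al,m_\beta)\lambda_{\al,\beta})}$ — combine with the $t$-prefactor in Theorem~\ref{howtocomputeM} to reproduce exactly the normalization in Definition~\ref{unavsl} of $\mu_{\bl,\bm}$, remembering that $\bn$ is the vector with $n_{\al,i} = \delta_{i,m_\al}$ so that $\bn\cdot(\lambda\otimes A)\bn = \sum_{\al,\beta}\min(m_\al,m_\beta)\lambda_{\al,\beta}$. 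A secondary subtlety is confirming that the non-commutativity of the $\qQ_{\al,m_\al}$ (which only $t$-commute among themselves by \eqref{commutq}) does not introduce extra $t$-factors beyond those already absorbed; this is handled exactly as in the $A_1$ proof, since all the relevant commutators are central scalars in $\Z_t$ and the ordering conventions in the products are fixed (lower indices to the left). Once the prefactors are checked, the proof is: apply Lemmas~\ref{polpPsl} and \ref{polypropsl} to reduce to monomials in $\bqQ_1$; invert the triangular system \eqref{decompir}; and pair \eqref{defp} with the dual basis, all as above.

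\begin{proof}
By Lemma~\ref{polpPsl} we have $\mu_\bl(\p) = \mu_\bl(\varphi(\p))$ with $\varphi(\p) = P(\bqQ_1) \in \Z_t[\bqQ_1]$, and by Lemma~\ref{polypropsl} we have $\langle 0,\ldots,0\vert\p = \langle 0,\ldots,0\vert\varphi(\p)(\bqQ_1)$. Hence, by linearity over $\Z_t$, it suffices to prove the identity for $\p = \prod_{\al=1}^r \qQ_{\al,1}^{m_\al}$, i.e.\ to check that $\langle 0,\ldots,0\vert \prod_{\al}\qQ_{\al,1}^{m_\al}\vert\ell_1,\ldots,\ell_r\rangle = \mu_{\bl,\bm}$ for all $\bm\in(\Z_+)^r$.

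Consider the polynomials $\prod_\al \qQ_{\al,m_\al}$. Each $\varphi(\qQ_{\al,m_\al})$ is, as in the $A_1$ discussion around \eqref{basone}, a polynomial in $\qQ_{\al,1}$ of degree $m_\al$ with nonzero leading coefficient; multiplying over $\al$ shows that $\varphi(\prod_\al \qQ_{\al,m_\al})$ is, modulo lower-order terms, a nonzero multiple of $\prod_\al \qQ_{\al,1}^{m_\al}$. Therefore \eqref{decompir}, $\mu_\bl(\prod_\al \qQ_{\al,m_\al}) = \sum_{\bj} c_{\bm,\bj}(t)\,\mu_{\bl,\bj}$, is a finite triangular system with invertible leading part, so it can be solved for the $\mu_{\bl,\bj}$ in terms of the $\mu_\bl(\prod_\al \qQ_{\al,m_\al})$; equivalently, after applying $\varphi$, the monomial $\prod_\al \qQ_{\al,1}^{j_\al}$ is a finite $\Z_t$-combination $\sum_{\bm} d_{\bj,\bm}(t)\,\varphi(\prod_\al \qQ_{\al,m_\al})$. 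Applying Lemma~\ref{polypropsl} again, we get
$$
\langle 0,\ldots,0\vert \prod_\al \qQ_{\al,1}^{j_\al}
= \sum_{\bm} d_{\bj,\bm}(t)\, \langle 0,\ldots,0\vert \prod_\al \qQ_{\al,m_\al}.
$$
By the defining relation \eqref{defp} of Definition~\ref{vacrepsl}, with $\bn = (\delta_{i,m_\al})_{\al,i}$ so that $\bn\cdot(\lambda\otimes A)\bn = \sum_{\al,\beta}\min(m_\al,m_\beta)\lambda_{\al,\beta}$,
$$
\langle 0,\ldots,0\vert \prod_\al \qQ_{\al,m_\al}
= t^{\,\sum_{\al,\beta}m_\al\lambda_{\al,\beta} + \frac12(\sum_\al \ell_\al\lambda_{\al,\al} + \bn\cdot(\lambda\otimes A)\bn)} \sum_{\bp} M_{\bp,\bn}(q^{-1})\,\langle p_1,\ldots,p_r\vert.
$$
Pairing with $\vert\ell_1,\ldots,\ell_r\rangle$ and using $\langle \bp\vert\bl\rangle = \prod_\al \delta_{p_\al,\ell_\al}$ selects $M_{\bl,\bn}(q^{-1})$ times the displayed $t$-prefactor, which is precisely the value of $\mu_{\bl,\bm}$ given by the final formula of the subsection ``Computing the graded multiplicities''. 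Combining with Theorem~\ref{howtocomputeM}, the $t$-prefactors match and we obtain $\langle 0,\ldots,0\vert \prod_\al \qQ_{\al,m_\al}\vert\bl\rangle = \mu_\bl(\prod_\al \qQ_{\al,m_\al})$ for all $\bm$. Since the $d_{\bj,\bm}(t)$ are the same on both sides, it follows that $\langle 0,\ldots,0\vert \prod_\al \qQ_{\al,1}^{j_\al}\vert\bl\rangle = \mu_{\bl,\bj}$ for all $\bj$, and hence, by the reduction in the first paragraph, $\mu_\bl(\p) = \langle 0,\ldots,0\vert \p\vert\ell_1,\ldots,\ell_r\rangle$ for every $\p\in\Z_t[\bqQ_0^{\pm1},\{\bqQ_{\al,i}\}_{i>0}]$.
\end{proof}
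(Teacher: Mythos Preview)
Your proposal is correct and follows essentially the same approach as the paper: reduce via Lemmas~\ref{polpPsl} and~\ref{polypropsl} to checking $\langle 0,\ldots,0\vert\prod_\al\qQ_{\al,1}^{j_\al}\vert\bl\rangle=\mu_{\bl,\bj}$, then establish the identity $\langle 0,\ldots,0\vert\prod_\al\qQ_{\al,m_\al}\vert\bl\rangle=\mu_\bl(\prod_\al\qQ_{\al,m_\al})$ by pairing \eqref{defp} with the dual basis and comparing prefactors with \eqref{Msumslcomp}, and finally transfer this back to monomials in $\bqQ_1$ via the (invertible) change of basis \eqref{decompir}. Your write-up is more explicit about the triangularity and the prefactor bookkeeping than the paper's terse version, but the logical skeleton is identical.
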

\begin{proof}
By Lemmas \ref{polpPsl} and  \ref{polypropsl}, we only have to check that 
$\langle 0,...,0 \vert\prod_{\al=1}^r\qQ_{\al,1}^{j_\al} 
\vert \ell_1,...,\ell_r \rangle=\mu_{\bl,\bj}$ for all $\bl,\bj\in (\Z_+)^r$.
Applying eq.\eqref{defp} on the vector $\vert \ell_1,...,\ell_\al\rangle$ 
and comparing the result to the expression \eqref{Msumslcomp} for the graded $M$-sum, we get:
$$\langle 0,...,0 \vert\prod_{\al=1}^r \qQ_{\al,m_\al} \vert \ell_1,...,\ell_r\rangle= 
\mu_\bl(\prod_{\al=1}^r \qQ_{\al,m_\al}) $$
The theorem follows by expressing both terms in the variables $\qQ_{\al,1}$, $\al\in I_r$, via
Lemmas \ref{polpPsl} and \ref{polypropsl}.
\end{proof}

The results of this section may be summarized in the following:

\begin{thm}\label{oscirepsl} 
With Definitions \ref{vacrepsl},
the $M$-sum is given in terms of the solutions of the quantum 
$Q$-system by the following expectation value:
\begin{equation}\label{basch}
M_{\bl;\bn}(q^{-1})=q^{-{1\over \delta}\sum_{\al,\beta,i}n_{\al,i}\lambda_{\al,\beta}-
{1\over 2\delta}(\sum_\al \ell_\al \lambda_{\al,\al}+\bn\cdot (\lambda\otimes A)\bn) } 
\langle 0,0,...,0\vert \prod_{i=1}^k  \prod_{\al=1}^r\qQ_{\al,i}^{n_{\al,i}} \vert \ell_1,...,\ell_r\rangle.
\end{equation}
\end{thm}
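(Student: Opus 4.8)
The plan is to assemble Theorem \ref{oscirepsl} from the pieces already established, so the proof is essentially a chain of identifications rather than a new computation. First I would recall the factorization Theorem \ref{zkfin}, which gives the generating function $Z_{\bl;\bn}^{(k)}(\bqQ_0,\bqQ_1)$ as an explicit product of solutions of the quantum $Q$-system times a power of $q$. Combining this with the relation \eqref{NsumSL} between the $N$-sum and $\CT_{\bqQ_1}(Z^{(k)})|_{\bqQ_0=1}$, and with the $M=N$ identity of Theorem \ref{M=Nsl}, I get the constant-term expression for $M_{\bl;\bn}(q^{-1})$ recorded in Theorem \ref{howtocomputeM}, namely
$$
M_{\bl,\bn}(q^{-1}) =t^{\sum_{\al,\beta,i}n_{\al,i}\lambda_{\al,\beta}+\frac12(\sum_\al \ell_\al \lambda_{\al,\al}+\bn\cdot(\lambda\otimes A)\bn)}\ \CT_{\bqQ_1}\,\rho\!\left(\prod_{i=1}^k\prod_{\al=1}^r\qQ_{\al,i}^{n_{\al,i}}\prod_{\beta=1}^r z_\beta^{\ell_\beta+1}\right),
$$
after replacing $z_{\beta,k}$ by its stable limit $z_\beta$ once $k$ is large with respect to $\bn$. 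In the notation of Definition \ref{unavsl} this is just $M_{\bl;\bn}(q^{-1})=(\text{prefactor})\cdot\mu_\bl\big(\prod_{\al,i}\qQ_{\al,i}^{n_{\al,i}}\big)$, so it remains to identify $\mu_\bl(\p)$ with the matrix element $\langle 0,\dots,0\vert \p\vert \ell_1,\dots,\ell_r\rangle$ of Definition \ref{vacrepsl}, for $\p=\prod_{\al,i}\qQ_{\al,i}^{n_{\al,i}}$.

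For that identification I would invoke the theorem proved just above (that $\mu_\bl(\p)=\langle 0,\dots,0\vert\p\vert\ell_1,\dots,\ell_r\rangle$ for all $\p\in\Z_t[\bqQ_0^{\pm1},\{\bqQ_{\al,i}\}_{i>0}]$), whose proof in turn rests on the polynomiality property (Lemma \ref{polpPsl}, the map $\varphi$) together with Lemma \ref{polypropsl}: both $\mu_\bl$ and $\langle 0,\dots,0\vert(-)\vert\ell_1,\dots,\ell_r\rangle$ factor through $\varphi$, so it suffices to check agreement on the monomials $\prod_\al\qQ_{\al,1}^{j_\al}$, i.e. $\langle 0,\dots,0\vert\prod_\al\qQ_{\al,1}^{j_\al}\vert\ell_1,\dots,\ell_r\rangle=\mu_{\bl,\bj}$; and the defining relation \eqref{defp} of the basis $\langle p_1,\dots,p_r\vert$ was rigged precisely so that $\langle 0,\dots,0\vert\prod_\al\qQ_{\al,m_\al}\vert\ell_1,\dots,\ell_r\rangle=\mu_\bl(\prod_\al\qQ_{\al,m_\al})$, after which the change of basis between the $\qQ_{\al,m_\al}$ and the monomials $\qQ_{\al,1}^{j_\al}$ (the invertible triangular system analogous to \eqref{decompir}) transports one statement to the other.

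Finally I would assemble the prefactors. The bookkeeping difference between $t$ and $q=t^{-\delta}$ means the exponent $\sum_{\al,\beta,i}n_{\al,i}\lambda_{\al,\beta}+\frac12(\sum_\al\ell_\al\lambda_{\al,\al}+\bn\cdot(\lambda\otimes A)\bn)$ in powers of $t$ from Theorem \ref{howtocomputeM} becomes $-\frac1\delta\sum_{\al,\beta,i}n_{\al,i}\lambda_{\al,\beta}-\frac1{2\delta}(\sum_\al\ell_\al\lambda_{\al,\al}+\bn\cdot(\lambda\otimes A)\bn)$ in powers of $q$, which is exactly the prefactor displayed in \eqref{basch}. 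Putting this together with the matrix-element identification yields
$$
M_{\bl;\bn}(q^{-1})=q^{-\frac1\delta\sum_{\al,\beta,i}n_{\al,i}\lambda_{\al,\beta}-\frac1{2\delta}(\sum_\al\ell_\al\lambda_{\al,\al}+\bn\cdot(\lambda\otimes A)\bn)}\ \langle 0,0,\dots,0\vert\prod_{i=1}^k\prod_{\al=1}^r\qQ_{\al,i}^{n_{\al,i}}\vert\ell_1,\dots,\ell_r\rangle,
$$
as claimed. I do not expect any serious obstacle here: the analytic content (factorization, polynomiality, vanishing of $\qQ_{\al,-1}$-terms under $\rho$) has all been discharged in the preceding lemmas, so the only care needed is consistent tracking of the $t$-versus-$q$ exponents and the normalization constants hidden in \eqref{normavac} and \eqref{defp}; the mildly delicate point is making sure the $k$-independence (``$k$ sufficiently large'') is invoked correctly so that the right-hand side genuinely computes the graded multiplicity $M_{\bl;\bn}$ of the fusion product and not merely the truncated sum $M_{\bl;\bn}^{(k)}$.
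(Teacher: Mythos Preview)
Your proposal is correct and follows exactly the paper's approach: Theorem \ref{oscirepsl} is presented in the paper as a summary of the preceding results, obtained by combining the constant-term expression of Theorem \ref{howtocomputeM} (i.e.\ $M_{\bl;\bn}(q^{-1})=(\text{prefactor})\cdot\mu_\bl(\prod_{\al,i}\qQ_{\al,i}^{n_{\al,i}})$) with the identification $\mu_\bl(\p)=\langle 0,\dots,0\vert\p\vert\ell_1,\dots,\ell_r\rangle$ proved just before, and then rewriting the $t$-prefactor in terms of $q=t^{-\delta}$. Your outline of how that identification itself is proved (factoring both sides through $\varphi$ and checking on the basis $\prod_\al\qQ_{\al,m_\al}$ via \eqref{defp}) also matches the paper's argument verbatim.
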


We conclude with a few explicit examples.
\begin{example}
We consider the case of $A_2=s\ell_3$, with $\delta=3$,
$q=t^{-3}$. 
Let us compute the fusion product of KR modules: $KR_{1,1}\star KR_{1,1}\star KR_{2,2}$.
We consider the solutions of the $A_2$ quantum $Q$-system:
\begin{eqnarray*}
t^2 \qQ_{1,n+1}\qQ_{1,n-1}&=&\qQ_{1,n}^2-\qQ_{2,n}\\
t^2 \qQ_{2,n+1}\qQ_{2,n-1}&=&\qQ_{2,n}^2-\qQ_{1,n}, \qquad n\in \Z,
\end{eqnarray*}
in terms of the initial data 
$(\qQ_{1,0},\qQ_{2,0},\qQ_{1,1},\qQ_{2,1})$, and
with commutation relations
$$ \qQ_{1,n}\qQ_{1,n+1}=t^2 \qQ_{1,n+1}\qQ_{1,n}, \quad 
\qQ_{2,n}\qQ_{2,n+1}=t^2 \qQ_{2,n+1}\qQ_{2,n} \quad 
\qQ_{1,n}\qQ_{2,n+p}=t^p \qQ_{2,n+p}\qQ_{1,n}$$
where $|p|\leq 2$, $n\in \Z$.
The fusion product of KR modules: $KR_{1,1}\star KR_{1,1}\star KR_{2,2}$ is obtained
by computing:
$$\langle 0,0\vert \qQ_{1,1}^2 \qQ_{2,2}
=t^{-2}\langle 0,0\vert \qQ_{1,1}^2 (\qQ_{2,1}^2-\qQ_{1,1})\qQ_{2,0}^{-1}=
\langle 0,0\vert \big(t^7 \qQ_{1,1}^2\qQ_{2,1}^2 -t^{4} \qQ_{1,1}^3\big) $$
by commuting $\qQ_{2,0}^{-1}$ to the left and using 
$\langle 0,0\vert \qQ_{2,0}=t^{-3}\langle 0,0\vert$.
We express the two above terms in the $\langle m_1,m_2\vert$ basis via \eqref{defp}:
\begin{eqnarray*}
\langle 0,0\vert \qQ_{1,1}^2Q_{2,1}^2 &=&t^{-28}\langle 2,2\vert+t^{-24}\langle 3,0\vert +t^{-24}\langle 0,3\vert+t^{-23}(1+t^3)^2\langle 1,1\vert
+t^{-18}(1+t^6)\langle 0,0\vert \\
\langle 0,0\vert \qQ_{1,1}^3&=&t^{-21}\langle 3,0\vert +t^{-17}(1+t^3)\langle 1,1\vert+t^{-9} \langle 0,0\vert 
\end{eqnarray*}
where we have used
\begin{eqnarray*}
M_{(2,2);(1^2),(1^2)}&=&1\quad M_{(3,0);1^2,1^2}=t^3 \quad M_{(0,3);(1^2),(1^2)}=t^3\quad M_{(1,1);(1^2),(1^2)}=t^3(1+t^3)^2\\
M_{(0,0);(1^2),(1^2)}&=&t^6(1+t^6)\quad 
M_{(3,0);(1^3),()}=1 \quad M_{(1,1);(1^3),()}=t^3(1+t^3) \quad M_{(0,0);(1^3),()}=t^9 
\end{eqnarray*}
This gives:
$$\langle 0,0\vert \qQ_{1,1}^2 \qQ_{2,2}=t^{-21}\langle 2,2\vert+t^{-17}\langle 0,3\vert+t^{-16}(1+t^3)\langle 1,1\vert
+t^{-11}\langle 0,0\vert $$
Collecting all the prefactors, we finally get from \eqref{basch}:
$$M_{(2,2);(1^2),(2^1)}=1\quad M_{(0,3);(1^2),(2^1)}=t^3 \quad M_{(1,1);(1^2),(2^1)}=t^3(1+t^3) \quad M_{(0,0);(1^2),(2^1)}=t^6$$
and therefore:
$$ KR_{1,1}\star KR_{1,1}\star KR_{2,2} = V_{2,2} \oplus q^{-1} V_{0,3}\oplus q^{-1}(1+q^{-1})V_{1,1}\oplus q^{-2}V_{0,0}$$
\end{example}

\begin{example}
We consider the case of $D_4$. 
Let us compute the fusion product of KR modules: $KR_{1,1}\star KR_{3,3}$.
The $D_4$ quantum $Q$-system, with
$\qQ_{1,n}=T_n,\qQ_{2,n}=U_n,\qQ_{3,n}=V_n, \qQ_{4,n}=W_n$ 
and the Cartan matrix
$C=\begin{pmatrix}
2 & -1 & 0 & 0\\ 
-1 & 2 & -1 & -1 \\ 
0& -1& 2& 0\\ 0 &
 -1 & 0 & 2\end{pmatrix}$, takes the form:
\begin{eqnarray*}
t^4 T_{n+1}T_{n-1}&=& T_n^2 -U_n\qquad
t^4V_{n+1}V_{n-1}= V_n^2 -U_n\\
t^4W_{n+1}W_{n-1}&=& W_n^2 -U_n\qquad
t^8 U_{n+1}U_{n-1}=U_n^2 -T_nV_nW_n
\end{eqnarray*}
while the commutation relations corresponding to $\delta=4$ and
$\lambda=\begin{pmatrix}
4 & 4 & 2 & 2\\ 
4 & 8 & 4 & 4 \\ 
2 & 4 & 4 & 2\\ 
2 & 4 & 2 & 4\end{pmatrix}$ have the form, for all $n\in \Z$:
$$\begin{matrix}T_nT_{n+1}=t^4 T_{n+1}T_n & V_nV_{n+1}=t^4 V_{n+1}V_n &
W_nW_{n+1}=t^4 W_{n+1}W_n & U_nU_{n+1}=t^8 U_{n+1}U_n \\
T_n V_{n+p} = t^{2p}V_{n+p}T_n& T_n W_{n+p}=t^{2p}W_{n+p} T_n& 
T_n U_{n+p}= t^{4p}U_{n+p}T_n &\\
V_n W_{n+p} = t^{2p}W_{n+p}V_n& V_n U_{n+p}= t^{4p}U_{n+p}V_n & 
W_n U_{n+p}= t^{4p}U_{n+p}W_n & {}^{(p=0,\pm 1,\pm 2)}
\end{matrix}$$
We now compute:
$$T_1 V_3=t^{-12}\big(T_1V_1^3-(1+t^{4})T_1V_1U_1\big)V_0^{-2}
+t^{-8}T_1^2W_1U_0^{-1}-t^{-20}V_{-1}T_1U_1^2V_0^{-2}U_0^{-1}$$
Acting on the left vacuum, using Lemma \ref{vanilem} and commuting $V_0^{-2}$ and $U_0^{-1}$ to the left, we get:
$$\langle 0,0,0,0\vert T_1 V_3=\langle 0,0,0,0\vert \left(t^{40}T_1V_1^3-t^{32}(1+t^4)T_1V_1U_1+t^{24}T_1^2W_1\right)$$
We now express via \eqref{defp}:
\begin{eqnarray*}
\langle 0,0,0,0\vert T_1V_1^3&=& t^{-82}\langle 1,0,3,0\vert +t^{-68}\langle 2,0,0,1\vert +t^{-78}(1+t^4)\langle 1,1,1,0\vert\\
&&+t^{-72}(1+t^4+t^8)\langle 0,1,0,1\vert  +t^{-70}(1+t^4)(1+t^4+t^8)\langle 1,0,1,0\vert \\
&&+t^{-76}(1+t^4+t^8) \langle 0,0,2,1\vert +t^{-64}(1+t^4)(1+t^8)\langle 0,0,0,1\vert \\
\langle 0,0,0,0\vert T_1V_1U_1&=&t^{-70}\langle 1,1,1,0\vert +t^{-64}\langle 2,0,0,1\vert +t^{-64} \langle 0,0,2,1\vert
+ t^{-64}(1+t^4)\langle 0,1,0,1\vert  \\
&&+t^{-62} (1+t^4)^2\langle 1,0,1,0\vert +t^{-56}(1+t^4+t^8) \langle 0,0,0,1\vert  \\
\langle 0,0,0,0\vert T_1^2W_1&=& t^{-56}\langle 2,0,0,1\vert +t^{-52}\langle 0,1,0,1\vert  +t^{-50}(1+t^4)\langle 1,0,1,0\vert \\
&&+t^{-44}(1+t^4)\langle 0,0,0,1\vert
\end{eqnarray*}
We deduce that
$$ \langle 0,0,0,0\vert T_1 V_3= t^{-42}\langle 1,0,3,0\vert +t^{-36} \langle 0,0,2,1\vert $$
Finally, using \eqref{basch}, we get:
$$ M_{(1,0,3,0);(1^1)()(3^1)()}=1 \qquad M_{(0,0,2,1);(1^1)()(3^1)()}=t^4 $$
so that:
$$ KR_{1,1}\star KR_{3,3} =V_{1,0,3,0}\oplus q^{-1} V_{0,0,2,1} $$
\end{example}

%\bibliography{refs}

\begin{thebibliography}{10}

\bibitem{AK}
Eddy Ardonne and Rinat Kedem.
\newblock Fusion products of {K}irillov-{R}eshetikhin modules and fermionic
  multiplicity formulas.
\newblock {\em J. Algebra}, 308(1):270--294, 2007.

\bibitem{BZ}
Arkady Berenstein and Andrei Zelevinsky.
\newblock Quantum cluster algebras.
\newblock {\em Adv. Math.}, 195(2):405--455, 2005.

\bibitem{Bethe}
Hans Bethe.
\newblock Zur {T}heorie der {M}etalle {I}. {E}igenwerte und {E}igenfunktionen
  der linearen {A}tomkette.
\newblock {\em Zeitschrift f\"ur Physik}, 71:2005--26, 1931.

\bibitem{Chari}
Vyjayanthi Chari.
\newblock On the fermionic formula and the {K}irillov-{R}eshetikhin conjecture.
\newblock {\em Internat. Math. Res. Notices}, (12):629--654, 2001.

\bibitem{ChariMoura}
Vyjayanthi Chari and Adriano Moura.
\newblock The restricted {K}irillov-{R}eshetikhin modules for the current and
  twisted current algebras.
\newblock {\em Comm. Math. Phys.}, 266(2):431--454, 2006.

\bibitem{DF11}
Philippe Di~Francesco.
\newblock Quantum {$A_r$} {$Q$}-system solutions as q-multinomial series.
\newblock {\em Elec. Jour. of Comb.}, 17(1):P176, 2011.
\newblock http://xxx.lanl.gov/abs/1104.0339.

\bibitem{DFK}
Philippe Di~Francesco and Rinat Kedem.
\newblock Proof of the combinatorial {K}irillov-{R}eshetikhin conjecture.
\newblock {\em Int. Math. Res. Not. IMRN}, (7):Art. ID rnn006, 57, 2008.

\bibitem{DFK2}
Philippe Di~Francesco and Rinat Kedem.
\newblock {$Q$}-systems as cluster algebras. {II}. {C}artan matrix of finite
  type and the polynomial property.
\newblock {\em Lett. Math. Phys.}, 89(3):183--216, 2009.

\bibitem{DFK10}
Philippe Di~Francesco and Rinat Kedem.
\newblock Non-commutative integrability, paths and quasi-determinants.
\newblock {\em Adv. Math.}, 228:97--152, 2011.
\newblock http://xxx.lanl.gov/abs/1008.0980.

\bibitem{FL}
B.~Feigin and S.~Loktev.
\newblock On generalized {K}ostka polynomials and the quantum {V}erlinde rule.
\newblock In {\em Differential topology, infinite-dimensional {L}ie algebras,
  and applications}, volume 194 of {\em Amer. Math. Soc. Transl. Ser. 2}, pages
  61--79. Amer. Math. Soc., Providence, RI, 1999.

\bibitem{FZ}
Sergey Fomin and Andrei Zelevinsky.
\newblock Cluster algebras. {I}. {F}oundations.
\newblock {\em J. Amer. Math. Soc.}, 15(2):497--529 (electronic), 2002.

\bibitem{HKOTY}
G.~Hatayama, A.~Kuniba, M.~Okado, T.~Takagi, and Y.~Yamada.
\newblock Remarks on fermionic formula.
\newblock In {\em Recent developments in quantum affine algebras and related
  topics ({R}aleigh, {NC}, 1998)}, volume 248 of {\em Contemp. Math.}, pages
  243--291. Amer. Math. Soc., Providence, RI, 1999.

\bibitem{Hernandez}
David Hernandez.
\newblock The {K}irillov-{R}eshetikhin conjecture and solutions of
  {$T$}-systems.
\newblock {\em J. Reine Angew. Math.}, 596:63--87, 2006.

\bibitem{Ke04}
Rinat Kedem.
\newblock Fusion products, cohomology of {${\rm GL}_N$} flag manifolds, and
  {K}ostka polynomials.
\newblock {\em Int. Math. Res. Not.}, (25):1273--1298, 2004.

\bibitem{Ke08}
Rinat Kedem.
\newblock {$Q$}-systems as cluster algebras.
\newblock {\em J. Phys. A}, 41(19):194011, 14, 2008.

\bibitem{Ke11}
Rinat Kedem.
\newblock A pentagon of identities, fusion products and the
  kirillov-reshetikhin conjecture.
\newblock In {\em New Trends in Quantum Integrable Systems}, pages 173--193.
  World Scientific, 2011.

\bibitem{KR}
A.~N. Kirillov and N.~Yu. Reshetikhin.
\newblock Representations of {Y}angians and multiplicities of the inclusion of
  the irreducible components of the tensor product of representations of simple
  {L}ie algebras.
\newblock {\em Zap. Nauchn. Sem. Leningrad. Otdel. Mat. Inst. Steklov. (LOMI)},
  160(Anal. Teor. Chisel i Teor. Funktsii. 8):211--221, 301, 1987.

\bibitem{KN}
Ryosuke Kodera and Katsuyuki Naoi.
\newblock Loewy series of weyl modules and the poincar\'e polynomials of quiver
  varieties.
\newblock Preprint 2011, arXiv:1103.4207[math.RT].

\bibitem{KNS}
Atsuo Kuniba, Tomoki Nakanishi, and Junji Suzuki.
\newblock Functional relations in solvable lattice models. {I}. {F}unctional
  relations and representation theory.
\newblock {\em Internat. J. Modern Phys. A}, 9(30):5215--5266, 1994.

\bibitem{Lusztig}
George Lusztig.
\newblock Fermionic formula and betti numbers (preprint).
\newblock math.QA/0005010.

\bibitem{Nakajima}
Hiraku Nakajima.
\newblock {$t$}-analogs of {$q$}-characters of {K}irillov-{R}eshetikhin modules
  of quantum affine algebras.
\newblock {\em Represent. Theory}, 7:259--274 (electronic), 2003.

\bibitem{OSS}
Masato Okado, Anne Schilling, and Mark Shimozono.
\newblock Virtual crystals and fermionic formulas of type
  {$D^{(2)}_{n+1},A^{(2)}_{2n}$}, and {$C^{(1)}_n$}.
\newblock {\em Represent. Theory}, 7:101--163 (electronic), 2003.

\end{thebibliography}
%\bibliographystyle{plain}
\def\cprime{$'$} \def\cprime{$'$}

\end{document}